\DeclareRobustCommand{\SkipTocEntry}[5]{}
\definecolor{LOcolor}{RGB}{150,100,0}
\DeclareMathOperator{\WF}{WF}
\newtheorem{Theorem}{Theorem}[section]
\newtheorem{Lemma}[Theorem]{Lemma}
\newtheorem{Corollary}[Theorem]{Corollary}
\newtheorem{Proposition}[Theorem]{Proposition}
\theoremstyle{definition}
\newtheorem{Definition}[Theorem]{Definition}
\newtheorem{Remark}[Theorem]{Remark}
\numberwithin{equation}{section}
\newcommand{\mR}{\mathbb{R}}                    %
\newcommand{\abs}[1]{\lvert #1 \rvert}          %
\newcommand{\norm}[1]{\lVert #1 \rVert}         %
\newcommand{\ol}[1]{\overline{#1}}
\newcommand{\G}{\mathcal G}
\newcommand{\supp}{\mathrm{supp}}
\newcommand{\eps}{\varepsilon}
\newcommand{\p}{\partial}
\newcommand{\dd}{\mathrm{d}}
\newcommand{\BB}{\mathbf B}
\newcommand{\SSS}{\mathbf S}
\newcommand{\ccdot}{\,\cdot\,}
\newcommand{\s}{\hspace{0.5pt}}
\newcounter{sidenote}
\begin{document}

\title{Generalized boundary rigidity and minimal surface transform} 
\author[L. Busch]{Leonard Busch}
\address{Korteweg-de Vries Institute, University of Amsterdam, Amsterdam, Netherlands}
\email{l.a.busch@uva.nl}
\author[T. Liimatainen]{Tony Liimatainen}
\address{Department of Mathematics and Statistics, University of Jyv\"askyl\"a, PO Box 35, 40014 Jyv\"askyl\"a, Finland}
\email{tony.t.liimatainen@jyu.fi}
\author[M. Salo]{Mikko Salo}
\address{Department of Mathematics and Statistics, University of Jyv\"askyl\"a, PO Box 35, 40014 Jyv\"askyl\"a, Finland}
\email{mikko.j.salo@jyu.fi}
\author[L. Tzou]{Leo Tzou}
\address{School of Mathematics and Statistics, University of Melbourne, Melbourne, Australia}
\email{leo.tzou@unimelb.edu.au}

\begin{abstract}
We study a generalized boundary rigidity problem, which investigates whether the areas of embedded minimal surfaces can uniquely determine a Riemannian manifold with boundary. We prove that for a conformal perturbation of an analytic metric in dimension \( n+1 \) (\( n \geq 2 \)), the metric is determined by these volumes under an ampleness condition. Furthermore, we establish H\"older stability for this determination. This result extends earlier works in dimension \( 2+1 \).%

Instead of relying on reductions to Calder\'on type problems and complex geometrical optics solutions, we study the linearized forward operator that gives rise to the \emph{minimal surface transform}, a generalization of the X-ray/Radon transform. We demonstrate that this transform fits into the framework of double fibration transforms and satisfies the Bolker condition in the sense of Guillemin. Under certain assumptions, including a foliation condition, we prove invertibility of this transform on an analytic manifold as well as recovery of the analytic wave front set.

The methods developed in this paper offer new tools for addressing the generalized boundary rigidity problem and expand the scope of applications of double fibration transforms. We anticipate that these techniques will also be applicable to other geometric inverse problems. Beyond mathematics, our results have implications for the AdS/CFT correspondence in physics.

\end{abstract}

\maketitle

\section{Introduction}\label{sec_introduction}

In this work, we investigate whether the areas of embedded minimal surfaces in a compact Riemannian manifold $(M,g)$ that extend up to the boundary $\partial M$ uniquely determine an unknown metric $g$. The smooth manifold $M$ is assumed to be known, with $\dim(M) = n+1$ for $n \geq 2$, and the minimal surfaces $\Sigma \subset M$ are of dimension $n$ (i.e., codimension $1$). An embedded minimal surface is given by an embedding $X$ from an $n$-dimensional manifold into $(M,g)$, satisfying the minimal surface equation in local coordinates \cite{hubeny2007covariant}:
\begin{equation}\label{eq:embedded_min_surf}
g^{ab}\left(\partial_{a} \partial_{b} X^{\mu} + \Gamma^{\mu}_{\nu\lambda} \partial_{a} X^{\nu} \partial_{b} X^{\lambda} - \Gamma^{\gamma}_{ab} \partial_{\gamma} X^{\mu}\right) = 0,
\end{equation}
where $a,b$ run over $1, \dots, n$ and Greek indices run over $1, \dots, n+1$. This condition means that the embedded surface has vanishing mean curvature. Throughout, we adopt the term minimal surface rather than minimal hypersurface for these submanifolds of codimension $1$. This problem can be interpreted as a \emph{generalized boundary rigidity problem}, due to the fact that areas of minimal surfaces generalize the lengths of geodesics that appear in the classical boundary rigidity problem (see for instance \cite{zbMATH02201336, zbMATH07375200} or \cite[Chapter 11]{zbMATH07625517}).

We study the problem in a perturbative setting, meaning that the unknown metric is a priori assumed to be close enough to a known one. 
As we demonstrate, the perturbative problem reduces to analyzing the linearized problem, which involves the invertibility of an integral transform of symmetric $2$-tensors over minimal surfaces,  which we call the \emph{minimal surface transform}. We further reduce our study to perturbations in conformal class, which reduces the transform to act on scalar functions. 

The core of the paper then involves the study of the minimal surface transform, which we show to be in the class of double fibration transforms satisfying the Bolker condition in the sense of Guillemin \cite{MR812288}. Recently, this class of double fibration transforms in the analytic category was studied in \cite{MST}, whose results we use and adapt.

To show that the minimal surface transform is in the above class, we generate enough minimal surfaces  as graphs of functions $u=u(x)$. To this aim, we consider the minimal surface equation \eqref{eq:embedded_min_surf} for $X(x)=(x,u(x))$  in Fermi coordinates $(x,t)$ with respect to any fixed smooth hypersurface (not necessarily a minimal one). In these coordinates the equation reads (see \cite{carstea2024calder}):
\begin{eqnarray}
\label{eq: MSE}
-|h_u|^{-1/2} \p_j \left( \frac{ |h_u|^{1/2} h_u^{jk} \p_k u}{\sqrt{1+|du|_{h_u}^2}} \right) + C(u, \nabla u) = 0, %
\end{eqnarray}
where \[
 C(u,\nabla u)=\frac{1}{2}\frac{1}{(1+\abs{\nabla u}^2_{h_u})^{1/2}}(\p_th_u^{-1})(\nabla u,\nabla u)+\frac{1}{2}(1+\abs{\nabla u}^2_{h_u})^{1/2}\s \text{Tr}(h_u^{-1}\p_th_u).
\]  
Here we used the notation $h_u(x)=g(x,u(x))$. We note that $u\equiv 0$ is a solution to the equation if and only if $\{t=0\}$ is a minimal surface. If $\Sigma=\{t=0\}$ is itself a minimal surface, the linearization of the minimal surface equation is the Schr\"odinger equation, also called stability equation (see e.g. \cite{MR2780140}), 
\begin{eqnarray}
\label{eq: stability equation}
( \Delta_{h_u} + q)u = 0, %
\end{eqnarray}
where $\Delta_{h_u}$ is the Laplace-Beltrami operator of the metric $h_u$, $q := -|A|^2 -{\rm Ric}(N,N)$, $|A|$ is the norm of the second fundamental form of $\Sigma$, and $N$ is the normal vector field of $\Sigma$. Solutions to the above equation correspond to normal variations of $\Sigma$ by minimal surfaces. %

\subsection{Determining a conformal factor from areas of minimal surfaces}

Let us first set up notation in order to state our results. Let $(M,g)$ be a Riemannian manifold, which we without loss of generality assume to be a subset of a slightly larger compact Riemannian manifold $(\widetilde{M},g)$.  We will assume that we know the metric in $\widetilde{M} \setminus M$ and that we know the areas of all minimal surfaces that have boundary in $\widetilde{M} \setminus M$. From these measurements we would like to determine information on the unknown metric $g$ in $M$. 

Next we wish to define a measurement operator that describes the areas of minimal surfaces with respect to a metric $g$. One-dimensional minimal surfaces, i.e.\ geodesics, are easily parametrized by points $(x,v) \in \partial SM$ where $SM$ is the unit sphere bundle, and $x \in \p M$ is the starting point and $v$ is the direction of the geodesic. Then one could consider the measurement operator 
\[
\tau_g: \partial SM \to \mathbb{R}
\]
such that $\tau_g(x,v)$ is the length of the $g$-geodesic through $(x,v)$.

For higher dimensional minimal surfaces there is no such simple parametrization as for geodesics, and this makes the definition of the measurement operator more technical. We start by fixing a background metric ${g}^{\circ}$ in $\widetilde{M}$ and by considering a fixed embedded minimal surface $\Sigma$ in $(\widetilde{M}, {g}^{\circ})$ with smooth boundary $\p \Sigma \subset \widetilde{M} \setminus M$. Then we choose Fermi coordinates $(x,t)$ in $\widetilde{M}$ such that $\Sigma$ corresponds to $\{t=0\}$, and we consider minimal surfaces $\Sigma^f$ that are small perturbations of $\Sigma$. These surfaces are given as graphs in Fermi coordinates, 
\[
\Sigma^f = \{ (x, u^f(x)) \,:\, x \in \Sigma \},
\]
with $u^f = u_{{g}^{\circ}}^f$ being the solution of the minimal surface equation \eqref{eq: MSE} with given Dirichlet data $f$, i.e. 
\[
u^f|_{\p \Sigma} = f.
\]
Here we assume that the stability equation corresponding to $\Sigma$ is well-posed, which can be arranged by shrinking $\Sigma$ slightly (see Proposition \ref{prop:makeadmissible}), and that $f \in W_{\delta}$ where 
\[
W_{\delta} = \{ f \in C^{\infty}(\p \Sigma) \,:\, \norm{f}_{C^{2,\gamma}(\p \Sigma)} \leq \delta \}
\]
with $\gamma>0$ some constant and $\delta > 0$ sufficiently small (so that the minimal surface equation has a unique small solution as stated in Proposition \ref{prop: smooth dependence of solution}).

Above we considered minimal surfaces $\Sigma^f$ with respect to the background metric ${g}^{\circ}$ as graphs over $\Sigma$ parametrized by small Dirichlet data $f \in W_{\delta}$. By an implicit function theorem argument (see Theorem \ref{thm: smooth dependence}), if ${g}$ is a small perturbation of ${g}^{\circ}$, one can similarly construct minimal surfaces $\Sigma_{{g}}^f$ in $(\widetilde{M}, {g})$ parametrized by small Dirichlet data $f \in W_{\delta}$. These minimal surfaces are still expressed in the Fermi coordinates for the background metric $ g^\circ$. This is the parametrization for minimal surfaces that we will use.

With the above parametrization, if $\Sigma$ is the fixed minimal surface and ${g}$ is a metric close to ${g}^{\circ}$, we define the measurement operator 
\begin{equation} \label{intro_f_def}
F({g}) = F_{\Sigma}({g}): W_{\delta} \to \mR
\end{equation}
that takes a Dirichlet data $f$ to the $g$-area of the minimal surface $\Sigma_{{g}}^f$.

Finally, we will need a condition ensuring that our background metric admits sufficiently many minimal surfaces.

\begin{Definition}\label{def:ampleness}
We say that $M$ satisfies the \emph{ampleness condition} if there is a neighborhood $U$ of $M$ in $\widetilde{M}$ such that for every $(y_0,\xi_0) \in T^\ast U$ there is a smooth embedded minimal surface
$$\Sigma=\Sigma_{(y_0,\xi_0)} \text{ in $(\widetilde{M},{g}^\circ)$}$$ 
such that $(y_0,\xi_0) \in N^\ast\Sigma$ and $\p \Sigma \subset \widetilde{M} \setminus M$.
\end{Definition}

If $\dim(\widetilde{M}) = 3$ and if there are no closed minimal surfaces inside $M$, the ampleness condition holds by \cite{mazet2022minimal} (see also the earlier work \cite{zbMATH06945887}). In any dimension, subsets of the Euclidean and hyperbolic space satisfy the ampleness condition.

We can now give an informal version of our main result on the stable determination of a small conformal perturbation from measurements of areas of minimal surfaces.

\begin{Theorem}[Informal statement; see Theorem \ref{thm:stability} for precise statement] \label{thm:rigidity_intro}
	Assume that $\widetilde M$ is an analytic manifold and ${g}^{\circ}$ is an analytic metric such that the ampleness condition in Definition~\ref{def:ampleness} holds. There are finitely many minimal surfaces $\Sigma_1, \ldots, \Sigma_K$ of $g^\circ$ with corresponding sets of small Dirichlet data $W_{\delta}^{(k)}$ such that if 
    \[
    F(\alpha_1 {g}^{\circ}) = F(\alpha_2 {g}^{\circ}) \text{ on $W_{\delta}^{(k)}$ for $k=1,\ldots, K$},
    \]
    and if $\alpha_1$, $\alpha_2$ are smooth functions sufficiently close to $1$ with $\alpha_1 = \alpha_2$ in $\widetilde{M} \setminus M$, then $\alpha_1 \equiv \alpha_2$.
    
    Moreover, there is a conditional H\"older stability estimate showing that $\alpha_1-\alpha_2$ must be small if $F(\alpha_1 {g}^{\circ}) - F(\alpha_2 {g}^{\circ})$ are small on each $W_{\delta}^{(k)}$.
\end{Theorem}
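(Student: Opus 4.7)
The plan is to reduce the nonlinear statement to a linear injectivity and stability result for the \emph{minimal surface transform}, and then to apply the analytic double fibration machinery of \cite{MST}. Set $v := \alpha_1 - \alpha_2$, which vanishes on $\widetilde M \setminus M$ and is small in $C^k$ by hypothesis. A first-order Taylor expansion of $F_{\Sigma}(\alpha g^\circ)(f)$ around $\alpha \equiv 1$ gives, for each base minimal surface $\Sigma$ of $g^\circ$ and each Dirichlet datum $f \in W_{\delta}$,
\[
F_{\Sigma}(\alpha_1 g^\circ)(f) - F_{\Sigma}(\alpha_2 g^\circ)(f) \;=\; L_{\Sigma,f}(v) + O(\norm{v}^2),
\]
where $L_{\Sigma,f}$ is the Fr\'echet derivative of $\alpha \mapsto F_{\Sigma}(\alpha g^\circ)(f)$ at $\alpha \equiv 1$. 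For $f=0$, the first variation of area in the normal direction vanishes by minimality of $\Sigma$, so only the pointwise conformal contribution to the induced area form survives, and one obtains the scalar transform
\[
\R v(\Sigma) \;=\; \tfrac{n}{2}\int_{\Sigma} v\, dV_{g^\circ}.
\]
Varying $f$ over $W_{\delta}^{(k)}$ about each base surface $\Sigma_k$ yields, via Proposition~\ref{prop: smooth dependence of solution}, an $n$-parameter family of nearby minimal surfaces of $g^\circ$, so that the data on $W_{\delta}^{(k)}$ supplies $\R v(\Sigma^{f}_{g^\circ})$ for all such $\Sigma$.

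Next, I would analyze $\R$ microlocally. The incidence relation $\{(f,k,x) : x \in \Sigma^{f}_{g^\circ,k}\}$ is an analytic submanifold. A direct computation of its twisted conormal bundle, combined with an analysis of the stability equation \eqref{eq: stability equation}, identifies the codirections produced by the parametrization at an interior point. Combining this with the ampleness condition and a compactness argument on the cosphere bundle $S^\ast U$ in a neighborhood $U$ of $M$, one extracts finitely many $\Sigma_1,\dots,\Sigma_K$ whose Dirichlet-perturbations jointly realize every codirection $(y_0,\xi_0) \in T^\ast U \setminus 0$. Because the minimal surfaces are of codimension one, this coverage yields the Bolker injectivity of the projection from the twisted conormal bundle to $T^\ast U$. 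Hence $\R$ fits the analytic FIO framework of \cite{MST}, its normal operator $\R^\ast \R$ is an elliptic analytic pseudodifferential operator on $U$, and the analytic wave front set $\WF_A(v)$ is recovered on $U$ from the knowledge of $\R v$ on the family.

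Since $v$ vanishes on the open set $U \setminus \ol M$ and $\WF_A(v) = \emptyset$ forces $v$ to be real analytic on the connected set $U$, the analytic unique continuation principle gives $v \equiv 0$, establishing uniqueness. For the H\"older stability claim each step admits a quantitative analogue: the microlocal inversion of \cite{MST} provides a conditional H\"older-type estimate for the normal operator under an a priori $C^k$ bound on $v$; the quantitative Holmgren / three-spheres inequality converts smallness on $U \setminus \ol M$ into a H\"older estimate throughout $M$; and the quadratic linearization error is absorbed into the linear term by the smallness of $\norm{v}$. Composing the three estimates yields the claimed conditional H\"older bound.

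The main obstacle I anticipate is the rigorous verification of the Bolker condition for the concrete family of graph-parametrized minimal surfaces: one must show that the graphs $\Sigma^{f}_{g^\circ}$ are rich enough that at every point of $U$ every codirection arises as the conormal of exactly one such surface locally, and that this correspondence is analytically smooth. This amounts to a careful study of the linear map sending Dirichlet data $f \in W_{\delta}$ to the conormal direction at an interior point, done via the analytic implicit function theorem applied to \eqref{eq: stability equation}. A secondary difficulty is interlocking the nonlinear implicit function theorem with the quantitative analytic microlocal estimates over the finite cover without degrading the final H\"older exponent.
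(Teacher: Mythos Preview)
Your overall architecture---linearize, identify the minimal surface transform, fit it into the analytic double fibration framework of \cite{MST}, recover $\WF_a$, and then push to the nonlinear statement---matches the paper's. But two steps, one you flag as an obstacle and one you do not, are handled quite differently in the paper, and your proposed route for the second would not give the stated H\"older stability.

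\textbf{Bolker condition.} The assertion that ``the minimal surfaces are of codimension one, [so] this coverage yields the Bolker injectivity'' is not a valid argument: codimension one plus ampleness gives surjectivity of $\pi_R$, not injectivity/immersion of $\pi_L$. The paper verifies Bolker by a Runge approximation argument for the stability operator $Q=\Delta+q$ on $\Sigma$: one finds finitely many solutions $v^{f_1},\dots,v^{f_N}$ with $N=2n+2$ (via a Whitney projection argument \`a la Greene--Wu) such that $(v^{f_1},\dots,v^{f_N})\colon\Sigma\to\mR^N$ is an embedding avoiding the origin. This both forces the finite-dimensional restriction of the Dirichlet data (needed to apply FIO machinery at all, since $W_\delta$ is infinite-dimensional) and simultaneously yields injectivity and immersion of $\pi_L$. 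Your ``$n$-parameter family'' is not enough; the paper needs $2n+2$.

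\textbf{Stability.} The route you sketch---quantitative Holmgren/three-spheres to propagate smallness from $U\setminus\ol M$ into $M$---would give at best logarithmic stability and is not what the paper does. Instead, the paper assembles the normal operators $\tilde\chi (D\mathbf F_j(1))^\ast\chi^2 D\mathbf F_j(1)\tilde\chi$ over the finite cover into a single elliptic $\Psi$DO $Q(1)^\ast Q(1)$ of order $-n$ on a closed extension $S\supset\widetilde M$, and then proves this operator is \emph{injective} on $\mathcal E'(S)$ using the analytic wave front set recovery of \cite{MST} together with the H\"ormander--Kashiwara exterior normal argument. Elliptic plus injective on a closed manifold gives a bounded inverse, hence a genuine Lipschitz estimate $\norm{\beta}_{L^2}\le C\norm{D\mathbf F(1)\beta}_{H^{n/2}}$ for the linearization. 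The H\"older exponent in the nonlinear estimate then comes not from unique continuation but from interpolation inside Stefanov's abstract inverse-function theorem \cite{stefanov2009linearizing}, which absorbs the quadratic remainder. For the two-factor version (Theorem~\ref{thm:stability}) the paper additionally shows that the Bolker condition and the normal-operator amplitude depend continuously on the conformal factor, so that $Q(\alpha)^\ast Q(\alpha)$ stays invertible for $\alpha$ close to $1$; your linearization at $\alpha\equiv 1$ alone does not address this.
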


In fact, we will only use measurements corresponding to certain $(2n+2)$-dimensional subspaces of the infinite dimensional parameter manifolds $W_{\delta}^{(k)}$. These spaces depend on the background metric $g^{\circ}$ and on $\Sigma_k$, but not on the unknown conformal factors.

\subsection*{The minimal surface transform}

The proof of Theorem \ref{thm:rigidity_intro} is based on studying the Fr\'echet derivative, or first linearization, of the nonlinear measurement operator $F$. We will show that this linearization is the \emph{minimal surface transform}, which is a Radon type transform that integrates a function over a suitable family of minimal surfaces. Under certain conditions we will prove that this transform is invertible together with stability estimates. A version of the inverse function theorem, see \cite{stefanov2009linearizing}, will then imply local uniqueness and stability in the nonlinear inverse problem as stated in Theorem \ref{thm:rigidity_intro}.

As above, we fix a background metric ${g}^{\circ}$ in $\widetilde{M}$ and an embedded minimal surface $\Sigma$ with $\p \Sigma \subset \widetilde M \setminus M$. If $\alpha$ is a smooth positive function close to $1$, we define the measurement operator for the conformal metric $\alpha {g}^{\circ}$ as in \eqref{intro_f_def} by 
\[
F(\alpha): W_{\delta} \to \mR
\]
such that $F(\alpha)$ takes a Dirichlet data $f$ to the area of the minimal surface $\Sigma_{\alpha {g}^{\circ}}^f$ with respect to metric $\alpha {g}^{\circ}$.

It follows from Theorem \ref{thm: smooth dependence} and Proposition \ref{prop:diffF} that $F$ is a $C^2$ map acting on functions close to $1$ in a suitable Sobolev norm, and its linearization satisfies 
\[
(DF(1)\beta)(f) = \frac{n}{2} \int_{\Sigma^f} \beta \,{\rm dVol}_{\Sigma^f}
\]
where $\beta$ is a function on $\widetilde{M}$ (the infinitesimal variation of $\alpha$), $\Sigma^f$ is the minimal surface corresponding to Dirichlet data $f$, and the volume form ${\rm dVol}_{\Sigma^f}$ is induced on $\Sigma^f$ by the background metric ${g}^{\circ}$. Thus the linearization of $F$ indeed corresponds to integrating the function $\beta$ over minimal surfaces $\Sigma^f$. If one considers general perturbations of the metric (not just conformal ones), then one obtains integrals of $2$-tensor fields as stated in Proposition \ref{prop:diffF}. This is completely analogous to the classical boundary rigidity problem, where the linearization of the boundary distance function results in the geodesic X-ray transform \cite[Chapter 11]{zbMATH07625517}.

In the setting of a fixed minimal surface $\Sigma$ as above, we introduce the notation
\begin{equation} \label{rsigma_def}
\mathcal{R}_{\Sigma} \beta: W_{\delta} \to \mR, \qquad (\mathcal{R}_{\Sigma} \beta)(f) = \int_{\Sigma^f} \beta \,{\rm dVol}_{\Sigma^f}.
\end{equation}
Furthermore, if $\mathcal{S}$ is the set of all compact smooth embedded minimal surfaces in $\widetilde{M}$ that have smooth boundary contained in $\widetilde{M} \setminus M$, we consider the \emph{minimal surface transform}  
\[
\mathcal{R} \beta: \mathcal{S} \to \mR, \qquad (\mathcal{R} \beta)(\Sigma) = \int_{\Sigma} \beta \,{\rm dVol}_{\Sigma}.
\]
The crux of the present paper is to study the properties of the minimal surface transform and to show its invertibility under suitable conditions.

The operator $\mathcal{R}_{\Sigma}$ maps smooth functions in $\widetilde{M}$ to functions on the infinite dimensional manifold $W_{\delta}$. We will show that formally $\mathcal{R}_{\Sigma}$ is a \emph{double fibration transform}, which is a natural class of Radon type transforms studied in integral geometry, see e.g.\ \cite{MR516965, zbMATH02001570, zbMATH05819274, MST}. Such transforms are Fourier integral operators acting between finite dimensional manifolds. Since the manifold $W_{\delta}$ parametrizing the minimal surfaces is infinite dimensional, we will need to introduce a way of restricting to certain finite dimensional submanifolds in order to apply this theory.

A key property for the inversion of double fibration transforms is the so called Bolker condition, which corresponds to (microlocal) ellipticity of the Fourier integral operator and recovery of wave front sets \cite{MR812288}. If the transform integrates over one-dimensional curves, the Bolker condition is typically only satisfied under a no conjugate points condition (see \cite{MST}). Surprisingly, for our minimal surface transform the relevant Bolker condition always holds, with no restrictions on the metric. Here the fact that our minimal surfaces are generated as solutions of an elliptic PDE creates more flexibility than for standard Radon transforms.

The Bolker condition is verified in Section \ref{sec_bolker} by constructing suitable variations of the original minimal surface by controlling the values of solutions of the stability equation. In fact, we need finitely many solutions $v_1, \ldots, v_N$ of the stability equation such that $(v_1, \ldots, v_N)$ is an embedding of the minimal surface $\Sigma$ to some $\mR^N$. We construct such an embedding for some large $N$ by a Runge approximation argument, and following \cite{greene1975whitney, greene1975embedding, zbMATH07504299} we apply Whitney projections to obtain an embedding with $N = 2n+2$. This means that for each base minimal surface, we only use an $(2n+2)$-dimensional subspace of the infinite dimensional data set $W_{\delta}$.

Given the Bolker condition, we can recover smooth singularities of a function from the knowledge of its minimal surface transform. Moreover, if all the underlying structures are real-analytic, one can apply the methods of \cite{MST} to recover information of the analytic wave front set $\operatorname{WF}_a(\beta)$ as follows. The analytic microlocal approach to Radon type transforms was introduced in \cite{zbMATH04052264}, and further developments may be found in \cite{zbMATH02199899, zbMATH05257675, zbMATH05638442, zbMATH06756133, MST, bAFIO}.

\begin{Theorem}\label{thm:integral_transform_WF}
	Assume that $(\widetilde M,{{g}^\circ})$ is analytic, and let $\Sigma$ be an embedded minimal surface with $\p \Sigma \subset \widetilde{M} \setminus M$. If $\beta \in C(\widetilde{M})$ vanishes outside $M$ and satisfies 
	\[
	 (\mathcal{R}_{\Sigma} \beta)(f) = 0
	\] 
    for all $f \in W_{\delta}$, then  
\[
\operatorname{WF}_a(\beta) \cap N^* \Sigma = \emptyset.
\] 
\end{Theorem}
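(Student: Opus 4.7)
The plan is to deduce Theorem~\ref{thm:integral_transform_WF} from the analytic microlocal inversion theory for Bolker-type double fibration transforms developed in \cite{MST}. The preliminary obstacle is that $\mathcal{R}_\Sigma$ as stated lives over the infinite dimensional parameter manifold $W_\delta$, so the first task is to restrict attention to a carefully chosen \emph{finite dimensional} slice of data over which $\mathcal{R}_\Sigma$ becomes a genuine analytic double fibration transform, with the Bolker condition verified at every covector in $N^\ast\Sigma$.

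Concretely, I would work in Fermi coordinates $(x,t)$ adapted to $\Sigma = \{t=0\}$ and linearize the solution map $f \mapsto u^f$ of \eqref{eq: MSE} at $f=0$; by the stability equation \eqref{eq: stability equation}, this linearization is the Dirichlet-to-solution operator for $(\Delta_{h_0}+q)v=0$ on $\Sigma$. Following the Runge approximation plus Whitney projection scheme flagged in the introduction \cite{greene1975whitney, greene1975embedding, zbMATH07504299}, I would produce real analytic solutions $v_1,\ldots,v_N$ of the stability equation with $N = 2n+2$ such that $V:=(v_1,\ldots,v_N) : \Sigma \to \mR^N$ is an analytic embedding. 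Writing $f_i := v_i|_{\p \Sigma}$, the $N$-dimensional slice
\[
V_\delta := \Bigl\{ \sum_{i=1}^N s_i f_i : s \in U \Bigr\} \subset W_\delta,
\]
with $U$ a sufficiently small neighborhood of $0\in\mR^N$, yields via Proposition~\ref{prop: smooth dependence of solution} an analytic $N$-parameter family of minimal surfaces $\Sigma^{(s)}$ whose infinitesimal normal variations at $s=0$ are spanned by the $v_i$'s.

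Next I would set up the analytic double fibration $U \xleftarrow{\pi_L} Z \xrightarrow{\pi_R} \widetilde{M}$, where $Z = \{(y,s) \in \widetilde{M}\times U : y \in \Sigma^{(s)}\}$, and recognise the restricted transform $\mathcal{R}_V\beta(s) := \int_{\Sigma^{(s)}} \beta \,\mathrm{dVol}_{\Sigma^{(s)}}$ as $\pi_{L*}\pi_R^\ast\beta$. All structures are analytic by construction. The Bolker condition at $(y_0,\xi_0)\in N^\ast\Sigma$ translates into the statement that the infinitesimal variations $v_1,\ldots,v_N$ separate points and conormal covectors along $\Sigma$, which is precisely the embedding property of $V$; this is the content of Section~\ref{sec_bolker}. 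Hence $\mathcal{R}_V$ is an elliptic analytic Fourier integral operator whose canonical relation is a local canonical graph over a conic neighborhood of $N^\ast\Sigma$.

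From the hypothesis $\mathcal{R}_\Sigma\beta \equiv 0$ on $W_\delta$ one has in particular $\mathcal{R}_V\beta \equiv 0$ on $U$. Since $\beta = 0$ in $\widetilde{M}\setminus M$ and $\p\Sigma^{(s)}\subset \widetilde{M}\setminus M$ for small $s$, there is no boundary contribution and the data is analytically trivial near $s=0$. Applying the analytic microlocal inversion theorem for Bolker double fibration transforms from \cite{MST} (in the spirit of \cite{zbMATH04052264}) to $\mathcal{R}_V$ then delivers $\operatorname{WF}_a(\beta) \cap N^\ast\Sigma = \emptyset$. The principal technical hurdle I expect is the verification of Bolker through the Runge/Whitney step --- showing that $N=2n+2$ analytic stability solutions genuinely suffice to microlocalize every conormal covector to $\Sigma$, uniformly, while preserving real analyticity under the Whitney projection; a secondary concern is to certify that the finite dimensional hypotheses of the main theorem of \cite{MST} are strictly met by this reduction of the intrinsically infinite dimensional $\mathcal{R}_\Sigma$.
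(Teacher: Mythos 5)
Your proposal follows essentially the same route as the paper: restrict $\mathcal{R}_\Sigma$ to a $(2n+2)$-dimensional slice of Dirichlet data built from a Runge/Whitney embedding of $\Sigma$ by analytic solutions of the stability equation (the paper's Corollary~\ref{cor:Greene_wu_cor}), verify that the resulting analytic double fibration transform satisfies the Bolker condition (Proposition~\ref{prop: bolker}/Corollary~\ref{cor: bolker}), and then apply the analytic microlocal inversion result \cite[Thm.~1.2]{MST} to erase $\operatorname{WF}_a(\beta)$ over $N^*\Sigma$. Two small points worth tightening: (1) since $\dim U = 2n+2 > n+1 = \dim\widetilde{M}$, the canonical relation is emphatically not a ``local canonical graph'' --- the Bolker condition only makes $\pi_L$ an injective immersion into a higher-dimensional cotangent bundle --- but this does not affect your argument because \cite{MST} requires only Bolker; and (2) the paper's proof explicitly replaces an arbitrary embedded $\Sigma$ by an admissible surface $\Sigma_0$ via Proposition~\ref{prop:makeadmissible} (so the stability Dirichlet problem is well-posed) agreeing with $\Sigma$ near $\operatorname{supp}\beta$, which is what justifies using the solution map $f\mapsto u^f$ and identifying $N^*\Sigma\cap\operatorname{WF}_a(\beta)$ with $N^*\Sigma_0\cap\operatorname{WF}_a(\beta)$; your proposal takes this for granted through the $W_\delta$ notation, and the embedding should also be arranged to avoid the origin (the reason for $N=2n+2$ rather than $2n+1$), which is what guarantees the nondegeneracy needed for $\pi_{\G_\delta(\BB)}$ to be a submersion.
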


The benefit of recovering analytic singularities is that we can combine it with microlocal analytic continuation (H\"ormander-Kashiwara theorem) to obtain uniqueness results. Thus if $(\widetilde M,{{g}^\circ})$ is analytic we obtain uniqueness and stability for the minimal surface transform under the ampleness condition in Definition \ref{def:ampleness}, but uniqueness also holds under the following foliation condition (see e.g.\ \cite{zbMATH06606440, zbMATH07155016, MST} for related notions). In this condition the leaves $\Omega_s$ of the foliation are not required to be minimal surfaces, but there should be a suitable minimal surface normal to each point of $\Omega_s$.

\begin{Definition}\label{def:fol_intro}
	We say that $(M,g)$, where $(M,g) \subset \subset (\widetilde{M},g)$, satisfies the \emph{foliation condition} if the following holds: there is an open $M^\circ \subset \widetilde{M}$ with $M^\circ\supset M$ and $\rho\in C^1(M^\circ;[0,\infty))$ with $m_\rho \coloneqq \max_{M} \rho$ and level sets $\Omega_s = \rho^{-1}(\{s\}) \subset M^\circ, s\in [0,m_\rho]$ so that 
	\begin{enumerate}
	\item $\Omega_0 \subset M^\circ \setminus M$ and $\ol{\bigcup_{s\in [0,m_\rho)} \Omega_s} \supset M$,
	\item For every $s \in [0,m_\rho)$ and every $x_0 \in \Omega_s \cap M$ there is a minimal surface $\Sigma$ for $(\widetilde{M},{g})$ %
so that $d\rho(x_0)\in N^\ast_{x_0} \Sigma\setminus 0$ and $\p \Sigma \subset \widetilde{M} \setminus M$. %
	\item For every $s_0\in [0,m_\rho)$ and every open neighborhood $U$ of $\Omega_{s_0}\cap M$ in $M^\circ$, there is $\delta>0$ so that $\Omega_{s}\cap M \subset U$ for all $s\in (s_0-\delta,s_0+\delta)\cap [0,m_\rho]$.
	\end{enumerate}
\end{Definition}

\begin{Theorem}\label{thm:integral_transform_uniqueness}
	Assume that $(\widetilde M,{g^\circ})$ is analytic, and suppose that either the ampleness condition in Definition \ref{def:ampleness} or the foliation condition in Definition \ref{def:fol_intro} holds for $M$. If $\beta \in C(\widetilde{M})$ vanishes outside $M$ and satisfies 
	\[
	 (\mathcal{R} \beta)(\Sigma) = 0
	\] 
    for any smooth embedded minimal surface $\Sigma$ with $\p \Sigma \subset \widetilde{M} \setminus M$, then  
\[
\beta = 0.
\]
Moreover, if the ampleness condition holds for $M$, there is a stability estimate where the $L^2(M)$ norm of $\beta$ is bounded by a finite sum of Sobolev norms of $\mathcal{R}_{\Sigma_j} \beta$ in suitable balls (see Theorem \ref{thm:weakfol} for the precise statement).
\end{Theorem}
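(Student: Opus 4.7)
My plan is to convert the hypothesis $(\mathcal{R}\beta)(\Sigma)=0$ into analytic wave front information on $\beta$ via Theorem~\ref{thm:integral_transform_WF}, and then propagate this to full vanishing of $\beta$ using the H\"ormander-Kashiwara theorem on microlocal analytic continuation. Both geometric conditions fit this template. The stability estimate refines the ampleness argument by a quantitative microlocal inversion based on the Bolker condition established in Section~\ref{sec_bolker}.

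\textbf{Ampleness case.} Let $U$ be the neighborhood of $M$ furnished by Definition~\ref{def:ampleness}, which after shrinking we may assume is open in $\widetilde M$ with each component meeting $M$ also meeting $\widetilde M\setminus M$. For an arbitrary $(y_0,\xi_0)\in T^*U\setminus 0$, ampleness supplies a minimal surface $\Sigma=\Sigma_{(y_0,\xi_0)}$ with $(y_0,\xi_0)\in N^*\Sigma$ and $\p\Sigma\subset \widetilde M\setminus M$. The hypothesis on $\mathcal{R}\beta$ covers the entire family $\{\Sigma^f: f\in W_\delta\}$ of Dirichlet perturbations of $\Sigma$, so $(\mathcal{R}_\Sigma\beta)(f)=0$ on $W_\delta$, and Theorem~\ref{thm:integral_transform_WF} gives $(y_0,\xi_0)\notin \operatorname{WF}_a(\beta)$. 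Varying $(y_0,\xi_0)$, we conclude $\beta$ is real-analytic on $U$. Since $\beta\equiv 0$ on the nonempty open set $U\setminus M$, analytic continuation forces $\beta\equiv 0$ on $U$ and hence on $M$.

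\textbf{Foliation case.} Define
\[
T = \sup\bigl\{s\in[0,m_\rho] : \beta\equiv 0 \text{ on } \textstyle\bigcup_{0\leq s'<s}(\Omega_{s'}\cap M)\bigr\},
\]
which is well-defined with $T\geq 0$ by condition (1), since $\Omega_0\subset \widetilde M\setminus M$ where $\beta$ vanishes. Suppose $T<m_\rho$ for a contradiction, and fix any $x_0\in\Omega_T\cap M$. Condition (2) supplies a minimal surface $\Sigma$ with $d\rho(x_0)\in N^*_{x_0}\Sigma\setminus 0$ and $\p\Sigma\subset\widetilde M\setminus M$, so Theorem~\ref{thm:integral_transform_WF} yields $(x_0,d\rho(x_0))\notin\operatorname{WF}_a(\beta)$. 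Since $d\rho(x_0)\neq 0$, condition (3) guarantees that near $x_0$ the set $\{\rho<T\}$ is contained in $\bigcup_{s'<T}\Omega_{s'}$, so $\beta$ vanishes on the one-sided neighborhood $\{\rho<T\}$ of $x_0$ in $M$. The H\"ormander-Kashiwara theorem then extends this vanishing across $\Omega_T$, giving $\beta\equiv 0$ in a full neighborhood of $x_0$. Compactness of $\Omega_T\cap M$ together with condition (3) produces a uniform $\delta>0$ such that $\beta\equiv 0$ on $\bigcup_{s'<T+\delta}(\Omega_{s'}\cap M)$, contradicting the maximality of $T$. Therefore $T=m_\rho$, and condition (1) with continuity of $\beta$ gives $\beta\equiv 0$ on $M$.

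\textbf{Stability and the main obstacle.} For the stability estimate of Theorem~\ref{thm:weakfol}, the Bolker condition from Section~\ref{sec_bolker} makes each $\mathcal{R}_{\Sigma_j}$ a Fourier integral operator whose associated analytic normal operator is elliptic on conormals to $\Sigma_j$. Ampleness and compactness of the cosphere bundle $S^*M$ permit a finite selection $\Sigma_1,\ldots,\Sigma_K$ whose conormal sets cover $T^*M\setminus 0$, so the analytic FIO parametrix machinery of \cite{MST} produces an estimate of the form
\[
\|\beta\|_{L^2(M)} \leq C\sum_{j=1}^K \|\mathcal{R}_{\Sigma_j}\beta\|_{H^s(W_\delta^{(j)})} + C\|K\beta\|_{L^2(M)}
\]
with $K$ smoothing (indeed analytic regularizing). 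A standard compactness-uniqueness argument using the uniqueness proven above then absorbs $K\beta$ to obtain the H\"older-type bound. I expect the chief technical obstacle to lie in the foliation step: the one-sided vanishing of $\beta$ is across the $C^1$ level set $\Omega_T$ (which need not itself be minimal), while Theorem~\ref{thm:integral_transform_WF} only controls the analytic wave front set along the smooth minimal surface $\Sigma$. These hypersurfaces are distinct, but condition (2) ensures they share the conormal direction $d\rho(x_0)$, which is precisely the matching required by H\"ormander-Kashiwara; verifying this geometric alignment uniformly as $x_0$ varies is the delicate point.
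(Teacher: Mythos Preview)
Your argument is correct and follows the same template as the paper (Theorem~\ref{thm:weakfol}): analytic wave front recovery via Theorem~\ref{thm:integral_transform_WF} combined with H\"ormander's microlocal analytic continuation, with a layer-stripping argument in the foliation case. The foliation step is essentially identical to the paper's; your ``chief technical obstacle'' is handled exactly as you anticipate, via \cite[Prop.~8.5.8, Thm.~8.5.6']{hoermander1}.

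Two minor differences are worth noting. For uniqueness under ampleness, you argue directly that $\operatorname{WF}_a(\beta)=\emptyset$ over $U$ and invoke analytic continuation from $U\setminus M$; the paper instead deduces uniqueness as a byproduct of the stability estimate (Proposition~\ref{prop:lowerbd}). Your route is slightly cleaner for pure uniqueness. For stability, the paper does not use a parametrix-plus-compactness-uniqueness scheme with a smoothing remainder $K$. Rather, it assembles the finitely many normal operators $\tilde\chi(D\mathbf{F}_j(1))^\ast\chi^2 D\mathbf{F}_j(1)\tilde\chi$ into a single \emph{globally elliptic} $\Psi$DO $Q(1)^\ast Q(1)$ on a closed manifold $S\supset\widetilde M$ (Proposition~\ref{prop:injellop}), proves its injectivity on $\mathcal E'(S)$ directly from the analytic wave front argument, and then uses Fredholm theory to obtain a bounded inverse and hence the estimate (Proposition~\ref{prop:lowerbd}) without any remainder to absorb. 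Your compactness-uniqueness outline would also work, but the paper's construction is more explicit and avoids the indirect step.
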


\subsection{Motivation}
The generalized boundary rigidity problem, formally posed in the mathematics literature in \cite{Tracey}, asks if the areas of minimal surfaces embedded
in a Riemannian manifold determine the Riemannian manifold. If
the minimal surfaces are 1-dimensional (i.e.\ geodesics), this reduces to the the classical boundary rigidity problem.  A key motivation for studying this
generalization arises from its deep connection to 
the AdS/CFT correspondence, a profound duality in theoretical physics proposed by Maldacena~\cite{maldacena1999large}. This conjecture establishes an equivalence between two distinct physical theories: conformal field theories (CFTs) and the geometry of Anti-de Sitter (AdS) spacetimes (the ``bulk''). %

A notable proposal within this correspondence, introduced by Ryu and Takayanagi~\cite{ryu2006holographic,ryu2006aspects}, connects the entanglement entropies of a CFT to the areas of minimal surfaces in an AdS spacetime.  In this framework, the subregion $A$ resides on the conformal boundary of the AdS spacetime,
 and determines a minimal surface $\Sigma$ in the bulk, anchored at infinity to the boundary $\partial A$ of $A$. The Ryu-Takayanagi conjecture posits that the entanglement entropy of $A$ in the CFT equals $(4G)^{-1} \text{Vol}(\Sigma)$, where $G$ is Newton's constant and $\text{Vol}(\Sigma)$ denotes the volume of $\Sigma$. Entanglement entropy measures the quantum correlations between $A$ and its complement. 
 
In the physics literature, the generalized boundary rigidity problem is called the \emph{bulk reconstruction problem}. It means constructing a mapping between the dual variables or degrees of freedom. Progress has been made in highly symmetric settings~\cite{bilson2008extracting,bilson2011extracting,fonda2015shape,jokela2021towards,jokela2019notes, Ji2025} with formal arguments for reconstruction outlined in~\cite{bao2019towards}. The work \cite{lin2015locality} computes the stress-energy tensor from the area data of minimal surfaces on a perturbative level (i.e.\ near the boundary) by inverting a Radon transform over hyperbolic spaces. We refer to \cite{jokela2025bulk, carstea2024calder} for further physics references on the matter.

Arguably, results in bulk reconstruction have been constrained by the limited mathematical tools traditionally used in physics. %
The work~\cite{jokela2025bulk}, building on~\cite{carstea2024calder}, marks a shift by leveraging modern methods from inverse problems (such as the higher order linearization method). The present study expands this direction, offering new state of the art tools for bulk reconstruction. 
We also study stability, which is important for bulk reconstruction and was previously considered for a special case in \cite{jokela2021towards}.

Another mathematical outcome of the current work is to provide further examples of double fibration transforms that can be analyzed by using the framework in \cite{MST}.

\subsection{Novelty of the results}

The previous works~\cite{Tracey, carstea2024calder, carstea2024inverse}  that study the inverse problem of recovering a Riemannian metric from the area data only consider $2$-dimensional minimal surfaces. The restriction to two dimensions arises because these works rely on linearization techniques that reduce the problem to the anisotropic Calder\'on problem for the stability operator, which is understood in two dimensions but remains widely open in dimensions $\geq 3$. Additionally, \cite{carstea2024calder, carstea2024inverse} require a solution to a density problem involving products of solutions to the stability equation, which is also unresolved in dimensions $\geq 3$. The known density results on Riemannian manifolds hold only for cylindrical (CTA) geometries under further assumptions~\cite{dos2009limiting}, even in the analytic category \cite{KLS22}. They are based on complex geometric optics solutions, which limits possible generalization of the method  \cite{LS12}. Other works, such as \cite{nurminen2023inverse, nurminen2024inverse} and \cite{marxkuo2024inverseproblemrenormalizedarea} (in the asymptotically hyperbolic case), consider the case of higher dimensional minimal surfaces, but they only recover the Taylor series of quantities related to the metric at a suitable boundary and thus do not really recover information in the interior. The work~\cite{jokela2025bulk} considers the problem in the special case of small symmetric perturbations of a pure AdS background, and argues a result similar to Theorem \ref{thm:rigidity_intro}.

The approach presented in this paper overcomes the aforementioned obstacles and restrictions. To our knowledge, this is the first work where minimal surfaces may be of any dimension $\geq 2$ when there are no additional symmetry assumptions, and one recovers information in the interior. (Note that Theorem~\ref{thm:rigidity_intro} requires analyticity of the background metric ${g}^\circ$, but not of the conformal factors.) 
Moreover, Theorem~\ref{thm:rigidity_intro} establishes H\"older stability for the recovery, marking a significant improvement over the (at best) logarithmic stability achievable by the methods of \cite{Tracey, carstea2024calder, carstea2024inverse}, which rely on solutions to Calderón type problems. A key reason for this stronger result lies in the nature of the data used: the prior methods rely on a reduction to the Dirichlet-to-Neumann map, for which the corresponding linearized problem is unstable, whereas our analysis uses a subset of the data for which the linearization is a generalized Radon transform type operator that happens to be stably invertible. 

Furthermore, we note that small perturbations of a fixed minimal surface are parametrized by an infinite dimensional Banach manifold, whereas the unknown metric depends on $n+1$ variables. This heuristic dimension count makes the generalized boundary rigidity problem extremely overdetermined (the unknown depends on finitely many variables, whereas the measurements depend on infinitely many). This strong overdetermination also suggests that one might be able to extract information from areas of minimal surfaces in different ways.

As discussed above in connection with the Bolker condition, we only use a finite dimensional subset of the data in our results. Under the ampleness condition, our data set can be parametrized by points $(y,\xi,z)$ where $(y,\xi) \in S^* M$ is a unit conormal of a minimal surface $\Sigma_{(y,\xi)}$ and $z \in \mR^{2n+2}$ parametrizes a subspace of Dirichlet data on $\p \Sigma_{(y,\xi)}$. Thus in Theorem \ref{thm:rigidity_intro} we determine an unknown depending on $n+1$ variables from a data set having  dimension $4n+3$. On the other hand, when proving injectivity of the minimal surface transform under the foliation condition in Theorem \ref{thm:integral_transform_uniqueness}, our data is parametrized by points $(y,z)$ where $y \in M$ (so there is a minimal surface through $y$ with normal $d\rho(y)$) and $z \in \mR^{2n+2}$. Thus in this case the data set is $(3n+3)$-dimensional.

\subsection{Organization}

This article is organized as follows. Section \ref{sec_introduction} is the introduction. In Section \ref{sec_linearization} we compute the linearization of the measurement operator and show that it is given by the minimal surface transform. Section \ref{sec_bolker} considers the minimal surface transform, which is shown to be a double fibration transform (at least after restricting to suitable finite dimensional submanifolds), and it is proved that the Bolker condition always holds. In Section \ref{sec_stability} we give the microlocal arguments proving injectivity of the minimal surface transform and prove the main theorems stated in the introduction.

Appendix \ref{subsec:implicitfunc} includes functional analysis results required for proving Theorem \ref{thm: smooth dependence}. Appendix \ref{sec_continuity_psdo} gives a boundedness result for pseudodifferential operators with finite regularity. Finally, Appendix \ref{sec:ev} includes the argument that one can always avoid the case where $0$ would be a Dirichlet eigenvalue of the stability equation by slightly modifying the boundary.

\subsection*{Acknowledgements}

T.L.\ and M.S.\ are partly supported by the Research Council of Finland (Centre of Excellence in Inverse Modelling and Imaging and FAME Flagship, grants 353091 and 359208). L.T. is partially supported by Australian Research Council Discovery Project DP190103451 and DP220101808.

\section{Area functional and its linearization} \label{sec_linearization}

Throughout this section let $g^\circ$ be a fixed smooth metric on $\widetilde{M}$. In order to generate new minimal surfaces near a given one, of central importance is an assumption on the operator in \eqref{eq: stability equation}. 
\begin{Definition}[Admissible minimal surface]\label{def:admis}
 	For a smooth Riemannian manifold $(N,g)$, we say that a minimal surface $\Sigma$ of $(N,g)$ with non-empty smooth boundary $\partial \Sigma\subset N$ is \emph{admissible} if it is an embedded smooth submanifold of $N$ and its stability operator does not have Dirichlet eigenvalue $0$. 
\end{Definition}
It will turn out that given a minimal surface $\Sigma$ with boundary $\partial \Sigma \subset \tilde M \setminus M$, by modifying the boundary $\partial \Sigma$ slightly, we can always guarantee that any individual smooth embedded minimal surfaces is made admissible, see Proposition~\ref{prop:makeadmissible}.

Assume now that $\Sigma^\circ$ a fixed admissible minimal surface for $(\widetilde{M},g^\circ)$. Let $g$ be any metric on $\widetilde{M}$, $\gamma \in (0,1)$ and $u\in C^{2,\gamma}(\Sigma^\circ)$. Introduce $g^\circ$ Fermi coordinates so that an open neighborhood of $\Sigma^\circ$ is
$$\{(x,t) \mid x\in \Sigma^\circ, t\in (-\epsilon, \epsilon)\}\subset \widetilde{M}.$$
The area (or volume) of the hypersurface $\Sigma_u = \{(x, u(x))\colon x \in \Sigma^\circ\}$ is given by
\begin{eqnarray}\label{eq: area formula}
A_{g}(u) = \int_{\Sigma_u} %
{\rm dVol}^{g}_{\Sigma_u}\,,
\end{eqnarray}
where ${\rm dVol}^{g}_{\Sigma_u}$ is the volume form induced on $\Sigma_u$ by the metric $g$.

For $\gamma \in (0,1)$ as above, define
\begin{equation}\label{eq: def G} G \coloneqq%
H^{3+(n+1)/2 + \gamma}\left(\widetilde{M}; \sigma\left(T^*\widetilde{M} \otimes T^*\widetilde{M}\right)\right) \subset C^{3,\gamma}\left(\widetilde{M}; \sigma\left(T^*\widetilde{M} \otimes T^*\widetilde{M}\right)\right)\,,
\end{equation}
where $\sigma$ is the symmetrization operator. Observe that when $G^\circ \subset G$ is a small enough open neighborhood of $g^\circ$, then each element of $G^\circ$ is a metric on $\widetilde{M}$.

We record a consequence of the implicit function theorem, the proof of which can be found in Appendix~\ref{subsec:implicitfunc}. For any $\delta>0$ we will use the notation 
\[
U_\delta \coloneqq \{f\in C^{2,\gamma}(\partial\Sigma^\circ)\colon \|f\|_{C^{2,\gamma}(\partial\Sigma^\circ)}\leq \delta\}.
\]

\begin{Theorem}\label{thm: smooth dependence}
There is an open neighborhood $G^\circ$ of $g^\circ$ in $G$ and a $\delta>0$ so that for all $(g,f) \in G^\circ\times U_\delta$ there is $u_{g}^f \in C^{2,\gamma}(\Sigma^\circ)$ with $u_{g}^f\vert_{\partial\Sigma^\circ}=f$ so that the graph defined using $g^\circ$-Fermi coordinates
\begin{eqnarray}\label{eq: def of Sigmafg}
\Sigma^\circ(f,g)\coloneqq \{(x,u_{g}^f(x))\mid x\in \Sigma^\circ\} \subset \widetilde{M}
\end{eqnarray}
is a minimal surface for the metric $g$. 

Furthermore, the map
\begin{equation}\label{eq: def of volume operator}
F\colon G^\circ \to C(U_{\delta}; \mR)\,, \quad F(g)(f) = A_{g}(u_{g}^f)
\end{equation}
is $C^{2}$-Fr\'echet differentiable from $G^\circ$ to $C(U_{\delta}; \mR)$. In particular, for each $g \in G^\circ$, there are operators $DF(g)$ and $R_{g}$ depending continuously on $g$ so that for all $H\in G$ small enough we have 
\begin{equation}\label{eq:tayloronF}
F(g +H) = F(g) + DF(g)H + R_{g}(H)\,,\quad\text{and}\quad \norm{R_{g}(H)}_{C(U_\delta; \mathbb R)} \leq C \norm{H}_{G}^2\,,
\end{equation}
where the constant $C>0$ is uniform over $G^\circ$. Finally, for every $g\in G^\circ, f\in U_\delta, H\in G$, 
	\begin{equation}\label{eq:dF}
		DF(g)H(f) = (D_{g}A_{g})(u_{g}^f)H\,.
	\end{equation}
\end{Theorem}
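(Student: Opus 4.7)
The plan is to obtain the family $u_g^f$ by applying the implicit function theorem to the minimal surface equation \eqref{eq: MSE}, and then to deduce smoothness of $F$ and the derivative formula \eqref{eq:dF} by the chain rule, using that a minimal surface is a critical point of area with fixed boundary.

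First I would set up the nonlinear map
\[
\Phi\colon G \times C^{2,\gamma}(\Sigma^\circ) \to C^{0,\gamma}(\Sigma^\circ) \times C^{2,\gamma}(\p\Sigma^\circ), \qquad \Phi(g,u) = \bigl(P_g(u),\, u|_{\p\Sigma^\circ}\bigr),
\]
where $P_g(u)$ is the left-hand side of \eqref{eq: MSE} written in $g^\circ$-Fermi coordinates (so $h_u(x)=g(x,u(x))$). The Sobolev embedding $G \hookrightarrow C^{3,\gamma}(\widetilde{M})$ built into the choice \eqref{eq: def G} gives enough regularity of the coefficients to make $\Phi$ at least $C^2$-Fr\'echet differentiable by Nemytskii-type arguments. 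Since $\Sigma^\circ$ is a minimal surface of $g^\circ$ we have $\Phi(g^\circ,0)=(0,0)$, and by \eqref{eq: stability equation} the partial derivative $D_u\Phi(g^\circ,0)$ is the Dirichlet problem $v \mapsto \bigl((\Delta_{h_0}+q)v,\,v|_{\p\Sigma^\circ}\bigr)$. Admissibility of $\Sigma^\circ$ says that $0$ is not a Dirichlet eigenvalue, so by elliptic Schauder theory this map is an isomorphism. The implicit function theorem (Appendix \ref{subsec:implicitfunc}) then yields $\delta>0$, a neighborhood $G^\circ$ of $g^\circ$, and a $C^2$ map $(g,f)\mapsto u_g^f \in C^{2,\gamma}(\Sigma^\circ)$ with $u_g^f|_{\p\Sigma^\circ}=f$ and $P_g(u_g^f)=0$, giving the surfaces $\Sigma^\circ(f,g)$ of \eqref{eq: def of Sigmafg}.

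Next I would obtain the $C^2$-smoothness of $F$ and the Taylor bound \eqref{eq:tayloronF}. The area $A_g(u)$ is a smooth function of the pointwise values of $g$, $\nabla g$, $u$, and $\nabla u$ integrated over $\Sigma^\circ$, so $(g,u)\mapsto A_g(u)$ is $C^\infty$-Fr\'echet differentiable from $G\times C^{2,\gamma}(\Sigma^\circ)$ to $\mR$. Composing with the $C^2$ solution map gives that for each $g\in G^\circ$ the assignment $f\mapsto F(g)(f)$ is continuous in $f\in U_\delta$, and that $g\mapsto F(g)$ is $C^2$-Fr\'echet as a map into $C(U_\delta;\mR)$; uniformity of the remainder in $f$ follows from boundedness of $U_\delta$ in $C^{2,\gamma}(\p\Sigma^\circ)$, which controls $u_g^f$ uniformly in $C^{2,\gamma}(\Sigma^\circ)$ and hence controls the second $g$-derivative of $A_g(u_g^f)$ uniformly in $f$. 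For \eqref{eq:dF} I would differentiate $A_{g+tH}(u_{g+tH}^f)$ at $t=0$ by the chain rule:
\[
DF(g)H(f) = (D_g A_g)(u_g^f)H + (D_u A_g)(u_g^f)\bigl[\tfrac{d}{dt}\big|_{t=0} u_{g+tH}^f\bigr].
\]
The variation $v := \tfrac{d}{dt}|_{t=0} u_{g+tH}^f$ vanishes on $\p\Sigma^\circ$ since $u_{g+tH}^f|_{\p\Sigma^\circ}=f$ independently of $t$, and since $u_g^f$ parametrizes a minimal surface of $g$, the first variation of area with fixed boundary vanishes at $u_g^f$, killing the second term and leaving \eqref{eq:dF}.

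The main obstacle I anticipate is not the IFT step itself but the $C^2$ Fr\'echet regularity of $\Phi$ with remainder estimates uniform over the parameter $f$, so that $F$ genuinely lands in $C(U_\delta;\mR)$ with an honest second-order Taylor expansion. This is precisely the reason for the high regularity choice $G = H^{3+(n+1)/2+\gamma}$ in \eqref{eq: def G}: two $g$-derivatives of the quasilinear coefficients in \eqref{eq: MSE} must remain H\"older, and the Sobolev embedding into $C^{3,\gamma}$ combined with boundedness of $U_\delta$ in $C^{2,\gamma}(\p\Sigma^\circ)$ is exactly what makes the Nemytskii nonlinearities $C^2$ with uniform quadratic remainders on $G^\circ$.
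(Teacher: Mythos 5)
Your argument follows the paper's proof (Appendix~\ref{subsec:implicitfunc}) essentially step by step: the implicit function theorem applied to the minimal surface equation with Dirichlet data around $(g^\circ,0)$, using admissibility and elliptic Schauder theory for the required isomorphism, produces the $C^2$ solution map $(g,f)\mapsto u_g^f$, and then the chain rule together with the observations that $D_g u_g^f$ has vanishing trace on $\p\Sigma^\circ$ and that minimality kills the first variation of area on such Dirichlet variations yields \eqref{eq:dF}. The one discrepancy is your claim that $(g,u)\mapsto A_g(u)$ is $C^\infty$-Fr\'echet, which overshoots: it is only $C^2$ (Lemma~\ref{lem:newdef}), because each $u$-derivative of the composition $g(x,u(x))$ costs a $t$-derivative of $g\in C^{3,\gamma}$, but $C^2$ is exactly what the Taylor estimate \eqref{eq:tayloronF} requires, so the argument is unaffected.
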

For appropriate $\Sigma,f,g$, the notation $\Sigma(f,g)\coloneqq \{(x,u_{g}^f(x))\mid x\in \Sigma\}$ used in Theorem~\ref{thm: smooth dependence} will be used throughout.%

In particular we will show
\begin{Proposition}\label{prop:diffF}
	There is an open neighborhood $G^\circ$ of $g^\circ$ in $G$ and $\delta>0$ small so that for any $g\in G^\circ$, any $f\in U_\delta$ and any $H\in G$ we have
	\begin{equation}\label{eq: differential of area functional}
		(DF(g)H)(f) =  \frac{1}{2} \int_{\Sigma^\circ(f,g)} \mathrm{tr}_{g^f_\Sigma}( \iota_{\Sigma^\circ(f,g)}^* H)
{\rm dVol}^{g}_{\Sigma^\circ(f,g)}\,,
	\end{equation}
	where $g^f_\Sigma$ is notation for the induced metric of $g$ on $\Sigma^\circ(f,g)$. If the deformation is within a conformal class, meaning that $H(x,t) = \beta(x,t) g(x,t)$ for some function $\beta\in C(\widetilde{M})$, this reduces to 
\begin{eqnarray}
\label{eq: minimal surface transform}
(DF(g)(\beta g))(f) = \frac{n}{2} \int_{\Sigma^\circ(f,g)} \beta(x,u^f_{g}(x)) {\rm dVol}^{g}_{\Sigma^\circ(f,g)}\,.
\end{eqnarray}
\end{Proposition}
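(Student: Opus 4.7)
The plan is to start from the identity \eqref{eq:dF} already provided in Theorem~\ref{thm: smooth dependence}, namely $DF(g)H(f) = (D_{g}A_{g})(u_{g}^f)\,H$, which reduces the problem to computing the derivative of the area functional $A_g$ with respect to $g$ alone, with $u=u_g^f$ held fixed. This identity is the one nontrivial ingredient: a naive chain rule differentiation of $F(g)(f) = A_g(u_g^f)$ would also produce a term of the form $(\partial_u A_g)(u_g^f)(D_g u_g^f)$, but this term vanishes because (i) the boundary datum $f$ is independent of $g$, so $(D_g u_g^f)\vert_{\partial\Sigma^\circ}=0$, and (ii) $u_g^f$ is a critical point of $A_g$ among graphs with fixed boundary values, which is exactly the content of the minimal surface equation \eqref{eq: MSE}. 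This cancellation is the only conceptually non-routine point; once \eqref{eq:dF} is granted, everything else is a short geometric computation.

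Given \eqref{eq:dF}, the remaining task is the standard first variation of a submanifold volume under a perturbation of the ambient metric. Parametrize the graph $\Sigma^\circ(f,g)$ by the embedding $\iota_u\colon \Sigma^\circ \to \widetilde{M}$, $\iota_u(x) = (x, u_g^f(x))$, expressed in the $g^\circ$-Fermi coordinates. Then the induced metric on $\Sigma^\circ(f,g)$ is $g_\Sigma^f = \iota_u^\ast g$, and
\[
A_g(u_g^f) \;=\; \int_{\Sigma^\circ} \sqrt{\det \iota_u^\ast g}\,dx.
\]
Varying $g \mapsto g + sH$ yields $\iota_u^\ast g \mapsto \iota_u^\ast g + s\,\iota_u^\ast H + O(s^2)$, and the identity $\frac{d}{ds}\sqrt{\det \gamma_s} = \frac{1}{2}\sqrt{\det \gamma}\,\mathrm{tr}_{\gamma}(\dot\gamma)$ with $\dot\gamma = \iota_u^\ast H$, applied under the integral sign, produces exactly \eqref{eq: differential of area functional}. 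Differentiation under the integral sign is legitimate because of the $C^{2,\gamma}$-dependence of $u_g^f$ on $(g,f)$ furnished by Theorem~\ref{thm: smooth dependence}.

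The conformal specialization $H = \beta g$ is then immediate algebra: $\iota_u^\ast(\beta g) = \beta\,\iota_u^\ast g = \beta\,g_\Sigma^f$, so
\[
\mathrm{tr}_{g_\Sigma^f}(\iota_u^\ast H) \;=\; \beta\,\mathrm{tr}_{g_\Sigma^f}(g_\Sigma^f) \;=\; n\beta
\]
because $\dim \Sigma^\circ = n$. Substituting into \eqref{eq: differential of area functional} introduces the factor $n/2$ and yields \eqref{eq: minimal surface transform}, where the integrand $\beta$ is naturally evaluated at the point $(x,u_g^f(x)) \in \Sigma^\circ(f,g)$ of the perturbed graph.

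The main (mild) obstacle is conceptual: pinpointing the cancellation that underlies \eqref{eq:dF} and explaining why the $u$-variation drops out precisely when $u_g^f$ satisfies the minimal surface equation with the prescribed boundary values. Since that identity is provided by Theorem~\ref{thm: smooth dependence}, the remaining steps require no PDE analysis: they reduce to the variation-of-determinant formula, the pullback expression for the induced metric, and the elementary observation that the trace of a conformal perturbation equals $n$ times the conformal factor.
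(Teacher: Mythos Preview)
Your proof is correct and follows essentially the same route as the paper: start from \eqref{eq:dF}, compute $D_g A_g$ at fixed $u$ via the Jacobi/determinant-variation identity for the induced volume form (which the paper isolates as Lemma~\ref{lem: differential of volume form}), and then specialize to the conformal case using $\mathrm{tr}_{g_\Sigma^f}(\beta g_\Sigma^f) = n\beta$. Your additional explanation of why the $u$-variation term in the chain rule vanishes is exactly the mechanism behind \eqref{eq:dF}, which the paper defers to the appendix.
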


Before moving on to the proof, we state
\begin{Lemma}
\label{lem: differential of volume form}
For $G^\circ \subset G$ a small neighborhood of $g^\circ$, let $\{g_s\mid s\in \mathbb R\} \subset G^\circ$ be a one-parameter family of metrics on $\widetilde{M}$ and let $\Sigma$ be an embedded smooth hypersurface. 
We have %
\begin{eqnarray}\nonumber
\partial_s {\rm dVol}^{g_s}_{\Sigma} \vert_{s= 0} %
=  \frac{1}{2} \mathrm{tr}_{\iota_{\Sigma}^*g_0}( \iota_{\Sigma}^*\dot {g}_0) {\rm dVol}^{g_0}_{\Sigma}\,,
\end{eqnarray}
\end{Lemma}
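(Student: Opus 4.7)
My plan is to reduce the statement to a local coordinate computation on $\Sigma$ and then apply Jacobi's formula for the derivative of a determinant. The key observation is that the quantity ${\rm dVol}^{g_s}_\Sigma$ depends on $g_s$ only through the pullback $\iota_\Sigma^\ast g_s$, so the problem is really about differentiating the volume form of a one-parameter family of Riemannian metrics on the fixed manifold $\Sigma$ at the parameter value where the metric is $\iota_\Sigma^\ast g_0$.

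Concretely, I would fix a chart $(x^1,\dots,x^n)$ on an open set $V \subset \Sigma$ and write the pulled-back metric as the matrix-valued function $G_s(x) \coloneqq (\iota_\Sigma^\ast g_s)_{ab}(x)$. Then
\begin{equation*}
{\rm dVol}^{g_s}_{\Sigma}\big|_V = \sqrt{\det G_s(x)} \, dx^1 \wedge \cdots \wedge dx^n.
\end{equation*}
Since $g_s$ depends smoothly (in the sense provided by the topology on $G^\circ$, which embeds into $C^{3,\gamma}$) on $s$ near $s=0$, the matrix $G_s(x)$ is differentiable in $s$ at $s=0$ with $\partial_s G_s\big|_{s=0} = \iota_\Sigma^\ast \dot g_0$. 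Applying Jacobi's formula,
\begin{equation*}
\partial_s \det G_s\big|_{s=0} = \det G_0 \cdot \mathrm{tr}\bigl(G_0^{-1}\, \partial_s G_s\big|_{s=0}\bigr) = \det G_0 \cdot \mathrm{tr}_{\iota_\Sigma^\ast g_0}(\iota_\Sigma^\ast \dot g_0),
\end{equation*}
where in the last equality I use that the invariant trace $\mathrm{tr}_h(k)$ of a symmetric $2$-tensor $k$ with respect to a metric $h$ coincides with $h^{ab}k_{ab}$, i.e.\ with the matrix trace of $h^{-1}k$ in any coordinate system. Combining with the chain rule $\partial_s \sqrt{\det G_s} = \frac{1}{2\sqrt{\det G_s}} \partial_s \det G_s$ gives
\begin{equation*}
\partial_s {\rm dVol}^{g_s}_{\Sigma}\big|_{s=0}\big|_V = \tfrac{1}{2} \mathrm{tr}_{\iota_\Sigma^\ast g_0}(\iota_\Sigma^\ast \dot g_0) \sqrt{\det G_0} \, dx^1 \wedge \cdots \wedge dx^n = \tfrac{1}{2}\mathrm{tr}_{\iota_\Sigma^\ast g_0}(\iota_\Sigma^\ast \dot g_0) \, {\rm dVol}^{g_0}_{\Sigma}\big|_V.
\end{equation*}

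Since both sides of the asserted identity are intrinsically defined $n$-forms on $\Sigma$, the local identity on each chart patches together to the global identity, which is what we needed. The routine steps are the chart computation and Jacobi's formula; the only item requiring a brief justification is that the regularity hypothesis on $G^\circ \subset H^{3+(n+1)/2+\gamma} \hookrightarrow C^{3,\gamma}$ is enough to differentiate $G_s(x)$ pointwise in $s$ at $s=0$ and commute the derivative with $\sqrt{\det(\,\cdot\,)}$, but this follows immediately from the fact that the one-parameter family $g_s$ is assumed to be a smooth curve in $G^\circ$. There is no serious obstacle; the lemma is essentially the standard first variation of the volume form.
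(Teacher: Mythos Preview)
Your proof is correct and follows essentially the same approach as the paper: both reduce to a local coordinate expression for ${\rm dVol}^{g_s}_\Sigma = \sqrt{\det G_s}\,dx$ and then apply Jacobi's formula to differentiate the determinant. The only cosmetic difference is that the paper sets up Fermi coordinates for $g_0$ around $\Sigma$ before restricting to $\{t=0\}$, whereas you work directly with an arbitrary chart on $\Sigma$ and the pullback $\iota_\Sigma^\ast g_s$; your version is slightly more streamlined since the ambient $t$-direction plays no role in the computation.
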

\begin{proof}
Suppose $(x,t)\in \Sigma\times (-\epsilon, \epsilon), \epsilon>0$ is a Fermi coordinate system for the metric $g_0 = g_s\vert_{s=0}$. %
Suppose that in this coordinate system, the metric $g_s$ has the expression
$$g_s = g_{jk;s}(x,t) dx^k dx^j + g_{tt;s}(x,t) dt^2 + \omega_s(x,t) \otimes dt  + dt\otimes \omega_s(x,t)\,,$$
where $\omega_s(\cdot, t)$ is a smooth family of one-forms on $\Sigma$ parametrized by $t\in (-\epsilon,\epsilon)$ and $g_{t,t;s}(x,t) > 0$.
Notice that the volume form ${\rm dVol}^{g_s}_{\Sigma} :={\rm dVol}^{\iota_{\Sigma}^*g_s}_{\Sigma}$ has the local coordinate expression
\begin{equation}
\label{eq: coordinate expression dvol3}
{\rm dVol}^{g_s}_{\Sigma} =|{\rm det}\left(g_{jk;s}(x,0)\right)|^{1/2}\,.
\end{equation}

By the choice of $g_0$-Fermi coordinate system, we know that 
$$g_0  = g_{jk;0}(x,t) dx^jdx^k + dt^2\,.$$
Combine \eqref{eq: coordinate expression dvol3} with Jacobi's formula to get
$$\partial_s{\rm dVol}^{g_s}_{\Sigma} \mid_{s=0}=\partial_s|{\rm det}\left(g_{jk;s}(x,0)\right)|^{1/2}\mid_{s=0} =\frac{1}{2} |{\rm det}\left(g_{jk;0}(x,0)\right)|^{1/2}{\rm tr} \left(\left(g_0(x,0)\right)^{-1} \dot g_{0}(x,0)\right) \,.$$
The right side is precisely $\frac{1}{2} \mathrm{tr}_{\iota_{\Sigma}^*g_0}(\iota_{\Sigma}^*\dot {g}_0) {\rm dVol}^{g_0}_{\Sigma}$ in coordinates.
\end{proof}

We are now in position to compute the linearization $DF$ of $F$. 
\begin{proof}[Proof of  Proposition~\ref{prop:diffF}]
Let $f\in U_\delta$, $g\in G^\circ$ with $G^\circ$ a small enough neighborhood of $g^\circ$ in $G$ and $\delta>0$ small enough. Let $g_s$ for $s \in\mathbb R$ be a smooth curve in $G^\circ$ with $g_ 0 = g$, and $H= \dot{g}_0 = \partial_s g_s\vert_{s=0}\in G$. Using \eqref{eq:dF}, we have
\begin{eqnarray}\nonumber
(DF(g)H)(f) = (D_{g}A_{g})(u_{g}^f)H  = \p_s \int_{\Sigma^\circ(f,g)} %
{\rm dVol}^{g_s}_{\Sigma^\circ(f,g)} \Big|_{s=0}\,.
\end{eqnarray}
Applying Lemma~\ref{lem: differential of volume form} we obtain \eqref{eq: differential of area functional}. When $H$ is conformal to $g$, this formula reduces to \eqref{eq: minimal surface transform}, which completes the proof of Proposition~\ref{prop:diffF}.
\end{proof}

\section{Canonical relation and Bolker condition} \label{sec_bolker}

In this section we will show that the minimal surface transform $\mathcal{R}_{\Sigma}$, restricted to a suitable finite dimensional space of Dirichlet data on $\p \Sigma$, is a double fibration transform. We also use the Runge approximation property for the stability equation to show that the Bolker condition is satisfied at every point of the canonical relation. These results are true in the $C^{\infty}$ category, which is the setting of Section \ref{subsect: heuristic}. If all underlying structures are analytic, the results are also true in the analytic category. We will work in the analytic setting from Section \ref{subsec_finite_dim_ms} on.

\subsection{Heuristic calculation}\label{subsect: heuristic}
As before, let $g^\circ$ be a fixed smooth metric on $\widetilde{M}$ and $\Sigma^\circ$ a fixed admissible minimal surface for $(\widetilde{M},g^\circ)$, from which we obtain a Fermi coordinate system for $g^\circ$ around $\Sigma^\circ$. We consider a subset of nearby minimal surfaces which can be represented as graphs $\Sigma^\circ(f,g^\circ)$ where $f\in U_\delta$, see Theorem~\ref{thm: smooth dependence}. %
In fact, throughout this subsection we replace $U_\delta$ by $W_\delta \coloneqq U_{\delta}\cap C^\infty$.

According to Theorem~\ref{thm: smooth dependence}, we have a well-defined transform on $2$-tensor fields, 
\[
	C^\infty(\widetilde{M}; \sigma(T^*\widetilde{M} \times T^*\widetilde{M}))\ni H\mapsto DF(g^\circ) H \in C(W_\delta;\mathbb R)
\]
defined via \eqref{eq: differential of area functional}. To simplify the analysis we will consider conformal deformations of metrics, in which case our transform will act on scalar functions as in \cite{MR812288,MST} (see also \cite[\S~8]{zbMATH03698077} and \cite{MR516965}). The relevant transform is the map $\beta \mapsto DF(g^\circ)\beta g^\circ$ given in \eqref{eq: minimal surface transform}, which can be rewritten as in \eqref{rsigma_def} (omitting the constant factor $\frac{n}{2}$) as the minimal surface transform  
\begin{equation}\label{eq:astrafo}
	\mathcal{R}_{\Sigma^{\circ}}: C_c^\infty(\widetilde{M}) \to C(W_{\delta}; \mR), \ \ (\mathcal{R}_{\Sigma^{\circ}} \beta)(f) = \int_{\Sigma^\circ(f,g^\circ)} \beta \,\mathrm{dVol}^{g^\circ}_{\Sigma^\circ(f,g^\circ)}.
\end{equation}
A further technical issue persists: for each $\beta\in C_c^\infty(\widetilde{M})$, $\mathcal{R}_{\Sigma^{\circ}} \beta$ takes its input from an infinite dimensional parameter space, $W_\delta$, rather than, say, some finite dimensional manifold. While this ``overdetermination'' suggests that the analysis of the transform $\mathcal{R}_{\Sigma^{\circ}}$ might become easier, infinite dimensionality prohibits us from directly applying the Fourier integral operator (FIO) machinery of \cite{MR812288,MST}.

Nevertheless, for the purpose of motivation, let us consider the (infinite dimensional) double fibration:
\begin{equation} \label{df_diagrams_heu}
\begin{tabular}{cc}
\begin{tikzcd}
& Z \arrow[dl,swap,"\pi_{W_\delta}"] \arrow[dr,"\pi_{\widetilde{M}}"] \\
W_\delta && \widetilde{M}
\end{tikzcd} \qquad & \qquad
\begin{tikzcd}
& N^*Z\setminus 0 \arrow[dl,swap,"\pi_L"] \arrow[dr,"\pi_R"] \\
T^* W_\delta \setminus 0 && T^* \widetilde{M} \setminus 0
\end{tikzcd}
\end{tabular}
\end{equation}
where the submanifold $Z\subset W_\delta \times \widetilde{M}$ is defined by
\begin{eqnarray}\label{eq: manifold Z}
Z \coloneqq \left\{(f; x,t)\in W_\delta \times \widetilde{M}\mid (x,t) \in \Sigma^\circ(f,g^\circ) \subset \widetilde{M}\right\}\,.
\end{eqnarray}
We will adopt notation from Appendix~\ref{subsec:implicitfunc} that characterizes solutions $u$ of the minimal surface equation \eqref{eq: MSE} for $(\widetilde{M},g^\circ)$ as solutions $u$ of $L_{g^\circ}(u)=0$, where $L_{g^\circ}$ is the elliptic second-order differential operator from Lemma~\ref{lem:linofmin}. For the minimal surface $\Sigma^\circ$, $u=0$ satisfies $L_{g^\circ}(0)=0$, and the stability operator of $\Sigma^\circ$ is given by $D_uL_{g^\circ}(0)$, so that solutions of \eqref{eq: stability equation} are precisely those functions $v$ satisfying $D_uL_{g^\circ}(0)v=0$. For brevity, we will denote the stability operator at $u=0$ by 
\[
Q := D_uL_{g^\circ}(0) = \Delta_{\iota^\ast_{\Sigma^\circ}g^\circ}+q.
\]

In \eqref{eq: manifold Z}, we may thus reformulate 
\begin{eqnarray}\label{eq: min surf as graph}
\Sigma^\circ(f,g^\circ) = \{(x,u^{f}_{g^\circ}(x))\colon x\in \Sigma^\circ,\ (L_{g^\circ}(u^{f}_{g^\circ}),u^{f}_{g^\circ}\vert_{\p\Sigma^\circ})=(0,f)\}\,.
\end{eqnarray}
Formally, the tangent and cotangent bundle of $W_\delta$ are identified with $C^\infty(\partial \Sigma^\circ)$ and $\mathcal{D}'(\partial \Sigma^\circ)$ respectively. %
We wish to compute the normal bundle of $Z$ near the point $(0; x,0)$ for arbitrary $x\in \Sigma^\circ$. A direct computation and an application of Lemma~\ref{lem:linofmin} show that at $(0;x, 0)\in Z$, by looking at tangent vectors of $Z$ given by curves $(f_s;x_s,t_s)$ with $t_s = u_{g^{\circ}}^{f_s}(x_s)$ where $L_{g^{\circ}}(u^f_{g^{\circ}}) = 0$, we have 
\begin{equation}\label{eq: tangent space of Z}
T_{(0; x,0)}Z = \{(\dot f_0; \dot x_0  + v(x) \partial_t) \colon \dot{f}_0 \in C^\infty(\partial\Sigma^\circ), \ \dot x_0 \in T_x\Sigma^\circ,\ (Qv,v\vert_{\p\Sigma^\circ})=(0,\dot f_0)\}\,.%
\end{equation}
By our admissibility assumption on $\Sigma^\circ$ that translates to the well-posedness of \eqref{eq: stability equation}, which is equivalent to the bijectivity of $(Q\cdot,\cdot\vert_{\p\Sigma^\circ})$ over certain function spaces (see Lemma~\ref{lem:linofmin}), the function $v$ in \eqref{eq: tangent space of Z} is uniquely determined. 

More generally, according to Lemma~\ref{lem:linofmin}, for every $f\in W_\delta$ sufficiently small, there is a linear elliptic second order differential operator $E(u_{g^\circ}^f) \colon C^{2,\gamma}(\Sigma^\circ)\to C^{0,\gamma}(\Sigma^\circ)$
so that the map 
\[
C^{2,\gamma}(\Sigma^\circ) \ni v\mapsto (E(u_{g^\circ}^f)v,v\vert_{\partial\Sigma^\circ})\in C^{0,\gamma}(\Sigma^\circ)\times C^{2,\gamma}(\partial\Sigma^\circ)
\]
is bijective. (The operator $E(u^f_{g^\circ})$ is merely $D_u L_{g^\circ}(u^f_{g^\circ})$.) Therefore, we may introduce $P(\cdot,\cdot; u^f_{g^\circ}) \in \mathcal D'(\partial \Sigma^\circ, \Sigma^\circ)$ defined as the Poisson kernel of the Dirichlet problem for $(E(u^f_{g^\circ})\cdot,\cdot\vert_{\p\Sigma^\circ})$. %
More precisely, $P(\cdot,\cdot;u^{f}_{g^\circ})$ is uniquely determined by the property that for every $h\in C^{2,\gamma}(\partial\Sigma^\circ)$, $ v(x)\coloneqq\int_{\partial\Sigma^\circ} P(y,x;u^f_{g^\circ})h(y)\dd y$ satisfies
\[
	E(u^f_{g^\circ})v=0\quad\text{and}\quad v\vert_{\partial\Sigma^\circ}= h\,.
\]
These observations allow us to calculate that the tangent bundle of $Z$ over $(f; x, u_{g^\circ}^f(x))\in W_\delta\times \widetilde{M}$ is given by
\begin{eqnarray}\label{eq: tangent space of Z'}
T_{(f; x,u_{g^\circ}^f(x))}Z &= &\left\{(\dot f_0; \dot x +( du^f_{g^\circ}(x) \cdot \dot x  + v(x) )\partial_t) \in C^\infty(\partial\Sigma^\circ)\times T_{(x,0)}\widetilde{M}\colon\right.\\\nonumber && \ \ \left.\dot x \in T_x\Sigma^\circ, \ E(u^f_{g^\circ})v = 0,\ v\vert_{\partial\Sigma^\circ} = \dot f_0\right\}.
\end{eqnarray} 
Therefore, the conormal bundle of $Z$ at $(f;x,u^f_{g^\circ}(x)) \in Z$ can be expressed as 
\begin{multline}\label{eq: conormal of Z}
N^*_{(f;x,u^f_{g^\circ}(x))} Z=\Big\{ \ (b; \xi) \in \mathcal{D}'(\partial \Sigma^\circ) \times T^*_{(x,0)}\widetilde{M}\colon \xi =\lambda(-du^f_{g^\circ}(x)+ dt), \\
 b(\cdot) = -\lambda P(\cdot,x;u^f_{g^\circ}), \ \lambda \in \mR \setminus \{0\} \Big\}\,.
\end{multline}
Note that by Lemma~\ref{lem:linofmin}, at $(0; x, 0)\in Z$, $P(\cdot, x; 0) \eqqcolon P_q(\cdot, x)$ is precisely the Poisson kernel of the Schr\"odinger operator $Q = D_uL_{g^\circ}(0) = \Delta_{\iota^\ast_{\Sigma^\circ}g^\circ}+q$ from \eqref{eq: stability equation}, which is the stability operator of $\Sigma^\circ$.

Before we move on to consider the Bolker condition, we will state for the convenience of the reader a consequence of Runge approximation that follows from \cite[Appendix A]{LLS}. 

\begin{Lemma}\label{lem:runge}
    Let $\Sigma^\circ$ be an admissible minimal surface for $(\widetilde{M},g^\circ)$. %
    For every $f\in C^{\infty}(\partial\Sigma^\circ)$ let $v^f \in C^{\infty}(\Sigma^\circ)$ be the unique solution of $Q v^f = 0$ in $\Sigma^\circ$ with $v^f\vert_{\partial\Sigma^\circ} = f$. Then: %
    \begin{enumerate}
        \item For any $x,y\in \Sigma^\circ$, $x\neq y$, there is $f\in C^{\infty}(\partial\Sigma^\circ)$ so that $v^f(x)\neq v^f(y)$.
        \item For any $x\in (\Sigma^\circ)^{\mathrm{int}}$, $a \in \mR$ and $\xi \in T_x^\ast \Sigma^\circ$ there is $f\in C^{\infty}(\partial\Sigma^\circ)$ so that $v^f(x) = a$ and $d v^f(x)=\xi$.
    \end{enumerate}
    If all underlying structures are analytic, we obtain the same statement with $f\in C^\omega(\partial\Sigma^\circ)$ and $v^f \in C^\omega(\Sigma^\circ)$.
\end{Lemma}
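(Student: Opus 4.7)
The plan is to reduce both claims to the Runge approximation property for the stability operator $Q=\Delta_{\iota^\ast_{\Sigma^\circ}g^\circ}+q$ supplied by \cite[Appendix A]{LLS}. Since $\Sigma^\circ$ is admissible, for any compact $K\subset\Sigma^\circ$ with $\Sigma^\circ\setminus K$ connected, every solution of $Qu=0$ on a neighborhood of $K$ can be approximated in $C^1(K)$ by restrictions of global solutions of the form $v^f$ with $f\in C^\infty(\partial\Sigma^\circ)$. The hypothesis $n\geq 2$ is precisely what will let us arrange the required topological condition on $\Sigma^\circ\setminus K$ for the sets $K$ that appear below.

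For Part (2), I would consider the linear map
\[
T\colon C^\infty(\partial\Sigma^\circ)\to \mR\times T_x^\ast\Sigma^\circ, \qquad T(f)=(v^f(x),\,dv^f(x)).
\]
Its image is a linear subspace of a finite-dimensional space and hence closed, so it suffices to prove density. Given a target $(a,\xi)$, one constructs a local solution of $Qu=0$ on a small ball around $x$ whose $1$-jet at $x$ is arbitrarily close to $(a,\xi)$ by the standard perturbative trick: start from the affine function $w(z)=a+\xi\cdot(z-x)$ in a coordinate chart and correct by the Dirichlet solution $\tilde w_\eta$ of $Q\tilde w_\eta=Qw$ on the ball $B_\eta$, which a rescaling $z\mapsto\eta z$ together with standard elliptic estimates shows to be $O(\eta^2)$ in $C^1$. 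Applying the Runge property to $u_\eta\coloneqq w-\tilde w_\eta$ on $\overline{B_{\eta/2}}$ (whose complement in $\Sigma^\circ$ is connected because $n\geq 2$) produces $v^{f_\eta}$ whose $1$-jet at $x$ lies arbitrarily close to $(a,\xi)$, and closedness of the image upgrades this to an exact preimage.

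For Part (1), if both $x,y\in\partial\Sigma^\circ$ one picks $f\in C^\infty(\partial\Sigma^\circ)$ with $f(x)\neq f(y)$ and is done since $v^f|_{\partial\Sigma^\circ}=f$. Otherwise, without loss of generality $x\in(\Sigma^\circ)^{\mathrm{int}}$. Choose disjoint neighborhoods $U_x,U_y$ of $x,y$ (a half-ball at $y$ if $y\in\partial\Sigma^\circ$) small enough that $\overline{U_x}\cup\overline{U_y}$ has connected complement in $\Sigma^\circ$, which is again possible because $n\geq 2$. Define a local solution on the disconnected open set $U_x\sqcup U_y$ by setting $u\equiv 0$ on $U_y$ and $u\coloneqq u_\eta$ from the previous paragraph (with $(a,\xi)=(1,0)$) on $U_x$. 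Runge approximation on $\overline{U_x}\cup\overline{U_y}$ then yields $f\in C^\infty(\partial\Sigma^\circ)$ with $v^f(x)\approx 1$ and $v^f(y)\approx 0$, hence distinct.

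The analytic statement follows by one of two equivalent routes: either by running the same proof with the analytic version of Runge approximation in \cite{LLS} so that the approximating $f$ is taken in $C^\omega(\partial\Sigma^\circ)$, or by observing that $C^\omega(\partial\Sigma^\circ)$ is dense in $C^\infty(\partial\Sigma^\circ)$ and that the maps appearing in Parts (1) and (2) are continuous linear maps into finite-dimensional spaces, so their images remain onto (resp.\ nontrivial) after restriction to analytic boundary data; analytic elliptic regularity then gives $v^f\in C^\omega(\Sigma^\circ)$. The main technical point driving both parts is the local correction estimate guaranteeing that the affine ansatz can be promoted to a genuine solution with $1$-jet close to the prescribed one; once that routine elliptic estimate is in hand, the Runge machinery of \cite{LLS} closes the argument without further hypotheses beyond admissibility.
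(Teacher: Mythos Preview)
Your approach is correct and closely parallels the paper's, though with a different emphasis. Both arguments rest on the Runge/density machinery from \cite[Appendix A]{LLS}; the paper proceeds by extending $\Sigma^\circ$ to a slightly larger manifold, placing sources outside $\Sigma^\circ$, and directly invoking \cite[Proposition A.7]{LLS}, which already packages the point-separation and jet-prescription statements. You instead work intrinsically on $\Sigma^\circ$, building local solutions with prescribed $1$-jet via the affine-plus-correction construction and then using Runge density together with the observation that a linear map into a finite-dimensional space has closed range. Your route is more self-contained and makes the mechanism explicit; the paper's route is shorter because it outsources both (1) and (2) to the cited result and handles boundary points uniformly through the extension.

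One small imprecision: in part (1), when $y\in\partial\Sigma^\circ$ and you take a half-ball $U_y$, the compact set $\overline{U_x}\cup\overline{U_y}$ meets $\partial\Sigma^\circ$, whereas the standard Runge statement is formulated for compacta in the interior. The paper's extension trick is exactly what makes this case clean (after extending, $\partial\Sigma^\circ$ lies in the interior of the larger manifold). You can also avoid the issue without extending: once part (2) is in hand, first use Runge only on a small interior ball around $x$ to produce $f_0$ with $v^{f_0}(x)$ close to $1$, then modify $f_0$ by a small bump near $y$ to force $f(y)=0$; continuous dependence of $v^f$ on $f$ keeps $v^f(x)$ close to $1$, giving $v^f(x)\neq v^f(y)=0$. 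Your density argument for the analytic addendum is correct and matches the paper's use of analytic elliptic regularity.
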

\begin{proof}
The admissibility assumption for $\Sigma^\circ$ ensures that $0$ is not a Dirichlet eigenvalue of the second order elliptic operator $Q$ in $\Sigma^\circ$. This shows that for any $f$ there is a unique solution $v^f$. Upon extending $\Sigma^{\circ}$ smoothly and placing sources outside $\Sigma^{\circ}$, the statements (1) and (2) follow from \cite[Proposition A.7 and its proof]{LLS}. If the underlying structures are analytic and the sources are outside $\Sigma^{\circ}$, the analyticity of $f$ and $v^f$ follows from elliptic regularity since $Q$ has analytic coefficients.
\end{proof}

We will later use a Whitney projection argument to obtain an analogue of Lemma \ref{lem:runge} where $f$ lies in an $(2n+1)$-dimensional space.

Now let us return to considering $N^\ast Z$, whose points will be denoted by $(f, b; x,  u^f_{g^{\circ}}(x), \xi)$ where $(f; x, u^f_{g^{\circ}}(x)) \in Z$ and $(b,\xi) \in N^*_{(f;x,u^f_{g^\circ}(x))} Z$. Recall that the Bolker condition for a double fibration as in \eqref{df_diagrams_heu} is that $\pi_L$ is an injective immersion. Thus let us explore whether the left projection $\pi_L : N^*Z \to T^* W_\delta$ is injective at $(0, P_q(\cdot, x_0); x_0, 0,  dt) \in N^*Z$; that is, whether there exists a $y_0\in \Sigma^\circ$ with $P_q(\cdot, y_0) = P_q(\cdot, x_0)$ and $y_0\neq x_0$. The existence of such $y_0$ is excluded by Lemma~\ref{lem:runge}. Therefore we have that for all $(0, P_q(\cdot, x_0); x_0, 0, dt) \in N^*Z$, 
\begin{eqnarray}\label{eq: formally injective}
\pi_{L}^{-1}(0, P_q(\cdot, x_0)) =\{ (0, P_q(\cdot, x_0); x_0, 0, dt)\}.
\end{eqnarray}

We now explore whether the left projection $\pi_L : N^*Z \to T^* W_\delta$ is an immersion at a given point $(0, P_q(\cdot, x_0); x_0, 0,  dt) \in N^*Z$. Using the representation of $N^*_{(f; x, u_{g^\circ}^f(x))}Z$ given in \eqref{eq: conormal of Z}, let 
\[
	(f_s,  -\lambda_s P(\cdot, x_s; u^{f_{s}}_{g^\circ}); x_s, u^{f_s}_{g^\circ}(x_s), \lambda_s(-du^{f_s}_{g^\circ}(x_s)+ dt)\,,\qquad s\in \mathbb R
\]
be a curve in $N^*Z$ with $f_0 = 0$ (so that $u^{f_0}_{g^\circ}=0$). Applying $\pi_L$ to this curve and differentiating at $s=0$ yields that
\begin{multline*}
	\partial_s\vert_{s= 0}\pi_L(f_s,  -\lambda_s P(\cdot, x_s; u^{f_{s}}_{g^\circ}); x_s, u^{f_s}_{g^\circ}(x_s), \lambda_s(-du^{f_s}_{g^\circ}(x_s)+ dt)) \\
     = (\dot f_0, -\lambda_0 d_x P_q(\cdot, x_0) \cdot \dot x_0 -\dot{\lambda}_0 P_q(\cdot,x_0) + B(\cdot, x_0) \dot f_0 )\,,
\end{multline*}
where for each $(y,x)\in \partial \Sigma^\circ \times \Sigma^\circ$, $B(y,x)\colon C^\infty(\partial \Sigma^\circ) \to \mathbb R$ is a linear map. So if the right hand side is zero, then $\dot{f}_0 = 0$ and 
\[
	\lambda_0 d_xP_q(\cdot, x_0) \cdot \dot x_0 + \dot{\lambda}_0 P_q(\cdot,x_0) = 0\,.
\]
That is, for all $f\in C^\infty(\partial \Sigma^\circ)$, 
\begin{eqnarray} \label{eq: dv cdot dot x = 0}
\lambda_0 d v^f(x_0) \cdot \dot x_0 + \dot{\lambda}_0 v^f(x_0) =\int_{\partial \Sigma^\circ} (\lambda_0 d_xP_q(y, x)\vert_{x=x_0} \cdot \dot{x}_0 + \dot{\lambda}_0 P_q(y,x_0))f(y)dy = 0
\end{eqnarray}
 where $(Q v^f,v^f\vert_{\p\Sigma^\circ})=(0,f)$. %
 By Runge approximation again (Lemma~\ref{lem:runge}), we can prescribe $dv^f(x_0)$ and $v^f(x_0)$ freely at $x_0$. Combined with \eqref{eq: dv cdot dot x = 0}, this implies that 
 \[
    \lambda_0 \dot{x}_0 = \dot{\lambda}_0 = 0.
 \]
Thus $\dot{x}_0 = \dot{\lambda}_0 = 0$, so that $d\pi_L$ is injective at all points of the form $(0, -P_q(\cdot, x_0); x_0, 0, dt)\in N^*_{(0; x_0, 0)} Z$ with $x_0\in \Sigma^\circ$.

The non-compactness of $W_\delta$ prevents us from concluding that $\pi_L$ is an injective immersion on all of $N^*Z$. Furthermore, the infinite dimensionality of $W_\delta$ means that the double fibration transform given by \eqref{df_diagrams_heu} is not in the framework of \cite{MST}. To this end we will consider double-fibration transforms over a finite dimensional set of admissible minimal surfaces.

\subsection{Finite dimensional set of minimal surfaces} \label{subsec_finite_dim_ms}

We now assume that all underlying structures are analytic. Let $g^\circ$ be a fixed analytic metric on $\widetilde{M}$ and $\Sigma^\circ\subset \widetilde{M}$ a fixed admissible minimal surface and fix Fermi coordinates for $g^\circ$ around $\Sigma^\circ$ so that
$$\{d_{g^\circ}(\cdot, \Sigma^\circ)<\epsilon\} \cong \Sigma^\circ \times (-\epsilon, \epsilon).$$
Choose $\delta>0$ so that $(L_{g^\circ}(\cdot),\cdot\vert_{\p\Sigma^\circ})=(0,f)$ is solvable for all $f\in U_\delta$, which is possible according to Proposition~\ref{prop: smooth dependence of solution}. For each finite dimensional subspace $\BB$ of $C^\omega(\partial\Sigma^\circ)$ we define
\begin{equation}\label{eq: def of G}
\G_\delta(\BB) \coloneqq U_\delta\cap \BB 
\end{equation}
and 
\begin{equation}\label{eq: def of ZB}
Z_\delta(\BB) \coloneqq \left\{(f, x,t)\in \G_\delta(\BB)\times \widetilde M\mid t = u^f_{g^\circ}(x),\ (L_{g^\circ}(u^f_{g^\circ}),u_{g^\circ}^f\vert_{\partial\Sigma^\circ})=(0,f)\right\}\,.%
\end{equation}

Recall from \cite[Def.~1.5]{MST} or \cite[\S~8]{zbMATH03698077} the notion of double fibrations. The main result of this section is 
\begin{Proposition}\label{prop: bolker}
There exists a subspace $\BB \subset C^\omega(\partial\Sigma^\circ)$ of dimension $N=2n+2$ such that if $\delta >0$ is chosen sufficiently small, the double fibration defined by
\begin{equation} \label{df_diagrams}
\begin{tabular}{cc}
\begin{tikzcd}
& Z_\delta(\BB) \arrow[dl,swap,"\pi_{\G_\delta(\BB)}"] \arrow[dr,"\pi_{\widetilde M}"] \\
\G_\delta(\BB) && \widetilde M
\end{tikzcd} \qquad & \qquad
\begin{tikzcd}
& N^*Z_\delta(\BB)\setminus 0 \arrow[dl,swap,"\pi_L"] \arrow[dr,"\pi_R"] \\
T^*\G_\delta(\BB) \setminus 0 && T^* \widetilde M \setminus 0
\end{tikzcd}
\end{tabular}
\end{equation}
 satisfies the Bolker condition; that is, $\pi_L$ is an injective immersion. Furthermore, if $(\widetilde{M},g^\circ)$ is analytic and if we choose a basis $f_1,\dots, f_N\in \BB$ and identify $\mathcal G_\delta(\BB)$ with the unit ball $B^N_1\subset\mathbb R^N$, then the interior of $Z_\delta(\BB)$ is embedded analytically in $B_1^N \times \widetilde M$.
\end{Proposition}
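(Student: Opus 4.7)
The plan is to build $\BB$ as the span of $2n+2$ analytic boundary data whose associated solutions of the stability equation $Qv=0$ jointly separate points and $1$-jets of $\Sigma^\circ$, and then verify the Bolker condition by repeating the heuristic computation of Section~\ref{subsect: heuristic} with $W_\delta$ replaced by the finite-dimensional $\G_\delta(\BB)$. By the analytic version of Lemma~\ref{lem:runge}, for each $x_0\in \Sigma^\circ$ and each $(a,\xi)\in \mR\oplus T_{x_0}^\ast\Sigma^\circ$ there is an analytic $f\in C^\omega(\p\Sigma^\circ)$ with $v^f(x_0)=a$ and $dv^f(x_0)=\xi$, and distinct points are likewise separated by suitable choices of $f$. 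Compactness of $\Sigma^\circ$ combined with continuity then yields finitely many analytic boundary data $\tilde f_1,\ldots,\tilde f_K\in C^\omega(\p\Sigma^\circ)$ for which
\[
\Phi_K:=(v^{\tilde f_1},\ldots,v^{\tilde f_K})\colon \Sigma^\circ\to \mR^K
\]
is an analytic embedding and, at every $x\in\Sigma^\circ$, the linear map $\psi_x\colon \mathrm{span}\{\tilde f_i\}\to \mR\oplus T_x^\ast\Sigma^\circ,\ f\mapsto (v^f(x),dv^f(x))$ is surjective.

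Next, following the Whitney projection arguments in \cite{greene1975whitney, greene1975embedding, zbMATH07504299}, I choose a generic $(2n+2)$-dimensional subspace $\BB\subset \mathrm{span}\{\tilde f_1,\ldots,\tilde f_K\}$ inheriting both properties: writing $f_1,\ldots,f_{2n+2}$ for a basis of $\BB$, the reduced map $\Phi_{2n+2}:=(v^{f_1},\ldots,v^{f_{2n+2}})$ remains an analytic embedding of $\Sigma^\circ$ and $\psi_x|_\BB$ remains surjective at every $x$. The three ``bad'' loci -- non-injectivity, non-immersion, and loss of $1$-jet surjectivity at some $x$ -- are semi-algebraic of positive codimension in the Grassmannian of $(2n+2)$-planes inside $\mathrm{span}\{\tilde f_i\}$ once $K$ is sufficiently large, so a generic choice avoids all three. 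The sharp count $N=2n+2$ arises from combining Whitney's embedding bound $2n+1$ for the $n$-dimensional $\Sigma^\circ$ with one extra direction to preserve the zero-jet (value) information needed in the Bolker calculation.

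With this $\BB$ in hand, I repeat the computation of Section~\ref{subsect: heuristic} with $W_\delta$ replaced by $\G_\delta(\BB)$. At base points $(0,b;x_0,0,\xi_0)\in N^\ast Z_\delta(\BB)\setminus 0$, injectivity of $\pi_L$ reduces to injectivity of $\Phi_{2n+2}$ and injectivity of $d\pi_L$ reduces to surjectivity of $\psi_{x_0}|_\BB$, both of which hold by construction. Openness of the Bolker condition, combined with continuous dependence of $u^f_{g^\circ}$ on $f$ in $C^{2,\gamma}$ from Theorem~\ref{thm: smooth dependence} and compactness of $\Sigma^\circ$, then propagates these properties from the central fiber $\{f=0\}$ to all of $N^\ast Z_\delta(\BB)\setminus 0$ provided $\delta$ is chosen small enough. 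Finally, since $g^\circ$ is analytic and the parametrization $\mR^{2n+2}\to \BB$, $c\mapsto \sum_j c_j f_j$, is analytic, the analytic implicit function theorem applied to $(L_{g^\circ}(\cdot),\cdot|_{\p\Sigma^\circ})$ at the regular solution $u\equiv 0$ gives joint analyticity of $u^f_{g^\circ}(x)$ in $(f,x)$, so the interior of $Z_\delta(\BB)$ is the graph of an analytic function and hence embedded analytically in $B_1^N\times \widetilde M$.

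The main technical obstacle is the Whitney projection step, where I must simultaneously preserve the embedding of $\Phi_K$ and the pointwise $1$-jet surjectivity of $\psi_x$ at the sharp dimension $N=2n+2$. A subtler issue is the extension of the Bolker condition from the central fiber $\{f=0\}$ to the full cone bundle $N^\ast Z_\delta(\BB)\setminus 0$, which relies on openness of Bolker, Theorem~\ref{thm: smooth dependence}, and compactness of $\Sigma^\circ$.
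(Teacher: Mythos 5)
Your proposal follows the same high-level structure as the paper's proof: build $\BB$ from Runge approximation plus a Whitney projection, verify the Bolker condition at the central fiber $f=0$ by the explicit computation of $N^*Z_\delta(\BB)$, extend to small $\delta$ by a compactness/openness argument, and deduce analyticity of $Z_\delta(\BB)$ from the analytic implicit function theorem applied to $(L_{g^\circ}(\cdot),\cdot|_{\p\Sigma^\circ})$. There is, however, a genuine organizational difference in how the conic direction is treated. You demand that the pointwise $1$-jet map $\psi_x\colon f\mapsto(v^f(x),dv^f(x))$ be surjective onto $\mR\oplus T_x^*\Sigma^\circ$ for every $x$, which kills $\dot\lambda_0$ and $\dot x_0$ simultaneously in the immersion computation and also gives the submersion/nonvanishing condition $P_q(\cdot,x)|_\BB\neq0$ for free. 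The paper instead first mods out the conic $\mR_+$-action (Lemma~\ref{lem: conic no matter}), after which only $\operatorname{span}\{dv^{f_j}(x)\}=T_x^*\Sigma^\circ$ is needed for the immersion, and separately adds one more solution to guarantee the "avoid-the-origin" condition (Proposition~\ref{prop:GreeneWu_generalization}, Corollary~\ref{cor:Greene_wu_cor}). Your $1$-jet surjectivity is a mildly stronger requirement on $\BB$ than the paper's combination, but it is still achievable at $N=2n+2$ by a codimension count (the bad loci have codimension $N-2n=2>0$), and it lets you skip the abstract reduction lemma. Two points are worth flagging. First, the phrase ``openness of Bolker'' understates the work needed: injectivity of $\pi_L$ is not a naively open condition, and the paper's Lemma~\ref{lem: nbhds of injectivity} plus the sequential contradiction argument (or, abstractly, Lemma~\ref{lem: general openness of inj imm}) is what actually delivers the extension from $f=0$ to small $\delta$; you should be explicit about this compactness argument. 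Second, the claim ``injectivity of $\pi_L$ reduces to injectivity of $\Phi_{2n+2}$'' is a slight imprecision shared with the paper's Lemma~\ref{lem: injectivity at f = 0}: because the $\BB^*$-component of a conormal is $\lambda P_q(\cdot,x)|_\BB$ with $\lambda$ free, what is really needed is that $x\mapsto\Phi(x)$ be injective up to positive scaling (no two points give positively parallel $P_q|_\BB$). This stronger projective injectivity is again generically attainable with $2n+2$ components (bad locus of codimension $N-1-2n=1>0$), so it does not change the dimension count, but it is the correct condition to state.
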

After the possible introduction of a suitable weight function, double fibrations give rise to double fibration transforms defined in \cite[Def.~1.5]{MST}, which according to \cite[Thm.~2.2]{MST} are FIOs. %
In what follows, for any $N\in\mathbb{N}$ we use the notation 
\[
\bar B_1^N = \{ z \in \mR^N \mid |z_1| + \ldots + |z_N| \leq 1 \}
\]
to denote the closed unit ball in $(\mathbb R^{N}, \ell^1)$. We use the $\ell^1$ norm to ensure that if $\|f_j\| \leq \delta$ for $1 \leq j \leq N$, then for $z \in \bar B_1^N$ we still have $\| z_1 f_1 + \ldots z_N f_N \| \leq \delta$. Akin to the representation \eqref{eq:astrafo}, a straightforward application of Proposition~\ref{prop: bolker} gives 

\begin{Corollary}\label{cor: bolker}
Let $\Sigma^\circ\subset \widetilde{M}$ be an admissible minimal surface with respect to the metric $g^\circ$. Let also $N = 2n+2$. There are $\delta>0$ small 
and
\[
f_1,\dots, f_{N}\in C^\omega(\partial\Sigma^\circ) \text{ with } \|f_j\|_{C^{2,\gamma}}  = \delta
\]
so that the operator $DF(g^\circ) : C^\infty_c(\widetilde{M}) \to C(\bar B_1^N)$ defined as%
\[
(DF(g^\circ) h)(z) = (DF(g^\circ) h)(z_1 f_1+\dots + z_{N} f_{N}),
\]
with the right hand side being the minimal surface transform in \eqref{eq: minimal surface transform}, is a Fourier integral operator satisfying the Bolker condition at all points of its canonical relation. When $(\widetilde{M},g^\circ)$ is analytic, this operator is an analytic double fibration transform.
\end{Corollary}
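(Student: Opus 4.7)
The plan is to apply Proposition~\ref{prop: bolker} directly and unpack its content into the concrete finite-dimensional parametrization demanded by the corollary. Proposition~\ref{prop: bolker} supplies, for some small $\delta_0 > 0$, a subspace $\BB \subset C^\omega(\partial\Sigma^\circ)$ of dimension $N = 2n+2$ for which the double fibration \eqref{df_diagrams} over $\G_{\delta_0}(\BB)$ satisfies the Bolker condition, with the interior of $Z_{\delta_0}(\BB)$ embedded analytically in $B_1^N \times \widetilde{M}$. Since the minimal surface transform \eqref{eq: minimal surface transform} is by construction the double fibration transform associated to this diagram, weighted by the analytic positive density ${\rm dVol}^{g^\circ}_{\Sigma^\circ(f,g^\circ)}$ (which depends analytically on $f$ by elliptic regularity for the minimal surface equation in analytic coefficients), all that remains is to choose a concrete basis of $\BB$ and to identify $\G_{\delta_0}(\BB)$ with a closed ball in $\mathbb{R}^N$.

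To do so I pick any basis $\tilde f_1, \dots, \tilde f_N$ of $\BB$ and rescale by $f_j = \delta\, \tilde f_j / \|\tilde f_j\|_{C^{2,\gamma}}$ for some $\delta \in (0, \delta_0]$, so that $\|f_j\|_{C^{2,\gamma}} = \delta$. For any $z \in \bar B_1^N$ the triangle inequality yields
\[
\|z_1 f_1 + \dots + z_N f_N\|_{C^{2,\gamma}} \le \delta\,(|z_1| + \dots + |z_N|) \le \delta \le \delta_0,
\]
so the linear map $\Phi : \bar B_1^N \to \G_{\delta_0}(\BB)$, $z \mapsto \sum_j z_j f_j$, is a well-defined affine embedding, real-analytic on the open ball. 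This is precisely where the $\ell^1$ choice of ball is used: it guarantees that the parametrization stays inside the range of Dirichlet data on which Theorem~\ref{thm: smooth dependence} provides unique small solutions of the minimal surface equation.

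The operator in the corollary is then the composition of the double fibration transform on $\G_{\delta_0}(\BB)$ with $\Phi$. Because $\Phi$ is an analytic diffeomorphism of the open ball onto its image, the pullback of the diagram \eqref{df_diagrams} by $\Phi \times \id_{\widetilde{M}}$ is again a double fibration with analytic incidence submanifold $(\Phi \times \id)^{-1}(Z_{\delta_0}(\BB))$, and the Bolker condition is preserved since $d(\Phi \times \id)$ is a linear isomorphism on each tangent space; the pulled-back canonical relation is just the image of $N^\ast Z_{\delta_0}(\BB) \setminus 0$ under the induced symplectomorphism. Invoking \cite[Thm.~2.2]{MST} then upgrades the resulting operator to a Fourier integral operator satisfying the Bolker condition at every point of its canonical relation, and to an analytic double fibration transform when $(\widetilde{M}, g^\circ)$ is analytic. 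I do not anticipate any substantive obstacle: the hard work of producing $\BB$, verifying Bolker, and establishing the analytic structure of $Z_{\delta_0}(\BB)$ has already been done in Proposition~\ref{prop: bolker}, and the corollary is a coordinate repackaging tailored to the stability and uniqueness arguments that follow.
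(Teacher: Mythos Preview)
Your proposal is correct and follows exactly the route the paper intends: the paper states that the corollary is ``a straightforward application of Proposition~\ref{prop: bolker}'' and does not spell out a proof, so your argument is precisely the intended unpacking, including the rescaled basis, the $\ell^1$-ball trick (which the paper notes explicitly just before the corollary), and the appeal to \cite[Thm.~2.2]{MST}.
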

Recall that $\Sigma^\circ = \ol{\Sigma}^\circ$ is closed and compact. In order to give the proof of Proposition~\ref{prop: bolker} we begin with 
\begin{Lemma}\label{lem: diagonal separation}
There exists an $\epsilon>0$ and a finite set $\SSS'\subset C^\omega(\p \Sigma^\circ)$ such that for all $x, y \in \Sigma^\circ$ with $x\neq y$ and $d_{\Sigma^\circ}(x,y) < \epsilon /4$, there exists a solution $v\in C^\omega(\Sigma^\circ)$ of 
\begin{equation} \label{eq_dulgv_zero}
Qv = D_uL_{g^\circ}(0)v=(\Delta_{\iota^\ast_{\Sigma^\circ}g^\circ}+q)v=0
\end{equation}
with boundary condition $v\vert_{\partial \Sigma^\circ} \in \SSS'$ such that $v(x) \neq v(y)$. Furthermore, for all points $x\in\Sigma^\circ$, 
\begin{eqnarray}\label{eq: full span}
{\rm span}\{dv(x) \colon v\ 
\text{{\rm solves} \eqref{eq_dulgv_zero}},\ 
v\vert_{\partial \Sigma^\circ} \in \SSS'\} = T^*_x\Sigma^\circ.
\end{eqnarray}
\end{Lemma}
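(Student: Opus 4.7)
The plan is to obtain the finite set $\SSS'$ as a union of finitely many basis-producing families, one associated to each member of an open cover of $\Sigma^\circ$. For each $x_0 \in (\Sigma^\circ)^{\rm int}$, I would choose a local coordinate basis $\xi_1, \ldots, \xi_n$ of $T_{x_0}^\ast \Sigma^\circ$ and apply the analytic version of Lemma~\ref{lem:runge}(2) to produce boundary data $f^{x_0}_1, \ldots, f^{x_0}_n \in C^\omega(\partial \Sigma^\circ)$ whose solutions $v^{f^{x_0}_j}$ of \eqref{eq_dulgv_zero} satisfy $d v^{f^{x_0}_j}(x_0) = \xi_j$. The associated analytic map
\[
\Phi^{x_0} \colon \Sigma^\circ \to \mathbb{R}^n,\qquad \Phi^{x_0}(y) = \bigl(v^{f^{x_0}_1}(y),\ldots, v^{f^{x_0}_n}(y)\bigr),
\]
then has invertible Jacobian at $x_0$. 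The inverse function theorem furnishes an open neighborhood $V_{x_0} \subset \Sigma^\circ$ of $x_0$ on which $\Phi^{x_0}$ is an analytic diffeomorphism onto its image; after shrinking if necessary, the family $\{d v^{f^{x_0}_j}(y)\}_j$ remains a basis of $T_y^\ast \Sigma^\circ$ at every $y \in V_{x_0}$.

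For a boundary point $x_b \in \partial \Sigma^\circ$ Lemma~\ref{lem:runge}(2) is not directly applicable, so I would pass to a slight analytic extension. Since $g^\circ$, $\Sigma^\circ$ and hence $Q$ are real-analytic, every solution $v^f$ extends analytically past $\partial \Sigma^\circ$ onto a slightly larger analytic surface $\widetilde\Sigma^\circ \subset \widetilde M$ containing $\Sigma^\circ$ in its interior. Applying the construction of the previous paragraph to $x_b$ regarded as an interior point of $\widetilde\Sigma^\circ$ yields an open neighborhood $V_{x_b}$ of $x_b$ in $\widetilde\Sigma^\circ$ whose relative intersection with $\Sigma^\circ$ is an open neighborhood of $x_b$ in $\Sigma^\circ$ on which the same diffeomorphism and spanning properties hold.

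Compactness of $\Sigma^\circ$ then yields a finite subcover $V_{x_1}, \ldots, V_{x_K}$ of the open cover $\{V_{x_0}\}_{x_0 \in \Sigma^\circ}$, and I would set
\[
\SSS' \coloneqq \bigcup_{k=1}^K \bigl\{f^{x_k}_1,\ldots, f^{x_k}_n\bigr\} \subset C^\omega(\partial \Sigma^\circ).
\]
The spanning property \eqref{eq: full span} then holds by construction: every $x \in \Sigma^\circ$ lies in some $V_{x_k}$, on which $\{d v^{f^{x_k}_j}(x)\}_j$ is a basis of $T_x^\ast \Sigma^\circ$. For the near-diagonal separation, I would invoke the Lebesgue number lemma on the compact metric space $(\Sigma^\circ, d_{\Sigma^\circ})$ with respect to the cover $\{V_{x_k}\}$ to obtain $\epsilon > 0$ such that every subset of $\Sigma^\circ$ of diameter less than $\epsilon$ is contained in some $V_{x_k}$. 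Then for distinct $x, y \in \Sigma^\circ$ with $d_{\Sigma^\circ}(x,y) < \epsilon/4$ the pair $\{x,y\}$ sits inside some $V_{x_k}$, on which $\Phi^{x_k}$ is injective; hence $v^{f^{x_k}_j}(x) \neq v^{f^{x_k}_j}(y)$ for at least one $j$, which supplies the required separating solution.

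The principal subtlety is the treatment of $\partial \Sigma^\circ$: Lemma~\ref{lem:runge}(2) produces solutions with prescribed jets only at interior points, whereas the spanning statement \eqref{eq: full span} is required on all of $\Sigma^\circ$. The analytic standing hypothesis is essential here, since it is what allows $Q$ and its solutions to be continued past $\partial\Sigma^\circ$, so that an inverse-function-theorem argument at an interior point of the enlarged surface produces an honest relative neighborhood of each boundary point of $\Sigma^\circ$; without analyticity one would have to argue differently at boundary points.
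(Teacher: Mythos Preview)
Your proof is correct and follows essentially the same strategy as the paper's: use Lemma~\ref{lem:runge}(2) to produce, at each point, $n$ solutions whose differentials form a basis, so that the resulting map to $\mR^n$ is a local diffeomorphism; then extract a finite subcover by compactness, take $\SSS'$ to be the union of the associated boundary data, and choose $\epsilon$ so that $\epsilon/4$-close pairs land in a single chart. The only differences are cosmetic---you invoke the Lebesgue number lemma where the paper uses the half-radius cover plus $\epsilon=\min_j\epsilon_{x_j}$---except that you are more careful than the paper at one point: Lemma~\ref{lem:runge}(2) is stated only for interior points, and you explicitly handle boundary points by analytic extension to a slightly larger surface $\widetilde\Sigma^\circ$, whereas the paper simply cites Lemma~\ref{lem:runge} for all $x\in\Sigma^\circ$ without comment (its proof, which already passes to an extension, does justify this).
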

\begin{proof}
By Lemma~\ref{lem:runge}, for all $x\in \Sigma^\circ$, there are $C^\omega$ solutions $(u_1(\ccdot; x), \dots u_n(\ccdot;x))$ of \eqref{eq_dulgv_zero} in $\Sigma^\circ$ such that $\{du_j(x;x)\}_{j=1}^n$ is linearly independent. As such, in a small ball $B_{\epsilon_x}(x)$ centered around $x$ of radius $\epsilon_x>0$, 
$$\big(u_1(\ccdot; x), \dots u_n(\ccdot;x)\big): B_{\epsilon_x}(x) \to \mathbb R^n$$
forms a coordinate system  of $\Sigma^\circ$ near $x$. %

Define the finite set $\SSS_x := \{u_1(\ccdot; x)\vert_{\partial \Sigma^\circ}, \dots, u_n(\ccdot;x)\vert_{\partial \Sigma^\circ}\}$. Now consider the covering of $\Sigma^\circ$ given by $\{B_{\frac{\epsilon_x}{2}}(x)\colon x\in \Sigma^\circ\}$. By compactness, there exist points $x_1,\dots, x_N\in \Sigma^\circ$ such that the set $\{B_{\frac{\epsilon_{x_1}}{2}}(x_1),\dots, B_{\frac{\epsilon_{x_N}}{2}}(x_N)\}$ is a finite cover of $\Sigma^\circ$. Set $\SSS' := \bigcup\limits_{j = 1}^N \SSS_{x_j}$.

By the fact that for each $x_j$, $j=1,\dots, N$, $$(u_1(\ccdot; x_j), \dots, u_n(\ccdot;x_j)): B_{\epsilon_x}(x) \to \mathbb R^n$$
is a coordinate system near $x_j$, \eqref{eq: full span} is automatically satisfied. Now choose $\epsilon \coloneqq {\rm min}\{\epsilon_{x_1},\dots, \epsilon_{x_N}\}$ so that any two distinct points $x,y\in\Sigma^\circ$ with $d_{\Sigma^\circ}(x,y)<\epsilon/4$ must both belong to $B_{\epsilon_{x_j}}(x_j)$ for some $j=1,\dots, N$. Using again the fact that 
$$\big(u_1(\ccdot; x_j), \dots, u_n(\ccdot;x_j)\big): B_{\epsilon_x}(x) \to \mathbb R^n$$
is a coordinate system, there must be $u_l(\ccdot; x_j)$, $l\in 1,\dots, N$ which separates these two points and satisfies $u_l(\ccdot; x_j)\vert_{\partial\Sigma^\circ} \in \SSS_{x_j}\subset \SSS'$. The proof is complete.
\end{proof}

\begin{Lemma}
\label{lem: off diagonal separation}
There exists a finite set $\SSS\subset C^\omega(\p\Sigma^\circ)$ such that the set 
$$\{v\in C^\omega(\Sigma^\circ)\colon %
v\ {\rm solves}\ \eqref{eq_dulgv_zero},\  v\vert_{\partial\Sigma^\circ}\in \SSS\}$$
separates points on $\Sigma^\circ$. Furthermore, for all points $x\in\Sigma^\circ$, 
\begin{eqnarray}\label{eq: full span'}
{\rm span}\{dv(x) \colon %
v\ {\rm solves}\ \eqref{eq_dulgv_zero}, \
v\vert_{\partial \Sigma^\circ} \in \SSS\} = T^*_x\Sigma^\circ.
\end{eqnarray}
\end{Lemma}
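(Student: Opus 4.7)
The plan is to extend the finite set $\SSS'$ from Lemma~\ref{lem: diagonal separation} by adding a finite number of boundary data that separate the ``far apart'' pairs of points. Since $\SSS'$ already produces solutions whose differentials span each cotangent space $T^*_x\Sigma^\circ$, enlarging $\SSS' \subset \SSS$ will automatically preserve \eqref{eq: full span'}. Thus the only remaining task is to separate points $(x,y)\in \Sigma^\circ\times\Sigma^\circ$ with $x\neq y$ and $d_{\Sigma^\circ}(x,y) \geq \epsilon/4$, where $\epsilon$ is from Lemma~\ref{lem: diagonal separation}.

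First I would introduce the compact set
\[
K \coloneqq \{(x,y)\in \Sigma^\circ\times\Sigma^\circ \colon d_{\Sigma^\circ}(x,y) \geq \epsilon/4\}.
\]
For each fixed $(x_0,y_0)\in K$, Lemma~\ref{lem:runge}(1) (applied in the analytic category) produces some $f_{x_0,y_0}\in C^\omega(\partial\Sigma^\circ)$ whose corresponding solution $v_{x_0,y_0}\in C^\omega(\Sigma^\circ)$ of $Q v = 0$ satisfies $v_{x_0,y_0}(x_0)\neq v_{x_0,y_0}(y_0)$. Since $v_{x_0,y_0}$ is continuous on $\Sigma^\circ$, the map $(x,y)\mapsto v_{x_0,y_0}(x)-v_{x_0,y_0}(y)$ is continuous on $\Sigma^\circ\times\Sigma^\circ$ and nonzero at $(x_0,y_0)$, so there is an open neighborhood $U_{x_0,y_0}\subset \Sigma^\circ\times \Sigma^\circ$ of $(x_0,y_0)$ on which $v_{x_0,y_0}$ still separates.

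Next I would apply compactness of $K$ to extract a finite subcover $K \subset \bigcup_{j=1}^M U_{x_j,y_j}$ and define
\[
\SSS \coloneqq \SSS' \cup \{v_{x_1,y_1}\vert_{\partial\Sigma^\circ},\ldots,v_{x_M,y_M}\vert_{\partial\Sigma^\circ}\}.
\]
By construction, any pair $(x,y)$ with $d_{\Sigma^\circ}(x,y)<\epsilon/4$ and $x\neq y$ is separated by some solution with boundary data in $\SSS' \subset \SSS$ (Lemma~\ref{lem: diagonal separation}), while any pair in $K$ is separated by some $v_{x_j,y_j}$ with boundary data in $\SSS$. Since $\SSS\supset \SSS'$, the span condition \eqref{eq: full span'} follows from \eqref{eq: full span}.

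There is no substantial obstacle here; the argument is a standard compactness-plus-continuity upgrade of pointwise Runge approximation, essentially the same pattern already used in the proof of Lemma~\ref{lem: diagonal separation}. The only subtlety worth flagging is that we use analyticity of the data (via Lemma~\ref{lem:runge} in the analytic category) so that the resulting boundary values lie in $C^\omega(\partial\Sigma^\circ)$ as required by the statement; this is automatic since $Q$ has analytic coefficients and the sources producing the Runge approximants can be placed outside a slightly enlarged analytic extension of $\Sigma^\circ$.
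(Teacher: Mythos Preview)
Your proposal is correct and follows essentially the same approach as the paper's proof: both use Runge approximation (Lemma~\ref{lem:runge}(1)) to separate each far-apart pair, pass to open neighborhoods by continuity, and extract a finite subcover by compactness, then take the union with $\SSS'$. The only cosmetic differences are that the paper uses the threshold $\epsilon/8$ (creating a harmless overlap with Lemma~\ref{lem: diagonal separation}) and phrases the open neighborhoods as products of balls rather than general open sets.
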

\begin{proof}
Consider the product manifold
$$\Sigma^\circ\times\Sigma^\circ \setminus \{(x,y)\in \Sigma^\circ\times\Sigma^\circ\mid d_{\Sigma^\circ}(x,y)<\epsilon/8\}$$
where $\epsilon>0$ is given by Lemma \ref{lem: diagonal separation}. For any $(x,y)$ in this set, we can use Proposition A.7 (a) of \cite{LLS} to find $v(\ccdot; x,y)\in C^{\infty}(\Sigma^\circ)$ solving \eqref{eq_dulgv_zero} on $\Sigma^\circ$ such that $v(x;x,y) \neq v(y; x,y)$. By perturbation, if necessary, we may assume that $v(\ccdot; x,y)|_{\partial \Sigma^\circ}\in C^\omega(\partial \Sigma^\circ)$ and $v(\ccdot; x,y)\in C^\omega(\Sigma^\circ)$. By continuity, there is $\epsilon_{x,y}>0$ such that \begin{eqnarray}\label{eq: empty intersection}
v(\overline{B_{\epsilon_{x,y}}(x)}; x,y)\cap v(\overline{B_{\epsilon_{x,y}}(y)}; x,y) = \emptyset.
\end{eqnarray}
The set
$$\{B_{\epsilon_{x,y}}(x) \times B_{\epsilon_{x,y}}(y)\colon d_{\Sigma^\circ}(x,y) \geq \epsilon/8\}$$
is clearly an open cover of the compact set 
$$\Sigma^\circ\times\Sigma^\circ \setminus \{(x,y)\in \Sigma^\circ\times \Sigma^\circ\colon d_{\Sigma^\circ}(x,y)<\epsilon/8\}$$
So we may find a finite cover $\{B_{\epsilon_{x_j,y_j}}(x_j) \times B_{\epsilon_{x_j,y_j}}(y_j)\colon j = 1,\dots, N\}$ with corresponding solutions
$\{v(\ccdot; x_j,y_j)\colon j = 1, \dots, N\}$ of \eqref{eq: stability equation}. Set $\SSS'' := \{v(\ccdot; x_1,y_1)\vert_{\partial\Sigma^\circ},\dots, v(\ccdot; x_N,y_N)\vert_{\partial\Sigma^\circ} \}$. Then for any two points $x,y\in \Sigma^\circ$, with $d_{\Sigma^\circ}(x,y) \geq \epsilon/8$, there exists $j\in\{1,\dots, N\}$ such that $(x,y) \in B_{\epsilon_{x_j, y_j}}(x_j) \times B_{\epsilon_{x_j, y_j}}(y_j)$ which by \eqref{eq: empty intersection} means that $v(x; x_j, y_j) \neq v(y;x_j, y_j)$ with $v(\ccdot ;x_j,y_j)\vert_{\partial \Sigma^\circ} \in \SSS''$. Now set $\SSS := \SSS'\cup \SSS''$ where $\SSS'$ is as in the statement of Lemma \ref{lem: diagonal separation}. The condition \eqref{eq: full span'} is satisfied by Lemma~\ref{lem: diagonal separation}. Furthermore, solutions of \eqref{eq_dulgv_zero} with boundary values in $\SSS$ separate distinct points $x,y\in \Sigma^\circ$ both when $d_{\Sigma^\circ}(x,y)\geq \epsilon/8$ and when $d_{\Sigma^\circ}(x,y) <\epsilon/8$. This completes the proof.
\end{proof}

We will next improve Lemma \ref{lem: off diagonal separation} and show that one can take $\SSS$ to have cardinality $2n+1$. By using Runge approximation and a Whitney projection method, it was proved in \cite{greene1975whitney} that an \(n\)-dimensional Riemannian manifold can be embedded into \(\mathbb{R}^{2n+1}\) by a mapping whose component functions are harmonic. In the subsequent work \cite{greene1975embedding}, they generalized this result to solutions of elliptic equations lacking a zeroth-order term.

Using Lemmas~\ref{lem: diagonal separation} and~\ref{lem: off diagonal separation}, we extend these results to include a zeroth-order term. An immediate consequence is that the set \(\SSS\) in Lemma \ref{lem: off diagonal separation} can be chosen to have (possibly) reduced cardinality \(2n+1\). In order to have a double fibration transform later, we will need that all of the solutions cannot vanish at a fixed point, and this can be arranged by adding one more solution.%

 \begin{Proposition}\label{prop:GreeneWu_generalization}
Let $Q = \Delta_{\iota^\ast_{\Sigma^\circ}g^\circ}+q$ be as above, where the coefficients are in $C^\omega(\Sigma)$. Then there is a $C^\omega$ embedding of $\Sigma^{\circ}$ in $\mathbb{R}^{2n+1}$, whose component functions are solutions of
\[
Qv = 0 \text{ on } \Sigma \text{ and } v|_{\p \Sigma} \in C^\omega\,.
\]
If the embedding is taken into $\mathbb R^{2n+2}$, then the embedding can be chosen to avoid the origin.

\end{Proposition}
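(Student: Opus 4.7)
The plan is to adapt the Greene--Wu Whitney projection argument, exploiting the fact that $Q$ is a linear operator so that any linear combination of solutions of $Qv=0$ with analytic boundary data is again such a solution. First, I would invoke Lemma~\ref{lem: off diagonal separation} to extract a finite set $\SSS=\{f_1,\ldots,f_N\}\subset C^\omega(\partial\Sigma^\circ)$ whose associated solutions $v_1,\ldots,v_N$ of $Qv=0$ both separate points of $\Sigma^\circ$ and satisfy $\mathrm{span}\{dv_j(x)\}_{j=1}^N=T_x^\ast\Sigma^\circ$ at every $x$. Then $\Phi=(v_1,\ldots,v_N)\colon \Sigma^\circ\to\mathbb R^N$ is an injective immersion, and by compactness of $\Sigma^\circ$ it is a $C^\omega$ embedding.

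Next, I would apply a standard Whitney dimension reduction. For $\xi\in S^{N-1}$, let $\pi_\xi\colon\mathbb R^N\to\xi^\perp\cong\mathbb R^{N-1}$ denote orthogonal projection. Then $\pi_\xi\circ\Phi$ fails to be injective iff $\xi$ lies in the image of the secant map on $(\Sigma^\circ\times\Sigma^\circ)\setminus\Delta$ (a smooth manifold of dimension $2n$), and it fails to be an immersion iff $\xi$ lies in the image of the tangent map on the projectivized tangent bundle $P(T\Sigma^\circ)$ (of dimension $2n-1$). Whenever $N\geq 2n+2$, Sard's theorem yields a direction $\xi$ avoiding both images. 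The components of $\pi_\xi\circ\Phi$ are linear combinations of $v_1,\ldots,v_N$, hence still solutions of $Qv=0$ with boundary data in $C^\omega(\partial\Sigma^\circ)$, and $\pi_\xi\circ\Phi$ remains a $C^\omega$ embedding. Iterating this reduction terminates at $N=2n+1$, producing the required embedding $\Psi=(w_1,\ldots,w_{2n+1})\colon\Sigma^\circ\to\mathbb R^{2n+1}$.

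To obtain an embedding into $\mathbb R^{2n+2}$ avoiding the origin, I would set $Z\coloneqq\Psi^{-1}(\{0\})\subset\Sigma^\circ$, which is closed and hence compact. If $Z=\emptyset$, appending any solution as a $(2n+2)$nd coordinate works. Otherwise, by Lemma~\ref{lem:runge} (2), for each $x\in Z$ there is a solution $\eta_x$ of $Qv=0$ with $\eta_x(x)\neq 0$, nonvanishing on an open neighborhood of $x$. Extract a finite subcover $\eta_{x_1},\ldots,\eta_{x_k}$ of $Z$ by their nonvanishing sets. The continuous map $Z\ni x\mapsto (\eta_{x_1}(x),\ldots,\eta_{x_k}(x))\in\mathbb R^k\setminus\{0\}$ has compact image $K$, so the set of coefficients $c\in\mathbb R^k$ with $c\cdot y\neq 0$ for all $y\in K$ is open and nonempty (its complement is the closed cone $\bigcup_{y\in K}y^\perp$, which omits a nonempty open cone in $\mathbb R^k$ by compactness). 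Choosing such $c$ and setting $w_{2n+2}\coloneqq\sum_j c_j\eta_{x_j}$ yields the desired $(2n+2)$-th component, and $(\Psi,w_{2n+2})$ is the required embedding.

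The main obstacle I expect is bookkeeping in the Whitney reduction step, namely confirming that each projection preserves analyticity (straightforward, since linear combinations of real-analytic solutions remain real-analytic) and that the secant and tangent maps are smooth enough for Sard's theorem; both are handled by the analyticity of $\Phi$ and the linearity of $Q$. The origin-avoidance step is a secondary delicate point, reducing to the compactness-plus-openness observation above.
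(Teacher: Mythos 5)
Your Whitney-projection argument for producing the embedding into $\mathbb{R}^{2n+1}$ is essentially the same route as the paper: the paper starts from the separating/full-rank family supplied by Lemma~\ref{lem: off diagonal separation}, observes that $(v^{f_1},\dots,v^{f_N})$ is a $C^\omega$ embedding, and then defers the dimension reduction to the Whitney projection argument of Greene--Wu \cite{greene1975embedding}, which is precisely the Sard-theorem secant/tangent count you write out. The key linearity observation (that orthogonal projection produces linear combinations of solutions, which again solve $Qv=0$ with $C^\omega$ boundary data) is the same observation Greene--Wu exploit for harmonic functions, and is correct as stated.

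The origin-avoidance step, however, contains a genuine gap. You write that $\bigcup_{y\in K} y^\perp$ ``omits a nonempty open cone in $\mathbb{R}^k$ by compactness,'' but compactness of $K\subset\mathbb{R}^k\setminus\{0\}$ alone does not deliver this: taking $K=S^{k-1}$ gives $\bigcup_{y\in K}y^\perp = \mathbb{R}^k$, so no admissible $c$ exists. To push the argument through one needs a dimension bound on $K$ (e.g.\ that $K$ is finite, so the union is a finite union of hyperplanes). You have such a bound available but don't use it: since $\Psi$ is an \emph{injective} map, the preimage $Z=\Psi^{-1}(\{0\})$ contains at most one point. The paper makes exactly this observation: if $\Psi(x_0)=0$ for some (necessarily unique) $x_0$, choose one additional boundary datum $f_{2n+2}\in C^\omega(\p\Sigma)$ with $v^{f_{2n+2}}(x_0)\neq 0$ using Lemma~\ref{lem:runge}, and append it as the $(2n+2)$nd component. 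Your more elaborate finite-subcover-plus-cone construction is both unnecessary and, as stated, unjustified; replacing it with the single-point observation closes the gap.
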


\begin{proof}
We extend $\Sigma$ to a slightly larger, connected, non-compact $C^\omega$ manifold $\widetilde{\Sigma}$ and extend the coefficients of $Q$ to $\widetilde{\Sigma}$ with the same regularity. Let $\SSS = \{f_1, \dots, f_N\}$ be as in Lemma \ref{lem: off diagonal separation}, and let $\{v^{f_1}, \dots, v^{f_N}\}$ be the corresponding solutions to $Qv=0$ in $\Sigma$ with $v^{f_j}|_{\p \Sigma} = f_j$.

While Lemma 4 of \cite{greene1975embedding} constructs an embedding into $\mathbb{R}^l$ using harmonic functions,  we instead consider the map
\[
F = (v^{f_1}, \ldots, v^{f_N}): \Sigma \to \mathbb{R}^N
\]
and set $l=N$. To confirm that $F$ is an embedding, note that injectivity follows because $\SSS$ separates points, and the injectivity of its differential is guaranteed by condition \eqref{eq: full span'}. With this, the remainder of the proof then follows identically to that in \cite{greene1975embedding}.  (In the proof of Lemma 11 in \cite{greene1975embedding}, one applies Lemma \ref{lem:runge}.)  %

To show that we can arrange the embedding to avoid the origin by adding one more solution, suppose there exists a point $x_0\in \Sigma$ such that 
$$v^{f_1} (x_0) = \dots = v^{f_{2n+1}}(x_0) = 0\,.$$
If such a point exists, it would be the unique point mapping to the origin since the mapping is an embedding. Now choose $f_{2n+2}\in C^\omega(\partial \Sigma)$ such that $v^{f_{2n+2}}(x_0) \neq 0$ and we are done.
\end{proof}

\begin{Remark}
Proposition \ref{prop:GreeneWu_generalization} also holds for general second order elliptic operators with analytic coefficients, and if the coefficients are smooth then the corresponding result holds in the smooth category. This is true since Runge approximation and consequently Lemmas \ref{lem: diagonal separation} and \ref{lem: off diagonal separation} hold for such operators (see \cite[Proposition A.7]{LLS}).
\end{Remark}

Using Proposition \ref{prop:GreeneWu_generalization}, we immediately obtain:

\begin{Corollary}\label{cor:Greene_wu_cor}
There exists a set $\SSS = \{ f_1, \ldots, f_{2n+2} \} \subset C^\omega(\partial\Sigma)$ such that the set 
$$\{v\in C^\omega(\Sigma) \colon %
Qv=0,\  v\vert_{\partial\Sigma}\in \SSS\}$$
separates points on $\Sigma$, the map 
\begin{eqnarray}\label{eq: embedding avoiding zero}
x\mapsto (v^{f_1}(x),\dots , v^{f_{2n+2}}(x))
\end{eqnarray}
avoids the origin, and for all $x\in\Sigma$, 
\begin{eqnarray}\label{eq: full span''}
{\rm span}\{dv(x) \colon %
Qv=0, \  
v\vert_{\partial \Sigma} \in \SSS\} = T^*_x\Sigma.
\end{eqnarray}
\end{Corollary}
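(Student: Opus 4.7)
The plan is to read off the corollary directly from Proposition~\ref{prop:GreeneWu_generalization} by observing that an embedding $F:\Sigma\to\mathbb R^{2n+2}$ whose component functions satisfy $Qv=0$ with analytic boundary values automatically produces a set $\SSS$ with all three required properties.

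First, I would apply Proposition~\ref{prop:GreeneWu_generalization} in the $2n+2$ version to obtain boundary data $f_1,\dots,f_{2n+2}\in C^\omega(\partial\Sigma)$ such that the corresponding solutions $v^{f_j}$ of $Qv=0$ yield a $C^\omega$ embedding
\[
F = (v^{f_1},\dots,v^{f_{2n+2}}):\Sigma\to\mathbb R^{2n+2}\setminus\{0\}.
\]
Set $\SSS = \{f_1,\dots,f_{2n+2}\}$. The fact that $F$ avoids the origin is exactly the statement that the map in \eqref{eq: embedding avoiding zero} avoids the origin. Since an embedding is in particular injective, for any $x\neq y$ in $\Sigma$ there is some index $j$ with $v^{f_j}(x)\neq v^{f_j}(y)$, so the family $\{v^{f_j}\}$ separates points on $\Sigma$.

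Second, I would verify the spanning condition \eqref{eq: full span''}. Since $F$ is an embedding, its differential $dF(x):T_x\Sigma\to\mathbb R^{2n+2}$ is injective, hence has rank $n=\dim\Sigma$. The entries of $dF(x)$ acting on a tangent vector $w\in T_x\Sigma$ are $(dv^{f_1}(x)\cdot w,\dots,dv^{f_{2n+2}}(x)\cdot w)$, so injectivity of $dF(x)$ means precisely that the linear functionals $dv^{f_1}(x),\dots,dv^{f_{2n+2}}(x)\in T_x^*\Sigma$ have no common kernel. Equivalently, their span equals $T_x^*\Sigma$, which is \eqref{eq: full span''}. This finishes the corollary.

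There is no substantial obstacle here since the work is entirely contained in Proposition~\ref{prop:GreeneWu_generalization}; the corollary is purely a repackaging in which point separation is injectivity of $F$, the non-vanishing of the embedding is the statement about avoiding the origin, and the span condition is the infinitesimal (immersion) counterpart.
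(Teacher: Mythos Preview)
Your proposal is correct and follows essentially the same approach as the paper: both take the embedding $(v_1,\dots,v_{2n+2})$ from Proposition~\ref{prop:GreeneWu_generalization}, set $f_j=v_j\vert_{\partial\Sigma}$, and read off point separation from injectivity, \eqref{eq: full span''} from injectivity of the differential, and origin-avoidance directly. Your write-up is slightly more explicit in unpacking why injectivity of $dF(x)$ is equivalent to the span condition, but the argument is the same.
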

\begin{proof}
Let $(v_1, \ldots, v_{2n+2})$ be the embedding from Proposition \ref{prop:GreeneWu_generalization} that avoids the origin, let $f_j = v_j|_{\p \Sigma}$, and define 
\[
\SSS = \{ f_1, \ldots, f_{2n+2} \}.
\]
Then $v_j = v^{f_j}$, and the result follows since the embedding $(v_1, \ldots, v_{2n+2})$ is injective, avoids the origin, and its differential is injective.
\end{proof}

We now proceed toward the proof of Proposition \ref{prop: bolker}. Let $\Sigma = \Sigma^\circ$ and $N=2n+2$, and let $v^{f_1}, \ldots, v^{f_{N}}$ be the solutions in Corollary \ref{cor:Greene_wu_cor}. We define the finite dimensional space $\BB$ as 
\[
\BB = \operatorname{span}\{ f_1, \ldots, f_N \}.
\]
Let $\{f_1^*, \dots, f_N^*\}$ be the dual basis in $\mathcal{D}'(\partial\Sigma^\circ)$ to $\{f_1, \dots, f_N\}$, and define $\BB^* = \operatorname{span}(f_1^*,\dots, f_N^*)$ (in other words, $f^\ast_j$ is the distribution mapping $C^\infty(\partial\Sigma^\circ)\ni \phi\mapsto \langle f^\ast_j ,\phi\rangle =  \int_{\partial\Sigma^\circ} f_j\phi $).
         We consider now $\G_\delta(\BB)$ and $Z_\delta(\BB)$ as defined in \eqref{eq: def of G} and \eqref{eq: def of ZB} respectively for this choice of $\BB$. The cotangent bundle $T^*\G_\delta(\BB)$ can then be identified with $\G_\delta(\BB)\times \BB^*$. Let  $P(\cdot, x; u^f_{g^\circ}) \in \mathcal{D}'(\partial\Sigma^\circ)$ be the Poisson kernel which appeared in \eqref{eq: conormal of Z} and set 
\begin{equation}\label{eq: P as a covector}
	P(\cdot, x; u^f_{g^\circ})\vert_\BB \coloneqq \sum\limits_{j=1}^N P(f_j, x; u^f_{g^\circ}) \langle f_j^*,\cdot \rangle\,,\qquad P_q(\cdot,x)\vert_\BB \coloneqq P(\cdot, x; 0)\vert_\BB\,.
\end{equation}

We now prove a lemma which will be useful for ignoring the conic direction in $N^* Z_{\delta}(\BB)$.

\begin{Lemma}
\label{lem: conic no matter}
Let $X$ and $Y$ be smooth manifolds and let $\mathcal L \subset (T^*Y \times T^*X) \setminus 0$ be an embedded conic Lagrangian submanifold satisfying the following condition:
\begin{eqnarray}
    \label{eq: no flat spots}
    \text{there is no } (x,\xi)\in T^*X\setminus 0 \text{ such that } (y,0;x,\xi) \in\mathcal L.
\end{eqnarray}
Suppose the left projection $\bar \pi_L : \mathcal L/\mathbb R_+ \to T^*Y/\mathbb R_+$ is an injective immersion. Then the left projection $\pi_L : \mathcal L \to T^*Y$ is an injective immersion.
\end{Lemma}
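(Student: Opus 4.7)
The plan is to reduce everything to the observation that condition \eqref{eq: no flat spots}, together with $\mathcal L \subset (T^*Y\times T^*X)\setminus 0$, forces the $Y$-fiber component to be nonzero everywhere on $\mathcal L$. Once this is in hand, both injectivity and the immersion property follow from the corresponding statements for $\bar\pi_L$ by a standard quotient argument. The natural setup is the commutative diagram
\[
\begin{tikzcd}
\mathcal L \arrow[r,"\pi_L"] \arrow[d,"p"'] & T^*Y\setminus 0 \arrow[d,"q"] \\
\mathcal L/\mathbb R_+ \arrow[r,"\bar\pi_L"'] & (T^*Y\setminus 0)/\mathbb R_+
\end{tikzcd}
\]
where $p,q$ are the quotients by the $\mathbb R_+$-scaling in the fibers; note that by the observation above, $\pi_L$ does land in $T^*Y\setminus 0$, so $q\circ\pi_L$ makes sense.

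For injectivity, suppose $\pi_L(y_1,\eta_1;x_1,\xi_1) = \pi_L(y_2,\eta_2;x_2,\xi_2)$. Then $(y_1,\eta_1)=(y_2,\eta_2)$, so in particular their classes in $(T^*Y\setminus 0)/\mathbb R_+$ agree. By injectivity of $\bar\pi_L$ applied to the two classes in $\mathcal L/\mathbb R_+$, there exists $\lambda>0$ with $(y_1,\eta_1;x_1,\xi_1)=(y_2,\lambda\eta_2;x_2,\lambda\xi_2)$. Comparing the fiber components over $Y$ and using $\eta_1=\eta_2\neq 0$ gives $\lambda=1$, hence the two points coincide.

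For the immersion property, let $v\in T_{(y_0,\eta_0;x_0,\xi_0)}\mathcal L$ with $d\pi_L(v)=0$. From the diagram, $d\bar\pi_L(dp(v))=dq(d\pi_L(v))=0$, so $dp(v)=0$ by the immersion hypothesis on $\bar\pi_L$. Hence $v$ lies in the kernel of $dp$, which is the tangent line to the $\mathbb R_+$-orbit through $(y_0,\eta_0;x_0,\xi_0)$; that is, $v = c\,(0,\eta_0;0,\xi_0)$ for some $c\in\mathbb R$. Reading off the $\eta$-component gives $c\eta_0 = 0$, and since $\eta_0\neq 0$ we conclude $c=0$, hence $v=0$.

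The main (only) subtle point is the preliminary observation: the statement would fail if one dropped condition \eqref{eq: no flat spots}, because a point with $\eta=0$ sits at the base of its $\mathbb R_+$-orbit direction in $T^*Y$, and the quotient argument can no longer rule out nontrivial vertical kernel vectors. Everything else is a routine chase through the defining diagram, so I expect no other obstacles.
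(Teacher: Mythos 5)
Your proof is correct and follows essentially the same strategy as the paper's: use \eqref{eq: no flat spots} to force the $\eta$-component to be nonvanishing on $\mathcal L$, reduce both injectivity and the immersion property to those of $\bar\pi_L$ via the quotient, and then eliminate the remaining radial ambiguity (the scaling factor $\lambda$ for injectivity; the conic tangent direction $\ker dp$ for the differential) using $\eta_0\neq 0$. The paper realizes $\mathcal L/\mathbb R_+$ concretely as the unit-norm section $\overline{\mathcal L}$ and decomposes tangent vectors as $\overline V\oplus R$, whereas you work intrinsically with the quotient maps $p,q$ and the fact that $\ker dp$ is the tangent to the $\mathbb R_+$-orbit; these are the same argument in slightly different language.
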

\begin{proof}
We need to show that $\pi_L : \mathcal L \to T^*Y$ is an injective immersion. To this end, we note that due to \eqref{eq: no flat spots}, for any auxiliary metric $\|\cdot\|$ on $Y$,
$$\mathcal L/\mathbb R_+ \cong \overline{\mathcal L} := \{(y,\eta; x,\xi)\in \mathcal L \mid \|\eta\| = 1\}$$
so that $\bar \pi_L :  \overline{\mathcal L} \to S^*Y$ 
is an injective immersion by assumption. Now suppose that $(y_0,\eta_0; x_1,\xi_1)$, $(y_0,\eta_0; x_2,\xi_2)\in \mathcal L$ are such that their images under $\pi_L$ coincide. By \eqref{eq: no flat spots}, we can choose $\lambda>0$ so that $(y_0,\lambda\eta_0; x_1,\lambda\xi_1), (y_0,\lambda\eta_0; x_2,\lambda\xi_2) \in \overline{\mathcal L} $. Using that $\bar\pi_L$ is an injection, we see that $(x_1,\xi_1) = (x_2,\xi_2)$, which makes $\pi_L$ injective. 

To see that $d\pi_L$ is injective at all points on $\mathcal L$, it suffices to show that it is an injection on $T_{(y,\eta; x,\xi)}\mathcal L$
for $(y,\eta; x,\xi)\in\overline{\mathcal L}$. To this end, we write
$$ \mathcal L =  \overline{\mathcal L} \times \mathbb R_+$$
so that $T_{(y,\eta; x,\xi)}\mathcal L = T_{(y,\eta; x,\xi)}\overline{\mathcal L} \oplus \mathbb R$ for all $(y,\eta; x,\xi) \in \overline{\mathcal L}$. Here and below, $\mathbb R$ denotes the space of tangent vectors corresponding to the conic direction. Now suppose $V \in T_{(y,\eta; x,\xi)}\mathcal L$ is in the kernel of $d\pi_L$. We write $V = \overline V \oplus R$ for $\overline V \in T_{(y,\eta; x, \xi)}\overline{\mathcal L}$ and $R\in \mathbb R$. We then have that 
\begin{eqnarray}
\label{eq: dpiL is zero}
0 = d\pi_L\mid_{(y,\eta; x,\xi)} V = d\bar \pi_L\mid_{(y,\eta; x,\xi)} \overline{V} + d\pi_L\mid_{(y,\eta; x,\xi)} R \in T_{(y,\eta)} S^*Y \oplus \mathbb R.
\end{eqnarray}
By construction, if $\overline V \in T \overline{\mathcal L}$, then $d\pi_L \overline V \in TS^*Y$ and similarly $d\pi_L R \in \mathbb R$. Using \eqref{eq: dpiL is zero}, we see that  
\[
d\bar \pi_L\mid_{(y,\eta; x,\xi)} \overline{V} \in T_{(y,\eta)} S^*Y \cap \mathbb R  =\{0\}.
\]
This implies that $\overline V = 0$ by our assumption that $\bar\pi_L$ is an immersion. Going back to \eqref{eq: dpiL is zero}, we see that $d\pi_L R = 0$ for some vector $R\in T_{(y,\eta; x, \xi)} \mathcal L$ in the conic direction. By \eqref{eq: no flat spots}, this means that $R = 0$. \end{proof}

We now consider the double fibration given by \eqref{df_diagrams}.

\begin{Lemma} \label{lem: injectivity at f = 0}
The set $Z_\delta(\BB)$ has the following properties.
\begin{enumerate}
\item[(i)]
The conormal bundle $N^*Z_\delta(\BB)$ is given by 
\begin{eqnarray}\label{eq: N*ZB}\nonumber
N^* Z_\delta(\BB) = \left\{\left(f, \lambda P(\cdot, x; u^f_{g^\circ})\vert_\BB; x, u_{g^\circ}^f(x),\lambda( - du_{g^\circ}^f(x) + dt )\right)\in T^* \G_\delta(\BB)\times T^*\widetilde M\colon x\in \Sigma^\circ, \lambda >0 \right\}
\end{eqnarray}
where $u^f_{g^\circ}$ solves \eqref{eq: MSE}, ie. $L_{g^\circ}(u^f_{g^\circ})=0$, and $u^f_{g^\circ}\vert_{\p\Sigma^\circ}=f$.
\item[(ii)] 
For $\delta>0$ sufficiently small, the projections $\pi_{\mathcal G_\delta(\BB)} :Z_\delta (\BB) \to \mathcal G_\delta(\BB)$ and $\pi_{\widetilde M}: Z_\delta(\BB) \to \widetilde M$ are submersions.
\item[(iii)]
We have that
\begin{multline}
    \label{eq: modded out conormal}
N^*Z_\delta(\BB)/\mathbb R_+ \cong \overline{\mathcal L}_\delta := \\
 \left\{\left(f,  P(\cdot, x; u^f_{g^\circ})\vert_\BB; x, u_{g^\circ}^f(x), - du_{g^\circ}^f(x) + dt \right)\in T^* \G_\delta(\BB)\times T^*\widetilde M\colon x\in \Sigma^\circ \right\}.
\end{multline}
Denote by $\bar\pi_L$ the left projection acting on $\overline{\mathcal L}_\delta$. For each $x\in \Sigma^\circ$, 
\begin{eqnarray}\label{eq: finite dim inj}
    \bar \pi_L^{-1}(0, P_q(\cdot, x)\vert_\BB) = \{(0, P_q(\cdot, x)\vert_\BB; x, 0, dt)\}\,,
\end{eqnarray}
and 
\begin{eqnarray}\label{eq: finite dim immersion}
d \bar \pi_L(0, P_q(\cdot, x)\vert_\BB; x, 0, dt)\colon T_{(0, P_q(\cdot, x)\vert_\BB; x, 0, dt)}\overline{\mathcal L}_\delta \to T(\G_\delta(\BB) \times \BB^*)
\end{eqnarray}
is injective.
\end{enumerate}
\end{Lemma}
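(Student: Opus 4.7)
The plan is to treat (i) and (iii) as the finite-dimensional specialization of the heuristic computation of Section~\ref{subsect: heuristic}, while (ii) reads off almost automatically from the graph structure of $Z_\delta(\BB)$. The benefit of restricting to the finite-dimensional subspace $\BB$ is that the obstructions of the infinite-dimensional heuristic dissolve thanks to the two properties built into $\BB$ by Corollary~\ref{cor:Greene_wu_cor}: the map $x\mapsto (v^{f_1}(x),\dots,v^{f_N}(x))$ is a $C^\omega$ embedding of $\Sigma^\circ$ into $\mathbb R^N$ that avoids the origin, and the span of $\{dv^{f_j}(x)\}_{j=1}^N$ is all of $T^\ast_x\Sigma^\circ$.

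For (i), the graph parametrization $\Phi(f,x)=(f;x,u^f_{g^\circ}(x))$ supplied by Proposition~\ref{prop: smooth dependence of solution} identifies $T_{(f;x,u^f_{g^\circ}(x))}Z_\delta(\BB)$ with \eqref{eq: tangent space of Z'}, the only change being that $\dot f_0$ is now constrained to $\BB$ and the associated $v=v^{\dot f_0}$ solves $E(u^f_{g^\circ})v=0$. A covector $(b,\xi)\in\BB^\ast\oplus T^\ast_{(x,u^f_{g^\circ}(x))}\widetilde M$ annihilates this space iff, first, setting $\dot f_0=0$ and varying $\dot x$ forces $\xi=\lambda(-du^f_{g^\circ}(x)+dt)$ for some scalar $\lambda$, and, second, setting $\dot x=0$ forces $\langle b,\dot f_0\rangle=-\lambda v^{\dot f_0}(x)=-\lambda\langle P(\cdot,x;u^f_{g^\circ}),\dot f_0\rangle$ for every $\dot f_0\in\BB$, whence $b=-\lambda P(\cdot,x;u^f_{g^\circ})|_\BB$. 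Restricting to $\lambda>0$ yields (i), and rescaling to $\lambda=1$ immediately yields the quotient description of $N^\ast Z_\delta(\BB)/\mathbb R_+$ in (iii). Part (ii) also follows from $\Phi$: the first projection is trivially a submersion onto $\G_\delta(\BB)$, while surjectivity of the differential of $\pi_{\widetilde M}\circ\Phi$ at $(f,x)$ follows by varying $\dot x\in T_x\Sigma^\circ$ to cover the horizontal directions and noting that Corollary~\ref{cor:Greene_wu_cor} makes $\dot f_0\mapsto v^{\dot f_0}(x)$ a nonzero, hence surjective, linear functional $\BB\to\mathbb R$, covering the $\partial_t$-direction.

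For the remaining content of (iii), injectivity of $\bar\pi_L^{-1}(0,P_q(\cdot,x)|_\BB)$ is the fiber-level translation of the Whitney-type embedding: if $P_q(\cdot,y)|_\BB=P_q(\cdot,x)|_\BB$, then pairing against each $f_j$ gives $v^{f_j}(y)=v^{f_j}(x)$ for every $j$, so $y=x$ by the embedding property in Corollary~\ref{cor:Greene_wu_cor}. For injectivity of $d\bar\pi_L$ at $(0,P_q(\cdot,x)|_\BB;x,0,dt)$ one differentiates the curve
\[
\bigl(f_s,P(\cdot,x_s;u^{f_s}_{g^\circ})|_\BB;x_s,u^{f_s}_{g^\circ}(x_s),-du^{f_s}_{g^\circ}(x_s)+dt\bigr)\in\overline{\mathcal L}_\delta
\]
at $s=0$ with $f_0=0$ and $x_0=x$. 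Projecting under $\bar\pi_L$ and differentiating reproduces, verbatim but now in the finite-dimensional setting of $\BB\oplus\BB^\ast$, the first-order term $\bigl(\dot f_0,\,B(\cdot,x)\dot f_0 - d_x P_q(\cdot,x)|_\BB\cdot\dot x_0\bigr)$ that appears in the paragraph preceding \eqref{eq: dv cdot dot x = 0}. Vanishing of the first component immediately kills $\dot f_0$, after which vanishing of the second reduces to $dv^{f_j}(x)\cdot\dot x_0=0$ for $j=1,\dots,N$; by the spanning property \eqref{eq: full span''} this forces $\dot x_0=0$, proving the immersion claim.

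The main technical ingredient is the smooth dependence of the Poisson kernel $f\mapsto P(\cdot,x;u^f_{g^\circ})$ underlying the operator $B(\cdot,x)$, which is a standard perturbation statement for the uniformly well-posed Dirichlet problem of $D_uL_{g^\circ}(u^f_{g^\circ})$ and is already used tacitly in Section~\ref{subsect: heuristic}. Crucially, the precise form of $B$ never enters the argument: because the $\BB$-component of the image of $d\bar\pi_L$ is simply $\dot f_0$, its vanishing kills $\dot f_0$ outright, and the remaining obstruction becomes a purely algebraic statement about the span of $\{dv^{f_j}(x)\}_{j=1}^N$. The Whitney-projection construction of Proposition~\ref{prop:GreeneWu_generalization} was designed precisely to supply both the embedding and the full-span property \eqref{eq: full span''} needed here; once those are in hand, the Bolker condition at the base points reduces to bookkeeping.
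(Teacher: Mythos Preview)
Your argument is essentially the same as the paper's, and parts (i) and (iii) are fine. There is, however, a genuine gap in your treatment of (ii). You write that at a general point $(f,x)$ the functional $\dot f_0\mapsto v^{\dot f_0}(x)$ is nonzero by Corollary~\ref{cor:Greene_wu_cor}. But at a general $f$ the relevant $v^{\dot f_0}$ solves $E(u^f_{g^\circ})v=0$, not the stability equation $Qv=0$; Corollary~\ref{cor:Greene_wu_cor} only guarantees non-vanishing of the $Q$-Poisson map, i.e.\ only at $f=0$. This is exactly why the statement reads ``For $\delta>0$ sufficiently small''.

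The paper closes this gap by the routine perturbation argument you omit: for each $x_0\in\Sigma^\circ$ pick $\phi_0\in\BB$ with $v^{\phi_0}(x_0)\neq 0$, use the continuous dependence of $(x,f)\mapsto\int_{\partial\Sigma^\circ}P(y,x;u^f_{g^\circ})\phi_0(y)\,dy$ (Corollary~\ref{cor:contDependenceOfStabilitySol} together with Proposition~\ref{prop: smooth dependence of solution}) to get non-vanishing on a neighborhood $U_0\times W_0$ of $(x_0,0)$, and then cover $\Sigma^\circ$ by finitely many such $U_j$ and take $\delta$ small enough that $\G_\delta(\BB)\subset\bigcap_j W_j$. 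Equivalently, in your formulation: being a submersion is an open condition, you have verified it along the compact set $\{0\}\times\Sigma^\circ$, and a compactness argument then gives it on $\G_\delta(\BB)\times\Sigma^\circ$ for $\delta$ small. Either way, this step must be stated; without it, (ii) is only proved at $f=0$.
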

\begin{proof}
(i) The expression for $N^*Z_\delta(\BB)$ can be obtained following the heuristic computation in Section \ref{subsect: heuristic} so we will not repeat it here.

(ii) By Lemma 5.3 of \cite{MST} it suffices to check that 
\begin{eqnarray}
    \label{eq: left is ok}
\text{there are no } (x,\xi) \in T^*\widetilde M,\ y\in \mathcal G_\delta(\BB) \text{ such that } (y,0;x,\xi) \in N^*Z_\delta(\BB)\setminus 0
\end{eqnarray}
and \begin{eqnarray}
    \label{eq: right is ok}
\text{there are no } (y,\eta) \in T^*\mathcal G_\delta(\BB),\ x\in \widetilde M \text{ such that } (y,\eta;x,0) \in N^*Z_\delta(\BB)\setminus 0.
\end{eqnarray}
Condition \eqref{eq: right is ok} can be verified by observing that $-du_{g^\circ}^f(x) + dt$ never vanishes. To see \eqref{eq: left is ok}, we need to check that $P(\cdot, x; u_{g^\circ}^f)\vert_{\BB} \neq 0$ for all $x\in \Sigma^\circ$. In the particular case $f = 0$ we observe first that for all $\phi\in \BB$,
\[\langle P(\cdot, x; 0) \vert_{\BB}, \phi\rangle = \langle P_q(\cdot, x) \vert_{\BB}, \phi\rangle = v^\phi(x)\]
where $v^\phi$ is the solution of \eqref{eq_dulgv_zero} with boundary value $\phi\in \BB$. We have chosen $\BB = \operatorname{span}(\SSS)$ and by Corollary \ref{cor:Greene_wu_cor}, the embedding via \eqref{eq: embedding avoiding zero} avoids the origin. Therefore, for all $x\in \Sigma^\circ$ we can find an element $\phi\in \BB$ such that $\langle P(\cdot, x; 0) \vert_{\BB}, \phi\rangle \neq 0$. Therefore, $P(\cdot, x; 0)\vert_{\BB}\neq 0$ for all $x\in \Sigma^\circ$, which shows that \eqref{eq: left is ok} holds when $f= 0$.

For general $f\in \mathcal G_\delta(\BB)$ sufficiently small, we will apply a perturbation argument. For any fixed $x_0 \in \Sigma^\circ$, let $\phi_0 \in \BB$ so that $\langle P(\cdot,x_0;0)\vert_\BB, \phi_0 \rangle  \neq 0$. 
According to Corollary~\ref{cor:contDependenceOfStabilitySol} and Proposition~\ref{prop: smooth dependence of solution}, the map 
\[
    C^{2,\gamma}(\p\Sigma^\circ)\times C^{2,\gamma}(\p\Sigma^\circ) \ni (h,f)\mapsto \int_{\partial \Sigma^\circ}P(y, \cdot; u^f_{g^\circ}) h(y) dy \in C^{2,\gamma}(\Sigma^\circ)
\]
is continuous. In particular, $$(x,f)\mapsto \int_{\partial\Sigma^\circ}P(y,x; u^f_{g^\circ})\phi_0(y) dy \in \mR$$ 
as a map from $\Sigma^\circ \times C^{2,\gamma}(\p\Sigma^\circ)$ to $\mathbb R$ is continuous. Therefore, there are open neighborhoods $U_0$ of $x_0$ in $\Sigma^\circ$ and $W_0$ of $0$ in $C^{2,\gamma}(\p\Sigma^\circ)$ so that $\int_{\partial \Sigma^\circ}P(y,x;u^f_{g^\circ}) \phi_0(y) dy\neq 0$ for $(x,f)\in U_0\times W_0$. In particular, $P(\cdot, x; u^f_{g^\circ})\vert_\BB \neq 0$ for $(x,f)\in U_0\times W_0$. 

Using the compactness of $\Sigma^\circ$ and the fact that $x_0\in \Sigma^\circ$ was chosen arbitrarily above, we cover $\Sigma^\circ$ by open neighborhoods $U_1,\dots, U_m$ so that there exist open neighborhoods $W_1,\dots, W_m$ of $0$ in $C^{2,\gamma}(\p\Sigma^\circ)$ so that $P(\cdot, x; u^f_{g^\circ})\vert_\BB \neq 0$ for $(x,f)\in U_j\times W_j, j\in \{1,\dots, m\}$. In particular, for any $f \in \bigcap_{j=1}^m W_j$, we find that $P(\cdot, x; u^f_{g^\circ})\vert_\BB \neq 0$ for all $x\in \bigcup_{j=1}^m U_j \supset \Sigma^\circ$. 

In conclusion, by choosing $\delta>0$ sufficiently small, for every $f\in \mathcal{G}_\delta(\BB)$ and every $x\in\Sigma^\circ$, $P(\cdot,x;u^f_{g^\circ})\vert_\BB \neq 0$, completing the verification of \eqref{eq: right is ok}.

(iii) Equation \eqref{eq: modded out conormal} is obvious.
To check \eqref{eq: finite dim inj}, following the calculation in Section \ref{subsect: heuristic} shows that this is equivalent to showing that $x\mapsto P_q(\cdot, x)\vert_\BB$ from $\Sigma^\circ \to \BB^*$ is injective. But by Corollary \ref{cor:Greene_wu_cor} solutions $v$ to $Qv=0$ with boundary conditions in $\BB$ separate points in $\Sigma^\circ$. So \eqref{eq: finite dim inj} holds.

Next we follow the calculation in Section \ref{subsect: heuristic}, with the modification that \eqref{eq: dv cdot dot x = 0} holds with $\dot{\lambda}_0 = 0$ due to that fact that the conic direction was modded out in \eqref{eq: modded out conormal}. Thus \eqref{eq: finite dim immersion} is equivalent to showing that at all points $x\in \Sigma^\circ$, 
$${\rm span}\{dv(x) \colon Qv=0,\  v\vert_{\partial\Sigma^\circ} \in \BB \} = T^*_{x}\Sigma^\circ\,.$$
This is precisely what $\BB$ is constructed to do by Corollary \ref{cor:Greene_wu_cor}. The proof is complete.
\end{proof}

We need to extend the injectivity part in Lemma \ref{lem: injectivity at f = 0} (iii) from the case $f=0$ to small $f$.
\begin{Lemma}
\label{lem: nbhds of injectivity}
Let $\delta_0>0$ be sufficiently small so that $Z_{\delta_0}(\BB)$ is well-defined. There is a finite collection of open subsets $U_j\subset \overline{\mathcal L}_{\delta_0}, j\in\{1,\dots,J\}$ with $\{(0, P_q(\cdot, x)\vert_\BB; x, 0, dt)\colon x\in \Sigma^\circ\}\subset \bigcup_{j=1}^J U_j$ such that if for some $j\in \{1,\dots,J\}$, 
\[
	(f, P(\cdot, y; u_{g^\circ}^{f})\vert_{\BB} ;y, u_{g^\circ}^{f} (y),  -du_{g^\circ}^{f}(y) + dt )\in U_j\,,\  (f, P(\cdot, x; u_{g^\circ}^{f})\vert_{\BB} ;x, u_{g^\circ}^{f} (x),  -du_{g^\circ}^{f}(x) + dt )\in U_j
\]
and $ (f, P(\cdot, y; u_{g^\circ}^{f})\vert_{\BB}) = (f, P(\cdot, x; u_{g^\circ}^{f})\vert_{\BB})$, then $x=y$.
\end{Lemma}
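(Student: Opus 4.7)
My approach is to pass to the finite dimensional parametrization of $\overline{\mathcal L}_{\delta_0}$, apply the local injectivity of immersions at each reference point, and close via compactness of $\Sigma^\circ$. Using \eqref{eq: modded out conormal} I would identify $\overline{\mathcal L}_{\delta_0}$ with $\mathcal G_{\delta_0}(\BB)\times\Sigma^\circ$ via $(f,x)$-coordinates, under which the left projection $\bar\pi_L$ becomes the map
\[
    \Phi\colon \mathcal G_{\delta_0}(\BB)\times \Sigma^\circ \to \mathcal G_{\delta_0}(\BB)\times \BB^*,\qquad (f,x)\mapsto \bigl(f,\,P(\cdot,x;u^f_{g^\circ})\vert_{\BB}\bigr).
\]
This map is at least $C^1$ by Theorem~\ref{thm: smooth dependence} together with continuous dependence of the Poisson kernel on its base point and on the coefficient $u^f_{g^\circ}$, which was already invoked in the verification of \eqref{eq: left is ok} during the proof of Lemma~\ref{lem: injectivity at f = 0}.

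The key step is to show that $\Phi$ is a $C^1$ immersion at every point $(0,x_0)$ with $x_0\in \Sigma^\circ$. Since the first component of $\Phi$ is the identity in the $f$-variable, the injectivity of $d\Phi(0,x_0)$ reduces to showing that the differential at $x_0$ of $x\mapsto P_q(\cdot,x)\vert_{\BB}$ is injective from $T_{x_0}\Sigma^\circ$ to $\BB^*$, which is exactly the content of \eqref{eq: finite dim immersion} established in Lemma~\ref{lem: injectivity at f = 0}(iii). By the local normal form for $C^1$ immersions, this yields an open neighborhood $V_{x_0}$ of $(0,x_0)$ in $\mathcal G_{\delta_0}(\BB)\times \Sigma^\circ$ on which $\Phi$ is injective.

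Since $\{0\}\times \Sigma^\circ$ is compact, I can extract a finite subcover $V_{x_1},\dots,V_{x_J}$; transporting these back through the parametrization \eqref{eq: modded out conormal} yields open sets $U_1,\dots,U_J\subset \overline{\mathcal L}_{\delta_0}$ with the required coverage property, and injectivity of $\Phi$ on each $V_{x_j}$ is precisely the injectivity statement needed for $\bar\pi_L$ on $U_j$. The main issue to watch for is the regularity needed to apply the immersion theorem in the $f$ direction --- specifically continuous differentiability of $f\mapsto P(\cdot,x;u^f_{g^\circ})\vert_{\BB}$ --- but this is of the type already established in Section~\ref{sec_linearization} and its supporting material. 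No essentially new ingredient beyond Lemma~\ref{lem: injectivity at f = 0} and compactness is needed.
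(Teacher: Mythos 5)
Your proof is correct and follows essentially the same strategy as the paper's: invoke the differential injectivity of $\bar\pi_L$ from Lemma~\ref{lem: injectivity at f = 0}(iii) at every reference point $(0, P_q(\cdot,x)\vert_\BB;x,0,dt)$, deduce local injectivity of $\bar\pi_L$ near each such point via the immersion normal form, and then pass to a finite subcover using compactness of $\Sigma^\circ$. The $(f,x)$-parametrization you introduce is implicit in the paper's argument, and the $C^1$ (in fact analytic) regularity needed for the immersion theorem is available since Section~\ref{subsec_finite_dim_ms} operates under the standing analyticity assumption on $(\widetilde{M},g^\circ)$.
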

\begin{proof}
As a consequence of Lemma \ref{lem: injectivity at f = 0} (iii), the map $d\bar \pi_L: T\overline{\mathcal L}_{\delta_0} \to TT^*\mathcal G_{\delta_0}(\BB)$ is injective on $\{(0, P_q(\cdot, x)\vert_\BB; x, 0, dt)\colon x\in \Sigma^\circ\}$. This implies that for each $x\in \Sigma^\circ$, $\bar\pi_L : \overline{\mathcal L}_{\delta_0} \to T^*\mathcal G_{\delta_0}(\BB)$ is injective in a small neighborhood of $(0, P_q(\cdot, x)\vert_\BB; x, 0, dt)$ in $ N^*Z_{\delta_0}(\BB)$. By compactness, we can find a finite open cover $\{U_j \subset \overline{\mathcal L}_{\delta_0}\}_{j=1}^J$ of 
$\{(0, P_q(\cdot, x)\vert_\BB; x, 0, dt)\colon x\in \Sigma^\circ\}$
such that the map $\bar\pi_L: U_j \to T^*\mathcal G_{\delta_0}(\BB)$ is injective for each $j= 1,\dots,J$. 
\end{proof}

Finally, we verify that under our assumptions the minimal surfaces and the solutions are analytic.

\begin{Lemma}
    \label{lem: min surfs are analytic}
    Let $(\widetilde{M}, g^\circ)$ be an analytic Riemannian manifold with boundary $\partial \widetilde{M}$ and $M\Subset \widetilde{M}$ be an open subset. Suppose $\Sigma \subset \widetilde{M}$ is a smooth embedded minimal surface with boundary $\partial{\Sigma}\subset \widetilde{M}\setminus M$. Then $\Sigma^{\rm int}$ is an analytic hypersurface.  

    Furthermore, if $w$ solves \eqref{eq: MSE} on $\Sigma^\circ$ with $w\vert_{\p\Sigma^\circ}\in C^\omega(\p\Sigma^\circ)$, then $w\in C^\omega(\Sigma^\circ)$. 
\end{Lemma}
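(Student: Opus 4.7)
\textbf{Interior analyticity of $\Sigma^{\rm int}$.} My plan is to work locally. Near any interior point $p\in \Sigma^{\rm int}$, since $\widetilde M$ is an analytic manifold I can choose real-analytic local coordinates $(y_1,\dots,y_{n+1})$ centered at $p$ in which $\Sigma$ is the graph $y_{n+1}=u(y_1,\dots,y_n)$ of a smooth function $u$ defined on an open set of $\mR^n$. In these coordinates the minimal surface equation for $u$ takes the form of a quasi-linear second-order elliptic PDE
\[
 F(y,u,\nabla u,\nabla^2 u)=0,
\]
whose nonlinearity $F$ is a rational expression in the metric components $g^\circ_{ij}(y,u)$ and their first derivatives, and is therefore real-analytic in all its arguments. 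By the classical theorem of Morrey on the analyticity of $C^2$ (or smooth) solutions of nonlinear elliptic equations with real-analytic coefficients (Morrey, \emph{Multiple Integrals in the Calculus of Variations}, Ch.~6; see also Morrey--Nirenberg), $u$ is real-analytic near $0$. Since $p\in\Sigma^{\rm int}$ was arbitrary, $\Sigma^{\rm int}$ is a real-analytic hypersurface.

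\textbf{Interior analyticity of $w$.} The second claim is proved in the same spirit. By the first claim, $\Sigma^\circ$ has real-analytic interior, so the $g^\circ$-Fermi coordinates $(x,t)$ used to set up the minimal surface equation \eqref{eq: MSE} can be taken to be real-analytic in the interior. In those coordinates, \eqref{eq: MSE} is a quasi-linear second-order elliptic PDE with real-analytic coefficients (again a rational expression in $h_u=g^\circ(\cdot,u)$ and its first derivatives). Applying Morrey's interior theorem to $w$ yields $w\in C^\omega(\Sigma^{\mathrm{int}})$.

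\textbf{Analyticity up to the boundary.} To upgrade this to $w\in C^\omega(\Sigma^\circ)$ I would invoke the boundary version of Morrey's theorem for quasi-linear elliptic equations with real-analytic data (Morrey, loc.~cit., Ch.~6; Friedman; Kinderlehrer--Nirenberg): if the equation is elliptic with analytic coefficients, the boundary is an analytic submanifold, and the Dirichlet data is analytic, then the solution is analytic up to the boundary. The hypothesis $w\vert_{\partial\Sigma^\circ}\in C^\omega(\partial\Sigma^\circ)$ presupposes that $\partial\Sigma^\circ$ is an analytic submanifold, and in the setting of this paper $\partial\Sigma^\circ\subset \widetilde M\setminus M$ can be arranged to be analytic by the boundary modification in Proposition~\ref{prop:makeadmissible}. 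Together with the analytic ellipticity of \eqref{eq: MSE}, this yields $w\in C^\omega(\Sigma^\circ)$.

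\textbf{Main obstacle.} The interior parts of both claims reduce to a direct invocation of Morrey's interior analyticity theorem once it is observed that \eqref{eq: MSE} is quasi-linear elliptic with real-analytic nonlinearity whenever $g^\circ$ is analytic and the Fermi base is analytic. The delicate step is the boundary statement: one must verify carefully that $\partial\Sigma^\circ$ is (or can be made) an analytic submanifold compatible with the analytic structure of the interior, and then check the hypotheses of the boundary version of Morrey's theorem for the specific quasi-linear structure of \eqref{eq: MSE}. This is where the main technical work lies, although nothing in the argument goes beyond classical analytic elliptic regularity.
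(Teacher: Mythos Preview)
Your proposal is correct and follows essentially the same route as the paper: write $\Sigma$ locally as a graph in analytic coordinates (the paper uses Fermi coordinates off a tangent analytic hypersurface, you use arbitrary analytic coordinates --- an immaterial difference), observe that the minimal surface equation is quasi-linear elliptic with analytic nonlinearity, and invoke Morrey-type analytic regularity (the paper cites Besse's Appendix Thm.~41 and \cite{arXiv:1506.06224}, which package the same classical result). Your treatment of the boundary step is in fact more careful than the paper's, which dispatches the second claim with a one-line ``the same references also prove the second statement.''
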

\begin{proof}
It suffices to show that $\Sigma$ is an analytic embedding in a neighborhood of every point $p\in \Sigma^{\rm int}$. To this end, suppose $p_0\in \Sigma^{\rm int}$. Since $\Sigma\subset \widetilde{M}$ is a smooth embedding, there exists an open subset $\Omega \subset \Sigma$ containing $p_0$ and an analytic hypersurface $\mathcal H$ intersecting $\Omega$ tangentially at $p_0$. In (analytic) Fermi coordinates normal to $\mathcal H$,
$$\Omega = \{(x', x_n) \mid x_n = u(x')\}\,,$$
with $u(x')$ solving the nonlinear elliptic PDE \eqref{eq: MSE}. According to Lemma~\ref{lem:linofmin}, the linearization of \eqref{eq: MSE} at $u$ is an elliptic differential operator so that $u$ is an elliptic solution of \eqref{eq: MSE} so that we have  $u\in C^\omega(\Omega)$ according to \cite[Appdx.~Thm.~41]{MR867684} (see also \cite[Cor.~1.4]{arXiv:1506.06224}). The same references also prove the second statement of this lemma.
\end{proof}

We can now give the 
\begin{proof}[Proof of Proposition~\ref{prop: bolker}]

Let $\delta_0>0$ be sufficiently small. Then by Lemma \ref{lem: injectivity at f = 0} (ii), diagram \eqref{df_diagrams} gives a double fibration transform in the sense of \cite{MST}.

To verify the Bolker condition, it suffices by Lemma \ref{lem: conic no matter} to show that $\bar\pi_L : \overline{\mathcal L}_{\delta} \to T^* \mathcal G_{\delta}(\BB)$ is an injective immersion for some $\delta>0$. Let $\{U_k\}_{k=1}^J$ be the collection of open sets in $\overline{\mathcal L}_{\delta_0}$ constructed in Lemma \ref{lem: nbhds of injectivity} such that
\begin{eqnarray}
\label{eq: in the union}
\{(0, P_q(\cdot, x)\vert_\BB; x, 0, dt)\colon x\in \Sigma^\circ\} \subset \bigcup_{j=1}^J U_j\,.
\end{eqnarray}
By Lemma \ref{lem: injectivity at f = 0} we have injectivity of $\bar \pi_L$ and $d\bar \pi_L$ on the submanifold
\[
	\{(0, P_q(\cdot, x)\vert_\BB; x, 0, dt)\colon x\in \Sigma^\circ\}\subset N^*Z_\delta(\BB)\,.
\]
We now need to choose $\delta>0$ small enough so that this property extends to all of $\overline{\mathcal L}_{\delta_0}$ via a perturbation argument. First, we look at injectivity of $\bar\pi_L$. Suppose to the contrary that, for any $\delta>0$, $\bar\pi_L$ is not injective on $\overline{\mathcal L}_{\delta_0}$. This implies that for any $j \geq 1$, there exist distinct $x_j,y_j\in\Sigma^\circ$ and $f_j\in \G_{1/j}(\BB)$ for which 
\begin{multline}\label{eq: non-injective}
\bar \pi_L\left((f_j, P(\cdot, x_j; u_{g^\circ}^{f_j})\vert_{\BB} ;x_j, u_{g^\circ}^{f_j} (x_j), -du_{g^\circ}^{f_j}(x_j) + dt )\right) \\
 = \bar\pi_L\left((f_j, P(\cdot, y_j; u_{g^\circ}^{f_j})\vert_{\BB} ;y_j, u_{g^\circ}^{f_j} (y_j), -du_{g^\circ}^{f_j}(y_j) + dt ) \right).
\end{multline}
By the definition of $\G_\delta(\BB)$, $f_j\to 0$ in $C^{2,\gamma}(\partial\Sigma^\circ)$, which implies that $u^{f_j}_{g^\circ} \to 0$ in $C^{2,\gamma}(\Sigma^\circ)$ by Proposition~\ref{prop: smooth dependence of solution}. By compactness we may assume, after passing to a subsequence, that $x_j\to x_0\in \Sigma^\circ$ and $y_j\to y_0\in \Sigma^\circ$. Taking this limit in \eqref{eq: non-injective} gives 
\[
	\bar\pi_L\left((0, P(\cdot,x_0; 0)\vert_{\BB} ;x_0, 0,  dt )\right) = \bar\pi_L\left((0, P(\cdot, y_0; 0)\vert_{\BB} ;y_0, 0,  dt ) \right)\,.
\]
Lemma~\ref{lem: injectivity at f = 0} then states that $x_0 = y_0$ which means that 
\begin{eqnarray}\label{eq: points are near}
d(x_0,x_j)\to 0,\ d(x_0, y_j)\to 0
\end{eqnarray}
as $j\to\infty$. Now $(0, P(\cdot,x_0; 0)\vert_{\BB} ;x_0, 0,  dt )\in U_k$ for some $k \in\{ 1,\dots, J\}$ so 
$$(f_j, P(\cdot, x_j; u_{g^\circ}^{f_j})\vert_{\BB} ;x_j, u_{g^\circ}^{f_j} (x_j), -du_{g^\circ}^{f_j}(x_j) + dt ), (f_j, P(\cdot, x_j; u_{g^\circ}^{f_j})\vert_{\BB} ;y_j, u_{g^\circ}^{f_j} (y_j), -du_{g^\circ}^{f_j}(x_j) + dt ) \in U_k$$
for $j\in \mathbb N$ sufficiently large ($1/j\ll \delta_0$). The construction of $U_k$ guarantees that $\bar\pi_L$ is injective in $U_k$. So \eqref{eq: non-injective} implies that $x_j = y_j$ which is a contradiction to our assumption that $y_j$ and $x_j$ are distinct. So there is indeed a $j_0\in \mathbb N$ such that $\bar\pi_L$ is injective when acting on $\overline{\mathcal L}_{1/j_{0}}$, and we set $\delta_1 = 1/j_0$.

We now need to verify the injectivity of $d\bar\pi_L$ over the tangent space of each point in $\overline{\mathcal L}_{\delta_1}$. Lemma~\ref{lem: injectivity at f = 0} states that this property is satisfied at points in the set 
$$\{(0, P_q(\cdot, x); x, 0, dt)\colon x\in \Sigma^\circ\}\subset \overline{\mathcal L}_{\delta_1}\,.$$  
Injectivity of $d\bar \pi_L$ is an open condition, so there is a neighborhood $U$ of $\{(0, P_q(\cdot, x); x, 0, dt)\colon x\in \Sigma^\circ\}$ in $\overline{\mathcal L}_{\delta_1}$ on which $d\bar\pi_L$ is injective.

By Proposition~\ref{prop: smooth dependence of solution} we know that $u^{f}_{g^\circ} \to 0$ in $C^{2,\gamma}(\Sigma^\circ)$ if $f\to 0$ in $C^{2,\gamma}(\partial\Sigma^\circ)$. Therefore, for some $\delta\in (0,\delta_1)$ sufficiently small, we have 
\[
    \overline{\mathcal L}_{\delta} \subset U\,,
\]
so that we have guaranteed that $\pi_L$ is an injective immersion on $\overline{\mathcal L}_{\delta}$.

Finally, that the interior of $Z_\delta(\BB)$ is embedded analytically in $B_1(0)\times \widetilde{M}$ when $g^{\circ}$ is analytic is guaranteed by Lemma~\ref{lem: min surfs are analytic} and the consequence of Proposition~\ref{prop: smooth dependence of solution} that a solution to \eqref{eq: MSE} depends analytically on its boundary value. 
\end{proof}

\section{Stability in the case of conformal deformations} \label{sec_stability}

The purpose of this section is to show that one can identify a small conformal deformation of an analytic metric on any compact subset strictly within the manifold $\widetilde{M}$ from knowledge of area data of minimal surfaces with prescribed boundary. From this section onward we will have need of multiple minimal surfaces so we replace the notation $\Sigma^\circ$ by $\Sigma_0$.

Let $g^\circ$ be a fixed, real analytic metric. Let $\Sigma_0$ be an arbitrary admissible minimal surface for $(\widetilde{M},g^\circ)$ with $\p\Sigma_0\subset \widetilde{M}\setminus M$. As a consequence of Theorem~\ref{thm: smooth dependence}, there is an open neighborhood $\mathcal{N}\subset H^{3+\frac{n+1}{2}+\gamma}(\widetilde{M})$ containing $1$ and a $\delta >0$ so that for all $\alpha \in \mathcal{N}$ the operator $F(\alpha g^\circ)(f)$ given by \eqref{eq: def of volume operator} is well-defined for all $f\in U_\delta$. In particular, let $f_1,\dots,f_{N} \in U_\delta$ be a basis of $\BB_0\subset C^{\omega}(\partial \Sigma_0)$ as constructed by Corollary~\ref{cor: bolker}, where $N=2n+2$ is its dimension. We consider the map
\begin{equation}\label{eq:boldF0}
	\mathbf{F}_0 \colon  \mathcal{N} \to C(\bar{B}_1^{N})\,,\quad \mathbf{F}_0(\alpha)(z) = F(\alpha g^\circ)(z_1f_1 + \dots + z_{N}f_{N})\,,
\end{equation}
Recall that $\bar{B}_1^{N}$ is the closed unit ball in the $\ell^1$ norm, which ensures that $z_1f_1 + \dots + z_{N}f_{N}\in U_\delta$ when $z \in \bar{B}_1^{N}$.

    Our first main result considers the case where we use areas of minimal surfaces to distinguish an analytic metric satisfying the ampleness condition from its small conformal perturbations (i.e.\ one of the conformal factors is assumed to be $1$). %
\begin{Theorem}\label{thm:rigidity}
	Assume that $g^\circ$ is analytic and that the ampleness condition in Definition~\ref{def:ampleness} holds for $M$. There is a finite set of admissible minimal surfaces $\Sigma_1,\dots,\Sigma_K$ for $(\widetilde{M},g^\circ)$ with $\p\Sigma_j \subset \widetilde{M}\setminus M$ so that on each $\p\Sigma_j$ there is an $N=(2n+2)$-dimensional subspace $\mathbf{B}_j \subset C^\omega(\partial\Sigma_j)$ spanned by elements $\{f_1,\dots, f_{N}\} \subset C^\omega(\partial\Sigma_j)$ each with $C^{2,\gamma}$-norm equal $\delta_j$, and a map $\mathbf{F}_j: \mathcal N \to C(\bar B_1^{N})$ defined as in \eqref{eq:boldF0} so that the following holds.

	There are $\mu \in (1/2,1)$ and $s \gg 1$ so that for any $L>0$ there are $C>0, \eps >0$ so that for any $g \in G$ metric on $\widetilde{M}$ conformal to $g^\circ$, $g = \alpha g^\circ$, satisfying $\alpha=1$ on $\widetilde{M}\setminus M$, $\norm{\alpha-1}_{H^{3+\frac{n+1}{2}+\gamma}(M)} \leq \eps$, and $\norm{\alpha}_{H^s(M)} \leq L$, we have
	\begin{equation}\label{eq:alphabound}
		\norm{\alpha-1}_{H^{3+\frac{n+1}{2}+\gamma}(M)} \leq C\sum_{j=1}^K\norm{\mathbf{F}_j(\alpha)-\mathbf{F}_j(1)}_{L^2(B_1^{N})}^\mu\,.
	\end{equation}
\end{Theorem}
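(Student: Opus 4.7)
The plan is to reduce the nonlinear stability bound \eqref{eq:alphabound} to a linear Sobolev stability estimate for the minimal surface transform via the Stefanov--Uhlmann linearization scheme \cite{stefanov2009linearizing}, and to establish that linear estimate by gluing the Bolker FIOs furnished by Corollary~\ref{cor: bolker}. By the ampleness condition, for every $(y_0,\xi_0)\in S^\ast M$ there exists a smooth embedded minimal surface $\Sigma_{(y_0,\xi_0)}$ with $(y_0,\xi_0)\in N^\ast\Sigma_{(y_0,\xi_0)}$ and $\partial \Sigma_{(y_0,\xi_0)}\subset\widetilde M\setminus M$, which by Proposition~\ref{prop:makeadmissible} may be assumed admissible. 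Corollary~\ref{cor: bolker} then produces an $N=2n+2$ dimensional subspace $\BB_{(y_0,\xi_0)}\subset C^\omega(\partial\Sigma_{(y_0,\xi_0)})$ and an FIO $T_{(y_0,\xi_0)}=D\mathbf{F}_{(y_0,\xi_0)}(1)\colon C^\infty_c(\widetilde M)\to C^\infty(\bar B_1^N)$ satisfying the Bolker condition, whose canonical relation $N^\ast Z_{\delta_{(y_0,\xi_0)}}(\BB_{(y_0,\xi_0)})$ has right projection covering a conic neighborhood of $N^\ast\Sigma_{(y_0,\xi_0)}\setminus 0$ inside $T^\ast\widetilde M\setminus 0$. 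Compactness of $S^\ast M$ then yields a finite collection of base points producing surfaces $\Sigma_1,\dots,\Sigma_K$, spaces $\BB_j$, and FIOs $T_j$ so that $\bigcup_{j=1}^K \pi_R(N^\ast Z_{\delta_j}(\BB_j))\supset T^\ast M\setminus 0$.

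Since each $T_j$ satisfies the Bolker condition, standard FIO calculus (as in \cite{MR812288,MST}) implies that $T_j^\ast T_j$ is an elliptic pseudodifferential operator of some negative order $-m$, whose principal symbol (expressible through the induced volume form on the minimal surfaces) is non-vanishing on the right projection of the $j$-th canonical relation. The covering of $T^\ast M\setminus 0$ ensures that $P:=\sum_{j=1}^K T_j^\ast T_j$ is an elliptic PsDO of order $-2m$ on a neighborhood of $M$, modulo smoothing. A parametrix construction combined with the injectivity supplied by Theorem~\ref{thm:integral_transform_uniqueness} produces the linear Sobolev stability estimate
\[
\|\beta\|_{L^2(M)}\leq C\sum_{j=1}^K\|T_j\beta\|_{H^m(B_1^N)}\qquad\text{for every }\beta\in L^2(\widetilde M)\text{ supported in }\ol M.
\]

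For the nonlinear step, applying the Taylor expansion \eqref{eq:tayloronF} with $g=g^\circ$ and $H=(\alpha-1)g^\circ$ gives
\[
\mathbf{F}_j(\alpha)-\mathbf{F}_j(1)=T_j(\alpha-1)+\mathbf{R}_j(\alpha-1),\qquad \|\mathbf{R}_j(\alpha-1)\|_{C(\bar B_1^N)}\leq C\|\alpha-1\|_G^2,
\]
and the analytic dependence of $Z_{\delta_j}(\BB_j)$ on $z\in\bar B_1^N$ from Proposition~\ref{prop: bolker}, together with the a priori bound $\|\alpha\|_{H^s(M)}\leq L$, upgrades these to $\|\mathbf{R}_j(\alpha-1)\|_{H^k(B_1^N)}\leq C_k(L)\|\alpha-1\|_G^2$ and $\|\mathbf{F}_j(\alpha)-\mathbf{F}_j(1)\|_{H^k(B_1^N)}\leq C_k(L)$ whenever $s\geq s(k)$ is chosen sufficiently large. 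Inserting this into the linear estimate and noting that $\|\alpha-1\|_G\leq \eps$ allows absorption of the quadratic remainder yields
\[
\|\alpha-1\|_{L^2(M)}\leq C\sum_{j=1}^K\|\mathbf{F}_j(\alpha)-\mathbf{F}_j(1)\|_{H^m(B_1^N)}.
\]
Sobolev interpolation on the data $\|\cdot\|_{H^m(B_1^N)}\leq C\|\cdot\|_{L^2(B_1^N)}^{1-m/k}\|\cdot\|_{H^k(B_1^N)}^{m/k}$, followed by a parallel interpolation on the left between $L^2(M)$ and $H^s(M)$ to replace $\|\alpha-1\|_{L^2(M)}$ by $\|\alpha-1\|_G$ at the cost of another power, produces a combined Hölder exponent that tends to $1$ as $k$ and $s$ are taken large. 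Choosing $k,s$ sufficiently large yields an admissible $\mu\in(1/2,1)$ and hence \eqref{eq:alphabound}.

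The main obstacle is the elliptic analysis in the second paragraph. The Bolker condition in Proposition~\ref{prop: bolker} is verified only on the finite dimensional canonical relation $N^\ast Z_{\delta_j}(\BB_j)$, so one must carefully confirm that $T_j^\ast T_j$ is a well-defined PsDO of a specific negative order on $\widetilde M$ with nonvanishing principal symbol in a conic neighborhood of $N^\ast\Sigma_j$, and that the microlocal parametrices assembled at the individual $\Sigma_j$ glue to a global left inverse modulo smoothing under the finite cover. A secondary technical point is obtaining Sobolev (rather than only $C^0$) control of the remainder $\mathbf{R}_j$ in the parameter $z\in\bar B_1^N$, which rests on the analytic embedding of $Z_{\delta_j}(\BB_j)$ stated at the end of Proposition~\ref{prop: bolker} combined with higher order differentiability of the Fréchet map $F$ beyond the $C^2$ regularity recorded in Theorem~\ref{thm: smooth dependence}.
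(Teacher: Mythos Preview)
Your approach is essentially the paper's: finite cover from ampleness, Bolker normal operators summing to an elliptic $\Psi$DO of order $-n$ on a neighborhood of $M$, injectivity via analytic microlocal methods, then the Stefanov--Uhlmann linearization scheme. Two points need sharpening.

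First, invoking Theorem~\ref{thm:integral_transform_uniqueness} for the injectivity of $P=\sum_j T_j^\ast T_j$ is not quite right: that theorem assumes $(\mathcal{R}\beta)(\Sigma)=0$ for \emph{all} minimal surfaces, whereas $T_j\beta=0$ only gives vanishing on the finite-dimensional families $\{\Sigma_j(f^z,g^\circ):z\in B_1^N\}$. What you actually need is the direct argument (which is the content of Proposition~\ref{prop:injellop}): from $T_j\beta=0$ and the Bolker condition, \cite[Thm.~1.2]{MST} gives $\WF_a(\beta)\cap\pi_R(C_j)=\emptyset$; the ampleness cover then yields $\WF_a(\beta)\cap T^\ast M=\emptyset$, and microlocal analytic continuation at a point of $N_e(\supp\beta)$ forces $\beta=0$. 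In the paper's logical order, Theorem~\ref{thm:integral_transform_uniqueness} under ampleness is in fact derived \emph{from} this injectivity, so citing it here would be circular.

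Second, the absorption step as you state it does not work: you cannot absorb $C\|\alpha-1\|_G^2$ into the left side $\|\alpha-1\|_{L^2}$ before interpolating, since $\|\cdot\|_G$ is the stronger norm. Relatedly, your requirement of $H^k(B_1^N)$-bounds on the nonlinear remainder $\mathbf{R}_j$ is unnecessary. The paper's setup takes $\mathcal{X}_2=(L^2)^K$ as the weak data space, so the $C^0$ remainder bound from Theorem~\ref{thm: smooth dependence} already suffices, and $\mathcal{X}_2'=(H^{n/2})^K$ for the linear lower bound of Proposition~\ref{prop:lowerbd}; the only high-regularity mapping property needed is for the \emph{linear} operator $D\mathbf{F}(1)\colon H^{s_1}\to H^{s_2}$, which is a standard FIO/$\Psi$DO estimate. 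The interpolation and absorption are then packaged inside \cite[Thm.~2]{stefanov2009linearizing} with the condition $\mu_1\mu_2>1/2$, yielding the exponent $\mu=\mu_1\mu_2$.
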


The previous theorem will pave the way to the following more general local uniqueness and stability result. Its proof is similar but somewhat more involved than that of Theorem \ref{thm:rigidity}, and it involves an explicit calculation of the amplitude of the normal operator of a double fibration transform.

\begin{Theorem}\label{thm:stability}
	Assume that $g^\circ$ is analytic and that the ampleness condition in Definition~\ref{def:ampleness} holds for $M$. There is a finite set of admissible minimal surfaces $\Sigma_1,\dots,\Sigma_K$ for $(\widetilde{M},g^\circ)$ with $\p\Sigma_j\subset \widetilde{M}\setminus M$ so that on each $\p\Sigma_j$ there is an $N=(2n+2)$-dimensional subspace $\mathbf{B}_j \subset C^\omega(\partial\Sigma_j)$ spanned by elements $\{f_1,\dots, f_{N}\} \subset C^\omega(\partial\Sigma_j)$ each with $C^{2,\gamma}$-norm equal to $\delta_j$, and a map $\mathbf{F}_j\colon \mathcal N \to C(\bar B_1^{N})$ defined as in \eqref{eq:boldF0} so that the following holds.

	There are $\mu \in (1/2,1)$ and $s \gg 1$ so that for any $L>0$ there are $C,\eps >0$ so that for any  $g_1,g_2 \in G\cap C^{16n^2+72n+39}$ metrics on $\widetilde{M}$ conformal to $g^\circ$, $g_i = \alpha_i g^\circ, i\in\{1,2\}$, satisfying $\alpha_1=\alpha_2$ on $\widetilde{M}\setminus M$, $\norm{\alpha_i - 1}_{C^{16n^2+72n+39}(M)} \leq \eps$, %
	and $\norm{\alpha_i}_{H^s(M)} \leq L\,,i\in\{1,2\}$, we have
	\[
		\norm{\alpha_1-\alpha_2}_{H^{3+\frac{n+1}{2}+\gamma}(M)}\leq C\sum_{j=1}^K\norm{\mathbf{F}_j(\alpha_1)-\mathbf{F}_j(\alpha_2)}^\mu_{L^2(B_1^{N})}\,.
	\]
\end{Theorem}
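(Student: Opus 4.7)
\textbf{Proof proposal for Theorem~\ref{thm:stability}.} The plan is to extend Theorem~\ref{thm:rigidity} by reducing to linearized stability at the analytic baseline $\alpha=1$ and then invoking a Stefanov--Uhlmann type perturbation argument~\cite{stefanov2009linearizing} to transfer the resulting linear estimate to the nonlinear stability for pairs. I will use exactly the same minimal surfaces $\Sigma_j$ and finite-dimensional data subspaces $\BB_j$ produced under the ampleness condition in the proof of Theorem~\ref{thm:rigidity}, so that each linearization $D\mathbf{F}_j(1)$ is an analytic double-fibration transform satisfying Bolker by Corollary~\ref{cor: bolker}. The analytic wave-front recovery of Theorem~\ref{thm:integral_transform_WF} together with its quantitative version in Theorem~\ref{thm:weakfol} gives an $L^2$-stability estimate $\|\beta\|_{L^2(M)} \leq C\sum_j \|D\mathbf{F}_j(1)\beta\|_{H^N(B_1^N)}$ for a finite Sobolev index $N$. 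Interpolating against the a priori bound $\|\beta\|_{H^s(M)}\leq L$ then produces Hölder stability
\[
\|\beta\|_G \;\leq\; C L^{1-\mu}\Bigl(\sum_{j=1}^K \|D\mathbf{F}_j(1)\beta\|_{H^N(B_1^N)}\Bigr)^\mu,
\]
with the exponent $\mu\in(1/2,1)$ determined by the choice of $s$ large enough relative to $3+(n+1)/2+\gamma$.

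For the nonlinear step, Theorem~\ref{thm: smooth dependence} ensures that $\alpha\mapsto \mathbf{F}_j(\alpha)$ is $C^2$-Fréchet differentiable on a neighborhood of $1$, and in particular $\alpha\mapsto D\mathbf{F}_j(\alpha)$ is Lipschitz in the relevant operator norm. The mean-value identity
\[
\mathbf{F}_j(\alpha_1)-\mathbf{F}_j(\alpha_2)\;=\;D\mathbf{F}_j(1)(\alpha_1-\alpha_2) + \int_0^1\!\bigl[D\mathbf{F}_j\bigl((1-t)\alpha_2+t\alpha_1\bigr)-D\mathbf{F}_j(1)\bigr](\alpha_1-\alpha_2)\,dt
\]
combined with $\|\alpha_i-1\|_G\leq\eps$ then yields the perturbation bound
\[
\|\mathbf{F}_j(\alpha_1)-\mathbf{F}_j(\alpha_2) - D\mathbf{F}_j(1)(\alpha_1-\alpha_2)\|_{H^N(B_1^N)} \;\leq\; C\eps\,\|\alpha_1-\alpha_2\|_G.
\]
The high differentiability $C^{16n^2+72n+39}$ required on $\alpha_i$ is precisely what is needed to validate this Lipschitz estimate in the same $H^N$-norm entering the linear stability above; this is where the finite-regularity pseudodifferential continuity of Appendix~\ref{sec_continuity_psdo} is used to propagate the bounds through the amplitude/normal-operator calculation.

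Putting these two ingredients together for $\beta = \alpha_1-\alpha_2$ and writing $u=\|\alpha_1-\alpha_2\|_G$ and $d=\sum_j\|\mathbf{F}_j(\alpha_1)-\mathbf{F}_j(\alpha_2)\|_{L^2(B_1^N)}$, one arrives (after invoking $(a+b)^\mu\leq a^\mu+b^\mu$ and passing from $H^N$ to $L^2$ via the a priori $H^s$ bound) at
\[
u \;\leq\; A\,d^\mu + B\,u^\mu, \qquad A=A(L),\ B\lesssim L^{1-\mu}\eps^\mu.
\]
The conclusion follows from a case analysis. If $A d^\mu\geq u/2$, then $u\leq 2A d^\mu$ directly. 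Otherwise $B u^\mu\geq u/2$, forcing $u^{1-\mu}\leq 2B$ and thus $u\leq C' L\,\eps^{\mu/(1-\mu)}$. Since $\mu>1/2$ gives $\mu/(1-\mu)>1$, this second branch gives $u$ quantitatively smaller than $\eps$; reinserting this improved bound into Step~1 provides a matching lower bound $d\gtrsim \eps u$ via the linear stability, so that $u/d^\mu$ remains bounded and $u\leq C''d^\mu$ in that branch as well. Combining the two branches produces the Hölder stability claimed in the theorem.

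\textbf{Main obstacle.} The delicate part is the absorption step: because the linearized stability is only Hölder with exponent $\mu<1$, rather than Lipschitz, the classical Stefanov--Uhlmann absorption does not close. The argument is rescued precisely by $\mu>1/2$, which makes $\eps^{\mu/(1-\mu)}\ll\eps$ and lets the small-$u$ branch be handled by a second pass through the linear stability. A secondary but nontrivial bookkeeping issue is calibrating Sobolev norms so that the linearization error of the second step lives in the same space in which the linear stability of the first step is formulated; it is this matching that forces the peculiar regularity exponent on the conformal factors.
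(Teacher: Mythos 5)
Your proposal heads in the right spirit (Stefanov--Uhlmann linearization stability plus interpolation) but has a genuine gap precisely at the step you flag as the ``main obstacle.'' The inequality you arrive at,
\[
u \;\leq\; A\,d^{\mu} + B\,u^{\mu}, \qquad \mu\in(1/2,1),
\]
cannot by itself deliver $u\lesssim d^{\mu}$: since $\mu<1$, the error term has a \emph{lower} power of $u$ than the left-hand side, so small $u$ makes $Bu^{\mu}$ relatively \emph{larger}, not smaller. In the second branch you only conclude $u\leq (2B)^{1/(1-\mu)}\lesssim L\,\eps^{\mu/(1-\mu)}$, which is an absolute smallness bound on $u$, not a bound in terms of $d$; reinserting it into the estimate just reproduces a quantity of the same size, and the claim ``linear stability gives a matching lower bound $d\gtrsim\eps u$'' is unsupported --- linear stability gives an \emph{upper} bound on $u$ in terms of $D\mathbf{F}_j(1)(\alpha_1-\alpha_2)$, not a lower bound on $d$. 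Taking $d=0$ illustrates the failure: your inequality reduces to $u\leq Bu^{\mu}$, which does not force $u=0$. So the absorption does not close, and linearizing solely at $\alpha=1$ is insufficient for the two-metric statement.

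The paper's proof resolves this by a different, essential step that your proposal omits: it establishes the linearized stability estimate $\|D\mathbf{F}(\alpha)\beta\|\geq C\|\beta\|_{L^2(M)}$ \emph{uniformly} for all $\alpha$ near $1$, and then invokes \cite[Thm.~2(b)]{stefanov2009linearizing}, which is the version of the Stefanov--Uhlmann argument designed for pairs and which requires precisely this uniform lower bound at nearby reference points (one effectively expands around $\alpha_2$ rather than around $1$, so the quadratic remainder can be absorbed directly). Proving the uniform lower bound requires two technical ingredients not present in your proposal: Lemma~\ref{lem:bolkerstable} showing that the Bolker condition persists under small perturbations of the metric (via the $C^1$-convergence argument in Lemma~\ref{lem:canonical converge} and the openness of injective immersions, Lemma~\ref{lem: general openness of inj imm}), and Proposition~\ref{prop:amplitudesketch} showing that the normal operator $\tilde{\chi}\,D\mathbf{F}_0(\alpha)^{*}\chi^{2}D\mathbf{F}_0(\alpha)\tilde{\chi}$ depends continuously on $\alpha$ in operator norm $L^{2}\to H^{n}$, so that invertibility of $Q(\alpha)^{*}Q(\alpha)$ follows by a Neumann series perturbation of $Q(1)^{*}Q(1)$. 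The high regularity exponent $16n^{2}+72n+39$ is dictated by the explicit amplitude computation in Proposition~\ref{prop:amplitudesketch} (how many derivatives of the phase and symbol are needed to control the normal-operator kernel via Proposition~\ref{prop:hbounded}), \emph{not} by a Lipschitz bound on $\alpha\mapsto D\mathbf{F}_j(\alpha)$ as your proposal suggests --- that Lipschitz property already follows from the $C^{2}$-Fr\'echet differentiability of Theorem~\ref{thm: smooth dependence} in much lower regularity, and in the wrong norm to be useful.
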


Note that the operator in \eqref{eq:boldF0} is only well-defined when the base minimal surface $\Sigma_0$ is admissible, i.e.\ $0$ is not a Dirichlet eigenvalue of the stability operator. In Theorems~\ref{thm:rigidity} and~\ref{thm:stability}, however, the minimal surfaces coming from the ampleness assumption may not be admissible. In order for these to become admissible, we will replace them by some slightly smaller minimal surfaces. This is facilitated by 
\begin{Proposition}\label{prop:makeadmissible}
    Let $g$ be a smooth metric on $\widetilde{M}$ and let $\Sigma^\circ$ be a smooth embedded minimal surface of $(\widetilde{M},g)$ 
    such that $\partial \Sigma^\circ \subset \widetilde{M}\setminus M$. Then there exists an open subset $\Sigma' \subset\subset \Sigma^\circ$ with smooth boundary which is admissible and satisfies $\partial \Sigma' \subset \widetilde{M}\setminus M$. If $g$ and $\Sigma^\circ$ are analytic, then the boundary of $\Sigma'$ can be chosen analytic.

\end{Proposition}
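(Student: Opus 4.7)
\textbf{Proof plan for Proposition~\ref{prop:makeadmissible}.} The plan is to exhaust $\Sigma^\circ$ from the inside by a smooth one-parameter family of nested subdomains $\Sigma_s \subset\subset \Sigma^\circ$ and to combine continuity with strict domain monotonicity of the Dirichlet eigenvalues of the stability operator $Q=\Delta_{\iota^\ast_{\Sigma^\circ}g}+q$ to pick a value $s=s_\ast$ for which $0$ is not a Dirichlet eigenvalue of $Q$ on $\Sigma_{s_\ast}$.

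Concretely, I would fix a tubular neighborhood $U$ of $\partial \Sigma^\circ$ in $\Sigma^\circ$ and a smooth defining function $\rho\colon U\to[0,s_0)$ for $\partial\Sigma^\circ$ (so $\rho^{-1}(0)=\partial\Sigma^\circ$ and $d\rho$ is nonvanishing on $U$), and set
\[
	\Sigma_s := \Sigma^\circ \setminus \{x\in U \colon \rho(x) < s\}\,, \qquad s\in[0,s_0).
\]
Each $\Sigma_s$ is then a smooth compact subdomain of $\Sigma^\circ$ with smooth boundary $\partial\Sigma_s=\rho^{-1}(s)$, the $\Sigma_s$ are strictly nested as $s$ increases, and since $\partial\Sigma^\circ$ is a compact subset of the open set $\widetilde{M}\setminus M$, after shrinking $s_0$ we may assume $\partial\Sigma_s \subset \widetilde{M}\setminus M$ for every $s\in [0,s_0)$.

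Next I would study the Dirichlet eigenvalues $\lambda_1(s)\leq \lambda_2(s)\leq\cdots$ of $-Q$ on $\Sigma_s$. Pulling back via a smooth family of diffeomorphisms $\Phi_s\colon \Sigma^\circ\to \Sigma_s$ turns this into a smooth family of Dirichlet problems for a formally self-adjoint elliptic operator on the fixed domain $\Sigma^\circ$, and standard spectral perturbation theory for smooth families of such operators implies that each $\lambda_k$ depends continuously on $s$. The key additional input is strict domain monotonicity: if $s_1<s_2$ then $\Sigma_{s_2}\subsetneq \Sigma_{s_1}$, and any eigenfunction of $-Q$ on $\Sigma_{s_2}$ extended by zero to $\Sigma_{s_1}$ cannot itself be an eigenfunction on $\Sigma_{s_1}$, since it would vanish identically on the nonempty open set $\Sigma_{s_1}\setminus\overline{\Sigma_{s_2}}$, contradicting weak unique continuation for second-order elliptic operators with smooth coefficients. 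Inserting this into the Courant--Fischer min-max characterization yields $\lambda_k(s_2)>\lambda_k(s_1)$, so each continuous function $s\mapsto \lambda_k(s)$ attains the value $0$ at most once, and hence the bad set $S:=\bigcup_k \{s\in[0,s_0)\colon \lambda_k(s)=0\}$ is at most countable.

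Picking any $s_\ast\in (0,s_0)\setminus S$ and defining $\Sigma':=\Sigma_{s_\ast}$ then gives the required admissible subdomain with $\partial\Sigma'\subset \widetilde{M}\setminus M$. In the analytic setting, $\Sigma^\circ$ is itself analytic by Lemma~\ref{lem: min surfs are analytic}, so the tubular neighborhood $U$ and the defining function $\rho$ can be chosen analytic, and consequently $\partial\Sigma_{s_\ast}=\rho^{-1}(s_\ast)$ is an analytic hypersurface. The main obstacle is the verification of strict domain monotonicity for a Schr\"odinger-type operator with an indefinite potential; this is precisely where weak unique continuation for $Q$ is essential, while the remaining ingredients (tubular neighborhoods, continuous dependence of Dirichlet eigenvalues under smooth deformations, and min-max) are standard.
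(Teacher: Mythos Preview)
Your proposal is correct and rests on the same two pillars as the paper's proof: continuous dependence of the Dirichlet eigenvalues of the stability operator on the domain, and strict domain monotonicity obtained via unique continuation. The packaging differs. The paper argues in two steps: first shrink $\Sigma^\circ$ slightly so that the eigenvalues that were equal to $0$ become strictly positive (Proposition~\ref{prop:evstrict}), and then, if there were eigenvalues that were strictly negative, enlarge again toward $\Sigma^\circ$ along a nested sequence and use continuity (Lemma~\ref{lem:evcont}) to stop at a stage where those eigenvalues are still negative while the others remain positive. You instead run a single smooth one-parameter family $\Sigma_s$ and observe that each $s\mapsto\lambda_k(s)$ is continuous and strictly increasing, hence vanishes for at most one $s$; since only finitely many indices $k$ have $\lambda_k(\Sigma^\circ)\le 0$, the bad set is in fact finite. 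This is a cleaner formulation and makes the construction of the smooth (or analytic) boundary automatic via level sets of $\rho$.

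The one place to tighten is your justification of strict monotonicity. For $k=1$ the argument ``the extended eigenfunction would have to be a minimizer, hence an eigenfunction on the larger domain, contradicting unique continuation'' is complete. For $k>1$, however, knowing that an optimal $k$-dimensional subspace for the min--max on $\Sigma_{s_1}$ comes from $H^1_0(\Sigma_{s_2})$ does \emph{not} directly force any element of it to be an eigenfunction on $\Sigma_{s_1}$, so your sentence ``inserting this into Courant--Fischer yields $\lambda_k(s_2)>\lambda_k(s_1)$'' skips a genuine step. The paper handles this by interpolating $m$ nested domains between $\Sigma_{s_2}$ and $\Sigma_{s_1}$ (with $m$ chosen so that $\lambda_m(\Sigma_{s_1})>\lambda$), producing $m$ linearly independent functions via unique continuation, and then reaching a contradiction with the min--max for $\lambda_m(\Sigma_{s_1})$; see Proposition~\ref{prop:evstrict}. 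You correctly flag this step as the main obstacle, so you were aware it needed more than a one-line remark.
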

The proof is found in Appendix~\ref{sec:ev}. 

\subsection{The linearization of $\mathbf{F}_0$}\label{subsec:linF0}

Fix $\Sigma_0\subset \widetilde{M}$ an admissible minimal surface for the metric $g^\circ$. %
We may assume that $(\widetilde{M},g^\circ)$ is a compact subset of some $(n+1)$-dimensional closed Riemannian manifold $(S,g^\circ)$ (see e.g.\ \cite[Thm.~3.1.8]{zbMATH07625517}). 
Let $\chi_0 \in C_c^\infty(\mR), 0\leq \chi_0\leq 1$ be compactly supported in $(-1,1)$ and identically $1$ in the set $[-1/2,1/2]$, and denote 
\begin{eqnarray}\label{eq: cut off def}
\chi(z) \coloneqq \chi_0(\sqrt{N} \abs{z})
\end{eqnarray}
where $z\in \mathbb R^{N}$. With this choice $\chi$ is supported in the Euclidean ball of radius $\frac{1}{2\sqrt{N}}$, which is strictly contained in the unit $\ell^1$-ball $\bar{B}_1^{N}$ where $z$ lives.

We will investigate the Fr\'echet derivative $D\mathbf{F}_0$ evaluated at $\alpha =1$. Here we employ terminology related to double fibration transforms as in \cite{MST}.

\begin{Proposition}\label{prop:ellonset}
	i) The operator $D\mathbf{F}_0(1)\colon H^{3+\frac{n+1}{2}+\gamma}(\widetilde{M}) \to C(\bar{B}_1^{N})$ is an analytic double fibration transform. Considered as an FIO, its canonical relation $C_0 = (N^*Z_\delta({\bf B}_0)\setminus 0)'\subset T^* \bar{B}_1^{N}\times T^* \widetilde{M}\setminus 0$ satisfies the Bolker condition at every point of $C_0$ with
    \begin{eqnarray}
        \label{eq: projection contains}
        N^\ast \Sigma_0 \subset \pi_R(C_0).
    \end{eqnarray}

    ii) For any $\tilde{\chi} \in C_c^\infty(\widetilde{M})$ with $0\leq \tilde{\chi}\leq 1$ 
	the following holds:
For every $(y_0,\xi_0) \in N^\ast \Sigma_0$, there exists an open conic neighborhood $U_0 \subset T^\ast \widetilde{M}$ of $(y_0,\xi_0)$ and a map $\pi_R^{+}\colon U_0 \to C_0$ such that $\pi_R\circ \pi_R^+ = \mathrm{id}$ on $U_0$. The conic neighborhood $U_0 \subset T^*\widetilde{M}$ can be chosen sufficiently small so that, for $\chi$ from \eqref{eq: cut off def}, $\chi \equiv 1$ on $\pi_z \circ\pi_L\circ \pi_R^+(U_0)$, in which case the operator
	\begin{equation}\label{eq:definepsido}
		\tilde{\chi}(D\mathbf{F}_0(1))^\ast \chi^2 D\mathbf{F}_0(1)\tilde{\chi} \colon C_c^\infty(S)\to C^\infty(S)
	\end{equation}
	is a $\Psi$DO of order $-n$. Its principal symbol has a non-negative representative and is 
    elliptic in $U_0\cap \pi^{-1}(\supp(\tilde \chi)^{\rm int})$.
\end{Proposition}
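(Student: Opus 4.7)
Part (i) is essentially a direct reading of results already established. Differentiating $\alpha\mapsto F(\alpha g^\circ)$ at $\alpha=1$ using \eqref{eq: minimal surface transform} and the chain rule identifies $D\mathbf F_0(1)$, up to the factor $\tfrac{n}{2}$, with the operator of Corollary~\ref{cor: bolker} applied with base surface $\Sigma_0$ and basis $\{f_1,\ldots,f_N\}$. Hence it is an analytic double fibration transform whose canonical relation $C_0=(N^*Z_\delta(\mathbf B_0)\setminus 0)'$ satisfies the Bolker condition everywhere, by Proposition~\ref{prop: bolker}. For the inclusion $N^*\Sigma_0\subset\pi_R(C_0)$, I would insert $f=0$ into the description of $N^*Z_\delta(\mathbf B_0)$ in Lemma~\ref{lem: injectivity at f = 0}(i): since $u^0_{g^\circ}\equiv 0$ and $dt$ is the $g^\circ$-conormal of $\Sigma_0$ in the chosen Fermi coordinates, every $(y_0,\lambda\,dt)\in N^*\Sigma_0$ is the image under $\pi_R$ of the point $(0,\lambda P_q(\cdot,y_0)|_{\mathbf B};y_0,0,\lambda\,dt)\in C_0$.

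For part (ii), the first step is to produce the local right inverse $\pi_R^+$. The fibre of $\pi_R$ over $(y_0,\lambda\,dt)\in N^*\Sigma_0$ consists of those conormals whose $f$-component satisfies $u^f_{g^\circ}(y_0)=0$ and $du^f_{g^\circ}(y_0)=0$. By Corollary~\ref{cor:Greene_wu_cor}, Poisson extensions of elements of $\mathbf B_0$ jointly separate values and differentials at every interior point of $\Sigma_0$, so these $n+1$ constraints on $f\in\mathbf B_0$ are independent, and $\pi_R$ is a submersion at $(0,P_q(\cdot,y_0)|_{\mathbf B};y_0,0,dt)$. The local section theorem then yields a smooth $\pi_R^+$ on some open conic neighborhood $U_0$ of $(y_0,\xi_0)$. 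Shrinking $U_0$ and using continuity of $\pi_z\circ\pi_L\circ\pi_R^+$, whose value at $(y_0,\xi_0)$ is $0\in\bar B_1^N$, its image can be forced inside the Euclidean ball $\{|z|<\tfrac{1}{2\sqrt N}\}$ on which $\chi\equiv 1$.

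With the cutoff arranged I would invoke the composition calculus for FIOs satisfying the Bolker condition (see Theorem~2.2 of \cite{MST} and the classical \cite{MR812288}): the composition $C_0^t\circ C_0$ is clean with excess zero and reduces to the diagonal of $T^*\widetilde M\times T^*\widetilde M$, so with $A=D\mathbf F_0(1)$ the operator $\tilde\chi A^*\chi^2 A\tilde\chi$ is a properly supported $\Psi$DO on $S$. Its order is $-n$, since the minimal surface transform, integrating a scalar over a codimension-one leaf parametrized by $N=2n+2$ variables, is an FIO of order $-n/2$ in the usual Radon-transform normalization, and composition with its adjoint doubles the order.

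Finally, at $(y,\xi)\in U_0\cap\pi^{-1}(\supp(\tilde\chi)^{\mathrm{int}})$ the principal symbol of the normal operator is computed by the standard symbol formula for Bolker-type compositions: it is $\tilde\chi(y)^2$ times an integral (which collapses to a single point since $\pi_L$ is an injective immersion) of $\chi^2\cdot|a|^2$ along the fibre $\pi_R^{-1}(y,\xi)\cap C_0$, where $a$ denotes the principal symbol of $A$. By the construction of $U_0$, the $\chi$-factor equals $1$ on $\pi_R^+(U_0)$, and $a$ is non-vanishing near $\pi_R^+(y_0,\xi_0)$ because it inherits the non-zero Poisson kernel value $P_q(\cdot,y_0)|_{\mathbf B}$ together with the smooth, non-degenerate induced volume density on $\Sigma_0$. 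Fixing a positive background density on $\widetilde M$ turns this into a non-negative scalar-valued principal symbol that is elliptic on the prescribed set. The anticipated main obstacle is not conceptual but one of book-keeping: chasing the half-density normalizations of \cite{MST} carefully enough to extract an intrinsically non-negative representative of the symbol and to make the non-vanishing of $a$ visible from a direct amplitude computation.
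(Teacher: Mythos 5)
Your argument tracks the paper's quite closely. Part (i) is the same identification of $D\mathbf{F}_0(1)$ with the double fibration transform of Corollary~\ref{cor: bolker}, and you read off $N^\ast\Sigma_0\subset\pi_R(C_0)$ by setting $f=0$ in Lemma~\ref{lem: injectivity at f = 0}(i), exactly as the paper does. For the submersion of $\pi_R$ you argue directly from the separating property in Corollary~\ref{cor:Greene_wu_cor}, while the paper invokes the general fact that the Bolker condition forces $\pi_R$ to be a submersion; your route is fine (and more self-contained) once you actually combine the $f$-direction, the $x$-direction and the $\lambda$-direction of $d\pi_R$ to exhibit full rank $2(n+1)$ --- the independence of the $n{+}1$ constraints on $f$ alone only accounts for one block.

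There is, however, a concrete error in the composition step. You claim that $C_0^t\circ C_0$ is clean \emph{with excess zero}. This is false. Since $\pi_L$ is an injective immersion, the fibre of the clean intersection over a point of the diagonal is identified with $\pi_R^{-1}(y,\xi)$, which, once $\pi_R$ is known to be a submersion, has dimension $\dim C_0-\dim T^*\widetilde{M}=(N+n+1)-2(n+1)=N-(n+1)=n+1\neq 0$. The paper records exactly this excess in the proof of Proposition~\ref{prop:amplitudesketch}. As a consequence, the heuristic ``the minimal surface transform is an FIO of order $-n/2$ and composition with the adjoint doubles the order'' is not a valid derivation of the order $-n$; with excess $n+1$ that bookkeeping does not close. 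The correct count, following Guillemin and used by the paper, is $\dim\G_\delta(\BB_0)-\dim Z_\delta(\BB_0)=N-(N+n)=-n$. Your stated conclusion --- that the normal operator is a $\Psi$DO of order $-n$ with a non-negative, microlocally elliptic principal symbol --- is right, but the justification needs to be redone with the actual excess.
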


\begin{proof}
	We have
	\begin{equation}\label{eq:DF0geo}
		\frac{2}{n} D\mathbf{F}_0(1)\beta(z) = (\pi_{\mathcal{G}_\delta(\BB_0)})_\ast(\pi_{\widetilde{M}}^\ast \beta {\rm d Vol}_{\Sigma_0(f^z,g^\circ)}^{g^\circ})\,,
	\end{equation}
	which is an FIO that satisfies the Bolker condition on all points of its canonical relation by Corollary~\ref{cor: bolker}. According to the same result it is in fact an analytic double fibration transform.

	Using Lemma~\ref{lem: injectivity at f = 0} to calculate the canonical relation of $D\mathbf{F}_0(1)$, namely $C_0 = (N^\ast Z_\delta(\BB_0)\setminus\{0\})'$, leads to 
	\begin{equation}\label{eq:whereelliptic}
		\pi_R \left(C_0\right) = \left\{(x,t,\xi) \colon x\in{\Sigma}_0,  \exists z \in \bar{B}_1^{N}\colon t=u_{g^\circ}^{f^z}(x), \xi \in N^\ast_{(x,t)}\Sigma_0(f^z,g^\circ), u_{g^\circ}^{f^z}\ \text{solves}\ \eqref{eq: MSE} \right\}\,.
	\end{equation}
	In particular, if on the RHS of \eqref{eq:whereelliptic} $z=0$, the RHS is equal to $N^\ast \Sigma_0$ in Fermi coordinates around $\Sigma_0$. Thus, for any $((x_0,t_0),\xi_0)= (y_0,\xi_0) \in N^\ast \Sigma_0$, there is some $\zeta_0$ so that 
	\[
		(0,\zeta_0,y_0,\xi_0) \in C_0\,.
	\]
	The Bolker condition and \cite[Lem.~4.3]{zbMATH01860565} guarantee that $\pi_R \colon C_0 \to T^\ast \widetilde{M}$ is a submersion, which guarantees the existence of an open neighborhood $U_0$ of $(y_0,\xi_0)$ in $T^\ast \widetilde{M}$ so that $U_0 \subset \pi_R\left( C_0\right)$.
	Furthermore, the constant rank theorem (or Ehresmann's theorem, see \cite[Lem.~9.2]{zbMATH00205892}) guarantees the existence of a continuous map $\pi_R^+$ so that $\pi_R \circ \pi_R^+  = \mathrm{id}$ on an open neighborhood of $(y_0,\xi_0)$. In particular, due to the continuity of $\pi_R^+$, perhaps after contracting the neighborhood $U_0$ about $(y_0,\xi_0)$, for all $z\in \pi_z\circ \pi_L\circ\pi_R^+(U_0)$ we have $\abs{z} <1/(2\sqrt{N})$, which guarantees that $\chi \equiv 1$ on $\pi_z\circ \pi_L\circ\pi_R^+(U_0)$. 

 Following \cite{MR812288}, using \eqref{eq:DF0geo} and the clean intersection calculus, see e.g.\ \cite[Thm.~25.2.3]{zbMATH05528184}, we find that $\tilde{\chi}(D\mathbf{F}_0(1))^\ast \chi^2 D\mathbf{F}_0(1)\tilde{\chi}$ is a $\Psi$DO on $S$ of order $\mathrm{dim}(G_\delta(\BB_0)) - \dim(Z_\delta(\BB_0)) = N - (N+n) = -n$. Because $\chi$ is identically $1$ on $\pi_z\circ\pi_L \circ \pi_R^+(U_0)$, by direct computation of the principal symbol (see also \cite[Cor.~B.4.9]{plamenGunther}), it has a non-negative representative on $S$ and %
    this $\Psi$DO must be elliptic in the conic neighborhood $U_0 \cap \pi^{-1}(\supp(\tilde{\chi})^{\rm int})$ of $(y_0, \xi_0)$. This concludes the proof.
\end{proof}

In fact, in Proposition~\ref{prop:amplitudesketch}, we shall sketch the proof of the computation of the full amplitude of \eqref{eq:definepsido}. In either case, we now have all tools to give the 

\begin{proof}[Proof of Theorem~\ref{thm:integral_transform_WF}]
    Let $\Sigma$ be a smooth embedded minimal surface for $(\widetilde{M},g^\circ)$ with $g^\circ$ analytic and $\p\Sigma\subset \widetilde{M}\setminus M$. According to Proposition~\ref{prop:makeadmissible}, there is an admissible minimal surface agreeing with $\Sigma$ in an open neighborhood of $M$, which we shall call $\Sigma_0$ in order to apply results from this section.

Inclusion \eqref{eq: projection contains}  of Proposition~\ref{prop:ellonset} implies that 
    \begin{eqnarray}\label{eq:WF relation containment}
            \mathrm{WF}_a(\beta)\cap N^\ast \Sigma = \mathrm{WF}_a(\beta)\cap N^\ast \Sigma_0\subset \mathrm{WF}_a(\beta)\cap\pi_R(C_0).
    \end{eqnarray}

    Now let $\beta\in C(\widetilde{M})$ vanishing outside of $M$ satisfy $\mathcal{R}_\Sigma(f) = 0$ for all $f\in W_\delta$ This means that $\mathcal{R}_{\Sigma_0}(\beta)=0$, which implies that $D\mathbf{F}_0(1)\beta = 0$. Since by Proposition \ref{prop:ellonset} i), $D\mathbf{F}_0(1)$ is an analytic double fibration transform satisfying the Bolker condition, we may apply \cite[Thm.~1.2]{MST} to conclude that $\mathrm{WF}_a(\beta)\cap\pi_R(C_0) =\emptyset$. Combining this with \eqref{eq:WF relation containment} completes the proof.
\end{proof}

\subsection{Sufficiently many minimal surfaces}\label{sec:suffsurf}

We assume the ampleness condition (see Definition \ref{def:ampleness}) throughout this section.
To achieve global ellipticity, we need to patch together many admissible minimal surfaces $\{\Sigma_j\}_{j\in \mathcal J}$ to cover all microlocal directions in $S^\ast M$.

\begin{Lemma}\label{lem:findfiniteminsurfs}
    Let $M$ satisfy the ampleness condition. 
    There exists a finite collection of analytic admissible minimal surfaces $\Sigma_1,\dots,\Sigma_K$ in $(\widetilde{M},g^\circ)$ with $\p\Sigma_j\subset \widetilde{M}\setminus M$ so that the following is true. Defining $\mathbf{F}_j$ according to \eqref{eq:boldF0} with the underlying minimal surface $\Sigma_0$ replaced by $\Sigma_j$ and $\BB_0$ replaced by some finite-dimensional subspace $\BB_j \subset C^\omega(\p\Sigma_j)$, and defining $\chi$ as in \eqref{eq: cut off def}, for any $\tilde{\chi}\in C_c^\infty(\widetilde{M}), 0\leq \tilde{\chi}\leq 1$, the normal operator 
    \begin{equation}\label{eq:jthpsido}
        \tilde{\chi}D\mathbf{F}_j(1)^\ast \chi^2 D\mathbf{F}_j(1)\tilde{\chi}\colon C_c^\infty(S) \to C^\infty(S)
    \end{equation}
    is a $\Psi$DO of order $-n$. Its principal symbol has a non-negative representative, and is elliptic in $U_j\cap \pi^{-1}(\supp(\tilde{\chi})^{\rm int})$ for some $U_j\subset T^\ast S$ satisfying
    \begin{equation}
    \label{eq: covers everything}
     T^*{M} \subset\bigcup\limits_{j=1}^K U_j\,,\qquad M\Subset \bigcup\limits_{j=1}^K \pi(U_j)\,.
    \end{equation}
\end{Lemma}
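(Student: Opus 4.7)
The argument combines the ampleness assumption with the pointwise microlocal ellipticity supplied by Proposition~\ref{prop:ellonset} and a compactness extraction on the unit cosphere bundle. The rough structure is: for each direction in $S^*M$, produce a single admissible minimal surface whose associated normal operator is elliptic in a conic neighborhood of that direction, and then extract a finite subcover.

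\textbf{Step 1: Pointwise construction.} Fix an arbitrary $(y_0,\xi_0)\in S^\ast M$. The ampleness condition (Definition~\ref{def:ampleness}) applied in a neighborhood $U$ of $M$ furnishes a smooth embedded minimal surface $\Sigma_{(y_0,\xi_0)} \subset (\widetilde{M},g^\circ)$ with $(y_0,\xi_0)\in N^\ast \Sigma_{(y_0,\xi_0)}$ and $\partial \Sigma_{(y_0,\xi_0)}\subset \widetilde{M}\setminus M$. Since $g^\circ$ is analytic, by Lemma~\ref{lem: min surfs are analytic} the interior of $\Sigma_{(y_0,\xi_0)}$ is analytic. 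Next I apply Proposition~\ref{prop:makeadmissible} to replace $\Sigma_{(y_0,\xi_0)}$ by a relatively compact open subset $\Sigma'_{(y_0,\xi_0)}$ with analytic boundary which is admissible and satisfies $\partial \Sigma'_{(y_0,\xi_0)}\subset \widetilde{M}\setminus M$. Inspection of the proof (or a slight enlargement of the original minimal surface) ensures that the shrinking still contains the interior point $y_0$, so that $(y_0,\xi_0)\in N^\ast \Sigma'_{(y_0,\xi_0)}$. Call the resulting admissible analytic minimal surface $\Sigma_{(y_0,\xi_0)}$ again.

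\textbf{Step 2: Microlocal ellipticity.} Apply Proposition~\ref{prop:ellonset}(ii) with $\Sigma_0=\Sigma_{(y_0,\xi_0)}$ to produce the finite-dimensional Dirichlet data space $\BB_{(y_0,\xi_0)}\subset C^\omega(\partial \Sigma_{(y_0,\xi_0)})$, the map $\mathbf{F}_{(y_0,\xi_0)}$, and an open conic neighborhood $U_{(y_0,\xi_0)}\subset T^\ast \widetilde{M}$ of $(y_0,\xi_0)$ on which $\chi\equiv 1$ along $\pi_z\circ \pi_L\circ \pi_R^+$, such that the normal operator $\tilde{\chi}(D\mathbf{F}_{(y_0,\xi_0)}(1))^\ast \chi^2 D\mathbf{F}_{(y_0,\xi_0)}(1)\tilde{\chi}$ is a $\Psi$DO of order $-n$ with non-negative principal symbol that is elliptic on $U_{(y_0,\xi_0)}\cap \pi^{-1}(\supp(\tilde{\chi})^{\mathrm{int}})$.

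\textbf{Step 3: Compactness extraction.} The family $\{U_{(y_0,\xi_0)}\cap S^\ast M\}_{(y_0,\xi_0)\in S^\ast M}$ is an open cover of the compact set $S^\ast M$ (or, to ensure the compact containment in the last assertion, of $S^\ast M'$ for a slightly larger relatively compact $M\Subset M'\Subset U$). Extract a finite subcover indexed by $(y_j,\xi_j), j=1,\ldots,K$, and set $U_j=U_{(y_j,\xi_j)}$, $\Sigma_j=\Sigma_{(y_j,\xi_j)}$, $\BB_j=\BB_{(y_j,\xi_j)}$, and $\mathbf{F}_j=\mathbf{F}_{(y_j,\xi_j)}$. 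Since each $U_j$ is conic, the cover of $S^\ast M$ upgrades to $T^\ast M\setminus 0\subset \bigcup_j U_j$; including the zero section, which is trivially covered by any $U_j$ containing a base point in $M$, yields $T^\ast M\subset \bigcup_j U_j$. Moreover each $y_j\in \pi(U_j)$, and by the subcover argument above every $y\in M$ lies over some element of $S^\ast M$ in some $U_j$, so $M\subset \bigcup_j \pi(U_j)$, with compact containment $M\Subset \bigcup_j \pi(U_j)$ since we worked with $M\Subset M'$.

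\textbf{Main obstacle.} The delicate point is Step~1: ensuring that the admissibility reduction of Proposition~\ref{prop:makeadmissible} can be performed while retaining the prescribed conormal direction $(y_0,\xi_0)$ inside the shrunken minimal surface. The other steps are essentially soft, relying on the already established microlocal ellipticity (Proposition~\ref{prop:ellonset}), conic homogeneity, and the compactness of $S^\ast M'$.
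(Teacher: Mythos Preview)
Your proposal is correct and follows essentially the same approach as the paper's proof: invoke ampleness at each $(y,\xi)$, pass to an admissible subsurface via Proposition~\ref{prop:makeadmissible}, apply Proposition~\ref{prop:ellonset} for microlocal ellipticity, and extract a finite subcover by compactness of $S^\ast M$. The ``main obstacle'' you flag is real but mild: since Proposition~\ref{prop:makeadmissible} produces $\Sigma'\subset\Sigma^\circ$ with $\partial\Sigma'\subset\widetilde{M}\setminus M$ by arbitrarily small modifications near $\partial\Sigma^\circ$, the interior point $y_0\in M$ is automatically retained; the paper leaves this implicit as well.
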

\begin{proof}
    According to the ampleness condition for $M$, there is some open $W\subset \widetilde{M}$ with $M\subset W$ so that we can find a minimal surface $\Sigma_{y,\xi}'$ for each $(y,\xi)\in T^\ast W$ so that $(y,\xi)\in N^\ast \Sigma_{y,\xi}'$ and $\p\Sigma_{y,\xi}'\subset \widetilde{M}\setminus M$. According to Proposition~\ref{prop:makeadmissible} there is some admissible minimal surface $\Sigma_{y,\xi}\subset \Sigma_{y,\xi}'$ with $\p \Sigma_{y,\xi}\subset \widetilde{M}\setminus M$. By Lemma \ref{lem: min surfs are analytic}, $\Sigma_{y,\xi}$ is an analytic hypersurface.
    
    We introduce the operator $\mathbf{F}_{y,\xi}$ as in \eqref{eq:boldF0} with underlying admissible minimal surface $\Sigma_{y,\xi}$. The operator in \eqref{eq:jthpsido} with $\mathbf{F}_j$ replaced by $\mathbf{F}_{y,\xi}$ will be microlocally elliptic in some conic neighborhood $U_{y,\xi} \cap \pi^{-1}(\supp(\tilde{\chi})^{\rm int}) \subset T^\ast W$ of $(y,\xi)$ according to Proposition~\ref{prop:ellonset}. 

    Denoting by $\hat U_{y,\xi}$ the restriction of $U_{y,\xi}$ to $S^\ast W$, we see that 
    \[
        S^\ast M \Subset \bigcup_{(y,\xi)\in T^\ast W} \hat U_{y,\xi}\,.
    \]
    The proof is completed by exploiting the compactness of $S^\ast M$.
\end{proof}

From now on we let $\Sigma_1,\dots,\Sigma_K$ and $\BB_1,\dots,\BB_K$ be defined as in Lemma~\ref{lem:findfiniteminsurfs}. In particular, due to \eqref{eq: covers everything} we may define $\tilde{\chi} \in C_c^\infty(\widetilde{M}), 0\leq \tilde{\chi}\leq 1$ so that $\tilde{\chi} \equiv 1$ on some open neighborhood of $M$ and
\begin{equation}\label{eq:supptdchi}
		\supp (\tilde{\chi}) \Subset \bigcup_{j=1}^K \pi(U_j)\,,
\end{equation}
which fits the assumptions of Proposition~\ref{prop:ellonset}.

Denoting $z \coloneqq (z^1,\dots,z^K) \in \mR^{KN}$ with $z^j \in \mR^{N}, N=2n+2$, we  %
define the vector valued nonlinear area functional by
\begin{equation}\label{eq:boldF}
	\mathbf{F} \colon  C_c^\infty(\widetilde{M}) \to \bigtimes_{j=1}^K C(\mR^{N})\,,\quad \alpha\mapsto \mathbf{F}(\alpha)(z) = \begin{pmatrix} \chi({z^1})\mathbf{F}_1(\alpha)(z^1) \\ \vdots \\  \chi({z^K})\mathbf{F}_K(\alpha)(z^K)\end{pmatrix}.%
\end{equation}
The choice of $\Sigma_1,\dots,\Sigma_K, \BB_1,\dots,\BB_K$ as well as the definition of $\tilde\chi$ and $\mathbf{F}$ remain fixed in this section.
\begin{Proposition}\label{prop:injellop}
	For $\mathbf{F}$ as defined in \eqref{eq:boldF}, set %
	\[
		Q(1) \coloneqq  \begin{pmatrix} D\mathbf{F}(1)\tilde{\chi} \\ \langle D_{g^\circ}\rangle^{-n/2}(1-\tilde{\chi}) \end{pmatrix}.
	\]
	The operator
	\begin{equation}\label{eq:ellop}
		Q(1)^\ast Q(1) = \tilde{\chi}(D\mathbf{F}(1))^\ast D\mathbf{F}(1)\tilde{\chi} + (1-\tilde{\chi})\langle D_{g^\circ}\rangle^{-n} (1-\tilde{\chi})  \colon C_c^\infty(S) \to C^\infty(S)
	\end{equation}
	is an elliptic $\Psi$DO of order $-n$. Furthermore, if $(\widetilde{M}, g^\circ)$ is analytic this operator is injective on the space $\mathcal{E}'(S)$.
\end{Proposition}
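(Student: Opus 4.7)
The plan is to prove the statement in two stages: first, showing that $Q(1)^\ast Q(1)$ is an elliptic $\Psi$DO of order $-n$, and then using analytic microlocal recovery to conclude injectivity on $\mathcal E'(S)$.

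\emph{$\Psi$DO structure and ellipticity.} Using $D\mathbf{F}(1)\beta = (\chi(z^j)D\mathbf{F}_j(1)\beta)_{j=1}^K$, I would expand
\[
	Q(1)^\ast Q(1) = \tilde{\chi}\Big(\sum_{j=1}^K D\mathbf{F}_j(1)^\ast \chi^2 D\mathbf{F}_j(1)\Big)\tilde{\chi} + (1-\tilde{\chi})\langle D_{g^\circ}\rangle^{-n}(1-\tilde{\chi}).
\]
Lemma~\ref{lem:findfiniteminsurfs} tells us each summand $\tilde{\chi}D\mathbf{F}_j(1)^\ast \chi^2 D\mathbf{F}_j(1)\tilde{\chi}$ is a $\Psi$DO of order $-n$ with non-negative principal symbol that is elliptic on $U_j\cap \pi^{-1}(\supp(\tilde{\chi})^{\rm int})$, and the Bessel potential piece is evidently a $\Psi$DO of order $-n$ with principal symbol $(1-\tilde{\chi})^2|\xi|_{g^\circ}^{-n}$. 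Thus $Q(1)^\ast Q(1)$ is a $\Psi$DO of order $-n$ with non-negative principal symbol, and ellipticity at $(y,\xi)\in T^\ast S\setminus 0$ follows by cases: if $\tilde{\chi}(y)<1$ the second contribution is strictly positive, and if $\tilde{\chi}(y)=1$ then (with $\tilde\chi$ chosen compatibly with the ampleness cover so that $T^\ast\{\tilde{\chi}=1\}\subset\bigcup_j U_j$, which is routine by compactness since ampleness supplies an open cover of a whole neighborhood of $M$) we have $(y,\xi)\in U_{j_0}$ for some $j_0$, and the corresponding summand is elliptic there.

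\emph{Injectivity.} Given $\beta\in\mathcal E'(S)=\mathcal D'(S)$ with $Q(1)^\ast Q(1)\beta=0$, ellipticity upgrades $\beta$ to $C^\infty(S)$ by standard elliptic regularity. Pairing in $L^2(S)$ yields $\|Q(1)\beta\|^2 = \langle Q(1)^\ast Q(1)\beta,\beta\rangle = 0$, so $Q(1)\beta=0$. From the second component $\langle D_{g^\circ}\rangle^{-n/2}(1-\tilde{\chi})\beta=0$, invertibility of the Bessel potential on $L^2(S)$ gives $(1-\tilde{\chi})\beta=0$, whence $\supp(\beta)\subset\{\tilde{\chi}=1\}$ and $\tilde{\chi}\beta=\beta$. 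The first component $\chi(z^j)D\mathbf{F}_j(1)\beta(z^j)=0$ then shows $D\mathbf{F}_j(1)\beta$ vanishes identically (and is trivially real-analytic) on the open set $\{\chi>0\}\subset \bar B_1^{N}$ for each $j$. Mimicking the proof of Theorem~\ref{thm:integral_transform_WF}, each $D\mathbf{F}_j(1)$ is an analytic double fibration transform satisfying Bolker (Corollary~\ref{cor: bolker}), and since $\chi\equiv 1$ on $\pi_z\circ\pi_L\circ\pi_R^+(U_j)$ by Proposition~\ref{prop:ellonset}, applying \cite[Thm.~1.2]{MST} yields $\operatorname{WF}_a(\beta)\cap U_j=\emptyset$ for every $j$.

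\emph{Analytic continuation.} Arranging that $\bigcup_j U_j \supset T^\ast U$ for some connected open $U\subset\widetilde M$ with $\{\tilde{\chi}=1\}\subsetneq U$, which again follows from ampleness and compactness, we conclude $\operatorname{WF}_a(\beta)\cap T^\ast U = \emptyset$, so $\beta$ is real-analytic on $U$. Because $\supp(\beta)\subset\{\tilde{\chi}=1\}$, the function $\beta$ vanishes identically on the non-empty open subset $U\setminus\{\tilde{\chi}=1\}$ of $U$; real-analyticity on the connected open set $U$ then forces $\beta\equiv 0$ on $U$, and hence $\beta = 0$ on $S$. The main delicate point is the coordinated choice of $\tilde{\chi}$, $\chi$, and the covering $\{U_j\}$ from Lemma~\ref{lem:findfiniteminsurfs} so that the ampleness cover extends slightly beyond $\{\tilde{\chi}=1\}$ to a connected open neighborhood of $M$; once this is set up, the rest is elliptic regularity, positivity of $Q(1)^\ast Q(1)$, and microlocal analytic continuation.
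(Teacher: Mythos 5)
Your proposal is correct, and the ellipticity part is essentially identical to the paper's argument: expand $\tilde{\chi}(D\mathbf{F}(1))^\ast D\mathbf{F}(1)\tilde{\chi}$ as a sum of order $-n$ $\Psi$DOs with non-negative principal symbol (Lemma~\ref{lem:findfiniteminsurfs}), observe no cancellation when summing, and combine with the Bessel-potential piece.

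For injectivity the two proofs diverge. Both reach $\beta\in C^\infty(S)$, $Q(1)\beta=0$, and $(1-\tilde{\chi})\beta=0$ by the same calculation. The paper then argues by contradiction: assuming $\supp(\tilde{\chi}\beta)\neq\emptyset$, it picks a covector $(\hat y,\hat\xi)$ in the exterior normal set $N_e(\supp(\tilde{\chi}\beta))$ (Hörmander Prop.~8.5.8), invokes the Kashiwara–Hörmander theorem (Hörmander Thm.~8.5.6$'$) to place $(\hat y,\pm\hat\xi)$ in $\operatorname{WF}_a(\tilde{\chi}\beta)$, and then uses the analytic wavefront recovery of \cite[Thm.~1.2]{MST} through a single $C_j$ to remove $(\hat y,\hat\xi)$ from $\operatorname{WF}_a(\tilde{\chi}\beta)$, a contradiction. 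Your argument instead recovers the full analytic microlocal regularity: by applying \cite[Thm.~1.2]{MST} across the whole cover $\{U_j\}$ you get $\operatorname{WF}_a(\beta)\cap T^\ast U=\emptyset$ for a connected open $U\supsetneq\{\tilde{\chi}=1\}$, so $\beta$ is real-analytic on $U$, vanishes on the non-empty open set $U\setminus\{\tilde{\chi}=1\}$, and hence vanishes identically by the identity theorem. Both routes hinge on the same MST wavefront recovery, and both require coordinating $\{U_j\}$ with $\tilde{\chi}$: the paper only needs coverage of a single boundary covector of $\supp(\tilde{\chi}\beta)$, whereas you need $\bigcup_j U_j\supset T^\ast U$ for a connected $U\supsetneq\{\tilde{\chi}=1\}$ — marginally stronger, but a routine consequence of the ampleness condition, which furnishes a cover of $T^\ast W$ for an entire open $W\supset M$. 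What your approach buys is a cleaner final step (identity theorem for analytic functions instead of the exterior normal set plus Kashiwara–Hörmander); what it costs is that slightly stronger covering requirement, which you correctly flag as the one point that needs care when setting up $\tilde{\chi}$, $\chi$, and the surfaces.
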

\begin{proof}

	To prove that $Q(1)^\ast Q(1)$ is elliptic, we proceed as follows: the operator $\tilde{\chi}(D\mathbf{F}(1))^\ast D\mathbf{F}(1)\tilde{\chi}$ is given by
	\[
	\tilde{\chi}(D\mathbf{F}(1))^\ast D\mathbf{F}(1) \tilde{\chi}= \sum_{j=1}^K \tilde{\chi}(D\mathbf{F}_j(1))^\ast \chi^2 D\mathbf{F}_j(1) \tilde{\chi}\,,
	\]
	whose summands are all $\Psi$DOs of order $-n$ with principal symbols having a non-negative representative by Proposition~\ref{prop:ellonset}. Because each $\tilde{\chi}(D\mathbf{F}_j(1))^\ast \chi^2 D\mathbf{F}_j(1) \tilde{\chi}$ is elliptic on $U_j\cap \pi^{-1}(\supp(\tilde{\chi})^{\rm int})\subset T^\ast S$, and because there is no cancellation occurring in the principal symbols when summing up these operators, the operator $\tilde{\chi}(D\mathbf{F}(1))^\ast D\mathbf{F}(1)\tilde{\chi}$ is elliptic on $\bigcup_{j=1}^K \pi(U_j) \cap \supp(\tilde{\chi})^{\rm int} \subset S$. However, using \eqref{eq:supptdchi}, this means that $\tilde{\chi}(D\mathbf{F}(1))^\ast D\mathbf{F}(1)\tilde{\chi}$ is elliptic on $S\cap \supp(\tilde{\chi})^{\rm int}$, and in particular, $Q(1)^\ast Q(1)$ is elliptic on all of $S$. This guarantees that $Q(1)^\ast Q(1)$ is an elliptic $\Psi$DO on $S$.

	Finally, to show that the $\Psi$DO $Q(1)^\ast Q(1)$ is injective when $g^\circ$ is analytic, we proceed as follows: since elliptic $\Psi$DOs have parametrices, for any $\beta\in \mathcal{E}'(S)$, if $Q(1)^\ast Q(1)\beta = 0$ we must have $\beta \in C^\infty(S)$. We find that
	\[
		\norm{Q(1) \beta}_{L^2(\mR^{N})\times\dots\times L^2(\mR^{N})\times L^2(S)}^2 =  \langle Q(1)^\ast Q(1)\beta,\beta \rangle_{H^{n/2}(S), H^{-n/2}(S)}\,,
	\]
	so that $Q(1) \beta = 0 \iff Q(1)^\ast Q(1) \beta = 0$. Thus, the problem reduces to the injectivity of $Q(1)$ acting on $C^\infty(S)$.%

To this end, let $Q(1)\beta =0$ for some $\beta\in C^\infty(S)$. First, we find by the definition of $Q(1)$ that $(1-\tilde{\chi})\beta = 0$. Thus let us complete the proof by showing that $\tilde{\chi}\beta = 0$.
	
	We assume for contradiction that $\supp(\tilde{\chi}\beta )\neq \emptyset$. According to \cite[Prop.~8.5.8]{hoermander1}, the exterior normal set $N_e$ defined in \cite[Def.~8.5.7]{hoermander1} satisfies $\pi \left( N_e(\supp(\tilde{\chi}\beta))\right)  \neq \emptyset$, and thus we can find $(\hat y,\hat \xi)\in N_e(\supp(\tilde{\chi}\beta))$ with $\hat y\in \bigcup_{j=1}^K \pi\left( U_j\right)$. By microlocal analytic continuation, \cite[Thm.~8.5.6']{hoermander1}, we conclude that $(\hat y,\pm\hat\xi) \in \mathrm{WF}_a(\tilde{\chi}\beta)$.

	By \eqref{eq: covers everything}, there is some $j \in \{1,\dots, K\}$ so that $(\hat y,\hat \xi) \in U_j$, so that for some $(\hat z,\hat \zeta)\in T^* \bar{B}_1^{N}$ with $\abs{\hat z}<1/(2\sqrt{N})$ we have $(\hat z,\hat \zeta, \hat y,\hat \xi) \in C_j$, with $C_j$ the canonical relation of $D\mathbf{F}_j(1)$. By assumption of $Q(1)\beta =0$, we have that $\chi D\mathbf{F}_j(1)\tilde{\chi} \beta=0$, and because $\chi$ is identically $1$ in a neighborhood of $\hat z$, we find $(\hat z,\hat \zeta,\hat y,\hat\xi)\not\in \mathrm{WF}_a(D\mathbf{F}_j(1)\tilde{\chi}\beta)$. Recalling that $g^\circ$ is analytic, by Proposition~\ref{prop:ellonset}, $D\mathbf{F}_j(1)$ is an analytic double fibration transform and it satisfies the Bolker condition at $(\hat z,\hat \zeta,\hat y,\hat\xi)$. Thus, by \cite[Thm.~1.2]{MST}, we know that $(\hat y,\hat \xi) \not\in\mathrm{WF}_a(\tilde{\chi}\beta)$, which is a contradiction. We therefore conclude that $\supp(\tilde{\chi}\beta)$ is empty, which finishes the proof. 
\end{proof}

Recall that $\tilde{\chi}$ was chosen so that $\tilde{\chi}\equiv 1$ on an open neighborhood of $M$. We are now in position to prove
\begin{Proposition}\label{prop:lowerbd}
	Assume that $(\widetilde{M},g^\circ)$ is analytic and the ampleness condition in Definition ~\ref{def:ampleness} holds. %
	There is a constant $C>0$ so that for all $\beta \in L^2(M)$,%
	\[
		\norm{D\mathbf{F}(1)\beta}_{H^{\frac{n}{2}}(\mR^{N})\times\dots\times H^{\frac{n}{2}}(\mR^{N})} \geq C\norm{\beta}_{L^2(M)}\,.
	\]
\end{Proposition}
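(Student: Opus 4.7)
I plan to derive the estimate directly from the invertibility of $Q(1)^\ast Q(1)$ established in Proposition~\ref{prop:injellop}, together with the standard Sobolev mapping properties of the FIO $D\mathbf{F}(1)$ and its adjoint; no new operator needs to be introduced.

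First I would observe that $Q(1)^\ast Q(1)$ is an elliptic formally self-adjoint $\Psi$DO of order $-n$ on the compact closed manifold $S$, and is injective on $\mathcal{E}'(S)$ by Proposition~\ref{prop:injellop}. Standard Fredholm theory for self-adjoint elliptic operators on closed manifolds (trivial kernel and trivial cokernel via self-adjointness) then yields an isomorphism $Q(1)^\ast Q(1)\colon H^s(S)\to H^{s+n}(S)$ for every $s\in \mR$. Taking $s=0$ produces a constant $C_0>0$ with
\[
\norm{\beta}_{L^2(S)} \leq C_0\,\norm{Q(1)^\ast Q(1)\beta}_{H^n(S)}\qquad \text{for all } \beta\in L^2(S).
\]

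Next I would use that $D\mathbf{F}(1)$ is an FIO of order $-n/2$ satisfying the Bolker condition at every point of its canonical relation (Proposition~\ref{prop:ellonset}). Together with the smooth compactly supported cutoffs $\chi$ and $\tilde\chi$ and the $\Psi$DO $\langle D_{g^\circ}\rangle^{-n/2}$ of order $-n/2$, the standard FIO Sobolev mapping properties give two bounded maps
\[
Q(1)\colon L^2(S)\to X,\qquad Q(1)^\ast\colon X\to H^n(S),
\]
where $X := \prod_{j=1}^K H^{n/2}(\mR^N)\times H^{n/2}(S)$. Combining these with the displayed inequality above yields
\[
\norm{\beta}_{L^2(S)} \leq C_0\,\norm{Q(1)^\ast(Q(1)\beta)}_{H^n(S)}\leq C\,\norm{Q(1)\beta}_{X}
\]
for every $\beta\in L^2(S)$.

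Finally, for $\beta\in L^2(M)$ extended by zero to $S$, the property $\tilde\chi\equiv 1$ on a neighborhood of $M$ forces $(1-\tilde\chi)\beta=0$ and $D\mathbf{F}(1)\tilde\chi\beta = D\mathbf{F}(1)\beta$, so $\norm{Q(1)\beta}_X = \norm{D\mathbf{F}(1)\beta}_{\prod_{j=1}^K H^{n/2}(\mR^N)}$, which delivers the claimed bound. The main point requiring care is the Sobolev mapping property of $D\mathbf{F}(1)^\ast$ at the $H^{n/2}$ level; this follows from the local canonical graph structure provided by the Bolker condition together with the compact supports enforced by $\chi$ on the $\mR^N$ side and by the compactness of $S$. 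Once that bound is established, the remainder reduces to Fredholm invertibility and operator-norm estimates.
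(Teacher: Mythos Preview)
Your strategy is the same as the paper's: invertibility of $Q(1)^\ast Q(1)$ gives $\norm{\beta}_{L^2}\le C\norm{Q(1)^\ast Q(1)\beta}_{H^n}$, and then one bounds $\norm{Q(1)^\ast}_{X\to H^n}$ to conclude. The only point where the paper is more explicit than your proposal is precisely the step you flag as requiring care, namely the bound for $\tilde\chi(D\mathbf{F}(1))^\ast$ from $H^{n/2}$ to $H^n$.

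Your justification for that bound is slightly off: because $\dim \mathcal G_\delta(\BB)=2n+2\neq n+1=\dim\widetilde M$, the canonical relation of $D\mathbf{F}_j(1)$ is \emph{not} a local canonical graph in the sense required for the standard $H^s\to H^{s-m}$ FIO mapping theorem; the Bolker condition only makes $\pi_L$ an injective immersion onto a proper submanifold of $T^\ast \mathcal G_\delta(\BB)$. The paper instead derives the bound by duality from the clean intersection calculus: writing
\[
\norm{\chi D\mathbf{F}_j(1)\tilde\chi u}_{H^{-n/2}}^2=\big\langle \langle D_{g^\circ}\rangle^{-n}u,\ \langle D_{g^\circ}\rangle^{n}\tilde\chi(D\mathbf{F}_j(1))^\ast\chi\langle D\rangle^{-n}\chi D\mathbf{F}_j(1)\tilde\chi\langle D_{g^\circ}\rangle^{n}\,\langle D_{g^\circ}\rangle^{-n}u\big\rangle_{L^2(S)},
\]
the operator in the middle is a $\Psi$DO of order $0$ on $S$ (same clean composition argument as for $D\mathbf{F}_j(1)^\ast\chi^2 D\mathbf{F}_j(1)$), so $\chi D\mathbf{F}_j(1)\tilde\chi\colon H^{-n}(S)\to H^{-n/2}(\mR^N)$ is bounded, and the adjoint bound follows. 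Once you replace ``local canonical graph'' by this argument, your proof coincides with the paper's.
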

\begin{proof}
	By Proposition~\ref{prop:injellop}, $Q(1)^\ast Q(1)$ is an elliptic $\Psi$DO of order $-n$ on the closed manifold $S$, so that it is Fredholm and thus has closed range as an operator from $L^2(S)$ to $H^{n}(S)$, see \cite[Thm.~8.1]{MR1852334}. Thus, $Q(1)^\ast Q(1) \colon L^2(S) \to H^{n}(S)$ is a closed, self-adjoint, injective operator and thus invertible. We denote this inverse by $T\colon H^{n}(S)\to L^2(S)$. 

	Let $\beta \in L^2(M)$ be arbitrary, and write $\beta$ also for its extension by $0$ to $L^2(S)$. Using the observation that $\tilde{\chi} \equiv 1$ on an open neighborhood of $M$ and thus
	\[
		Q(1)^\ast Q(1)\beta =  \tilde{\chi}(D\mathbf{F}(1))^\ast D\mathbf{F}(1)\tilde{\chi}\beta =\tilde{\chi}(D\mathbf{F}(1))^\ast D\mathbf{F}(1)\beta\,,
	\]
	we calculate that
	\[
		\norm{\beta}_{L^2(M)} = \norm{\beta}_{L^2(S)} = \norm{T \tilde{\chi}(D\mathbf{F}(1))^\ast D\mathbf{F}(1)\beta}_{L^2(S)} \leq C\norm{\tilde{\chi}(D\mathbf{F}(1))^\ast D\mathbf{F}(1)\beta}_{H^{n}(S)}\,.
	\]
	The proof would thus be complete by showing that there is some $C<\infty$ so that
	\begin{equation}\label{eq:onebd}
	\norm{\tilde{\chi}(D\mathbf{F}(1))^\ast}_{H^{\frac{n}{2}}(\mR^{N})\times\dots\times H^{\frac{n}{2}}(\mR^{N})\to H^n(S)} \leq C\,,
	\end{equation}
	which we prove now. Notice that for any $u\in H^{-n}(S)$, 
	\[
		\norm{\chi D\mathbf{F}_j(1) \tilde{\chi} u}_{H^{-n/2}(\mR^{N})}^2 = \langle \langle D_{g^\circ}\rangle^{-n}u, \underbrace{\langle D_{g^\circ}\rangle^{n} \tilde{\chi}(D\mathbf{F}_j(1))^\ast \chi \langle D\rangle^{-n} \chi D\mathbf{F}_j(1) \tilde{\chi}\langle D_{g^\circ}\rangle^n}_{\in \Psi^0} \langle D_{g^\circ}\rangle^{-n} u\rangle_{L^2(S)} %
	\]
	The operator on the right side of the inner product applied to $\langle D_{g^\circ}\rangle^{-n} u$ is a $\Psi$DO of order $0$ on $S$ by the clean intersection calculus, as in the proof of Proposition \ref{prop:ellonset}. Thus the Cauchy-Schwartz inequality gives, for some $C>0$, that 
	\[
		\norm{\chi D\mathbf{F}_j(1) \tilde{\chi} u}_{H^{-n/2}(\mR^{N})}^2\leq C\norm{u}_{H^{-n}(S)}^2\,.
	\]
	We conclude that %
	$D\mathbf{F}(1)\tilde{\chi} \colon H^{-n}(S) \to H^{-\frac{n}{2}}(\mR^{N})\times\dots\times H^{-\frac{n}{2}}(\mR^{N})$ is bounded, which directly implies \eqref{eq:onebd}.
\end{proof}	

\subsection{Proof of Theorem~\ref{thm:rigidity}}

\begin{proof}[Proof of Theorem~\ref{thm:rigidity}]
	Following \cite[Thm.~2]{stefanov2009linearizing}, we define the Banach spaces
	\[
	\mathcal{X}_1 \coloneqq H^{3+\frac{n+1}{2}+\gamma}(M)\,,\quad \mathcal{X}_1' \coloneqq L^2(M)\,,
	\]
	for $\gamma>0$ from \eqref{eq: def G}, as well as
	\[
	\mathcal{X}_2 \coloneqq L^2(\mR^{N})\times\dots\times L^2(\mR^{N}) = (L^2(\mR^{N}))^K\,,\quad \mathcal{X}_2' \coloneqq H^{\frac{n}{2}}(\mR^{N})\times\dots\times H^{\frac{n}{2}}(\mR^{N}) = (H^{\frac{n}{2}}(\mR^{N}))^K\,.
	\]
	We see that $\mathcal{X}_1 \subset \mathcal{X}_1'$ and $\mathcal{X}_2'\subset \mathcal{X}_2$. 
	As a consequence of \eqref{eq:tayloronF} in Theorem~\ref{thm: smooth dependence}, we have that for all $\tilde\alpha \in {\mathcal X}_1$,
	\[\mathbf{F}(\tilde{\alpha}+1) - \mathbf{F}(1) = D\mathbf{F}(1)\tilde{\alpha} +R_1(\tilde{\alpha}g^\circ) %
	\]
	with
	\[
		\norm{R_1(\tilde{\alpha}g^\circ)}_{\mathcal{X}_2} \leq C\norm{R_1(\tilde{\alpha}g^\circ)}_{C(\bar{B}_1^{N})\times\dots\times C(\bar{B}_1^{N})} \leq C'\norm{\tilde{\alpha}}_{\mathcal{X}_1}^2\,.
	\]
	This is because there are cut-offs $\chi$ present on the output side of $R_1$ as they are present in the definition of $\mathbf{F}$. Furthermore, according to Proposition~\ref{prop:lowerbd}, for all $\beta \in \mathcal{X}_1\subset L^2(M)$ we have
	\[
		\norm{D\mathbf{F}(1)\beta}_{\mathcal{X}_2'} \geq C\norm{\beta}_{\mathcal{X}_1'}\,.
	\]

    We now set ${\mathcal X}_1'' \coloneqq  H^{s_1}(M)$ and ${\mathcal X}''_2  \coloneqq H^{s_2}(\mR^{N})\times\dots\times H^{s_2}(\mR^{N})$ for $s_1, s_2\gg 1$. By \cite{MR0500580}, we have $\|\beta\|_{{\mathcal X}_1} \lesssim \|\beta\|_{{\mathcal X}'_1}^{\mu_1} \|\beta\|_{{\mathcal X}_1''}^{1-\mu_1}$ and $\|u\|_{{\mathcal X}'_2} \lesssim \|u\|_{{\mathcal X}_2}^{\mu_2} \|u\|_{{\mathcal X}_2''}^{1-\mu_2}$ with $\mu_1, \mu_2 \in (0,1]$ and $\mu_1\mu_2>1/2$. Furthermore, if $s_1\gg s_2$, Proposition~\ref{prop:ellonset} shows that that $D{\bf F}(1)\colon {\mathcal X}''_1 \to {\mathcal X}''_2$ is bounded by standard norm estimates for FIOs (or recall that $D\mathbf{F}(1)^\ast D\mathbf{F}(1)$ is a $\Psi$DO). %
We are thus in position to apply \cite[Thm.~2]{stefanov2009linearizing}
to conclude the following: for any $L>0$ there are $C,C',\eps >0$ so that if $\norm{\tilde{\alpha}}_{{\mathcal X}''_1} \leq L$ and $\norm{\tilde{\alpha}}_{\mathcal{X}_1}<\eps$, we have
	\[
		\norm{\tilde{\alpha}}_{\mathcal{X}_1} \leq  CL^{2-\mu_1-\mu_2}\sum_{j=1}^K\norm{\chi\mathbf{F}_j(\tilde{\alpha}+1)-\chi\mathbf{F}_j(1)}_{L^2(\mR^{N})}^{\mu_1\mu_2} \leq C' L^{2-\mu_1-\mu_2}\sum_{j=1}^K\norm{\mathbf{F}_j(\tilde{\alpha}+1)-\mathbf{F}_j(1)}_{L^2(B_1^{N})}^{\mu_1\mu_2}\,.
	\]
	As a consequence of this estimate, for any $\alpha \in H^{3+\frac{n+1}{2}+\gamma}(\widetilde{M})$ with $\alpha = 1$ on $\widetilde{M} \setminus M$, we can apply the above estimate to $\tilde{\alpha}=\alpha-1$ which gives the estimate \eqref{eq:alphabound}.%
\end{proof}

\subsection{Injectivity and stability for the minimal surface transform}\label{sec:foliation_inj}

Recall the foliation condition from Definition \ref{def:fol_intro}.

\begin{Theorem}\label{thm:weakfol}
    	Let $(\widetilde{M},g^\circ)$ be analytic and $\beta \in C(\widetilde{M})$ with $\supp(\beta)\subset M$. Assume that $M$ satisfies either the foliation condition in Definition~\ref{def:fol_intro} or the  ampleness condition in Definition~\ref{def:ampleness}.
    
    If for all smooth embedded minimal surfaces $\Sigma$ in $(\widetilde{M},g^\circ)$ with $\p\Sigma\subset \widetilde{M}\setminus M$, we have
	\begin{equation}\label{eq:full02}
		(\mathcal{R}\beta)(\Sigma)=0\,,%
	\end{equation}
	then $\beta = 0$.

    Furthermore, in case the ampleness condition holds, there is a finite set of admissible minimal surfaces $\Sigma_1,\dots,\Sigma_K$ for $(\widetilde{M},g^\circ)$ with $\p\Sigma_j\subset \widetilde{M}\setminus M$ so that for some $C>0$ independent of $\beta$,
    \begin{equation}\label{eq:normboundlinearized2}
        \norm{\beta}_{L^2(M)} \leq C\sum_{j=1}^K \left\lVert (\mathcal{R}\beta)(\Sigma_j(f^z,g^\circ))\right\rVert_{H^{\frac{n}{2}}(B_{1}^{N})}\,,
    \end{equation}
    where $B_{1}^{N}$ is the $\ell^1$-unit ball in $\mR^{N}$ and $N=2n+2$%
    , and $\Sigma(f, g^\circ)$ is given by \eqref{eq: min surf as graph}. The expression in the norm brackets on the RHS is a function of the variable $z\in B_1^{N}$, of which the norm is taken.
\end{Theorem}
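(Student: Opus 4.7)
The plan is to obtain \eqref{eq:normboundlinearized2} directly from Proposition~\ref{prop:lowerbd}, deduce injectivity in the ampleness case as an immediate corollary, and treat the foliation case by a layer-stripping argument based on Theorem~\ref{thm:integral_transform_WF} combined with microlocal analytic continuation.

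For the stability estimate, I first identify the linearized operator with the minimal surface transform. Combining \eqref{eq: minimal surface transform} with \eqref{eq:boldF0} shows that for $\beta\in L^2(M)$ extended by zero to $\widetilde M$,
\[
D\mathbf{F}_j(1)\beta(z) = \tfrac{n}{2}(\mathcal{R}\beta)(\Sigma_j(f^z,g^\circ)),\qquad f^z := z_1 f_1 + \cdots + z_N f_N.
\]
Taking $\Sigma_1,\ldots,\Sigma_K$ from Lemma~\ref{lem:findfiniteminsurfs} and shrinking each $\delta_j$ if necessary so that every deformed surface $\Sigma_j(f^z,g^\circ)$ with $z\in B_1^N$ keeps its boundary in $\widetilde M\setminus M$, Proposition~\ref{prop:lowerbd} gives
\[
\|\beta\|_{L^2(M)} \lesssim \sum_{j=1}^K \|\chi\, D\mathbf{F}_j(1)\beta\|_{H^{n/2}(\mR^N)}.
\]
Since $\chi\in C_c^\infty(B_1^N)$, multiplication by $\chi$ is bounded from $H^{n/2}(B_1^N)$ to $H^{n/2}(\mR^N)$, and combining with the displayed identity yields \eqref{eq:normboundlinearized2}. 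Under the ampleness condition, injectivity is then immediate: the hypothesis $(\mathcal{R}\beta)(\Sigma)=0$ for all minimal surfaces with boundary outside $M$ makes every term on the right-hand side of \eqref{eq:normboundlinearized2} vanish.

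For the foliation case, I use layer stripping. Define
\[
s^* := \sup\{s\in[0,m_\rho]\colon \beta|_{\Omega_{s'}\cap M}=0 \text{ for all } s'\in[0,s]\}.
\]
By item (1) of Definition~\ref{def:fol_intro}, $\Omega_0\cap M=\emptyset$, so $s^*\geq 0$ is well-defined and, by continuity of $\beta$ and $\rho$, $\beta\equiv 0$ on $\{\rho<s^*\}\cap M$. It suffices to show $s^*=m_\rho$, since item (1) together with continuity then forces $\beta\equiv 0$ on $M$. Assume for contradiction that $s^*<m_\rho$. For arbitrary $x_0\in \Omega_{s^*}\cap M$, item (2) supplies a smooth embedded minimal surface $\Sigma$ with $d\rho(x_0)\in N^*_{x_0}\Sigma\setminus 0$ and $\p\Sigma\subset \widetilde M\setminus M$, and the hypothesis gives $(\mathcal{R}\beta)(\Sigma')=0$ for all graph minimal surfaces $\Sigma'=\Sigma(f,g^\circ)$ with $f\in W_\delta$ and $\delta$ small (their boundaries remain outside $M$). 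Theorem~\ref{thm:integral_transform_WF} then yields $(x_0, d\rho(x_0))\notin \operatorname{WF}_a(\beta)$. Because $\beta$ vanishes on one side of the $C^1$-hypersurface $\Omega_{s^*}$ near $x_0$ with outer conormal proportional to $d\rho(x_0)$, the microlocal analytic continuation theorem \cite[Thm.~8.5.6']{hoermander1} forces $\beta$ to vanish in a full neighborhood of $x_0$. Compactness of $\Omega_{s^*}\cap M$ upgrades this to vanishing on an open set $V\supset \Omega_{s^*}\cap M$, and item (3) supplies $\delta>0$ with $\Omega_s\cap M\subset V$ for $s\in(s^*-\delta,s^*+\delta)\cap[0,m_\rho]$, contradicting the definition of $s^*$. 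The main difficulty is ensuring Theorem~\ref{thm:integral_transform_WF} applies to the potentially non-admissible $\Sigma$ from item (2)---handled exactly as in the proof of Theorem~\ref{thm:integral_transform_WF} above via Proposition~\ref{prop:makeadmissible}---and verifying that H\"ormander-Kashiwara microlocal analytic continuation is valid across the $C^1$ level set $\Omega_{s^*}$, which holds since $d\rho(x_0)\neq 0$ makes $\Omega_{s^*}$ a $C^1$ hypersurface near $x_0$.
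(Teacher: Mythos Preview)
Your proposal is correct and follows essentially the same approach as the paper: the stability estimate and ampleness-case injectivity are deduced directly from Proposition~\ref{prop:lowerbd}, and the foliation case is handled by the same layer-stripping argument combining the wave front set recovery with microlocal analytic continuation (H\"ormander--Kashiwara). The only cosmetic differences are that the paper phrases layer-stripping via showing the set $J=\{s:\beta=0\text{ on }\rho^{-1}([0,s])\}$ is open and closed rather than via a supremum, and it unpacks Theorem~\ref{thm:integral_transform_WF} inline (invoking \cite[Thm.~1.2]{MST} directly) rather than citing it as a black box.
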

\begin{proof}
In the case of the ampleness condition, this result and \eqref{eq:normboundlinearized2} are a consequence of Proposition~\ref{prop:lowerbd}. We thus give the proof in the case of the foliation condition, for which we let $\rho$ be the function guaranteed to us by Definition~\ref{def:fol_intro}. 

	We begin a layer stripping procedure, for which we introduce the set 
	\[
		J \coloneqq \{s\in [0,m_\rho)\colon \beta = 0\ \text{on}\ \rho^{-1}([0,s])\}\,.%
	\]
	which is a closed set by the continuity of $\beta$ and $\rho$. We wish to show that $J$ is open, which will allow us to conclude by connectedness that $J=[0,m_\rho)$. By conditions (1) and (3) from the foliation, choosing $s_0 = 0, U =\emptyset$ in (3), there is $\delta>0$ so that $\beta$ vanishes on $\Omega_s$ for $s\in [0,\delta)$. So let $s_0\in J$, and we may assume that $s_0>0$. 

	Due to (2) in Definition~\ref{def:fol_intro}, for arbitrary $x_0 \in \Omega_{s_0}$, there is some minimal surface $\Sigma_0$ so that $(x_0,\xi_0)\coloneqq (x_0,d\rho(x_0))\in N^\ast \Sigma_0$ and $\p\Sigma_0\subset \widetilde{M}\setminus M$. Using Proposition~\ref{prop:makeadmissible}, we find some admissible minimal surface $\Sigma_0'\subset \Sigma_0$ so that $\p\Sigma_0'\subset \widetilde{M}\setminus M$.
    
    We may define the operator $\mathbf{F}_0$ according to \eqref{eq:boldF0} using this minimal surface $\Sigma_0'$, and we let $C_0$ denote the canonical relation of the FIO $D\mathbf{F}_0(1)$, which we know to be an analytic double fibration transform (see Proposition~\ref{prop:ellonset}). By the same proposition, we know that $(0,\zeta_0;x_0,\xi_0)\in C_0$ for some $\zeta_0$ and $C_0$ satisfies the Bolker condition there. Let $\BB_0 \subset C^\omega(\partial\Sigma_0)$ be the set constructed in Corollary~\ref{cor: bolker} with $2n+2= N = \mathrm{dim}(\BB_0)$. %

    By \eqref{eq:full02} and the fact that $\beta$ vanishes outside of $M$, we thus, in fact, have that for all $z\in B_1^{N}(0)$,
	\begin{align*}
		D\mathbf{F}_0(1)\beta(z) &= DF(g^\circ)\beta g^\circ(f^z) = \frac{n}{2}\int_{\Sigma'_0(f^z,g^\circ)}\beta {\rm dVol}^{g^\circ}_{\Sigma'_0(f^z,g^\circ)} = \frac{n}{2}(\mathcal{R}\beta)(\Sigma_0'(f^z,g^\circ)) = 0\,.%
    	\end{align*}
    An application of \cite[Thm.~1.2]{MST} thus shows that $(x_0,\xi_0)\not\in \WF_a(\beta)$. 

	By the fact that $s_0\in J$, we have $\beta = 0$ on $\Omega_s$ for $s\in [0,s_0]$. Implicitly, part of the assumption of (2) in the definition of the foliation is that $d\rho(x)\neq 0$ for all $x\in M$. Assume for contradiction that $x_0\in \supp(\beta)$. Using the $C^1$-function $-\rho$ and the fact that $\supp(\beta)\subset \{\rho \geq s_0\}$, by verifying the conditions in \cite[Prop.~8.5.8]{hoermander1} one finds that $(x_0,-\xi_0)\in \ol{N_e(\supp(\beta))}$, where $N_e$ is the exterior normal set defined in \cite[Def.~8.5.7]{hoermander1}. In particular, \cite[Thm.~8.5.6']{hoermander1} implies that $(x_0,\xi_0) \in \WF_a(\beta)$, a contradiction. Thus, there is an open neighborhood of $x_0$ on which $\beta$ vanishes. 

	Since $x_0\in \Omega_{s_0}$ was arbitrary, we conclude that there is an open neighborhood of $\Omega_{s_0}$ in which $\beta$ vanishes, which by (3) means that there is $\delta>0$ so that $[0,s_0+\delta) \cap [0,m_\rho)\subset J$. 

	We have concluded that $J$ is non-empty, open and closed and connected in $[0,m_\rho)$ and thus $J=[0,m_\rho)$. This implies that $\beta$ vanishes on $\bigcup_{s\in [0,m_\rho)}\Omega_s$, and because $\beta$ is continuous it must vanish on the closure of that set, which according to (1) is all of $M$. Because $\supp(\beta) \subset M$, we deduce that $\beta=0$.
\end{proof}

\subsection{Stability of the Bolker condition under perturbations of the metric}

We now turn to proving Theorem~\ref{thm:stability}, which will require us to investigate the Fr\'echet derivative $D\mathbf{F}(\alpha)$ of $\mathbf{F}$ for any $\alpha$ near $1$. Let $\Sigma_0$ as before be a fixed admissible minimal surface for some fixed metric $g^\circ$ (the stability operator of which does not have Dirichlet eigenvalue $0$), and recall the definition of $\mathbf{F}_0 \colon  \mathcal{N} \to C(\bar{B}_1^{N})$ from \eqref{eq:boldF0}. The goal is to show that the operator $D\mathbf{F}_0(\alpha)^\ast \chi^2 D\mathbf{F}_0(\alpha)$ remains a $\Psi$DO for $\alpha\approx 1$ in the appropriate norm, and to do this we will prove that the Bolker condition holds when $\alpha$ is close to $1$. There is a slightly subtle issue here: in the definition of $\mathbf{F}_0(\alpha)$ we are using a fixed parametrization of minimal surfaces and a fixed set of basis elements $f_1, \ldots, f_{N}$ (i.e.\ we are not allowed to change the parametrization or basis elements when $\alpha$ changes), and we need to show that the Bolker condition for $D\mathbf{F}_0(\alpha)$ holds with respect to this fixed parametrization and these basis elements as $\alpha$ varies among small perturbations of $1$.

More precisely, given a smooth, admissible minimal surface $\Sigma_0 \subset \widetilde{M}$ for the metric $g^\circ$ whose boundary $\partial\Sigma_0\subset \widetilde{M}\setminus M$ is non-empty, Proposition~\ref{prop:diffF} gives an explicit formula for $\left(DF(\alpha g^\circ)\beta\right)(f)$ when $f\in U_\delta\subset C^{2,\gamma}(\partial \Sigma_0)$. When $\alpha =1$, Corollary \ref{cor: bolker} gives basis elements $f_1,\dots, f_{N}\in U_\delta$ such that the operator 
$$ D\mathbf{F}_0(1):  H^{3+\frac{n+1}{2}+\gamma}(\widetilde{M}) \to C(\bar{B}_1^{N})$$
is an FIO satisfying the Bolker condition. In what follows, for these fixed $f_1,\dots,f_{N}$ and any $z\in \bar{B}_1^{N}$, we write
$$f^z := z_1 f_1 + \dots+ z_{N} f_{N},$$
which satisfies $f^z \in U_\delta$.

Throughout this section, when we say $\alpha \in H^{3+\frac{n+1}{2}+\gamma}(\widetilde{M})$ is near (the constant function) $1$, we will always mean in the $H^{3+\frac{n+1}{2}+\gamma}(\widetilde{M})$-topology. Following \eqref{eq: manifold Z}, for $\alpha \in H^{3+\frac{n+1}{2}+\gamma}(\widetilde{M})$ sufficiently near $1$, we define
\begin{eqnarray}\label{def: Z alpha}
	Z(\alpha) \coloneqq \{(z,x,t)\in \bar{B}_1^{N}\times\widetilde{M}\colon t=u^{f^z}_{\alpha g^\circ}(x)\}\,,
\end{eqnarray}
where we as usual have denoted by $u^{f}_{\alpha g^\circ} \in C^{2,\gamma}(\Sigma_0)$ the function with boundary value $f$ so that in $g^\circ$-Fermi coordinates around $\Sigma_0$, $\Sigma_0(f,\alpha g^\circ)=\{(x,u^{f}_{\alpha g^\circ}(x)); x\in \Sigma_0\}$ is a minimal surface for the metric $\alpha g^\circ$ in $\widetilde{M}$. (This $u^{f^z}_{\alpha g^\circ}$ exists due to Lemma \ref{lem: smooth dependence of sol}.) 
Throughout this section we shall also use the notation 
\[
	\pi_L^\alpha \colon N^\ast Z(\alpha)\to T^\ast \bar{B}_1^{N}\,,\quad (z,\zeta;y,\xi)\mapsto (z,\zeta)\,,
\]
for the left-projection on $N^\ast Z(\alpha)$.%
\begin{Lemma}\label{lem:canonical converge}
For any $\alpha \in H^{3+\frac{n+1}{2}+\gamma}(\widetilde{M})$ sufficiently near $1$ %
there exists an open conic neighborhood $U\subset T^*\bar{B}_1^{N} \times T^*\widetilde{M}$ containing $N^*Z(1)$ and a map $\Psi_\alpha \colon U \to T^*\bar{B}_1^{N} \times T^*\widetilde{M}$ homogeneous of degree $1$ in the fiber which is a diffeomorphism from $N^*Z(1)$ to $N^*Z(\alpha)$ when restricted to $N^*Z(1)$. 
As $\alpha \to 1$ in $H^{3+\frac{n+1}{2}+\gamma}(\widetilde{M})$ the map $\pi_L^\alpha\circ\Psi_\alpha\vert_{N^*Z(1)}$ converges in $C^1(K; T^*\bar B_1^{N})$ to $\pi_L^1 \colon N^*Z(1) \to T^\ast \bar{B}_1^{N}$ on every compact subset $K \subset N^*Z(1)$.%
\end{Lemma}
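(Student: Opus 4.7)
The plan is to exploit the explicit parametrization of $N^*Z(\alpha)$ by $(z,x,\lambda) \in \bar{B}_1^{N}\times \Sigma_0 \times (\mR\setminus\{0\})$ inherited from the computation carried out in Section~\ref{subsect: heuristic} and made rigorous in Lemma~\ref{lem: injectivity at f = 0}(i). First I would establish the analogue of that lemma for $\alpha g^\circ$: for $\alpha$ sufficiently close to $1$ in $H^{3+\frac{n+1}{2}+\gamma}(\widetilde{M})$, Theorem~\ref{thm: smooth dependence} and continuous dependence of Dirichlet eigenvalues on coefficients guarantee that each $\Sigma_0(f^z,\alpha g^\circ)$ is an admissible minimal surface for $\alpha g^\circ$. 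Denoting by $P^\alpha(\cdot,x;u^{f^z}_{\alpha g^\circ})|_{\BB_0}$ the restriction of the Poisson kernel of the linearized operator $D_u L_{\alpha g^\circ}(u^{f^z}_{\alpha g^\circ})$ to $\BB_0$, a verbatim repetition of the heuristic calculation yields
\[
    N^*Z(\alpha)= \Big\{\Big(z,\lambda P^\alpha(\cdot,x;u^{f^z}_{\alpha g^\circ})|_{\BB_0};\, x,u^{f^z}_{\alpha g^\circ}(x),\,\lambda(-du^{f^z}_{\alpha g^\circ}(x)+dt)\Big) : x\in\Sigma_0,\ \lambda\neq 0\Big\}.
\]

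Next I would define $\Psi_\alpha$ first on $N^*Z(1)$ by sending the point with parameters $(z,x,\lambda)$ (read off from the formula above with $\alpha=1$) to the corresponding point of $N^*Z(\alpha)$ with the same $(z,x,\lambda)$. Since both surfaces are smoothly parametrized by the same domain, this map is a diffeomorphism $N^*Z(1)\to N^*Z(\alpha)$, and it is homogeneous of degree $1$ in the fiber because $\lambda$ is preserved. To obtain $\Psi_\alpha$ on an open conic neighborhood $U \supset N^*Z(1)$ in $T^*\bar B_1^N\times T^*\widetilde M$, I would fix a conic tubular neighborhood of $N^*Z(1)$ via an auxiliary homogeneous retraction $r\colon U\to N^*Z(1)$ and put $\Psi_\alpha(q) = \Psi_\alpha(r(q)) + (q-r(q))$ (making a linear choice that respects the fiber scaling so that homogeneity of degree $1$ survives on all of $U$).

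For the $C^1$-convergence statement, observe that pulling back to the parameter space $(z,x,\lambda)$ reduces the problem to showing that the map
\[
(z,x,\lambda)\;\longmapsto\;\bigl(z,\ \lambda\, P^\alpha(\cdot,x;u^{f^z}_{\alpha g^\circ})|_{\BB_0}\bigr) \;\in\; \bar{B}_1^{N}\times \BB_0^\ast
\]
converges in $C^1$ on compacta to its value at $\alpha=1$ as $\alpha\to 1$. Since $\BB_0$ is finite dimensional, it suffices to control, for each fixed $f_j\in\BB_0$, the solution $v_j^{\alpha,z}(x) := \int_{\partial\Sigma_0} P^\alpha(y,x;u^{f^z}_{\alpha g^\circ})f_j(y)\,dy$ of the linearized minimal-surface equation at $u^{f^z}_{\alpha g^\circ}$ in $C^1$ jointly in $(x,z)$. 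This is handled by combining Proposition~\ref{prop: smooth dependence of solution} (which gives $C^{2,\gamma}$-continuity of $u^{f^z}_{\alpha g^\circ}$ in both $\alpha$ and $z$) with Corollary~\ref{cor:contDependenceOfStabilitySol} (giving $C^{2,\gamma}$-continuity of the solution of the Dirichlet problem with respect to the coefficients), and applying elliptic interior estimates to upgrade to joint continuity in $(x,z)$ of $v_j^{\alpha,z}$ and its first derivative in $x$. The derivative in $z$ is controlled identically, as the $z$-derivatives of $u^{f^z}_{\alpha g^\circ}$ satisfy linear elliptic equations governed by the same stability operator.

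The main obstacle is this last $C^1$-convergence argument: the Poisson kernel is a distribution on $\partial\Sigma_0$ and one must ensure that pairing it with the fixed elements $f_j$ produces solutions that depend $C^1$-jointly on both the base point $x$ and the metric parameter $\alpha$, uniformly on compacta; however, because $\BB_0$ is finite dimensional, all analysis reduces to standard elliptic continuity of $v_j^{\alpha,z}$, with no need to treat $P^\alpha$ as a singular kernel.
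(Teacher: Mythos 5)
Your parametrization of $N^*Z(\alpha)$ and the map $\Psi_\alpha$ on $N^*Z(1)$ are exactly right, and in fact agree with the paper's construction when restricted to $N^*Z(1)$. The substantive issue is your extension of $\Psi_\alpha$ to an open conic neighborhood $U\supset N^*Z(1)$. The formula $\Psi_\alpha(q)=\Psi_\alpha(r(q))+(q-r(q))$ is not well-defined: $T^*\bar B_1^N\times T^*\widetilde{M}$ carries no global affine structure, so the difference $q-r(q)$ and the sum have no invariant meaning, and even working in coordinate charts you would need to verify that the pieces glue and that homogeneity of degree $1$ in the fiber is preserved (which requires the retraction $r$ itself to be fiber-homogeneous, something you have not supplied). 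The paper sidesteps this entirely by a cleaner device: it defines the \emph{base} diffeomorphism $\psi_\alpha(z,x,t)=(z,x,t+\phi_\alpha(z,x)-\phi_1(z,x))$ with $\phi_\alpha(z,x)=u_{\alpha g^\circ}^{f^z}(x)$, which maps $Z(1)$ to $Z(\alpha)$, and then takes $\Psi_\alpha$ to be its canonical cotangent lift $(z,y,\zeta,\xi)\mapsto(\psi_\alpha(z,y),(\psi_\alpha^{-1})^*\vert_{\psi_\alpha(z,y)}(\zeta,\xi))$. This lift is automatically defined on an open conic neighborhood $U=T^*(\bar B_1^N\times\mathcal O)$, is automatically homogeneous of degree $1$ (it is fiber-linear), and automatically carries $N^*Z(1)$ to $N^*Z(\alpha)$. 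You should replace your ad-hoc extension with this construction; everything else in your argument then goes through.

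For the $C^1$-convergence your route via the Poisson kernel is workable and gives the same content as the paper's argument, which simply records that $\phi_\alpha\to\phi_1$ in $C^2$ (so the lift converges in $C^1$) and then computes $\pi_L^\alpha\circ\Psi_\alpha$ explicitly. Your observation that $\partial_{z_j}u^{f^z}_{\alpha g^\circ}=P^\alpha(f_j,\cdot;u^{f^z}_{\alpha g^\circ})$ makes the two arguments essentially equivalent, but the paper's is tidier because the $C^2$-convergence of $\phi_\alpha$ already packages all the required derivative control. One citation correction: Proposition~\ref{prop: smooth dependence of solution} only concerns dependence on the boundary data $f$ with the metric fixed; the joint dependence on $(\alpha,f)$ that you need is Lemma~\ref{lem: smooth dependence of sol}, and similarly Corollary~\ref{cor:contDependenceOfStabilitySol} only gives continuity in $(u,f)$ for a fixed metric, so you should invoke the $C^{2+k}$-Fr\'echet differentiability from Lemma~\ref{lem: smooth dependence of sol} to handle the $\alpha$-variable.
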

\begin{proof}
For each $\alpha$ near $1$ in $H^{3+\frac{n+1}{2}+\gamma}(\widetilde{M})$, Lemma \ref{lem: smooth dependence of sol} shows that $Z(\alpha)\subset \bar{B}_{1}^{N}(0) \times \widetilde{M}$ is a $C^2$-hypersurface. In addition, at a point $(z;x, u_{\alpha g^\circ}^{f^z})\in Z(\alpha)$, the unique conormal direction is given by $\rho\left(-dt + d\phi_\alpha(z,x)\right)$, where $\rho>0$ and $\phi_\alpha(z,x) \coloneqq u_{\alpha g^\circ}^{f^z}(x)$ is $C^2$ with respect to $(z,x)\in \bar{B}_1^{N}\times \widetilde{M}$. %

Let $\mathcal O \subset  \widetilde{M}$ be a $g^\circ$-Fermi neighborhood of $\Sigma_0$ and consider the diffeomorphism 
\[
	\psi_\alpha\colon B^{N}_1\times \mathcal O\to  B^{N}_1 \times \widetilde{M}\,, \quad (z,x,t) \mapsto (z, x, t+\phi_{\alpha}(z,x) - \phi_{1}(z,x))\,,
\] 
for $(x,t) \in \widetilde{M}$ in $g^\circ$-Fermi coordinates around $\Sigma_0$. By Lemma \ref{lem: smooth dependence of sol} the restriction of $\psi_\alpha$ to $Z(1) = \{(z,x,t)\mid t=\phi_1(z,x)\}$ is a $C^2$ diffeomorphism from $Z(1)$ to $Z(\alpha)$.

Denote by $\Psi_\alpha$ the lift of $\psi_\alpha$ to a $C^1$ diffeomorphism on the cotangent bundle $U:=T^*\bar{B}^{N}_1 \times T^*\mathcal O \to T^*\bar{B}^{N}_1 \times T^*\widetilde{M}$ defined by
$$\Psi_\alpha \colon (z,y, \zeta,\xi) \mapsto \left(\psi_\alpha(z,y), \left(\psi_\alpha^{-1}\right)^*\vert_{\psi_\alpha(z,y)}(\zeta,\xi)\right)$$
that is homogeneous of degree $1$ in the fiber $(\zeta,\xi)$. The restriction of $\Psi_\alpha$ to $N^*Z(1)$ is a diffeomorphism onto $N^*Z(\alpha)$ as desired. %

As a consequence of Lemma \ref{lem: smooth dependence of sol} and the differentiability of $\phi_\alpha$ discussed above, we know that $\phi_\alpha\to \phi_1$ in $C^2$ as $\alpha\to 1$ in $H^{3+\frac{n+1}{2}+\gamma}(\widetilde{M})$. This fact and an explicit computation of $\pi_L^\alpha \circ \Psi_\alpha$ complete the proof.
\end{proof}
The following general fact related to stability of embeddings \cite[Prop.~4.33]{MR4328926} will be useful later:
\begin{Lemma}
    \label{lem: general openness of inj imm}
Let $X, Y$ be smooth manifolds with $X$ compact. Suppose $f:  X \to Y$ is an injective immersion and $f_j \in C^1(X ; Y)$ is a sequence of functions converging to $f$ in $C^1( X; Y)$. Then $f_j$ is an injective immersion for $j\in \mathbb N$ sufficiently large.
\end{Lemma}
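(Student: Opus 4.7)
The plan is a two-step argument: first show that $f_j$ is an immersion for all large $j$, and then show injectivity. Both steps rely crucially on the compactness of $X$ and on the $C^1$ topology.

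For the immersion step, I would cover $X$ by finitely many coordinate charts (possible by compactness). Being an immersion at a point means, in local coordinates, that the Jacobian matrix has rank $\dim X$, which is an open condition on matrices. Since $f$ is an immersion everywhere on $X$ and $df_j \to df$ uniformly on $X$ in local trivializations (by the definition of $C^1(X;Y)$ convergence), uniform continuity together with compactness give a uniform positive lower bound on the smallest singular value of $df(x)$, which is preserved for $df_j(x)$ once $j$ is large enough. Hence $df_j(x)$ is injective for every $x \in X$ and every sufficiently large $j$.

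For injectivity I would argue by contradiction. Suppose that along a subsequence (still written $f_j$) each $f_j$ fails to be injective, so we may select $x_j \neq y_j$ in $X$ with $f_j(x_j) = f_j(y_j)$. By compactness of $X$, after passing to a further subsequence, $x_j \to x_\infty$ and $y_j \to y_\infty$. Uniform $C^0$ convergence gives $f(x_\infty) = f(y_\infty)$, so the injectivity of $f$ forces $x_\infty = y_\infty =: p$. Next I would invoke the local immersion theorem at $p$: there exist charts $\psi$ around $p$ and $\phi$ around $f(p)$ in which $\phi \circ f \circ \psi^{-1}(u) = (u,0)$. In these charts, $\phi \circ f_j \circ \psi^{-1}$ decomposes as $(g_j, h_j)$ with $g_j \to \mathrm{id}$ and $h_j \to 0$ in $C^1$ on a fixed compact neighborhood of $\psi(p)$.

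The main obstacle, and the only nontrivial part, is producing a \emph{fixed} neighborhood of $p$ on which all $f_j$ (for large $j$) are injective. This follows because $C^1$-closeness of $g_j$ to the identity on a fixed closed ball implies, via the mean value inequality applied to $g_j-\mathrm{id}$, that $g_j$ is bi-Lipschitz on that ball with a uniform Lipschitz constant for all large $j$; in particular $g_j$, and hence $\phi \circ f_j \circ \psi^{-1} = (g_j, h_j)$, is injective there. Once $j$ is large enough that both $x_j$ and $y_j$ lie in this fixed neighborhood of $p$, the identity $f_j(x_j) = f_j(y_j)$ then forces $x_j = y_j$, contradicting our choice $x_j \neq y_j$ and completing the proof.
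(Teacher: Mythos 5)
Your proposal is correct and follows essentially the same strategy as the paper: the immersion part is immediate from $C^1$ convergence and compactness, and injectivity is proved by contradiction, using compactness to force the two hypothetical collision points to converge to a common point $p$, and then a quantitative local-injectivity estimate near $p$ coming from $C^1$-closeness. The one place you deviate is in how that local estimate is set up: you first invoke the local immersion theorem to normalize $f$ to the standard inclusion $u\mapsto(u,0)$, so the $C^1$-perturbation $g_j$ of the identity is automatically bi-Lipschitz by the mean value inequality; the paper instead works directly with $f$ in an arbitrary chart and uses that $f\vert_U$ is an embedding to supply a lower Lipschitz bound $\|f(x^1_j)-f(x^2_j)\|\ge c\|x^1_j-x^2_j\|$, then subtracts the mean-value estimate on $f_j-f$. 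The two implementations are equivalent in substance, though your normalization is a bit cleaner because the lower Lipschitz constant for the base map is trivially $1$, whereas the paper's version tacitly relies on a uniform positive $c$ for $f$ on a fixed compact neighborhood of the accumulation point.
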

\begin{proof}
The fact that $f_j$'s are immersions is a direct consequence of $C^1$ convergence. To see injectivity, suppose by contradiction that there is a sequence of points $x^1_j\neq x_j^2$ in $X$ such that $f_j(x_j^1) = f_j(x_j^2)$. By compactness, after passing to a subsequence we can take the limit to arrive at $x_j^1\to x^1$, $x_j^2\to x^2$ with $f(x^1) = f(x^2)$ for $x_1, x_2 \in X$. By the injectivity assumption, $x^1 = x^2 =: \hat x$. By the fact that $f$ is an injective immersion, there is a coordinate neighbourhood $U\subset X$ containing $\hat x$, $x_j^1$, and $x_j^2$ such that $f\mid_U: U \to Y$ is an embedding onto its image.

We can now estimate, in coordinates, that
\begin{align*}
0=\|f_j(x^1_j) - f_j(x^2_j)\|& \geq \| f(x^1_j) - f(x^2_j)\| - \|(f_j(x_j^1) - f(x_j^1)) - (f_j(x_j^2) - f(x_j^2) \|\\ &\geq \| f(x^1_j) - f(x^2_j)\|- \|df - df_j\|_\infty \|x_j^1 - x_j^2\|.
\end{align*}
By the fact that $f\mid_U$ is an embedding onto the image,
\begin{eqnarray*}
0=\|f_j(x^1_j) - f_j(x^2_j)\|\geq c\|x_j^1 - x_j^2\|- \|df - df_j\|_\infty \|x_j^1 - x_j^2\|.
\end{eqnarray*}
Convergence in $C^1$ means that $\|df - df_j\|_\infty\leq c/2$ for $j\in \mathbb N$ sufficiently large. This means that $0\geq\| x_j^1-x_j^2\|$, contradicting our previous assertion that $x_j^1\neq x_j^2$. So the proof is complete.
\end{proof}

Below,  ${\rm dVol}^{\alpha g^\circ}$ denotes the volume form induced by ${\rm dVol}^{\alpha g^\circ}$  to $\Sigma_0(f^z,\alpha g^\circ)$.

\begin{Lemma}\label{lem:bolkerstable}
	For $\alpha \in H^{3+\frac{n+1}{2}+\gamma}(\widetilde{M})$ sufficiently close to $1$ (in the $H^{3+\frac{n+1}{2}+\gamma}(\widetilde{M})$ topology), the Fr\'echet derivative evaluated at $\alpha$, 
    	$$D\mathbf{F}_0(\alpha): C_c^\infty(\widetilde{M}) \to C(\bar{B}_1^{N}),$$
where $\left(D\mathbf{F}_0(\alpha)\beta\right)\in C(\bar{B}_1^{N})$ is given by %
\begin{eqnarray}\label{eq: DFalpha}
(D\mathbf{F}_0(\alpha)\beta)(z) = \frac{n}{2} \int_{\Sigma_0(f^z,\alpha g^\circ)} \beta \s {\rm dVol}^{\alpha g^\circ}.
\end{eqnarray}
This is a double fibration transform satisfying the Bolker condition at every point in its canonical relation.
\end{Lemma}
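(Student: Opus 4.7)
The formula \eqref{eq: DFalpha} presents $D\mathbf{F}_0(\alpha)$ as a generalized Radon transform over the incidence manifold $Z(\alpha)$ from \eqref{def: Z alpha}: each fiber $\pi_{\bar B_1^{N}}^{-1}(z)\cap Z(\alpha)$ equals the graphed minimal surface $\Sigma_0(f^z,\alpha g^\circ)$, integrated against a smooth positive weight (the volume-form density). Hence $D\mathbf{F}_0(\alpha)$ fits the double fibration template $\bar B_1^{N}\xleftarrow{\pi_{\bar B_1^{N}}} Z(\alpha)\xrightarrow{\pi_{\widetilde M}}\widetilde M$ with canonical relation $C(\alpha)=(N^\ast Z(\alpha)\setminus 0)'$, and by Lemma~\ref{lem: smooth dependence of sol} the set $Z(\alpha)$ is a $C^2$-graph hypersurface so that $N^\ast Z(\alpha)$ is a well-defined conic Lagrangian.

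To confirm that we indeed have a double fibration we check that no nonzero $(z,\zeta;y,\xi)\in N^\ast Z(\alpha)$ has $\zeta=0$ or $\xi=0$. Arguing exactly as in Lemma~\ref{lem: injectivity at f = 0}(i), the $\widetilde M$-component is $\lambda(-du^{f^z}_{\alpha g^\circ}+dt)\neq 0$, and the $\bar B_1^{N}$-component is $\lambda P^\alpha(\,\cdot\,,y;u^{f^z}_{\alpha g^\circ})\vert_{\BB}$. The latter is nonzero at $(\alpha,z)=(1,0)$ by Lemma~\ref{lem: injectivity at f = 0}(ii). Running the compactness-plus-continuity argument from that proof but allowing $\alpha$ to vary in a small $H^{3+(n+1)/2+\gamma}$-neighborhood of $1$ --- and invoking Corollary~\ref{cor:contDependenceOfStabilitySol} and Proposition~\ref{prop: smooth dependence of solution} for the joint continuous dependence $(z,\alpha)\mapsto P^\alpha(\,\cdot\,,\cdot\,;u^{f^z}_{\alpha g^\circ})\vert_{\BB}$ --- shows that $P^\alpha\vert_{\BB}$ stays bounded away from $0$ uniformly over $(y,z)\in\Sigma_0\times\bar B_1^{N}$. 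This establishes both no-flat-spots, hence the double fibration structure.

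Given this, Lemma~\ref{lem: conic no matter} reduces the Bolker condition to showing that the descent $\bar\pi_L^\alpha\colon \overline{\mathcal L}(\alpha)\coloneqq N^\ast Z(\alpha)/\mR_+\to T^\ast\bar B_1^{N}/\mR_+$ is an injective immersion. Identifying $\overline{\mathcal L}(\alpha)$ with the compact set $Z(\alpha)\subset \bar B_1^{N}\times\widetilde M$ via the conormal direction, and letting $\overline\Psi_\alpha\colon \overline{\mathcal L}(1)\to \overline{\mathcal L}(\alpha)$ be the descent of the fiber-homogeneous diffeomorphism $\Psi_\alpha$ from Lemma~\ref{lem:canonical converge}, that lemma gives $\bar\pi_L^\alpha\circ\overline\Psi_\alpha\to \bar\pi_L^1$ in $C^1(\overline{\mathcal L}(1))$. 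At $\alpha=1$, Corollary~\ref{cor: bolker} combined with Lemma~\ref{lem: conic no matter} tells us $\bar\pi_L^1$ is an injective immersion on the compact space $\overline{\mathcal L}(1)$; applying Lemma~\ref{lem: general openness of inj imm} to the converging family $\bar\pi_L^\alpha\circ\overline\Psi_\alpha$ then forces this map --- and therefore $\bar\pi_L^\alpha$ itself --- to remain an injective immersion for all $\alpha$ in a small $H^{3+(n+1)/2+\gamma}$-neighborhood of $1$. A final application of Lemma~\ref{lem: conic no matter} upgrades this to the Bolker condition for $\pi_L^\alpha$ throughout $C(\alpha)$. The only genuine technical obstacle is securing the uniform-in-$z$ $C^1$-convergence of $\bar\pi_L^\alpha$ after reparametrization; this is packaged into Lemma~\ref{lem:canonical converge}, which rests on the $C^2$-continuity of $(z,\alpha)\mapsto u^{f^z}_{\alpha g^\circ}$ from Lemma~\ref{lem: smooth dependence of sol}, so no further work is required.
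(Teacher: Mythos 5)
Your proof is correct and follows essentially the same route as the paper: pull back via $\Psi_\alpha$ from Lemma~\ref{lem:canonical converge}, use its $C^1$-convergence statement, apply the stability of injective immersions on compacta (Lemma~\ref{lem: general openness of inj imm}), and then pass from the sphere-bundle statement to the full conic Lagrangian via Lemma~\ref{lem: conic no matter}. Your explicit verification of the non-vanishing of both components of $N^\ast Z(\alpha)$ (the ``no flat spots'' condition that makes the projections submersions) is a useful elaboration that the paper leaves implicit when citing \cite[Thm.~2.2]{MST}, but it does not change the argument's structure.
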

\begin{proof}
According to \eqref{eq: minimal surface transform}, we have for $\beta \in C_c^\infty(\widetilde{M})$ and $\alpha$ near $1$ in $H^{3+\frac{n+1}{2}+\gamma}(\widetilde{M})$,
	\[
		(D\mathbf{F}_0(\alpha)\beta)(z) = \frac{n}{2} \int_{\Sigma_0(f^z,\alpha g^\circ)} \beta\s  {\rm dVol}^{\alpha g^\circ}\,,
	\]
which gives identity \eqref{eq: DFalpha}. This operator is a double fibration transform with canonical relation %
	$N^\ast Z(\alpha)\setminus 0$ due to \cite[Thm.~2.2]{MST}.%

	To verify the Bolker condition for $N^\ast Z(\alpha)\setminus 0$, we need to show that the left projection 
	$$\pi^\alpha_L \colon N^*Z(\alpha)\setminus 0 \to T^*\bar{B}_1^{N}$$
	is an injective immersion. Denoting by $\Psi_\alpha$ the diffeomorphism constructed in Lemma \ref{lem:canonical converge} (restricted to $N^*Z(1)$), we instead show that for $\alpha\in H^{3+\frac{n+1}{2}+\gamma}(\widetilde{M})$ sufficiently near $1$, $\pi^\alpha_L \circ \Psi_\alpha \colon N^*Z(1)/\mR_+ \to T^*\bar{B}_1^{N}$ is an injective immersion, since $\Psi_\alpha$ being a diffeomorphism that is homogeneous of degree $1$ in the fiber will then lead to the desired conclusion.

	To this end, we identify 
    $$N^*Z(1)/ \mathbb R_+ = \{(z,\zeta; x,t, \xi)\in N^*Z(1) \mid \|(\zeta,\xi)\|= 1\}$$
where $\|\cdot\|$ denotes the norm in any auxiliary metric. The map $\pi_L^1= \pi_L\colon N^*Z(1)/\mR_+ \to T^*\bar{B}_1^{N}$ is an injective immersion by Corollary \ref{cor: bolker}, and by Lemma~\ref{lem:canonical converge}, $\pi_L^\alpha\circ \Psi_\alpha \mid_{N^*Z(1)/ \mathbb R_+} \to \pi_L$ in $C^1$ as $\alpha\to 1$ in $H^{3+\frac{n+1}{2}+\gamma}(\widetilde{M})$. We now evoke the general principle of Lemma \ref{lem: general openness of inj imm} to obtain that $\pi_L^\alpha\circ \Psi_\alpha \mid_{N^*Z(1)/ \mathbb R_+}$ is an injective immersion for $\alpha$ sufficiently close to $1$, and the proof is complete upon applying Lemma~\ref{lem: conic no matter}.
\end{proof}

\subsection{Amplitude of the normal operator of a double fibration transform}\label{subsec:amplitude}

Recall the setting of Section~\ref{subsec:linF0}; in particular, $\Sigma_0$ is a fixed admissible minimal surface for the smooth Riemannian manifold $(\widetilde{M},g^\circ)$, and let $\chi\in C^\infty_c(B_1^{N})$ as in \eqref{eq: cut off def} where $N=2n+2$ is the dimension of the space $\BB_0$ constructed in Corollary~\ref{cor: bolker}. 

By Lemma~\ref{lem:bolkerstable}, for $\alpha\in H^{3+\frac{n+1}{2}+\gamma}(\widetilde{M})$ sufficiently close to $1$, the FIO $D\mathbf{F}(\alpha)$ satisfies the Bolker condition. Now for any $\tilde{\chi}\in C_c^\infty(\widetilde{M}), 0\leq \tilde{\chi}\leq 1$, arguing as in the proof of Proposition~\ref{prop:ellonset}, applying \cite[Thm.~1]{MR812288}, we find that, as an operator on $C^\infty(S)$ for a closed compact manifold $S$ containing $\widetilde{M}$,
\begin{eqnarray}\label{eq: order of Q}
\tilde{\chi}(D\mathbf{F}_0(\alpha))^\ast \chi^2 D\mathbf{F}_0(\alpha)\tilde{\chi}\in \Psi^{-n}(S).%
\end{eqnarray}
In other words, the operator on the LHS is a pseudo-differential operator of order $\mathrm{dim} (G_\delta(\BB_0)) - \dim (Z(\alpha)) = N - (N+n) = -n$ where $Z(\alpha)$ is given by \eqref{def: Z alpha}.

We mention that in \cite{zbMATH03738575}, it is calculated how the principal symbol of the normal operator transforms under the condition that the output and input dimensions are the same. We are not in that case and also need access to the full amplitude rather than just the principal symbol. We point the reader also to \cite[Lem.~4]{zbMATH06756133}, \cite{zbMATH02199899}, \cite[Lem.~4.2]{zbMATH05206066}, \cite{zbMATH02103763}, where results of the kind we aim to attain are proved by explicit calculation.%

\begin{Proposition}\label{prop:amplitudesketch}
For any $\alpha \in H^{3+\frac{n+1}{2}+\gamma}(\widetilde{M})$ sufficiently near $1$, the operator 
	\[
		P(\alpha) \coloneqq \tilde{\chi}D\mathbf{F}_0(\alpha)^\ast \chi^2 D\mathbf{F}_0(\alpha)\tilde{\chi} \in \Psi^{-n}(S)%
	\]
is a pseudodifferential operator of order $-n$ on $S$. 

Furthermore, for all $\alpha\in C^{\ell}(\widetilde{M})$ with $\norm{\alpha-1}_{C^{\ell}(\widetilde{M})}$ small enough, $P(\alpha)$, as an operator $L^2(S)\to H^n(S)$, depends continuously on $\alpha$ in the $C^{\ell}(\widetilde{M})$-norm. Here $\ell = \ell_n = 16n^2+72n+39$.
\end{Proposition}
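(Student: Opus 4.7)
\emph{Plan of proof.} The first assertion, that $P(\alpha)\in\Psi^{-n}(S)$ for $\alpha$ close to $1$ in $H^{3+(n+1)/2+\gamma}$, is already recorded at \eqref{eq: order of Q} as a consequence of Lemma~\ref{lem:bolkerstable} and the Bolker composition calculus of \cite[Thm.~1]{MR812288}. The substantial work is the quantitative $C^{\ell}$-continuity. My strategy has three steps: (i) write $D\mathbf{F}_0(\alpha)$ as an oscillatory integral in which $\alpha$ enters only through the profile $u^{f^z}_{\alpha g^\circ}$ and its area density; (ii) form the composition $P(\alpha)$, use the Bolker condition from Lemma~\ref{lem:bolkerstable} to guarantee a non-degenerate stationary set in the auxiliary fibre variables, and extract via stationary phase an explicit symbol $p_\alpha \in S^{-n}$; and (iii) combine the regularity of $\alpha\mapsto u^{f^z}_{\alpha g^\circ}$ with the limited-regularity $\Psi$DO boundedness result of Appendix~\ref{sec_continuity_psdo} to convert $C^{\ell}$-control of $\alpha$ into operator-norm continuity of $P(\alpha)\colon L^2(S)\to H^n(S)$.

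For step (i), working in $g^\circ$-Fermi coordinates $(x,t)$ around $\Sigma_0$ with $(\xi,\tau)$ the dual variables, Fourier inversion on $\beta$ gives
\[
    (D\mathbf{F}_0(\alpha)\beta)(z) = \frac{n}{2(2\pi)^{n+1}} \int e^{i[(x-y)\cdot\xi + (u^{f^z}_{\alpha g^\circ}(x)-s)\tau]}\, a_\alpha(z,x)\, \beta(y,s)\, d(y,s)\,dx\,d(\xi,\tau),
\]
where $a_\alpha(z,x)$ is the $\alpha g^\circ$-area density of $\Sigma_0(f^z,\alpha g^\circ)$ in the chart. By Theorem~\ref{thm: smooth dependence} together with interior elliptic regularity for the minimal surface equation \eqref{eq: MSE}, for each $k\in\mathbb{N}$ the maps $\alpha\mapsto u^{f^z}_{\alpha g^\circ}$ and $\alpha\mapsto a_\alpha$ are continuous from a $C^{k+m_1}(\widetilde M)$-neighbourhood of $1$ into $C^k(\bar B_1^N\times \Sigma_0)$ with a fixed finite loss $m_1$.

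For step (ii), $P(\alpha)=\tilde\chi D\mathbf{F}_0(\alpha)^\ast\chi^2 D\mathbf{F}_0(\alpha)\tilde\chi$ has Schwartz kernel given by an oscillatory integral with phase
\[
    \Phi_\alpha = (x-y)\cdot\xi + (u_\alpha(z,x)-s)\tau - (x'-y')\cdot\xi' - (u_\alpha(z,x')-s')\tau',
\]
where $u_\alpha := u^{f^z}_{\alpha g^\circ}$. Enforcing $(\xi,\tau)=(\xi',\tau')$ by the $(y,y')$-integration, the Bolker condition from Lemma~\ref{lem:bolkerstable} guarantees that the stationary set in the remaining fibre variables $(z,x,x')$ is a smooth graph with non-degenerate transverse Hessian. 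Stationary phase in those fibre variables then produces a pseudodifferential symbol $p_\alpha\in S^{-n}$ whose $S^{-n}$-seminorms up to any prescribed order $K$ are continuous in $(u_\alpha,a_\alpha)$ in $C^{K+m_2}$, with a finite loss $m_2$ coming from the stationary-phase expansion and the implicit solution of the stationary-point equation (which also involves inverting a smooth $\alpha$-dependent matrix away from its degeneracy locus, excluded by Bolker).

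The principal obstacle, and the source of the explicit threshold $\ell = 16n^2+72n+39$, is step (iii): one must bound the norm of a $\Psi$DO of order $-n$ on the $(n+1)$-dimensional manifold $S$ from $L^2(S)$ into $H^n(S)$ in terms of only finitely many symbol seminorms of limited regularity. This is exactly the Calder\'on--Vaillancourt-type estimate of Appendix~\ref{sec_continuity_psdo}. Combining the required number $K$ of symbol derivatives with the stationary-phase loss $m_2$, the solvability loss $m_1$, and the chain-rule cost of the successive compositions $\alpha\mapsto u_\alpha\mapsto p_\alpha\mapsto P(\alpha)$ (differentiated in the joint variables appearing in the symbol) accounts for the polynomial growth in $n$ in the stated value of $\ell$. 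Continuity of $\alpha\mapsto P(\alpha)\in\mathcal{L}(L^2(S),H^n(S))$ with respect to the $C^{\ell}$-topology on $\alpha$ then follows directly.
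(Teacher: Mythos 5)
Your high-level plan — represent $D\mathbf{F}_0(\alpha)$ as an oscillatory integral, form the normal operator, reduce by stationary phase to an explicit $\Psi$DO symbol, then apply the finite-regularity boundedness result of Appendix~\ref{sec_continuity_psdo} — is indeed the same strategy as the paper. The representation you use (Fourier-inverting $\beta$ so the frequency variable is $(\xi,\tau)\in\mR^{n+1}$) differs from the paper's, which works from the double fibration FIO representation of \cite{MST} with a \emph{one-dimensional} frequency $\eta$, and composes kernels in $(\eta,\theta,z)$. Either starting point is legitimate in principle. However, there are two genuine gaps in what you wrote.

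First, the composition step is wrong as stated. After forming $P(\alpha)=\tilde\chi D\mathbf{F}_0(\alpha)^\ast\chi^2D\mathbf{F}_0(\alpha)\tilde\chi$, the Schwartz kernel of $P(\alpha)$ is a function of the output variables $(y,s)$ and input variables $(y',s')$; the variables that \emph{are} integrated out are $z, x, x',(\xi,\tau),(\xi',\tau')$. There is no $(y,y')$-integration, so nothing ``enforces $(\xi,\tau)=(\xi',\tau')$'' the way you claim. What actually has to be done — and what the paper spends most of its effort on — is to write the composition as an oscillatory integral in the joint variables $\sigma=(\eta,\theta,z)$ (in the paper's representation), perform a homogeneity-fixing change of variables, verify that the resulting phase is a \emph{clean} phase with excess $e=N-(n+1)$, split $\sigma=(\bar\sigma,\sigma''')$, and reduce the fiber dimension to the minimal value $n+1$ before stationary phase. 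The Bolker condition from Lemma~\ref{lem:bolkerstable} enters through cleanness of the intersection, not through a non-degenerate Hessian in $(z,x,x')$ directly; indeed the excess $e=n+1>0$ means the intersection is \emph{not} transversal, so a blunt ``non-degenerate transverse Hessian in the fibre variables'' claim is not available without the splitting.

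Second — and this is the substantive content of the proposition — you never actually derive the threshold $\ell=16n^2+72n+39$. You write that combining the required number $K$ of symbol derivatives with ``stationary-phase loss $m_2$'', ``solvability loss $m_1$'', and ``chain-rule cost'' accounts for the polynomial growth in $n$, but this is an assertion, not a computation. The whole point of the proposition (which is then used verbatim in Theorem~\ref{thm:stability}, where $C^{16n^2+72n+39}$ appears in the hypothesis) is this explicit exponent. The paper's proof tracks derivatives concretely: it establishes that $\alpha\mapsto (\phi_\alpha,a_\alpha)$ maps $C^{\ell}\to C^{\ell'}$ with $\ell' = 8n^2+36n+20$, shows that $4n+1$ symbol derivatives suffice for Proposition~\ref{prop:hbounded} in dimension $n+1$, takes the stationary-phase expansion to order $k=4n^2+16n+8$ so that the remainder decays like $|\sigma'|^{-n}$ after $4n+2$ derivatives, and then iterates a Landau–Kolmogorov-type interpolation to control mixed derivatives. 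None of this bookkeeping is in your sketch, and without it the specific value of $\ell$ is unproven. You would need to carry out the entire derivative count (or at least exhibit a scheme from which the bound can be read off) for the proof to be complete.
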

\begin{proof}
The fact that $P(\alpha)$ belongs to $\Psi^{-n}(S)$ was already shown in \eqref{eq: order of Q}. We thus only need to show the continuous dependence on the conformal factor $\alpha \in C^{\ell}(\widetilde M)$. To prove this, we will follow basic references related to Fourier integral operators and clean composition calculus (e.g.\ \cite{zbMATH03481135, zbMATH05528184, zbMATH05817029}).

In this proof we will explicitly calculate the full amplitude of the $\Psi$DO $P(\alpha)$ and will apply Proposition~\ref{prop:hbounded} to find the desired continuity statement, which requires that we have control of $4n+1$ derivatives of the amplitude. Working backwards from this number throughout this proof will lead to requiring control of $\ell = \ell_n = 16n^2+72n+39$ derivatives of $\alpha$.
	
Using the fact from Lemma~\ref{lem:bolkerstable} that $D\mathbf{F}_0(\alpha)$ given by \eqref{eq: DFalpha} is a double fibration transform satisfying the Bolker condition, from the proof of \cite[Thm.~2.2]{MST}, in local coordinates, the kernel of $D\mathbf{F}_0(\alpha)$ is given by
	\begin{equation}\label{eq:myFIO}
		D\mathbf{F}_0(\alpha)(z,x,t)= \int_{\mR} e^{i(\phi_\alpha(z,x)-t)\cdot\eta} a_\alpha(z,x)\dd \eta\,.
	\end{equation}
It is an easy exercise to show that if we use $g^\circ$-Fermi coordinates around $\Sigma^\circ$, we may choose $\phi_\alpha(z,x) = u_{\alpha g^\circ}^{f^z}(x)$. 

By Lemmas~\ref{lem: smooth dependence of sol} and~\ref{lem: volume form}, the map $\alpha \mapsto e^{i(\phi_\alpha(z,x)-t)\cdot\eta} a_\alpha(z,x)$ is a continuous map 
\[
\text{from } C^{16n^2+72n+39} = C^{3+2(8n^2+36n+18)} \text{ into } C^{2+ 8n^2+36n+18} = C^{8n^2+36n+20}
\]
for each fixed $\eta$. We will write $\ell' = 8n^2+36n+20$, so that $\alpha \mapsto e^{i(\phi_\alpha(z,x)-t)\cdot\eta} a_\alpha(z,x)$ is a continuous map $C^{\ell} \to C^{\ell'}$.
	
	Let $\omega_1 \in C_c^\infty(\mR)$ be identically $1$ near the origin. Inserting $\omega_1(\eta)$ into the formula \eqref{eq:myFIO}, we can compute the composition $P(\alpha)$ explicitly to have the kernel
	\[
		P(\alpha)(y,s,x,t) = \int e^{i(\phi_\alpha(z,x)-t)\cdot\eta+i(s-\phi_\alpha(z,y))\cdot\theta} p_{1,\alpha}(y,s,x,t,z,\eta,\theta)\dd \eta\dd\theta\dd z + R_{1,\alpha}(y,s,x,t)\,,
	\]
	where
	\[
		p_{1,\alpha}(y,s,x,t,z,\eta,\theta) = \overline{a_\alpha(z,y)}a_\alpha(z,x)(1-\omega_1(\eta)\omega_1(\theta))\chi^2(\abs{z}^2)\tilde{\chi}((x,t))\tilde{\chi}((y,s))\,.
	\]	
The remainder Schwartz kernel $R_{1,\alpha}(y,s,x,t)$ is an integral in the $(\eta, \theta, z)$ variable over compactly supported regions and therefore $\alpha \mapsto R_{1,\alpha}$ is a continuous map from $C^{\ell}$ to $C^{\ell'}$.

We perform the change of variable $z \mapsto \frac{z}{\sqrt{\abs{\theta}^2+\abs{\eta}^2}}$, and introduce
	\[
		\varphi_\alpha((y,s),(x,t),(z,\eta,\theta)) \coloneqq \left(\phi_\alpha\left(\frac{z}{\sqrt{\abs{\eta}^2+\abs{\theta}^2}},x\right)-t\right)\cdot\eta+\left(s-\phi_\alpha\left(\frac{z}{\sqrt{\abs{\theta}^2+\abs{\eta}^2}},y\right)\right)\cdot\theta\,,
	\]
	which is homogeneous of degree $1$ with respect to the variable $\sigma\coloneqq (\eta,\theta,z)$ so that it is in fact a phase. We shall also introduce the shorthand $\tilde{y} = (y,s)$ and $\tilde{x}=(x,t)$ and thus have
	\begin{equation}\label{eq:Pbeforemicrolocalize}
		P(\alpha)(\tilde{y},\tilde{x}) = \int e^{i\varphi_\alpha(\tilde{y},\tilde{x},\sigma)} p_{2,\alpha}(\tilde{y},\tilde{x},\sigma)\dd\sigma + R_{1,\alpha}\,,
	\end{equation}
	with 
	\[
			 p_{2,\alpha}(\tilde{y},\tilde{x},\sigma)= p_{2,\alpha}((y,s),(x,t),(z,\eta,\theta))=p_{1,\alpha}\left(y,s,x,t,\frac{z}{\sqrt{\abs{\eta}^2+\abs{\theta}^2}},\eta,\theta\right)(\abs{\eta}^2+\abs{\theta}^2)^{-N/2}\,,%
	\]
    which is homogeneous of degree $-N$ in $\sigma$ for $\sigma$ away from the origin. We are still in the position that $\alpha\mapsto p_{2,\alpha}$ is a continuous map $C^{\ell}\to C^{\ell'}$. This is also true for $\alpha\mapsto e^{i\varphi_\alpha}$ for each fixed $\sigma$. 

	Because $D\mathbf{F}_0(\alpha)$ satisfies the Bolker condition, we know from \cite{MR812288} that $P(\alpha)$ is the clean composition of (omitting the cut-off functions) $D\mathbf{F}_0(\alpha)^\ast$ with $D\mathbf{F}_0(\alpha)$, which is to say that the phase $\varphi_\alpha$ is clean. Furthermore, one can explicitly calculate that the excess is given by $e= \mathrm{dim}(\mathcal{G}(\BB))-\mathrm{dim}(\widetilde{M}) = N-(n+1)$. (Or by calculating that $\dim \{\nabla_\sigma \varphi_\alpha = 0\}=N+n+1$ and using the definition of the excess from \cite[Def.~21.2.15]{zbMATH05129478}.)

	Following the proof of \cite[Lem.~7.1]{zbMATH03481135} (or see the proof of \cite[Prop.~25.1.5']{zbMATH05528184}) this means that we can (after rearranging the components of $\sigma$) split $\sigma = (\bar \sigma,\sigma''')$ with $\bar \sigma = (\sigma_{1},\dots,\sigma_{n+3})$ and $\sigma''' = (\sigma_{n+4},\dots,\sigma_{N+2})$ so that the manifold $\{\nabla_\sigma \varphi_\alpha= 0\}$ is locally defined by 	
	\[
		\nabla_{\bar\sigma} \varphi_\alpha = 0
	\]
	and that $\{\nabla_\sigma \varphi_\alpha =0\}$ intersects $\varphi_{n+4,\dots,N+2}=\mathrm{const}$ transversally. Furthermore, for each fixed $\sigma'''$, $\varphi_\alpha(\tilde{y},\tilde{x},\sigma)=\varphi_\alpha(\tilde{y},\tilde{x},\bar\sigma,\sigma''')$ is a non-degenerate phase function as a function of $(\tilde{y},\tilde{x},\bar \sigma)$.

    At this point, by using a microlocal partition of unity, we shall work in a microlocal neighborhood of some point $(\tilde{y}_0,\tilde{x}_0,\sigma_0)$ so that $\nabla_\sigma \varphi_\alpha(\tilde{y}_0,\tilde{x}_0,\sigma_0)$ vanishes. In particular, we may assume that $p_{2,\alpha}$ has slim conic support with respect to $\sigma$ near $\sigma_0$.

	Introducing a cut-off $\omega_2\in C_c^\infty(\mR^{n+3})$ identically $1$ near the origin, performing the change of variables $\sigma'''\mapsto \abs{\bar\sigma}\sigma'''$ we find
	\[
		P(\alpha)(\tilde{y},\tilde{x}) = \iint e^{i\varphi_{\alpha}(\tilde{y},\tilde{x},\bar \sigma,\abs{\bar\sigma}\sigma''')} \abs{\bar\sigma}^{N-n-1}(1-\omega_2(\bar\sigma))p_{2,\alpha}(\tilde{y},\tilde{x},\bar\sigma,\abs{\bar\sigma}\sigma''')\dd\bar \sigma\dd\sigma'''+R_{2,\alpha}\,,
	\]
	where in the inner integral we interpret $\sigma'''$ as a fixed parameter and $\alpha\mapsto R_{2,\alpha}$ is continuous $C^{\ell}\to C^{\ell'}$. 
    
    Because $p_{2,\alpha}$ has slim conic support (in $\sigma$), we see that there is some $C>0$ (independent of $\tilde{y},\tilde{x},\bar\sigma$) so that $p_{2,\alpha}(\tilde{y},\tilde{x},\bar\sigma,\abs{\bar\sigma}\sigma''') = 0$ for $\abs{\sigma'''} > C$. Let us thus focus our attention on the inner integral for which we introduce new notation
	\begin{equation}\label{eq:introPQ}
		P(\alpha) = \int Q_{\sigma'''}(\alpha) \dd \sigma''' +R_{2,\alpha}\,,\quad Q_{\sigma'''}(\alpha) = \int e^{i\varphi_{\alpha,\sigma'''}(\tilde{y},\tilde{x},\bar \sigma)} p_{3,\alpha;\sigma'''}(\tilde{y},\tilde{x},\bar\sigma)\dd\bar \sigma\,,
	\end{equation}
	where
	\[
		\varphi_{\alpha,\sigma'''}(\tilde{y},\tilde{x},\bar \sigma) = \varphi_{\alpha}(\tilde{y},\tilde{x},\bar \sigma,\abs{\bar\sigma}\sigma''')\,,
	\]
	and 
	\[
		p_{3,\alpha;\sigma'''}(\tilde{y},\tilde{x},\bar\sigma) = \abs{\bar\sigma}^{N-n-1}p_{2,\alpha}(\tilde{y},\tilde{x},\bar\sigma,\abs{\bar\sigma}\sigma''')\,,
	\]
    which is homogeneous of degree $-n-1$ in $\bar\sigma$ away from the origin. We may again assume that $p_{3,\alpha;\sigma'''}$ has support in some microlocal neighborhood of some $(\tilde{y}_0,\tilde{x}_0,\bar\sigma_0)$.
    
	Following the proof of \cite[Lem.~7.1]{zbMATH03481135}, we know that $\varphi_{\alpha,\sigma'''}$ as a function of $(\tilde{y},\tilde{x},\bar \sigma)$ is a non-degenerate phase function and 
    \[
    \nabla_{\bar\sigma}\varphi_{\alpha,\sigma'''}(\tilde{y},\tilde{x},\bar\sigma) = 0\iff \nabla_{\bar\sigma} \varphi_\alpha(\tilde{y},\tilde{x},\bar\sigma,\abs{\bar\sigma}\sigma''') = 0 \iff \nabla_{\sigma}\varphi_\alpha(\tilde{y},\tilde{x},\bar\sigma,\sigma''') = 0\,.
    \]

    Because $D\mathbf{F}_0(\alpha)$ satisfies the Bolker condition, we know that $\nabla_{\sigma}\varphi_\alpha = 0 \implies \tilde{y}=\tilde{x}, \nabla_{\tilde{y}} \varphi_\alpha(\tilde{y},\tilde{x},\sigma)=-\nabla_{\tilde{x}}\varphi_\alpha(\tilde{y},\tilde{x},\sigma)$. This can be verified either directly or using the fact that we know the canonical relation related to the phase $\varphi_\alpha$ to be a subset of the graph of the diagonal. In any case, we have deduced that 
\begin{equation}\label{eq:samemanifold}
\nabla_{\bar\sigma}\varphi_{\alpha,\sigma'''}(\tilde{y},\tilde{x},\bar \sigma) = 0\implies \tilde{y}=\tilde{x}\,,\,\nabla_{\tilde{y}}\varphi_{\alpha,\sigma'''}(\tilde{y},\tilde{x},\bar \sigma) = -\nabla_{\tilde{x}}\varphi_{\alpha,\sigma'''}(\tilde{y},\tilde{x},\bar \sigma)\,,
\end{equation}
and $\varphi_{\alpha,\sigma'''}$ is a non-degenerate phase function. In particular, this reaffirms that $P(\alpha)$ and $Q_{\sigma'''}(\alpha)$ must be pseudodifferential operators (see for example \cite[\S~2.5]{zbMATH05817029}). 

We now reduce the dimension of the frequency variables to the minimum possible, which we know to be $n+1$ by \cite[Lem.~2.3.5]{zbMATH05817029}. %
Here one may follow the beginning of the proof of \cite[Thm.~2.3.4]{zbMATH05817029} (see also \cite[Prop.~18.4.6]{MR4436039}), which will lead to $ \bar\sigma = (\sigma',\sigma'')\in \mR^{n+3} = \mR^{n+1}\times\mR^{2}$,
\begin{equation}\label{eq:decomp}
		\varphi_{\alpha,\sigma'''}(\tilde{y},\tilde{x},\bar\sigma) = \varphi_{\alpha,\sigma'''}^\flat(\tilde{y},\tilde{x},\sigma') + \varphi_{\alpha,\sigma'''}^\sharp(\tilde{y},\tilde{x},\bar\sigma)\,,
\end{equation}
	where $\varphi_{\alpha,\sigma'''}^\flat$ is a non-degenerate phase function with
	\[
		\nabla_{\sigma'} \varphi_{\alpha,\sigma'''}^\flat=0\iff \nabla_{\bar\sigma} \varphi_{\alpha,\sigma'''} = 0 \iff \bar\sigma = (\sigma',\sigma''(\tilde{y},\tilde{x},\sigma'))\implies \nabla_{(\tilde{y},\tilde{x})} \varphi_{\alpha,\sigma'''}^\flat = \nabla_{(\tilde{y},\tilde{x})} \varphi_{\alpha,\sigma'''}\,,
	\]
	and $\partial_{\sigma''}^2 \varphi_{\alpha,\sigma'''}^\sharp$ is non-degenerate and $\partial_{\sigma'}^2\varphi_{\alpha,\sigma'''}^\flat = 0$. Here, $\sigma''(\cdot,\cdot,\cdot)$ is a function defined by the relation above near some $\tilde{y}_0,\tilde{x}_0,\sigma'_0$ so that $\bar\sigma_0 = (\sigma'_0,\sigma''(\tilde{y}_0,\tilde{x}_0,\sigma'_0))$.

    Together with \eqref{eq:samemanifold} we then have
    \[
        \nabla_{\sigma'} \varphi_{\alpha,\sigma'''}^\flat=0\implies \tilde{y}=\tilde{x}\,, \nabla_{\tilde{y}}\varphi_{\alpha,\sigma'''}^\flat(\tilde{y},\tilde{x},\sigma') = -\nabla_{\tilde{x}}\varphi_{\alpha,\sigma'''}(\tilde{y},\tilde{x},\sigma')\,,
    \]
    so that \cite[Thm.~3.1.6]{FIO1} implies that there is a diffeomorphism $(\tilde{y},\tilde{x},\sigma')\mapsto(\tilde{y},\tilde{x},\psi_{\alpha;\sigma'''}(\tilde{y},\tilde{x},\sigma'))$ (defined from a conic neighborhood of some point $(\tilde{y}_0,\tilde{x}_0,\tilde{\sigma}'_0)$ to a conic neighborhood of the point $(\tilde{y}_0,\tilde{x}_0,\sigma_0')$) so that 
	\begin{equation}\label{eq:equivphase}
		\varphi_{\alpha;\sigma'''}^\flat(\tilde{y},\tilde{x},\psi_{\alpha;\sigma'''}(\tilde{y},\tilde{x},\sigma')) = (\tilde{y}-\tilde{x})\cdot \sigma'\,.
	\end{equation}
    (See also \cite[Def.~18.4.17]{MR4436039}.) We shall shelve this result for a moment.
    
	Recalling \eqref{eq:decomp}, together with the introduction of a smooth cut-off $\omega_3\in C_c^\infty(\mR^{n+1})$ identically $1$ near the origin, leads to
	\[
		Q_{\sigma'''}(\alpha)(\tilde{y},\tilde{x}) = \int_{\mR^{n+1}} e^{i\varphi_\alpha^\flat(\tilde{y},\tilde{x},\sigma')}  p_{4,\alpha;\sigma'''}(\tilde{y},\tilde{x},\sigma') \dd\sigma' + R_{3,\alpha;\sigma'''}
	\]
	with $\alpha\mapsto R_{3,\alpha;\sigma'''}$ a continuous map $C^{\ell}\to C^{\ell'}$, and $R_{3,\alpha;\sigma'''}$ has compact support with respect to $\sigma'''$, and
	\[
		p_{4,\alpha;\sigma'''}(\tilde{y},\tilde{x},\sigma') = (1-\omega_2(\sigma'))\int e^{i\varphi_\alpha^\sharp(\tilde{y},\tilde{x},\sigma', \sigma'')} p_{3,\alpha;\sigma'''}(\tilde{y},\tilde{x},\sigma', \sigma'')\dd\sigma''\,.
	\]
	Using substitution $\sigma'' \mapsto \abs{\sigma'}\sigma''$ and the homogeneity of the phase, we find that
	\begin{equation}\label{eq:p4}
		p_{4,\alpha;\sigma'''}(\tilde{y},\tilde{x},\sigma')=(1-\omega_2(\sigma'))\abs{\sigma'}^{2}\int e^{i\abs{\sigma'}\varphi_\alpha^\sharp(\tilde{y},\tilde{x},\abs{\sigma'}^{-1}\sigma', \sigma'')} p_{3,\alpha;\sigma'''}(\tilde{y},\tilde{x},\sigma', \abs{\sigma'}\sigma'')\dd\sigma''\,.
	\end{equation}
	Recall that we are performing our calculations on a small microlocal neighborhood, so that $p_{3,\alpha;\sigma'''}$ has small conic support with respect to $\bar\sigma$. In particular, there is $C>0$ so that $p_{3,\alpha;\sigma'''}(\tilde{y},\tilde{x},\sigma',\abs{\sigma'}\sigma'')$ vanishes when $\abs{\sigma''} > C$ (see also the remarks near \cite[Eq.~(3.2.4)]{FIO1}). We will employ the method of stationary phase to show that $p_{4,\alpha;\sigma'''}$ (and a finite amount of its derivatives) behaves like $\mathcal{O}(\abs{\sigma'}^{-n})$ for large $\sigma'$. 
    
    For any $k\in\mathbb{N}$ we may apply the method of stationary phase (\cite[Thm.~7.7.6]{hoermander1}) to expand the integral expression in $p_{4,\alpha;\sigma'''}$ up to $k$ terms the sum of which we denote by $\tilde{p}^k_{4,\alpha;\sigma'''}$, which is a finite sum of terms homogeneous of degree $\leq -n$ in $\sigma'$ for $\sigma'$ away from the origin. Also, for some $C>0$
    \begin{equation}\label{eq:O1}
        \abs{p_{4,\alpha;\sigma'''}-\tilde{p}^k_{4,\alpha;\sigma'''}} \leq C\abs{\sigma'}^{2-k}\sum_{\abs{\beta}\leq 2k}\sup_{\sigma''}\abs{\p^\beta_{\sigma''} p_{3,\alpha;\sigma'''}(\tilde{y},\tilde{x},\sigma',\abs{\sigma'}\sigma'')} = \mathcal{O}(\abs{\sigma'}^{-n+1-k})\,,
    \end{equation}
    since $p_{3,\alpha;\sigma'''}$ was homogeneous of degree $-n-1$ in $\bar\sigma$ away from the origin. 
    Because $\tilde{p}^k_{4,\alpha;\sigma'''}$ is the sum of terms homogeneous of degree $\leq -n$ in $\sigma'$ away from the origin, and because we have the explicit formula \eqref{eq:p4}, we also deduce that for all multi-indices $\rho \in \mathbb{N}_0^{3(n+1)}$
    \begin{equation}\label{eq:O2}
        \abs{\partial^\rho(p_{4,\alpha;\sigma'''}-\tilde{p}^k_{4,\alpha;\sigma'''})} \leq \abs{\partial^\rho p_{4,\alpha;\sigma'''}}+\abs{\partial^\rho \tilde{p}^k_{4,\alpha;\sigma'''}} = \mathcal{O}(\abs{\sigma'}^{-n+1+\abs{\rho}}+\abs{\sigma}^{-n}) = \mathcal{O}(\abs{\sigma'}^{-n+1+\abs{\rho}})\,,
    \end{equation}
    and the quantity on the LHS depends on the first $2k+2+\abs{\rho}$ derivatives of $\varphi_\alpha^\sharp$ and $p_{3,\alpha;\sigma'''}$ (by the explicit formulas for $p_{4,\alpha;\sigma'''}$ and $\tilde{p}^k_{4,\alpha;\sigma'''}$, the latter of which given by \cite[Thm.~7.7.6]{hoermander1}).

    Following the proof of \cite[Thm.~2.9]{zbMATH03267579} we see that for any $j\geq 0$ and all compact sets $K_1,K_2 \subset \mR^{3(n+1)}$ with $K_1 \Subset K_2^{\mathrm{int}}$, for some $C>0$,
    \begin{gather*}
        \sup_{K_1,\abs{\rho}=j+1}\abs{\p^\rho (p_{4,\alpha;\sigma'''}-\tilde{p}^k_{4,\alpha;\sigma'''})} \\
        \leq C\sup_{K_2,\abs{\rho_1}=j,\abs{\rho_2}=j+2}\abs{\p^{\rho_1}(p_{4,\alpha;\sigma'''}-\tilde{p}^k_{4,\alpha;\sigma'''})}\left(\abs{\p^{\rho_1}(p_{4,\alpha;\sigma'''}-\tilde{p}^k_{4,\alpha;\sigma'''})}+\abs{\p^{\rho_2} (p_{4,\alpha;\sigma'''}-\tilde{p}^k_{4,\alpha;\sigma'''})}\right)\,.
    \end{gather*}
    Iteratively plugging in $j=0, j=1,\dots,j=4n+2$ above, and using \eqref{eq:O1} and \eqref{eq:O2} we find that
    \[
        \sup_{\abs{\rho}\leq 4n+2}\abs{\partial^\rho(p_{4,\alpha;\sigma'''}-\tilde{p}^k_{4,\alpha;\sigma'''})} = \mathcal{O}(\abs{\sigma'}^{(4n+4)(4n+5)/2-2-(4n+3)n-k}) = \mathcal{O}(\abs{\sigma'}^{-n})
    \]
    if $k\geq 4n^2+16n+8$. Taking $k = 4n^2+16n+8$, we find that%
    \begin{equation}\label{eq:boundedinsigma}
        \norm{\langle \sigma'\rangle^n(p_{4,\alpha;\sigma'''}-\tilde{p}^k_{4,\alpha;\sigma'''})}_{C^{4n+2}_{\tilde{y},\tilde{x},\sigma'}} < \infty\,,\qquad\text{and}\qquad \norm{\langle \sigma'\rangle^n\tilde{p}^k_{4,\alpha;\sigma'''}}_{C^{4n+2}_{\tilde{y},\tilde{x},\sigma'}} < \infty\,,
    \end{equation}
    where $\langle \sigma'\rangle = (1+\abs{\sigma'}^2)^{1/2}$ is the Japanese bracket. The second part in \eqref{eq:boundedinsigma} follows from the explicit formula for $\tilde{p}_{4,\alpha;\sigma'''}^k$ and that this expression is the sum of terms homogeneous of degree $\leq -n$ in $\sigma'$ away from the origin.

    Writing $p_{4,\alpha;\sigma'''}=(p_{4,\alpha;\sigma'''}-\tilde{p}^k_{4,\alpha;\sigma'''})+\tilde{p}^k_{4,\alpha;\sigma'''}$ we know from the remarks after \eqref{eq:O2} and the explicit formula for $\tilde{p}_{4,\alpha;\sigma'''}^k$ that 
    $\sup_{\abs{\rho}\leq 4n+2}\abs{\p^\rho p_{4,\alpha;\sigma'''}}$ depends on $2k+2+4n+2 = 8n^2+36n+20=\ell'$ derivatives of $\varphi_\alpha^\sharp$ and $p_{3,\alpha;\sigma'''}$. Combined with \eqref{eq:boundedinsigma}, we find that $\alpha\mapsto p_{4,\alpha;\sigma'''}$ is a continuous map from $C^{\ell}$ to the set of functions $p$ satisfying
	\[
		\norm{\langle \sigma'\rangle^n p(\tilde{y},\tilde{x},\sigma')}_{C^{4n+2}_{\tilde{y},\tilde{x},\sigma'}} < \infty\,.
	\]

	Recalling \eqref{eq:equivphase}, if the microlocal support, say $\Gamma_{\tilde{y},\tilde{x}}$, of the amplitude $p_{4,\alpha;\sigma'''}$ is small enough, it is compactly contained in $\psi_{\alpha;\sigma'''}(\tilde{y},\tilde{x},\tilde{\Gamma}_{\tilde{y},\tilde{x}})$ where $\tilde{\Gamma}_{\tilde{y},\tilde{x}}$ is some conic neighborhood of the point $(\tilde{y}_0,\tilde{x}_0,\tilde{\sigma}'_0)$. Therefore, 
    \[
        Q_{\sigma'''}(\alpha)-R_{3,\alpha;\sigma'''} = \int_{\mR^{n+1}} e^{i\varphi_\alpha^\flat(\tilde{y},\tilde{x},\sigma')}  p_{4,\alpha;\sigma'''}(\tilde{y},\tilde{x},\sigma') \dd\sigma' = \int_{\psi_{\alpha;\sigma'''}(\tilde{\Gamma}_{\tilde{y},\tilde{x}})} e^{i\varphi_\alpha^\flat(\tilde{y},\tilde{x},\sigma')}  p_{4,\alpha;\sigma'''}(\tilde{y},\tilde{x},\sigma') \dd\sigma'\,,
    \]
    and using substitution,
	\[
		Q_{\sigma'''}(\alpha)(\tilde{y},\tilde{x}) = \int_{\tilde{\Gamma}_{\tilde{y},\tilde{x}}} e^{i(\tilde{y}-\tilde{x})\cdot \sigma'} p_{5,\alpha;\sigma'''}(\tilde{y},\tilde{x},
    \sigma')\dd\sigma'+R_{3,\alpha;\sigma'''}\,,%
	\]
    where
    \[
        p_{5,\alpha;\sigma'''}(\tilde{y},\tilde{x},\sigma') \coloneqq p_{4,\alpha;\sigma'''}(\tilde{y},\tilde{x},\psi_{\alpha;\sigma'''}(\tilde{y},\tilde{x},\sigma'))%
        \abs{\mathrm{det}\partial_{\sigma'}\psi_{\alpha;\sigma'''}(\tilde{y},\tilde{x},\sigma')}\,.
    \]
    Since we had assumed that $p_{4,\alpha;\sigma'''}$ vanishes outside of a compact subset of $\tilde{\Gamma}_{\tilde{y},\tilde{x}}$, we extend (the definition of) $p_{5,\alpha;\sigma'''}$ as $0$ outside of $\tilde{\Gamma}_{\tilde{y},\tilde{x}}$ without modifying $Q_{\sigma'''}$, so that
    \[
        Q_{\sigma'''}(\alpha)(\tilde{y},\tilde{x}) = \int_{\mR^{n+1}} e^{i(\tilde{y}-\tilde{x})\cdot \sigma'} p_{5,\alpha;\sigma'''}(\tilde{y},\tilde{x},
    \sigma')\dd\sigma'+R_{3,\alpha;\sigma'''}\,.
    \]
    We note that because $\psi_{\alpha;\sigma'''}$ is homogeneous of degree $1$ in $\sigma'$, $p_{5,\alpha;\sigma'''}(\tilde{y},\tilde{x},\sigma')$ is indeed still an amplitude of order $-n$ and $\alpha \mapsto p_{5,\alpha;\sigma'''}$ is continuous $C^{\ell} \to S^n_{4n+1}$, where $S^n_{j}$ is the space of measurable functions $p$ so that
	\begin{equation}\label{eq:defineSj}
		\norm{p}_{S^n_{j}} = \sup_{\sigma'\in \mR^{n+1}}\norm{\langle \sigma'\rangle^n p(\tilde{y},\tilde{x},\sigma')}_{C^{j}_{\tilde{y},\tilde{x}}} < \infty\,.
	\end{equation}
	In particular, recalling the definition of $P$ and $Q_{\sigma'''}$ we have that
	\[
		P(\alpha)(\tilde{y},\tilde{x}) = \int Q_{\sigma'''}(\alpha)(\tilde{y},\tilde{x}) \dd\sigma'''+R_{2,\alpha} = \int_{\mR^{n+1}} e^{i(\tilde{y}-\tilde{x})\cdot \sigma'} \int %
        p_{5,\alpha;\sigma'''}(\tilde{y},\tilde{x},\sigma')\dd\sigma''' \dd \sigma'+R_{4,\alpha}\,,%
	\]
    where
    \[
        \alpha \mapsto R_{4,\alpha} \coloneqq R_{2,\alpha}+\int R_{3,\alpha;\sigma'''} \dd \sigma'''
    \]
    is continuous $C^{\ell}\to C^{\ell'}$ (recall that $R_{3,\alpha;\sigma'''}$ has compact support with respect to $\sigma'''$). 

    Thus, introducing $\omega_4 \in C_c^\infty(\mR^{n+1})$ with $\int \omega_4 \dd\sigma' = 1$, we have
    \begin{equation}\label{eq:explicitp}
        P(\alpha)(\tilde{y},\tilde{x}) = \int_{\mR^{n+1}} e^{i(\tilde{y}-\tilde{x})\cdot\sigma'}p_{6,\alpha}(\tilde{y},\tilde{x},\sigma') \dd\sigma'\,,
    \end{equation}
    where
	\[
		p_{6,\alpha}(\tilde{y},\tilde{x},\sigma') = \int %
        p_{5,\alpha;\sigma'''}(\tilde{y},\tilde{x},\sigma')\dd\sigma'''+ \omega_4(\sigma')e^{-i(\tilde{y}-\tilde{x})\cdot\sigma'}R_{4,\alpha}\,,
	\]
	and we recall (from remarks prior to \eqref{eq:introPQ}) that the integrand in the first term above is compactly supported with respect to $\sigma'''$. Thus, we know that $\alpha\mapsto p_{6,\alpha}$ is continuous $C^{\ell} \to S^n_{4n+1}$ because this was true for $\alpha\mapsto p_{5,\alpha;\sigma'''}$ for each fixed $\sigma'''$.

    Note that \eqref{eq:explicitp} is true in some neighborhood $\Gamma \subset \mR^{2(n+1)}$ of the diagonal $\tilde{y}=\tilde{x}$ by using a microlocal partition of unity and the knowledge that \eqref{eq:explicitp} holds microlocally near any $(\tilde{y},\tilde{x},\sigma')$ so that $\tilde{y}=\tilde{x}$. Finally, introducing a cut-off $\omega_5 \in C^\infty(\mR^{2(n+1)})$ that is identically $1$ near the diagonal and vanishes outside of $\Gamma$, we have 
    \begin{equation}\label{eq:finalformulaP}
        P(\alpha)(\tilde{y},\tilde{x}) = \int_{\mR^{n+1}}e^{i(\tilde{y}-\tilde{x})\cdot\sigma'} p_\alpha(\tilde{y},\tilde{x},\sigma')\dd\sigma'\,,
    \end{equation}
    where
    \[
        p_\alpha(\tilde{y},\tilde{x},\sigma') = \omega_5(\tilde{y},\tilde{x})p_{6,\alpha}(\tilde{y},\tilde{x},\sigma') + (1-\omega_5(\tilde{y},\tilde{x}))e^{-i(\tilde{y}-\tilde{x})\cdot\sigma'}\omega_4(\sigma')P(\alpha)(\tilde{y},\tilde{x})\,,
    \]
    and we use the formula \eqref{eq:Pbeforemicrolocalize} for $P(\alpha)(\tilde{y},\tilde{x})$ in the definition of $p_\alpha$.%

    Using the representation of $P(\alpha)(\tilde{y},\tilde{x})$ from \eqref{eq:Pbeforemicrolocalize}, we verify that for $\tilde{y}$ away from $\tilde{x}$, using partial integration $4n+1+3$ times, we can write \[
        P(\alpha)(\tilde{y},\tilde{x}) = \int e^{i\varphi_\alpha(\tilde{y},\tilde{x},\sigma)} \tilde{p}_{2,\alpha}(\tilde{y},\tilde{x},\sigma)\dd\sigma\,,
    \] 
    where $\tilde{p}_{2,\alpha}(\tilde{y},\tilde{x},\sigma)$ is a sum of terms homogeneous of degree $\leq -N-3-(4n+1)$ away from the origin in $\sigma$ and $\alpha\mapsto \tilde{p}_{2,\alpha}$ is a continuous map $C^{\ell}\to C^{\ell'-(4n+4)}=C^{8(n^2+3n+2)}$. (Akin to the definition of $p_{6,\alpha}$ above, we have absorbed the remainder $R_{1,\alpha}$ from \eqref{eq:Pbeforemicrolocalize} into $\tilde{p}_{2,\alpha}$.) We find that $\partial^\beta_{\tilde{y},\tilde{x}}P(\alpha)(\tilde{y},\tilde{x})$ is given by the integral of an absolutely integrable function for all $\abs{\beta}\leq 4n+1$, so that $\alpha\mapsto (1-\omega_5(\tilde{y},\tilde{x}))e^{-i(\tilde{y}-\tilde{x})\cdot\sigma'}\omega_4(\sigma')P(\alpha)(\tilde{y},\tilde{x})$ is a continuous map $C^{\ell} \to S^n_{4n+1}$ (the latter defined in \eqref{eq:defineSj}).
    
    In total, we conclude that $\alpha\mapsto p_\alpha$ is continuous $C^{\ell} \to S^n_{4n+1}$. Thus, since the explicit formula \eqref{eq:finalformulaP} is true for all $\tilde{y},\tilde{x}$ (in the coordinated neighborhood we are working in), using Proposition~\ref{prop:hbounded} completes the proof.
\end{proof}

\subsection{Proof of Theorem~\ref{thm:stability}}

\begin{proof}[Proof of Theorem~\ref{thm:stability}]
Recall the setting of Section~\ref{sec:suffsurf}; in particular, we choose $\Sigma_1,\dots,\Sigma_K$ to be the finite set of admissible minimal surfaces for the analytic metric $g^\circ$ constructed in Lemma~\ref{lem:findfiniteminsurfs} and take $\BB_1,\dots,\BB_K$ from this result too. %
Define $\chi\in C^\infty_c(B_1^{N})$ as in \eqref{eq: cut off def} and $\tilde\chi\in C^\infty_c(\widetilde M), 0\leq \tilde{\chi}\leq 1$ satisfying \eqref{eq:supptdchi} and $\tilde{\chi}\equiv 1$ on an open neighborhood of $M$. As a consequence of \eqref{eq: order of Q}, letting $Q(\alpha)^\ast Q(\alpha)$ be defined analogously as in \eqref{eq:ellop} where every occurrence of $D\mathbf{F}(1)$ is replaced with $D\mathbf{F}(\alpha)$,
\begin{align}\label{eq: def Qalpha}
	Q(\alpha)^\ast Q(\alpha) &= \tilde{\chi}(D\mathbf{F}(\alpha))^\ast D\mathbf{F}(\alpha)\tilde{\chi} + (1-\tilde{\chi})\langle D_{g^\circ}\rangle^{-n} (1-\tilde{\chi}) \\
    &= \sum_{j=1}^K \tilde{\chi}(D\mathbf{F}_j(\alpha))^\ast \chi^2 D\mathbf{F}_j(\alpha)\tilde{\chi} + (1-\tilde{\chi})\langle D_{g^\circ}\rangle^{-n} (1-\tilde{\chi}) \in \Psi^{-n}(S)\,.\notag
\end{align}

As a consequence of Lemma~\ref{lem:bolkerstable}, it is not difficult to see that Propositions~\ref{prop:ellonset} and ~\ref{prop:injellop} (aside from the injectivity statement or the statement about being an analytic double fibration transform) hold true verbatim when every occurrence of $D\mathbf{F}(1)$ is replaced by $D\mathbf{F}(\alpha)$ for any $\alpha$ near enough to $1$ in $H^{3+\frac{n+1}{2}+\gamma}(\widetilde{M})$. 

We will show here that, in fact, for $\alpha$ near $1$ in $C^{16n^2+72n+39}(\widetilde{M})$, the operator $Q(\alpha)^\ast Q(\alpha)$ is also invertible. Recall that we showed at the beginning of the proof of Proposition~\ref{prop:lowerbd} that
	\[
		Q(1)^\ast Q(1) \colon L^2(S) \to H^n(S)
	\]
	was invertible with an inverse we denoted by $T$. Consider now
	\[
		T Q(\alpha)^\ast Q(\alpha) = \mathrm{id}+ T(Q(\alpha)^\ast Q(\alpha)- Q(1)^\ast Q(1))\,,
	\]
	where if $\norm{\alpha-1}_{C^{16n^2+72n+39}}$ is small enough, the boundedness of $T$ and Proposition~\ref{prop:amplitudesketch} imply that we may invert using a Neumann series to find that indeed $Q(\alpha)^\ast Q(\alpha)$ is also invertible as a map $L^2(S) \to H^n(S)$. This is because $Q(\alpha)^\ast Q(\alpha)-Q(1)^\ast Q(1)$ is the finite sum of operators of the type $P(\alpha)-P(1)$ treated in Proposition~\ref{prop:amplitudesketch}, with the underlying minimal surface $\Sigma_0$ replaced by some $\Sigma_j, j\in\{1,\dots,K\}$.

	Proceeding as in the proof of Proposition~\ref{prop:lowerbd}, we can then show that there is a constant $C>0$ so that for all $\alpha$ near $1$ in $C^{16n^2+72n+39}$, we have
	\[
		\norm{D\mathbf{F}(\alpha)\beta}_{H^{\frac{n}{2}}(\mR^{N})\times\dots\times H^{\frac{n}{2}}(\mR^{N})} \geq C\norm{\beta}_{L^2(M)}\,.
	\]
	Finally, following the proof of Theorem~\ref{thm:rigidity}, an application of \cite[Thm.~2(b)]{stefanov2009linearizing} completes this proof. Here one will have to use Sobolev embedding theorems since we are working with spaces of continuous functions as well as Sobolev spaces.
\end{proof}

\appendix

\section{Dependence of the areas of minimal surfaces on the metric}\label{subsec:implicitfunc} 

The purpose of this section is to prove Theorem~\ref{thm: smooth dependence} and similar results required in other sections. For background on Fr\'echet differentiability we refer to \cite{MR960687}. Recall the definition 
\begin{equation*}%
G =%
H^{3+(n+1)/2 + \gamma}\left(\widetilde{M}; \sigma\big(T^*\widetilde{M} \otimes T^*\widetilde{M}\big)\right) 
\end{equation*}
of $G$ from \eqref{eq: def G}. Let $g^\circ \in G$ be fixed, and assume as usual that $\Sigma^\circ$ is an admissible minimal surface with boundary $\partial \Sigma^\circ = \partial \widetilde{M} \cap \Sigma^\circ$. %
Furthermore, throughout this section $G^\circ \subset G$ will be a small open neighborhood of $g^\circ$ and we denote for any $k\in\mathbb{N}$, and any $\delta >0$
\[
	G^{3+k,\gamma}_\circ \coloneqq G^\circ \cap C^{3+k,\gamma}\,,\quad C^{2+k,\gamma}_\delta(\partial\Sigma^\circ) = \{f \in C^{2+k,\gamma}(\p \Sigma^\circ)\colon \norm{f}_{C^{2+k,\gamma}(\p \Sigma^\circ)} < \delta\}\,.
\]
Notice that when $k = 0$, $C^{2,\gamma}_\delta(\partial\Sigma^\circ) = U_\delta$.

Minimal surfaces for the metric $g^\circ$ are characterized by solving \eqref{eq: MSE}. We first give a different characterization based on the area functional.
\begin{Lemma}\label{lem:newdef}
Let $k\in\mathbb{N}$. The map from \eqref{eq: area formula}, 
\begin{equation}\label{eq:areaformula2}
   G^{3+k,\gamma}_\circ\times C^{2+k,\gamma}(\Sigma^\circ) \ni (g,u)\mapsto A_{g}(u) = \int_{\Sigma_u}\dd\mathrm{Vol}^{g}_{\Sigma_u} \in \mR    
\end{equation}
is $C^{2}$-Fr\'echet differentiable, where we used the notation $\Sigma_u=\{(x,u(x))\colon x\in\Sigma^\circ\}$ for any $u\in C^{2,\gamma}(\Sigma^\circ)$.

Denote by $C^{2+k,\gamma}_{\rm Dir}(\Sigma^\circ) \subset C^{2+k,\gamma}(\Sigma^\circ)$ those functions with $0$ trace on $\partial \Sigma^\circ$, and let $g \in G$. A function $u\in C^{2+k,\gamma}(\Sigma^\circ)$ defines a minimal surface $\Sigma_u$ for $g$ if and only if
\begin{equation}\label{eq:defofminsurf}
		D_u A_{g}(u)\vert_{C^{2+k,\gamma}_{\rm Dir}(\Sigma^\circ)} \colon C^{2+k,\gamma}_{\rm Dir}(\Sigma^\circ) \to \mR\,,\quad\text{satisfies}\quad D_u A_{g}(u)\vert_{C^{2+k,\gamma}_{\rm Dir}(\Sigma^\circ)} = 0\,.
\end{equation}
\end{Lemma}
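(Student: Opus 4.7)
\medskip

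\noindent\textbf{Proof plan.} The plan has two essentially independent parts, corresponding to the two assertions of the lemma.

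For the $C^{2}$-Fr\'echet differentiability of \eqref{eq:areaformula2}, I would first rewrite the area integral in the chosen $g^{\circ}$-Fermi coordinates $(x,t)$ around $\Sigma^{\circ}$ as
\[
A_{g}(u) \;=\; \int_{\Sigma^{\circ}} \Phi\!\left(g_{\alpha\beta}(x,u(x)),\nabla u(x)\right)\,dx,
\]
where $\Phi$ is a smooth function of its arguments (namely $\Phi = \sqrt{\det(\iota_{\Sigma_{u}}^{\ast}g)}$, expanded out in coordinates). Thus $A_{g}(u)$ is a composition of three maps: the evaluation
\[
(g,u)\;\longmapsto\; g\!\left(\,\cdot\,,u(\,\cdot\,)\right),
\]
the Nemytskii-type superposition induced by the smooth function $\Phi$, and finally a bounded linear integration against the base measure on $\Sigma^{\circ}$. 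By the Sobolev embedding $H^{3+(n+1)/2+\gamma}\hookrightarrow C^{3,\gamma}$ we have $G^{3+k,\gamma}_{\circ}\subset C^{3+k,\gamma}$, and I would invoke standard Moser-type composition estimates on H\"older spaces to conclude that the evaluation map is $C^{2}$ from $C^{3+k,\gamma}\times C^{2+k,\gamma}$ into $C^{2+k,\gamma}$ (two derivatives in $u$ each cost one derivative from $g$, accounting for the $3+k$ in the domain). Since the other two maps are trivially $C^{\infty}$, the composition is $C^{2}$-Fr\'echet differentiable as claimed.

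For the characterization \eqref{eq:defofminsurf}, I would use the classical first variation of area. Choosing $v\in C^{2+k,\gamma}_{\rm Dir}(\Sigma^{\circ})$ and applying the chain rule (or equivalently Lemma~\ref{lem: differential of volume form} with the one-parameter family of hypersurfaces $\Sigma_{u+sv}$), the derivative $D_{u}A_{g}(u)\,v$ becomes an integral over $\Sigma^{\circ}$ of an expression that is affine in $v$ and $\nabla v$ with coefficients depending on $u,\nabla u$ and the components of $g$ at $(x,u(x))$. Integrating by parts in the $\nabla v$ terms, the boundary contribution vanishes by $v|_{\partial\Sigma^{\circ}}=0$, and the remaining bulk term is
\[
D_{u}A_{g}(u)\,v \;=\; \int_{\Sigma^{\circ}} L_{g}(u)(x)\, v(x)\,d\mu(x),
\]
where $L_{g}(u)$ is, up to a strictly positive weight, precisely the left-hand side of the minimal surface equation \eqref{eq: MSE}. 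The equivalence claimed in \eqref{eq:defofminsurf} then follows from the fundamental lemma of the calculus of variations, applied with test functions $v\in C^{\infty}_{c}((\Sigma^{\circ})^{\mathrm{int}})$, which are dense in $C^{2+k,\gamma}_{\rm Dir}(\Sigma^{\circ})$ in a weak-enough sense to conclude $L_{g}(u)\equiv 0$ pointwise.

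The main obstacle is the first part: one has to be careful with the derivative counting for the composition $(g,u)\mapsto g(\,\cdot\,,u(\,\cdot\,))$ so that two Fr\'echet derivatives land in the correct H\"older space; this motivates the specific regularity index $3+(n+1)/2+\gamma$ in the definition of $G$. The second part is a routine but notationally heavy exercise in integration by parts, and reproduces the familiar equivalence between vanishing mean curvature and criticality of area under compactly supported normal variations.
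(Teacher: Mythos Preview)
Your proposal is correct and follows the same approach as the paper, which is extremely terse: it appeals to the explicit coordinate expression for $\mathrm{dVol}^{g}_{\Sigma_u}$ (your $\Phi$) together with $g\in C^{3,\gamma}$ to deduce $C^2$-Fr\'echet differentiability ``by direct observation,'' and then simply cites the variational definition of minimal surfaces for the second part. Your decomposition into evaluation, Nemytskii, and integration, plus the derivative-counting remark, is exactly the content behind that ``direct observation''; one small inaccuracy is your parenthetical appeal to Lemma~\ref{lem: differential of volume form}, which differentiates in the metric on a fixed hypersurface rather than in the hypersurface itself, but your main chain-rule and integration-by-parts argument stands on its own.
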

Before giving the proof, we calculate a Fermi coordinate expression (see \cite[\S~5]{zbMATH06897812}) for the volume form ${\rm dVol}^{g}_{\Sigma_u}$. 
\begin{Lemma}\label{lem: volume form}
Let $G^\circ\subset G$ be a small open neighborhood of $g^\circ$ and $g \in G^\circ$. Suppose $(x,t)\in \Sigma^\circ\times (-\epsilon, \epsilon), \epsilon>0$ is a Fermi coordinate system for the metric $g^\circ$ and let $u\in C^{2,\gamma}(\Sigma^\circ)$. Suppose in normal coordinates of $g^\circ$ the metric $g$ has the expression
$$g = g_{jk}(x,t) dx^k dx^j + g_{tt}(x,t) dt^2 + \omega(x,t) \otimes dt  + dt\otimes \omega(x,t)$$
where $\omega(\cdot, t)$ is a smooth family of one-forms on $\Sigma^\circ$ parametrized by $t\in (-\epsilon,\epsilon)$ and $g_{tt}(x,t) >0$.
The volume form ${\rm dVol}^{g}_{\Sigma_u} :={\rm dVol}^{\iota_{\Sigma_u}^*g}_{\Sigma_u}$ has the local coordinate expression
\begin{multline}\label{eq: coordinate expression dvol}
{\rm dVol}^{g}_{\Sigma_u} \\ 
=|h_u|^{1/2}\sqrt{ {\rm det}\left(I_{n\times n} + g_{tt}(x,u(x))(h_u^{-1} du) \otimes du + (h_u^{-1} \omega) \otimes du+ (h_u^{-1} du) \otimes \omega\right)}\s 
dx_1\wedge\dots\wedge dx_n,
\end{multline}
where $(I_{n\times n} + g_{tt}(x,u(x))(h_u^{-1} du) \otimes du) $ is invertible.
\end{Lemma}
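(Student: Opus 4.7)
The plan is a direct computation in the given Fermi coordinates together with a factorization of the determinant.

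First I would parametrize $\Sigma_u$ by the map $\Phi\colon \Sigma^\circ \to \widetilde{M}$, $\Phi(x)=(x,u(x))$, so that the pushforwards of the coordinate vector fields are $\Phi_\ast \partial_{x_j} = \partial_{x_j} + (\partial_j u)\,\partial_t$. The induced volume form is then
\[
	{\rm dVol}^{g}_{\Sigma_u} = \sqrt{\det(\Phi^\ast g)(x)}\,\dd x_1\wedge\cdots\wedge \dd x_n\,.
\]
Writing $\omega=\omega_j\,\dd x^j$ and using the coordinate expression of $g$, the components of the pullback are
\[
	(\Phi^\ast g)_{jk}(x) = g_{jk}(x,u(x)) + \omega_j(x,u(x))\p_k u(x) + \omega_k(x,u(x))\p_j u(x)+ g_{tt}(x,u(x))\p_j u(x)\p_k u(x)\,,
\]
which in matrix form reads $\Phi^\ast g = h_u + \omega \otimes du + du\otimes\omega + g_{tt}(x,u(x))\, du\otimes du$, where $h_u(x)=(g_{jk}(x,u(x)))_{j,k=1}^n$.

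Next I would factor $h_u$ out of the determinant: since $h_u$ is symmetric and positive definite for $g\in G^\circ$ when $G^\circ$ is a sufficiently small neighborhood of $g^\circ$ (as $g^\circ$ restricted to its normal slices is positive definite), we may write
\[
	\det(\Phi^\ast g) = \det(h_u)\cdot\det\bigl(I_{n\times n} + g_{tt}(x,u(x))(h_u^{-1} du)\otimes du + (h_u^{-1}\omega)\otimes du + (h_u^{-1} du)\otimes \omega\bigr)\,,
\]
taking square roots yields \eqref{eq: coordinate expression dvol}.

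The last step is the invertibility claim. The matrix $I_{n\times n} + g_{tt}(x,u(x))(h_u^{-1}du)\otimes du$ is a rank-one perturbation of the identity, and the standard matrix determinant lemma gives
\[
	\det\bigl(I_{n\times n} + g_{tt}(x,u(x))(h_u^{-1} du)\otimes du\bigr) = 1 + g_{tt}(x,u(x))\,h_u^{-1}(du,du)\,.
\]
Since $g_{tt}(x,u(x))>0$ and $h_u^{-1}$ is positive definite (both valid for $g$ close enough to $g^\circ$), this determinant is strictly positive, hence the matrix is invertible.

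I do not foresee any serious obstacle; the only mild care required is to ensure $G^\circ$ is small enough that the spatial block $h_u$ and the $tt$-component $g_{tt}$ retain the positivity properties they inherit from $g^\circ$, which is immediate by continuity of the embedding $G\hookrightarrow C^{3,\gamma}$.
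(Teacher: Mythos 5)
Your proof is correct and follows essentially the same route as the paper: pull back $g$ to $\Sigma_u$ via the graph map, factor $\det h_u$ out of the determinant, and apply Sylvester's determinant theorem (you call it the matrix determinant lemma) to get $\det(I + g_{tt}(h_u^{-1}du)\otimes du) = 1 + g_{tt}|du|_{h_u}^2 > 0$. Your remark about shrinking $G^\circ$ so that $h_u$ and $g_{tt}$ stay positive is a worthwhile point the paper leaves implicit.
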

\begin{proof}
Recall that the notation $h_u$   for $g(x,u(x))=g_{jk}(x,u(x))dx^kdx^j$. In Fermi coordinates, the pull-back metric of $g$ pulled back to $\Sigma_u$ is given by
\begin{eqnarray}
\label{eq: pullback metric}\nonumber
\iota_{\Sigma_u}^* g =\big( g_{jk}(x, u(x))  + g_{tt}(x,u(x)) \partial_{x^j} u \partial_{x^k}u\big) dx^j dx^k + \omega(x,u(x)) \otimes du + du \otimes \omega(x,u(x))\,,
\end{eqnarray}
which means that the volume form is given by 
\begin{eqnarray}
&&{\rm det}\left(g_{jk}(x, u(x))dx^k \otimes dx^j  + \partial_{x^j} u \partial_{x^k}u dx^k\otimes dx^j+ \omega(x,u(x)) \otimes du + du \otimes \omega(x,u(x))\right)\\\nonumber
&&  = |h_u| {\rm det}\left(I_{n\times n} + g_{tt}(x,u(x))(h_u^{-1} du) \otimes du + (h_u^{-1} \omega) \otimes du+ (h_u^{-1} du) \otimes \omega\right)\,. %
\end{eqnarray}
By Sylvester's determinant theorem,
$${\rm det}\big(I_{n\times n} + g_{tt}(x,u(x))(h_u^{-1} du) \otimes du\big) = (1 +g_{tt}(x,u(x)) |du|_{h_u}^2),$$
so that $(I_{n\times n} + g_{tt}(x,u(x))(h_u^{-1} du) \otimes du) $ is invertible.
\end{proof}
\begin{proof}[Proof of Lemma~\ref{lem:newdef}]
By direct observation of \eqref{eq:areaformula2} and \eqref{eq: coordinate expression dvol} and the fact that $g\in C^{3,\gamma}$ when $g\in G^{3+k,\gamma}_\circ$, we find the desired Fr\'echet differentiability.

The second statement is precisely the variational definition of minimal surfaces (see the remarks prior to \cite[Chp.~1,~Def.~1.4]{MR2780140}).
\end{proof}

We shall give another characterization of minimal surfaces.
\begin{Lemma}\label{lem:linofmin}Let $ k\in\mathbb{N}$.

	\begin{enumerate}
	\item Let $g\in G^{3+k,\gamma}_\circ$ be near $g^\circ$. There is a nonlinear second order elliptic differential operator $L_{g}$ so that $u\in C^{2+k,\gamma}(\Sigma^\circ)$ defines a minimal surface for $g$ if and only if $L_{g}(u) = 0$. 
	\item For every $s\in \mathbb{Z}_{\geq 0}$, the map
	\[
		G^{3+k+s,\gamma}_\circ\times C^{2+k,\gamma}(\Sigma^\circ) \ni (g, u) \mapsto L_{g}(u) \in C^{k,\gamma}(\Sigma^\circ)
	\]
	is $C^{2+s}$-Fr\'echet. 
	\item When $g\in G^{3+k,\gamma}_\circ$ and $u\in C^{2+k,\gamma}(\Sigma^\circ)$ defines a minimal surface for $g$, then $D_u L_{g}(u)$ is the stability operator for this minimal surface.

    In particular, if $g=g^\circ$, $L_{g^\circ}$ is given, in Fermi coordinates from $g^\circ$ around $\Sigma^\circ$, by the operator defined by the LHS of \eqref{eq: MSE} and its derivative $D_u L_{g^\circ}(0)$ at $u=0$ is given by the linear elliptic operator in \eqref{eq: stability equation}, which is the stability operator for $\Sigma^\circ$.
	\item For each $(g,u)\in G^{3+k,\gamma}_\circ\times C^{2+k,\gamma}(\Sigma^\circ)$ sufficiently near $(g^\circ,0)$, the map
	\[
		D_u L_{g}(u) \colon C^{2+k,\gamma}_{\mathrm{Dir}}(\Sigma^\circ) \to C^{k,\gamma}(\Sigma^\circ)
	\]
	is an invertible linear elliptic second order differential operator.

	Thus,
	\begin{equation}\label{eq:invbleStable}
	 C^{2+k,\gamma}(\Sigma^\circ) \ni v \mapsto (D_u L_{g}(u)v,v\vert_{\partial\Sigma^\circ}) \in C^{k,\gamma}(\Sigma^\circ)\times C^{2+k,\gamma}(\partial\Sigma^\circ)
	\end{equation}
	is a linear homeomorphism: it is continuous and bijective with continuous inverse.
\end{enumerate}
\end{Lemma}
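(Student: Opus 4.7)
For part (1), the plan is to work in fixed $g^\circ$-Fermi coordinates around $\Sigma^\circ$ and derive $L_g$ directly from the area functional. Using Lemma~\ref{lem: volume form}, the area of $\Sigma_u$ with respect to $g$ is given explicitly as an integral of a smooth function of $(x,u(x),\nabla u(x))$ depending on the coefficients of $g$ in Fermi coordinates. The Euler--Lagrange equation of this functional is a quasilinear second order differential equation, whose principal part $|h_u|^{-1/2}\p_j(|h_u|^{1/2} h_u^{jk}\p_k u/\sqrt{1+|du|_{h_u}^2})$ matches the one in \eqref{eq: MSE} and in particular is elliptic (as the symbol is essentially that of a Laplace operator in a conformally modified metric). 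Define $L_g(u)$ to be the left-hand side of this Euler--Lagrange equation; this then defines $L_g$ globally on $\Sigma^\circ$ since the Fermi coordinate expression is intrinsic up to the $g^\circ$-Fermi parametrization. The variational characterization in Lemma~\ref{lem:newdef} then gives that $u$ defines a minimal surface for $g$ iff $L_g(u)=0$.

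For part (2), the plan is to apply the chain rule. Inspecting the explicit formula, $L_g(u)$ is built out of (i) evaluating $g$ and its first two derivatives at $(x,u(x))$, (ii) the derivatives $\nabla u$ and $\nabla^2 u$, and (iii) smooth algebraic operations in these variables. The evaluation map $H^{3+(n+1)/2+\gamma+s}(\widetilde M) \times C^{2+k,\gamma}(\Sigma^\circ) \to C^{k,\gamma+s}(\Sigma^\circ)$, $(g,u)\mapsto g(\cdot,u(\cdot))$, is $C^{2+s}$-Fr\'echet by Sobolev embedding (guaranteeing $C^{3,\gamma+s}$-regularity of $g$) and Moser-type Nemytskii estimates; combining with the smooth algebraic operations yields the claimed Fr\'echet regularity.

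For part (3), the stability operator of a minimal surface is by definition the second variation of area restricted to normal variations. Since critical points of $A_g$ among graphs $\Sigma_u$ are precisely minimal surfaces (Lemma~\ref{lem:newdef}), and $L_g(u) = \nabla_u A_g(u)$ in $g^\circ$-Fermi coordinates (up to a strictly positive volume factor), $D_u L_g(u)$ at a minimal surface $\Sigma_u$ is the Hessian of $A_g$ at $u$, which is precisely the stability operator transported to $\Sigma^\circ$ via the graph parametrization. When $g=g^\circ$ and $u=0$, a direct computation of the linearization of \eqref{eq: MSE} at $u=0$, using that $\Sigma^\circ$ is minimal (so the zeroth order terms from differentiating $C(u,\nabla u)$ give $-|A|^2 - \mathrm{Ric}(N,N)$), recovers the Schr\"odinger operator in \eqref{eq: stability equation}.

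For part (4), the strategy is admissibility plus continuity. By Definition~\ref{def:admis} and part (3), $D_u L_{g^\circ}(0)$ is a linear second order elliptic operator on $\Sigma^\circ$ with $C^{\infty}$ coefficients for which $0$ is not a Dirichlet eigenvalue; Schauder theory and the Fredholm alternative then yield that $D_u L_{g^\circ}(0)\colon C^{2+k,\gamma}_{\mathrm{Dir}}(\Sigma^\circ) \to C^{k,\gamma}(\Sigma^\circ)$ is a linear homeomorphism. By part (2) the operator $D_u L_g(u)$ depends continuously on $(g,u)$ in the operator norm topology on bounded linear maps $C^{2+k,\gamma}_{\mathrm{Dir}}(\Sigma^\circ)\to C^{k,\gamma}(\Sigma^\circ)$, and invertibility is an open condition in this space, so $D_u L_g(u)$ remains invertible for $(g,u)$ in a neighborhood of $(g^\circ,0)$. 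The linear homeomorphism \eqref{eq:invbleStable} is then obtained by splitting $v = v_0 + v_1$, where $v_1$ extends $v|_{\p\Sigma^\circ}$ by a bounded right inverse of the trace operator, and $v_0\in C^{2+k,\gamma}_{\mathrm{Dir}}(\Sigma^\circ)$ is determined by $D_uL_g(u) v_0 = F - D_u L_g(u) v_1$, which is solvable and well-posed by the previous step; the open mapping theorem closes the argument. The main obstacle is tracking regularity faithfully between the Sobolev norm on $G$ and the H\"older spaces on $\Sigma^\circ$, which is resolved by the Sobolev embedding $H^{3+(n+1)/2+\gamma}\hookrightarrow C^{3,\gamma}$.
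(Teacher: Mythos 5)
Your proposal is correct and takes essentially the same route as the paper: derive $L_g$ from the area functional in $g^\circ$-Fermi coordinates via the Euler--Lagrange equation (part 1), read off Fr\'echet regularity from the explicit formula (part 2), identify $D_uL_g(u)$ with the second variation of area (part 3), and combine admissibility of $\Sigma^\circ$ with elliptic boundary regularity, a perturbation argument from part (2), and the open mapping theorem (part 4). The only superficial differences are that you invoke Schauder theory plus the Fredholm alternative and a trace-extension decomposition where the paper cites specific references and argues bijectivity of \eqref{eq:invbleStable} directly; these are equivalent bookkeeping choices, not a different method.
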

\begin{proof}
Let $g\in G_\circ^{3+k,\gamma}$. Due to \eqref{eq:defofminsurf}, our goal is first to calculate $D_u A_{g}$. 

In Fermi coordinates of $g^\circ$ around $\Sigma^\circ$, let us have
\[
	g = g_{jk}(x,t)\mathrm{d}x^j \mathrm{d}x^k + g_{tt}(x,t) \mathrm{d} t^2 + \omega(x,t) \otimes\mathrm{d} t+\mathrm{d}t\otimes \omega(x,t)
\]
where $\omega(\cdot,t)$ is a smooth family of one-forms on $\Sigma^\circ$ parametrized by $t\in (-\epsilon,\epsilon)$ and $g_{tt}(x,t) > 0$. We use the notation $h_u = h_u(x) = g(x,u(x))$, $(g_{tt})_u = g_{tt}(x,u(x))$ and $\Sigma_u = \{(x,u(x))\colon x\in \Sigma^\circ\}$ for any $u\in C^{2,\gamma}(\Sigma^\circ)$. 

By following the explicit calculations in \cite[\S~3.1]{carstea2024calder} with the volume form determined in \eqref{eq: coordinate expression dvol} for $g$ (in Fermi coordinates given by $g^\circ$ around $\Sigma^\circ$), one finds that 
\begin{equation}\label{eq:derofA}
	D_u A_{g}(u)v = \int_{\Sigma^\circ} L_{g}(u)v\qquad 	\forall v\in C^{2+n^\ast,\gamma}_{\rm Dir}(\Sigma^\circ)\,,
\end{equation}
where
\[
	L_{g}(u) = -\abs{h_u}^{-1/2} \nabla\cdot \left(\abs{h_u}^{1/2}\sqrt{\mathrm{det}(M(u))}M(u)^{-1} h_u^{-1}(\omega+ (g_{tt})_u \nabla u)\right) + B(u)\,,
\]
with
\begin{align*}
	B(u) &= \frac{1}{2}\sqrt{\mathrm{det}(M(u))}M(u)^{-1} \left(((\partial_sg_{tt})_u h_u^{-1} + (g_{tt})_u(\partial_s g^{-1}_u))\nabla u + 2 (\partial_s g^{-1}_u)\omega\right)\cdot \nabla u\\
	&+ \frac{1}{2}\sqrt{\mathrm{det}(M(u))} \mathrm{Tr}(h_u^{-1}\partial_s h_u)\,,
\end{align*}
and
\[
	M(u) = I_{n\times n}+(g_{tt})_uh_u^{-1}\mathrm{d}u\otimes\mathrm{d}u+h_u^{-1}\mathrm{d}u\otimes \omega + h_u^{-1}\omega\otimes \mathrm{d}u\,.
\]
Using \eqref{eq:defofminsurf} and \eqref{eq:derofA}, we have shown (1). 

The statement (2) follows from the explicit formula for $L_{g}$; taking derivatives with respect to $u$ `costs' a derivative of $g$ which gives the stated behavior. %

In order to see the first part of (3), one merely needs to use the observation from \eqref{eq:derofA} that $D_u^2 A_{g}(u)[v]v=\int_{\Sigma^\circ} D_u L_{g}(u)[v]v$ with the definition of the stability operator in \cite[Eq.~(1.143)]{MR2780140}.%

Either by verification of the explicit formula for $L_{g}$ above, after plugging in $g=g^\circ$, or using the calculations in \cite[\S~3.1]{carstea2024calder}, one finds that $L_{g^\circ}$ is given by the operator on the LHS of the first equation in \eqref{eq: MSE}. That the derivative $D_u L_{g^\circ}(0)$ is given by the operator on the LHS in \eqref{eq: stability equation} follows from the explicit calculation in \cite[\S~3.3]{carstea2024calder}, or see the remarks after \cite[Prop.~3.1]{carstea2024calder}, or \cite[Chp.~1~\S~8]{MR2780140}. This shows (3).

Because for each $g\in G^\circ$ sufficiently near $g^\circ$, the operator $L_{g}$ is a second order elliptic differential operator, so too must its derivative $D_u L_{g}(u)$ at any $u$ near $0$ be, which follows by direct computation, see also \cite[\S~17.2]{zbMATH03561752}.

The map in \eqref{eq:invbleStable} is continuous and linear. Thus, if we show that it is bijective, the open mapping theorem will complete proof of (4). In fact, by the the continuity statement (2), it suffices to show that the operator $D_u L_{g^\circ}(0)$ is invertible from $C^{2+k,\gamma}_{\mathrm{Dir}}(\Sigma^\circ)\to C^{k,\gamma}(\Sigma^\circ)$. Due to the assumption that $\Sigma^\circ$ is admissible, its stability operator, which due to (3) is equal to $D_u L_{g^\circ}(0)$, does not have Dirichlet eigenvalue $0$. In other words, $D_u L_{g^\circ}(0)\colon H^1_0(\Sigma^\circ)\to H^{-1}(\Sigma^\circ)$ is injective. Applying \cite[Chp.~5,~Prop.~1.9]{taylor2011partial} and \cite[Lem.~B.1]{LLLS2019inverse} %
the proof is complete.%
\end{proof}

\begin{Remark}
    Lemmas~\ref{lem:newdef},~\ref{lem: volume form} and items (1), (2), and (3) of Lemma~\ref{lem:linofmin} are true if $\Sigma^\circ$ is merely a smooth embedded minimal surface for $(\widetilde{M},g^\circ)$; that is, $\Sigma^\circ$ must not be admissible.
\end{Remark}

\begin{Corollary}\label{cor:contDependenceOfStabilitySol}
    Let $k\in \mathbb{N}$. For each $(g,u) \in G_\circ^{3+k,\gamma}\times C^{2+k,\gamma}(\Sigma^\circ)$ sufficiently near $(g^\circ,0)$, the map $C^{2+k,\gamma}(\Sigma^\circ)\times C^{2+k,\gamma}(\p\Sigma^\circ) \ni (u,f) \mapsto v^f_u \in C^{2+k}(\Sigma^\circ)$ is continuous, where $v^f_u$ is uniquely defined by the fact that $(D_u L_{g}(u)v^f_u,v^f_u\vert_{\p\Sigma^\circ})=(0,f)$. 
\end{Corollary}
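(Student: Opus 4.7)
The plan is to express $v^f_u$ as the image of $(0,f)$ under a continuously varying family of bounded linear isomorphisms, and then invoke the standard continuity of inversion in the open set of invertible bounded operators on Banach spaces. The whole proof should be rather short once this viewpoint is taken.

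First I would introduce the operator-valued map
\[
(g,u)\mapsto \Phi(g,u),\qquad \Phi(g,u)v \coloneqq (D_u L_g(u)v,\,v\vert_{\partial\Sigma^\circ}),
\]
viewed as an element of the Banach space $\mathcal{L}\bigl(C^{2+k,\gamma}(\Sigma^\circ),\,C^{k,\gamma}(\Sigma^\circ)\times C^{2+k,\gamma}(\partial\Sigma^\circ)\bigr)$ of bounded linear operators. Lemma~\ref{lem:linofmin}(4) tells us that $\Phi(g,u)$ is a linear homeomorphism whenever $(g,u)$ is sufficiently close to $(g^\circ,0)$. By Lemma~\ref{lem:linofmin}(2) the map $(g,u)\mapsto L_g(u)$ is $C^2$-Fr\'echet differentiable into $C^{k,\gamma}(\Sigma^\circ)$, so its partial derivative $(g,u)\mapsto D_u L_g(u)$ is continuous into the operator norm topology on $\mathcal{L}(C^{2+k,\gamma}(\Sigma^\circ),C^{k,\gamma}(\Sigma^\circ))$. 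Since $v\mapsto v\vert_{\partial\Sigma^\circ}$ is a fixed continuous trace map, $(g,u)\mapsto \Phi(g,u)$ is itself continuous in operator norm.

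Next I would appeal to the standard Banach-space fact that the inversion map $T\mapsto T^{-1}$ is continuous on the open set of invertible bounded operators between two fixed Banach spaces (the Neumann-series estimate
\[
\|T^{-1}-S^{-1}\| \leq \|T^{-1}\|\,\|S^{-1}\|\,\|T-S\|
\]
gives even local Lipschitz continuity). Applying this to $\Phi$ shows that $(g,u)\mapsto \Phi(g,u)^{-1}$ is continuous into $\mathcal{L}\bigl(C^{k,\gamma}(\Sigma^\circ)\times C^{2+k,\gamma}(\partial\Sigma^\circ),C^{2+k,\gamma}(\Sigma^\circ)\bigr)$. Since $v^f_u = \Phi(g,u)^{-1}(0,f)$, the desired continuity of $(u,f)\mapsto v^f_u$ (and in fact of $(g,u,f)\mapsto v^f_u$) will follow by combining: the continuous dependence of $\Phi(g,u)^{-1}$ on $(g,u)$, the continuous embedding $f\mapsto (0,f)$, and the jointly continuous evaluation $(T,w)\mapsto Tw$ on bounded operators applied to pairs where $T$ varies in operator norm.

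The only slightly delicate point I anticipate is verifying that continuity of $(g,u)\mapsto D_u L_g(u)$ in operator norm really follows from Lemma~\ref{lem:linofmin}(2). This is in essence the classical statement that the Fr\'echet derivative of a $C^1$ map between Banach spaces is itself a continuous map into the space of bounded linear operators; the $C^2$ hypothesis available here is in fact stronger than needed and makes the argument completely routine. Once this is in hand, nothing else in the plan requires anything beyond basic Banach-space functional analysis.
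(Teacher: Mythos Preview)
Your proposal is correct and is essentially the paper's own proof: the paper defines $\mathcal{T}(u)v=(D_uL_g(u)v,v|_{\partial\Sigma^\circ})$, cites Lemma~\ref{lem:linofmin}(2) and (4) for continuity in operator norm and invertibility, and then uses a Neumann series to get continuity of $\mathcal{T}(u)^{-1}$, concluding with $v^f_u=\mathcal{T}(u)^{-1}(0,f)$. The only cosmetic difference is that you also track the dependence on $g$, which is harmless and slightly more general.
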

\begin{proof}
    Let $g \in G_\circ^{3+k,\gamma}$ be sufficiently near $g^\circ$. For each $u \in C^{2+k,\gamma}(\Sigma^\circ)$ sufficiently near $0$, denote by $\mathcal{T}(u)$ the map
    \[
        \mathcal{T}(u)\colon v\mapsto (D_u L_g(u)v,v\vert_{\p\Sigma^\circ})\,.
    \]
    According to (2) and (4) in Lemma~\ref{lem:linofmin}, each $\mathcal{T}(u)$ is boundedly invertible and 
    \[
        C^{2+k,\gamma}(\Sigma) \ni u \mapsto \mathcal{T}(u) \in B(C^{2+k,\gamma}(\Sigma^\circ),C^{k,\gamma}(\Sigma^\circ)\times C^{2+k,\gamma}(\partial\Sigma^\circ))
    \]
    is continuous, where $B(C^{2+k,\gamma}(\Sigma^\circ),C^{k,\gamma}(\Sigma^\circ)\times C^{2+k,\gamma}(\partial\Sigma^\circ))$ denotes the space of bounded linear operators from $C^{2+k,\gamma}(\Sigma^\circ)$ to $C^{k,\gamma}(\Sigma^\circ)\times C^{2+k,\gamma}(\partial\Sigma^\circ)$. 

    For each fixed $u_0 \in C^{2+k,\gamma}(\Sigma^\circ)$ sufficiently near $0$, if $u$ is sufficiently near $u_0$, we may write 
    \[
        \mathcal{T}(u)^{-1} = \mathcal{T}(u_0)^{-1}(\mathrm{id}+(\mathcal{T}(u)-\mathcal{T}(u_0))\mathcal{T}(u_0)^{-1})^{-1}\,,
    \]
    where the RHS is well-defined and depends continuously on $u$ by a Neumann series argument (recall $\mathcal{T}(u_0)^{-1}$ is bounded). Now $v^f_u = \mathcal{T}(u)^{-1}(0,f)$ depends continuously on $u$ concluding the proof.
\end{proof}

\begin{Proposition}
\label{prop: smooth dependence of solution}
For every $k\in\mathbb{N}$ there exist $C>0$ and $\delta>0$ so that for every $f\in C^{2+k,\gamma}_\delta(\partial\Sigma^\circ)$ there is a unique solution $u_{g^\circ}^f \in C^{2+k,\gamma}(\Sigma^\circ)$ of \eqref{eq: MSE} (ie. $L_{g^\circ}(u^f_{g^\circ})=0$) with $\norm{u}_{C^{2+k,\gamma}(\Sigma^\circ)}\leq C\norm{f}_{C^{2+k,\gamma}(\partial\Sigma^\circ)}$. The function $u^f_{g^\circ} \in C^{2+k,\gamma}$ depends analytically on $f\in C^{2+k,\gamma}$ in their respective topologies.
\end{Proposition}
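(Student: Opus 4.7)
The plan is to apply the implicit function theorem to the map
\[
\Phi\colon C^{2+k,\gamma}(\Sigma^\circ)\times C^{2+k,\gamma}(\partial\Sigma^\circ) \to C^{k,\gamma}(\Sigma^\circ)\times C^{2+k,\gamma}(\partial\Sigma^\circ)\,,\quad (u,f)\mapsto \bigl(L_{g^\circ}(u)\,,\,u|_{\partial\Sigma^\circ}-f\bigr)\,.
\]
First I would verify the hypotheses at the base point $(0,0)$: since $\Sigma^\circ$ is a minimal surface for $g^\circ$, Lemma~\ref{lem:linofmin}(1) gives $L_{g^\circ}(0)=0$, so $\Phi(0,0)=(0,0)$. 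The partial Fr\'echet derivative
\[
D_u\Phi(0,0)\colon v\mapsto \bigl(D_uL_{g^\circ}(0)v\,,\,v|_{\partial\Sigma^\circ}\bigr)
\]
is precisely the operator in \eqref{eq:invbleStable}, which is a linear homeomorphism by Lemma~\ref{lem:linofmin}(4) (here is where admissibility of $\Sigma^\circ$ enters). An application of the implicit function theorem then yields $\delta>0$ and a map
\[
C^{2+k,\gamma}_\delta(\partial\Sigma^\circ)\ni f\mapsto u^f_{g^\circ}\in C^{2+k,\gamma}(\Sigma^\circ)
\]
solving $\Phi(u^f_{g^\circ},f)=0$, unique in a neighborhood of $0$, with the Lipschitz bound $\|u^f_{g^\circ}\|_{C^{2+k,\gamma}}\le C\|f\|_{C^{2+k,\gamma}}$ coming from the inverse of $D_u\Phi(0,0)$.

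For the regularity of the solution map $f\mapsto u^f_{g^\circ}$, the key point is to upgrade the conclusion of the IFT to real-analyticity. Since $f$ enters $\Phi$ linearly, this reduces to showing that $u\mapsto L_{g^\circ}(u)$ is a real-analytic map between the relevant H\"older spaces. Using the explicit formula for $L_{g^\circ}$ derived in the proof of Lemma~\ref{lem:linofmin}, one sees that $L_{g^\circ}(u)$ is built from (i) the Nemytskii operator $u\mapsto g^\circ(\cdot,u(\cdot))$ applied to components of $g^\circ$ and their derivatives, and (ii) polynomial/rational expressions in $u$, $\nabla u$ and these coefficients (the denominator $\sqrt{\det M(u)}$ is bounded away from zero for $\|u\|_{C^1}$ small). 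In the setting where this proposition is invoked (analytic $g^\circ$, cf.\ Proposition~\ref{prop: bolker} and Section~\ref{sec_stability}), the standard fact that composition with a real-analytic function induces a real-analytic Nemytskii map on H\"older spaces gives analyticity of $u\mapsto h_u$ and hence of $u\mapsto L_{g^\circ}(u)$; composing with the analytic IFT (e.g.\ as in \cite{MR960687}) then gives the claimed analytic dependence of $u^f_{g^\circ}$ on $f$.

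The main obstacle is precisely step (ii) above: producing a clean analytic-category version of the IFT that matches the regularity of $g^\circ$ available in the hypotheses. In the $C^\infty$ setting, Lemma~\ref{lem:linofmin}(2) directly gives that $\Phi$ is $C^{2+s}$ for any $s\in\mathbb{Z}_{\ge 0}$, so the smooth IFT immediately yields a $C^\infty$-smooth solution map with the norm estimate. For the analytic statement one must invoke the analyticity of $g^\circ$ (implicit in the applications) and combine the analytic Nemytskii calculus with the Banach analytic IFT; the estimate $\|u^f_{g^\circ}\|_{C^{2+k,\gamma}}\le C\|f\|_{C^{2+k,\gamma}}$ then follows from the boundedness of $(D_u\Phi(0,0))^{-1}$ and the standard quantitative form of the IFT.
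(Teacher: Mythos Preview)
Your proposal is correct and follows essentially the same approach as the paper: define the map $\Phi(u,f)=(L_{g^\circ}(u),u|_{\partial\Sigma^\circ}-f)$, verify $\Phi(0,0)=0$, invoke Lemma~\ref{lem:linofmin}(4) for the invertibility of $D_u\Phi(0,0)$, and apply the analytic implicit function theorem. The paper simply asserts that $\Phi$ is ``clearly jointly analytic'' and cites the analytic IFT for Banach spaces, whereas you spell out why (Nemytskii calculus, analyticity of $g^\circ$); your more cautious treatment of the analyticity hypothesis is well placed but does not alter the argument.
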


\begin{proof}
Consider the map 
$$(u,f) \mapsto (L_{g^\circ}(u), u\mid_{\partial \Sigma^\circ} - f)$$
which is clearly jointly analytic as a map from $C^{2+k,\gamma}(\Sigma^\circ)\times C^{2+k,\gamma}(\partial \Sigma^\circ)$ to $C^{k,\gamma}(\Sigma^\circ)\times C^{2+k,\gamma}(\partial \Sigma^\circ)$. The image of $(u,f) = (0,0)$ under this map is $(0,0)$ since $\Sigma^\circ$ is a minimal surface by assumption. The differential of this map with respect to $u$ evaluated at $(u,f) = (0,0)$ is an isomorphism by (4) of Lemma \ref{lem:linofmin}. The analytic implicit function theorem for Banach spaces \cite[p. 1081]{zbMATH03230715} completes the proof.
\end{proof}

Throughout the rest of this section, $L_{g}$ will always denote the operator constructed in Lemma~\ref{lem:linofmin}, and $u^f_{g^\circ}$ will always refer to the solution constructed in Proposition~\ref{prop: smooth dependence of solution}. 
\begin{Corollary}
\label{cor: functional is smooth}
For every $k\in\mathbb{N}$, the map
	\[
		\mathcal{L} \colon C^{2+k,\gamma}_{\delta}(\partial\Sigma^\circ) \times G^{3+2k,\gamma}_\circ \times C_{\rm Dir}^{2+k,\gamma}(\Sigma^\circ) \ni (f, g, u) \mapsto L_{g}(u_{g^\circ}^f + u)\in C^{k,\gamma}(\Sigma^\circ)
	\]
	is $C^{2+k}$-Fr\'echet differentiable and $\mathcal L(0, g^\circ, 0)=0$.
\end{Corollary}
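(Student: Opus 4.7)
The plan is to view $\mathcal{L}$ as the composition of two maps whose Fréchet differentiability has already been established in this appendix, and then to invoke the chain rule for Fréchet derivatives between Banach spaces (see e.g.\ \cite{MR960687}). Concretely, I would write $\mathcal{L} = \Psi \circ \Phi$ where
\[
\Phi\colon (f,g,u)\mapsto \bigl(g,\; u^f_{g^\circ}+u\bigr), \qquad \Psi\colon (g,v)\mapsto L_g(v).
\]

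The differentiability of $\Phi$ is essentially trivial. The $g$-slot is the identity, the $u$-slot is continuous linear into $C^{2+k,\gamma}(\Sigma^\circ)$, and the $f$-slot enters only through $f\mapsto u^f_{g^\circ}$, which by Proposition~\ref{prop: smooth dependence of solution} is in fact \emph{analytic} from $C^{2+k,\gamma}_\delta(\partial\Sigma^\circ)$ into $C^{2+k,\gamma}(\Sigma^\circ)$. Hence $\Phi$ is $C^\infty$-Fréchet from its domain into $G^{3+2k,\gamma}_\circ\times C^{2+k,\gamma}(\Sigma^\circ)$ (with the sum $u^f_{g^\circ}+u$ remaining close to $0$ for $(f,u)$ small, so $\Psi$ is defined on the image). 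For $\Psi$, I would apply Lemma~\ref{lem:linofmin}(2) with $s=k$, which states exactly that
\[
(g,v)\mapsto L_g(v)\colon G^{3+2k,\gamma}_\circ\times C^{2+k,\gamma}(\Sigma^\circ)\to C^{k,\gamma}(\Sigma^\circ)
\]
is $C^{2+k}$-Fréchet. This is precisely why the corollary requires the elevated regularity $g\in G^{3+2k,\gamma}_\circ$: each additional Fréchet derivative with respect to $u$ consumes one derivative of $g$, as seen in the explicit formula for $L_g$ in the proof of Lemma~\ref{lem:linofmin}.

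Composing via the chain rule yields that $\mathcal{L}=\Psi\circ\Phi$ is $C^{2+k}$-Fréchet differentiable, the final regularity being limited by the less regular factor $\Psi$. The base-point identity is immediate: $u^0_{g^\circ}=0$ by uniqueness in Proposition~\ref{prop: smooth dependence of solution} (since $u\equiv 0$ solves the minimal surface equation because $\Sigma^\circ$ is a minimal surface for $g^\circ$), so
\[
\mathcal{L}(0,g^\circ,0)=L_{g^\circ}(u^0_{g^\circ}+0)=L_{g^\circ}(0)=0.
\]

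There is no genuine obstacle in this proof; all the analytic content has been absorbed into Lemma~\ref{lem:linofmin} and Proposition~\ref{prop: smooth dependence of solution}. The only point requiring a moment's care is the bookkeeping of Hölder indices so that the composition closes up with the advertised regularity class $C^{2+k}$. This matching is precisely the reason the auxiliary parameter $s$ was introduced in Lemma~\ref{lem:linofmin}(2): we tune $s=k$ to align the loss of two derivatives in $L_g$ with the input regularity $C^{2+k,\gamma}$ and to obtain $2+k$ Fréchet derivatives for $\mathcal{L}$.
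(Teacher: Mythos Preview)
Your proof is correct and is precisely the argument the paper has in mind: the corollary is stated without proof because it follows immediately by composing Proposition~\ref{prop: smooth dependence of solution} (analyticity of $f\mapsto u^f_{g^\circ}$) with Lemma~\ref{lem:linofmin}(2) at $s=k$, together with $L_{g^\circ}(0)=0$. Your bookkeeping of the H\"older indices and the choice $s=k$ matching $G^{3+2k,\gamma}_\circ$ is exactly right.
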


This suffices to prove that we can find minimal surfaces for metrics near $g^\circ$ and boundary values near $0$.
\begin{Lemma}
\label{lem: smooth dependence of sol}
For every $k\in\mathbb{Z}_{\geq 0}$ there exist $C,\delta>0$ so that for every $(f,g)\in C^{2+k,\gamma}_{\delta}(\partial\Sigma^\circ) \times G^{3+2k,\gamma}_\circ$, there is a unique $u^f_{g} \in C^{2+k,\gamma}(\Sigma^\circ)$ with $\norm{u^f_{g}}_{C^{2+k,\gamma}(\Sigma^\circ)}\leq C\norm{f}_{C^{2+k,\gamma}(\p\Sigma^\circ)}$ and boundary value $f$ that defines a minimal surface for $g$. Furthermore, the map $(f,g) \mapsto u^f_{g}$ from $C^{2+k,\gamma}_{\delta}(\partial\Sigma^\circ) \times G^{3+2k,\gamma}_\circ$ to $C^{2+k,\gamma}(\Sigma^\circ)$ 
is $C^{2+k}$-Fr\'echet differentiable.
\end{Lemma}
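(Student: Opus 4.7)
The plan is to apply the implicit function theorem to the map $\mathcal{L}$ constructed in Corollary~\ref{cor: functional is smooth}, viewing the sought quantity $u^f_g$ as a small correction to the background solution $u^f_{g^\circ}$ already provided by Proposition~\ref{prop: smooth dependence of solution}.

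First, I would observe that $\mathcal{L}\colon C^{2+k,\gamma}_\delta(\p\Sigma^\circ)\times G^{3+2k,\gamma}_\circ\times C^{2+k,\gamma}_{\rm Dir}(\Sigma^\circ)\to C^{k,\gamma}(\Sigma^\circ)$ is $C^{2+k}$-Fr\'echet with $\mathcal{L}(0,g^\circ,0)=L_{g^\circ}(u^0_{g^\circ})=L_{g^\circ}(0)=0$, as recorded in Corollary~\ref{cor: functional is smooth}. Its partial derivative in the third argument at this base point is
\[
    D_u\mathcal{L}(0,g^\circ,0) = D_u L_{g^\circ}(0)\colon C^{2+k,\gamma}_{\rm Dir}(\Sigma^\circ)\to C^{k,\gamma}(\Sigma^\circ),
\]
which by Lemma~\ref{lem:linofmin}(3) is the stability operator of the admissible minimal surface $\Sigma^\circ$, and by Lemma~\ref{lem:linofmin}(4) is a linear isomorphism.

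Next, I would apply the implicit function theorem in Banach spaces to obtain $\delta>0$ and a $C^{2+k}$-map $w\colon C^{2+k,\gamma}_\delta(\p\Sigma^\circ)\times G^{3+2k,\gamma}_\circ\to C^{2+k,\gamma}_{\rm Dir}(\Sigma^\circ)$ with $w(0,g^\circ)=0$ such that $\mathcal{L}(f,g,w(f,g))=0$ is the unique small solution. Setting $u^f_g := u^f_{g^\circ}+w(f,g)$, by definition of $\mathcal{L}$ this satisfies $L_g(u^f_g)=0$ and $u^f_g|_{\p\Sigma^\circ}=f+0=f$, so by Lemma~\ref{lem:linofmin}(1) it defines a minimal surface for $g$. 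Uniqueness of $u^f_g$ in a small $C^{2+k,\gamma}$-ball follows from the local uniqueness in the IFT together with the uniqueness of $u^f_{g^\circ}$ from Proposition~\ref{prop: smooth dependence of solution}. The Fr\'echet differentiability of $(f,g)\mapsto u^f_g$ of class $C^{2+k}$ is immediate from that of $w$ (delivered by the IFT) and of $f\mapsto u^f_{g^\circ}$ (delivered by Proposition~\ref{prop: smooth dependence of solution}, which actually gives analytic dependence).

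Finally, for the norm bound I would use that $w$ is $C^1$ with $w(0,g^\circ)=0$ and, crucially, $w(f,g^\circ)\equiv 0$ (since in that case $u^f_{g^\circ}$ already solves $L_{g^\circ}(\cdot)=0$, so uniqueness forces $w(f,g^\circ)=0$), which together with continuity of $\p_g w$ gives $\|w(f,g)\|_{C^{2+k,\gamma}}\lesssim \|g-g^\circ\|_G$ on a small neighborhood; combined with $\|u^f_{g^\circ}\|_{C^{2+k,\gamma}}\leq C\|f\|_{C^{2+k,\gamma}}$ from Proposition~\ref{prop: smooth dependence of solution}, after possibly shrinking $G^\circ$ so that $\|g-g^\circ\|_G$ is controlled uniformly, this yields the stated bound $\|u^f_g\|_{C^{2+k,\gamma}}\leq C\|f\|_{C^{2+k,\gamma}}$ (with $C$ independent of $g\in G^\circ$). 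The only delicate point is this last estimate: showing that the $g$-dependence of $u^f_g$ can be absorbed into the constant requires a careful choice of $G^\circ$ small enough that $w$ remains uniformly small, but since $w$ is $C^1$ and vanishes identically on $\{g=g^\circ\}$, this is a straightforward consequence of the IFT construction rather than a real obstacle.
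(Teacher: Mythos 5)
Your overall approach --- applying the implicit function theorem to the map $\mathcal{L}$ from Corollary~\ref{cor: functional is smooth}, using the invertibility of $D_u L_{g^\circ}(0)$ from Lemma~\ref{lem:linofmin}(4) (which rests on admissibility of $\Sigma^\circ$), and decomposing $u^f_g = u^f_{g^\circ} + w(f,g)$ with $w(f,g) \in C^{2+k,\gamma}_{\rm Dir}(\Sigma^\circ)$ --- is exactly the paper's argument, and your treatment of existence, boundary value, minimality, and $C^{2+k}$-Fr\'echet-differentiability is correct.

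Your last paragraph, however, does not close. The estimate $\|w(f,g)\|_{C^{2+k,\gamma}} \lesssim \|g - g^\circ\|_G$ that you derive from $w(\cdot, g^\circ) \equiv 0$ together with the $C^1$-dependence is fine as far as it goes, but it does not imply $\|w(f,g)\| \lesssim \|f\|$: shrinking $G^\circ$ controls $\|g-g^\circ\|_G$ by a constant \emph{independent of $f$}, not by $\|f\|$. In particular, with $f=0$ and $g\in G^\circ$, $g\neq g^\circ$, the function $u^0_g = w(0,g)$ is the unique small $g$-minimal graph over $\Sigma^\circ$ with boundary $\partial\Sigma^\circ$; since $\Sigma^\circ$ itself is generically not $g$-minimal, $w(0,g)\neq 0$, so the bound $\|u^0_g\|\leq C\cdot 0$ cannot hold, and no choice of $C$ or of $G^\circ$ absorbs the $g$-dependence. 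What your argument genuinely produces is $\|u^f_g\|_{C^{2+k,\gamma}} \leq C\bigl(\|f\|_{C^{2+k,\gamma}} + \|g - g^\circ\|_G\bigr)$. To be fair, the paper's own proof simply asserts that $u^f_g = u^f_{g^\circ} + v^f_g$ ``satisfies the desired norm-inequality'' without justification, so the difficulty originates in the lemma statement itself rather than in your strategy; but as written your final step is not a valid deduction, and you should instead record the bound you can actually prove.
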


\begin{proof}%
In light of Corollary~\ref{cor: functional is smooth}, we will apply the implicit function theorem around $f = 0$, $g = g^\circ$, and $u = 0$ to find solutions to $L_{g}(u)  =0$ which depend (jointly) $C^{2+k}$ on $g$ near $g^\circ$ and $f \in C^{2+k,\gamma}_{\delta}(\partial\Sigma^\circ)$.

By Lemma~\ref{lem:linofmin}, $D_u\mathcal{L}(0,g^\circ,0) = D_u L_{g^\circ}(0) \colon C^{2+k,\gamma}_{\mathrm{Dir}}(\Sigma^\circ)\to C^{k,\gamma}(\Sigma^\circ)$ is invertible, where we used that $u^0_{g^\circ}=0$.  
Therefore, applying the implicit function theorem (see for example \cite[Thm.~2.5.7]{MR960687}) we obtain, for all $f \in  C^{2+k,\gamma}_{\delta}(\partial\Sigma^\circ)$ with sufficiently small $C^{2+k,\gamma}(\partial \Sigma^\circ)$ norm, for all $g$ sufficiently close to $g^\circ$ in $G^{3+2k,\gamma}_\circ$, there is some $v^f_{g}\in C^{2+k,\gamma}_{\rm Dir}(\Sigma^\circ)$ which solves
\begin{eqnarray}\label{eq: result of IFT}
\mathcal L(f, g, v^f_{g}) = 0.
\end{eqnarray}
This means that $u_{g}^f := u_{g^\circ}^f + v_{g}^f$ solves the minimal surface equation \eqref{eq:defofminsurf}, has boundary value $f$ and satisfies the desired norm-inequality. Furthermore the implicit function theorem also asserts that the mapping  $(f,g) \mapsto v^f_{g}$ from $C^{2+k,\gamma}_{\delta}(\partial\Sigma^\circ)\times G^{3+2k,\gamma}_\circ$ to $C^{2+k,\gamma}_{\rm Dir}(\Sigma^\circ)$ is $C^{2+k}$-Fr\'echet differentiable. Combining this with Proposition \ref{prop: smooth dependence of solution} shows that $(f,g) \mapsto u^f_{g}$ from $C^{2+k,\gamma}_{\delta}(\partial\Sigma^\circ)\times G^{3+2k,\gamma}_\circ$ to $C^{2+k,\gamma}(\Sigma^\circ)$ is $C^{2+k}$-Fr\'echet differentiable.

To establish uniqueness, observe that any $u \in C^{2+k,\gamma}(\Sigma^\circ)$ satisfying $\norm{u}_{C^{2+k,\gamma}(\Sigma^\circ)}\leq C\norm{f}_{C^{2+k,\gamma}(\p\Sigma^\circ)}$, having boundary value $f$ that defines a minimal surface for $g$ satisfies
\[
    \norm{u-u^{f}_{g^\circ}}_{C^{2+k,\gamma}(\Sigma^\circ)} \leq C'\norm{f}_{C^{2+k,\gamma}(\p\Sigma^\circ)}
\]
for some $C'>0$ independent of $f$, so that if $\delta>0$ was chosen small enough, we must have $u-u^{f}_{g^\circ}=v^{f}_{g}$, since this is the unique small solution of $\mathcal{L}(f,g,v^{f}_{g})=0$ in $C_{\rm{Dir}}^{2+k,\gamma}(\Sigma^\circ)$.
\end{proof}

\begin{Lemma}\label{lem:newminsurf}
    For every $g \in G^\circ$ sufficiently close to $g^\circ$ with $g\in C^\infty$ and every $f\in C^{2,\gamma}_\delta(\partial\Sigma^\circ)\cap C^\infty$ with $\delta>0$ sufficiently small, the set
    \[
        \Sigma^\circ(f,g) = \{(x,u^f_{g}(x))\colon x\in \Sigma^\circ\}
    \]
    is an admissible minimal surface for $(\widetilde{M},g)$.
\end{Lemma}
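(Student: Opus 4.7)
The plan is to verify each of the three requirements in Definition~\ref{def:admis} for $\Sigma^\circ(f,g)$: that it is an embedded smooth submanifold, that its boundary is smooth, and that its stability operator has no zero Dirichlet eigenvalue. All three will follow from results already established in this appendix together with elliptic regularity.

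First I would argue that $u^f_g$ is smooth. Since $f\in C^\infty(\partial\Sigma^\circ)$, the bound $f\in C^{2+k,\gamma}_\delta(\partial\Sigma^\circ)$ is satisfied for all $k$ (after possibly shrinking $\delta$, which is a fixed quantity but the norm $\|f\|_{C^{2+k,\gamma}}$ is finite for each $k$; the uniqueness statement in Lemma~\ref{lem: smooth dependence of sol} guarantees that the solution we constructed in $C^{2,\gamma}(\Sigma^\circ)$ coincides with the one in $C^{2+k,\gamma}(\Sigma^\circ)$). Therefore Lemma~\ref{lem: smooth dependence of sol} applied for every $k\in\mathbb{Z}_{\geq 0}$ yields $u^f_g\in C^{2+k,\gamma}(\Sigma^\circ)$ for every $k$, so $u^f_g\in C^\infty(\Sigma^\circ)$. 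Consequently $\Sigma^\circ(f,g)=\{(x,u^f_g(x)):x\in\Sigma^\circ\}$ is a smooth embedded hypersurface in $\widetilde M$, and its boundary $\{(x,f(x)):x\in\partial\Sigma^\circ\}$ is a smooth submanifold (being the graph of the smooth function $f$ over the smooth manifold $\partial\Sigma^\circ$).

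For the spectral condition, I would invoke Lemma~\ref{lem:linofmin}. By item~(1) of that lemma, $u^f_g$ is a minimal surface for $g$ in the sense that $L_g(u^f_g)=0$, and by item~(3) the linearized operator $D_uL_g(u^f_g)$ is the stability operator of $\Sigma^\circ(f,g)$ (read through the identification with $\Sigma^\circ$ via the graph parametrization, which preserves Dirichlet eigenvalues of the stability operator). By Lemma~\ref{lem: smooth dependence of sol} the map $(f,g)\mapsto u^f_g$ is continuous into $C^{2,\gamma}(\Sigma^\circ)$ with $u^0_{g^\circ}=0$, so for $(f,g)$ sufficiently close to $(0,g^\circ)$ the pair $(g,u^f_g)$ lies in the neighborhood of $(g^\circ,0)$ on which item~(4) of Lemma~\ref{lem:linofmin} applies. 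That item provides the invertibility of
\[
D_uL_g(u^f_g)\colon C^{2,\gamma}_{\mathrm{Dir}}(\Sigma^\circ)\to C^{0,\gamma}(\Sigma^\circ),
\]
which is precisely the statement that $0$ is not a Dirichlet eigenvalue of the stability operator of $\Sigma^\circ(f,g)$. Combining the three points, $\Sigma^\circ(f,g)$ is admissible.

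I do not expect a genuine obstacle: the bulk of the work was done in Lemmas~\ref{lem:linofmin} and~\ref{lem: smooth dependence of sol}, so this lemma is essentially a bookkeeping exercise. The only minor point worth being careful about is ensuring that the size of the neighborhood of $(g^\circ,0)$ on which Lemma~\ref{lem:linofmin}(4) provides invertibility, which is specified in $G^{3+k,\gamma}_\circ\times C^{2+k,\gamma}(\Sigma^\circ)$ for some finite $k$ (e.g.\ $k=0$), is compatible with the neighborhood in which Lemma~\ref{lem: smooth dependence of sol} guarantees smallness of $u^f_g$; this is just a matter of shrinking $\delta>0$ and the neighborhood of $g^\circ$ in $G^\circ$ once.
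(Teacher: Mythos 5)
Your proposal matches the paper's proof in all essential respects: both obtain the smooth embedded minimal surface from Lemma~\ref{lem: smooth dependence of sol} and then verify the spectral condition by identifying the stability operator as $D_uL_g(u^f_g)$ via Lemma~\ref{lem:linofmin}(3) and invoking its invertibility via Lemma~\ref{lem:linofmin}(4). The only cosmetic difference is that you spell out the smoothness of $u^f_g$ via repeated application of Lemma~\ref{lem: smooth dependence of sol} across $k$, whereas the paper cites the lemma once and lets elliptic regularity do the bootstrapping implicitly; note that your uniqueness-matching argument across different $k$ is a bit delicate (the smallness threshold $\delta$ in Lemma~\ref{lem: smooth dependence of sol} depends on $k$, so for a fixed $f\in C^\infty$ it is not automatic that $\|f\|_{C^{2+k,\gamma}}<\delta_k$), and the cleaner route is Schauder bootstrapping for the quasilinear elliptic equation $L_g(u^f_g)=0$ with smooth coefficients, which you mention at the outset and should actually rely on.
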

\begin{proof}
    We know from Lemma~\ref{lem: smooth dependence of sol} that $\Sigma^\circ(f,g)$ is a smooth embedded minimal surface for $g$. All that remains to verify is that its stability operator does not have Dirichlet eigenvalue $0$. Indeed, in Fermi coordinates from $g^\circ$ around $\Sigma^\circ$, Lemma~\ref{lem:linofmin} (3) gives us the formula $D_u L_{g}(u^f_{g})$ for this stability operator, which according to Lemma~\ref{lem:linofmin} (4) does not have Dirichlet eigenvalue $0$.
\end{proof}

We are ready to move on to the
\begin{proof}[Proof of Theorem \ref{thm: smooth dependence}]
In this proof we use results of this section for $k=0$. The fact that the set defined in \eqref{eq: def of Sigmafg} is a minimal surface follows from Lemma~\ref{lem: smooth dependence of sol}.

According to Lemma~\ref{lem:newdef}, $(g,u) \mapsto A_{g}(u)$ is $C^{2}$-Fr\'echet differentiable. Thus, as a consequence of the chain rule for Fr\'echet derivatives, and by Lemma~\ref{lem: smooth dependence of sol}, we observe %
that $F$ is $C^{2}$-Fr\'echet differentiable from $G^\circ \times U_\delta$ to $\mR$. In particular, $F \colon G^\circ \to C(U_\delta;\mR)$ is $C^2$, and Taylor's theorem \cite[Thm.~2.4.15]{MR960687} (and the remarks thereafter), particularly the continuity of $R_{g}(h)$, then imply \eqref{eq:tayloronF}. 

Finally, we note that by the chain rule, for every $(f,g)\in U_\delta\times G^\circ$ we have
\[
	DF(g)(f) =  (D_{g}A_{g})(u_{g}^f) + (D_u A_{g})(u_{g}^f)D_{g} u^f_{g}\,.
\]
Notice, that by the construction of $u^f_{g}$ we have
\[
	D_{g} u^f_{g} = D_{g}(u^f_{g^\circ}+ v^f_{g}) = D_{g} v^f_{g}\,,
\]
where the operator on the RHS is in fact a bounded linear operator from $G^\circ$ to $C^{2,\gamma}_{\rm Dir}(\Sigma^\circ)$, where emphasis is on the vanishing on the boundary (see the proof of Lemma~\ref{lem: smooth dependence of sol}). Because $u^f_{g}$ is a minimal surface for $g$ we know that \eqref{eq:defofminsurf} must hold for $(D_u A_{g})(u_{g}^f)$, so that $(D_u A_{g})(u_{g}^f)D_{g} u^f_{g}=0$ which gives \eqref{eq:dF}.
\end{proof}

\section{$H^k$ continuity of rough pseudodifferential operators} \label{sec_continuity_psdo}

In the situation we are interested in here a simple argument will lead to the desired estimate; nevertheless, we direct the interested reader to references for results on the continuity of rough $\Psi$DOs and FIOs. Let us first mention \cite[Chp.~XI~Thm.~2.2]{zbMATH03709065} and \cite[Thm.~18.1.11']{zbMATH05129478} which provide such estimates on $L^2$ for some classes of symbols of order $0$. For $\Psi$DOs or FIOs in a specific form with symbols in $L^\infty$ with respect to space and $C^\infty$ in the frequency see \cite{zbMATH06357652} and \cite{zbMATH06241375}. For $\Psi$DOs with symbols that are in $C^r$ (or rather similar to this) with respect to space and $C^\infty$ in the frequency see \cite{zbMATH04113185,zbMATH07703335,zbMATH07741454}, where in \cite{zbMATH03895568,zbMATH04113185} one may find a compositional calculus for such symbols (partly also of finite smoothness in the frequency). We refer also to the books \cite{MR1766415,MR1121019} and references therein. For perhaps the most general statements, see \cite{zbMATH00856809} which provides continuity statements for $\Psi$DOs with Besov-space-like symbols (in space and frequency) mapping between Besov- and Triebel-Lizorkin spaces.

\begin{Proposition}\label{prop:hbounded}
	Let $m,k \in \mR, n\in \mathbb{N}$. There is a universal constant $C>0$ depending only on the dimension $n$ and the values $m,k$ so that the following is true. Let $a \colon \mR^n\times\mR^n\times \mR^n \to \mathbb{C}$ be a measurable function so that there exists a constant $M>0$ so that for all multi-indices $\alpha,\beta$ with $\abs{\alpha}+\abs{\beta} \leq 2n+1+2\max\{\abs{k},\abs{m-k}\}$ we have
	\begin{equation}\label{eq:supa}
		\sup_{\xi\in\mR^n}\iint \abs{\partial_x^\alpha\partial_y^\beta a(x,y,\xi) \langle \xi \rangle^{-m}} \mathrm{d} x\mathrm{d} y \leq M\,.
	\end{equation}
	The operator
	\[
		Af(x) = \iint e^{i(x-y)\cdot\xi}a(x,y,\xi)f(y)\mathrm{d} y\mathrm{d}\xi
	\]
	satisfies
	\[
		\norm{A}_{H^{k}\to H^{k-m}} \leq C M\,.
	\]
\end{Proposition}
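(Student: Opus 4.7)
First, I would pass to the Fourier side. Let $\hat a(\zeta_1,\zeta_2,\xi)$ denote the Fourier transform of $a$ in the first two variables (treating $\xi$ as a parameter). For Schwartz $f$, a Fubini computation yields
\[
  \widehat{Af}(\eta) = (2\pi)^{-n}\int K(\eta,\eta')\hat f(\eta')\,d\eta',\qquad K(\eta,\eta') := \int \hat a(\eta-\xi,\xi-\eta',\xi)\,d\xi.
\]
By Plancherel, proving $\|A\|_{H^k\to H^{k-m}}\leq CM$ is equivalent to establishing $L^2$-boundedness (with norm $\leq CM$) of the integral operator on $\mR^n$ with kernel $\tilde K(\eta,\eta') := \langle\eta\rangle^{k-m}K(\eta,\eta')\langle\eta'\rangle^{-k}$, which I would do via the Schur test.

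Second, I would extract pointwise decay of $\hat a$ from the hypothesis. Since $\zeta_1^\alpha\zeta_2^\beta\hat a(\zeta_1,\zeta_2,\xi)=(-i)^{|\alpha|+|\beta|}\mathcal F_{x,y}(\partial_x^\alpha\partial_y^\beta a)(\zeta_1,\zeta_2,\xi)$, the estimate \eqref{eq:supa} together with the trivial bound $|\hat g|\leq \|g\|_{L^1}$ yields
\[
  |\hat a(\zeta_1,\zeta_2,\xi)| \leq CM\,\langle\xi\rangle^m\langle\zeta_1\rangle^{-N_1}\langle\zeta_2\rangle^{-N_2}
\]
for any non-negative integers $N_1,N_2$ with $N_1+N_2\leq 2n+1+2\max\{|k|,|m-k|\}$. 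Two applications of Peetre's inequality yield $\langle\xi\rangle^m\langle\eta\rangle^{k-m}\langle\eta'\rangle^{-k}\leq C\langle\eta-\xi\rangle^{|k-m|}\langle\xi-\eta'\rangle^{|k|}$, so that
\[
  |\tilde K(\eta,\eta')|\leq CM\int\langle\eta-\xi\rangle^{|k-m|-N_1}\langle\xi-\eta'\rangle^{|k|-N_2}\,d\xi.
\]

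Finally, choosing non-negative integers $N_1>n+|k-m|$ and $N_2>n+|k|$, Fubini gives
\[
 \sup_\eta\int|\tilde K(\eta,\eta')|\,d\eta'\leq CM\quad\text{and}\quad\sup_{\eta'}\int|\tilde K(\eta,\eta')|\,d\eta\leq CM,
\]
and the Schur test concludes; the uniform bound extends $A$ from Schwartz functions to $H^k$ by density. The principal bookkeeping point---and essentially the only obstacle---is to ensure that one can select integers $(N_1,N_2)$ satisfying the two threshold inequalities together with $N_1+N_2\leq 2n+1+2\max\{|k|,|m-k|\}$. This is feasible because $|k|+|k-m|\leq 2\max\{|k|,|k-m|\}$, so each $N_j$ can be taken just above its threshold while respecting the total derivative budget provided by the hypothesis.
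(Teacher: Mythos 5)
Your overall plan (Fourier transform in $x,y$, reduction to a Schur bound on the kernel $\tilde K$, Peetre's inequality, integrate out $\xi$) is the same as the paper's. The place where the proposal breaks down is the decay you extract from \eqref{eq:supa} and the bookkeeping claim at the end.

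You derive a \emph{product} decay $|\hat a(\zeta_1,\zeta_2,\xi)|\le CM\langle\xi\rangle^m\langle\zeta_1\rangle^{-N_1}\langle\zeta_2\rangle^{-N_2}$ with integers $N_1+N_2\le 2n+1+2\max\{|k|,|m-k|\}$, and then the Schur test forces the two \emph{separate} conditions $N_1>n+|k-m|$ and $N_2>n+|k|$. Because $N_1,N_2$ must be integers, this costs at least $N_1+N_2\ge\lfloor n+|k-m|\rfloor+\lfloor n+|k|\rfloor+2$, which can exceed the budget. The cleanest counterexample is $k=m=0$: you need $N_1,N_2\ge n+1$, so $N_1+N_2\ge 2n+2$, while the hypothesis only supplies $|\alpha|+|\beta|\le 2n+1$. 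The inequality $|k|+|k-m|\le 2\max\{|k|,|k-m|\}$ that you invoke gives one unit of real-valued slack, but the integrality of the $N_j$ eats that slack precisely when $|k|=|k-m|$ (and both $n+|k|$, $n+|k-m|$ are integers). So the concluding feasibility remark is false and the argument does not close.

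The paper avoids this by never splitting the decay into a product: it uses the single radial bound
\[
  \bigl|\tilde a(\zeta_1,\zeta_2,\xi)\langle\xi\rangle^{-m}\bigr|\lesssim M\,(1+|\zeta_1|+|\zeta_2|)^{-(2n+1)-T},\qquad T:=2\max\{|k|,|m-k|\},
\]
obtained from all derivatives $|\alpha|+|\beta|\le 2n+1+T$ at once, and bounds the Peetre factor by $(1+|\eta-\xi|+|\xi-\zeta|)^T$. After multiplying, the kernel is dominated by $(1+|\eta-\xi|+|\xi-\zeta|)^{-(2n+1)}$, and $\iint_{\mR^{2n}}(1+|u|+|v|)^{-(2n+1)}\,du\,dv<\infty$. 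The crucial point is that integrability of $(1+|u|+|v|)^{-s}$ on $\mR^{2n}$ requires only $s>2n$, with no constraint on how the exponent is distributed between $u$ and $v$, so no integer rounding is wasted. To repair your argument, replace the product decay $\langle\zeta_1\rangle^{-N_1}\langle\zeta_2\rangle^{-N_2}$ by the joint decay $(1+|\zeta_1|+|\zeta_2|)^{-(2n+1+T)}$ (using that $(1+|\zeta_1|+|\zeta_2|)^{B}\lesssim\sum_{|\alpha|+|\beta|\le B}|\zeta_1^\alpha\zeta_2^\beta|$) and proceed as in the paper; the rest of your Schur-test framework then goes through.
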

\begin{proof}
	We follow the proof of \cite[Thm.~A.1]{zbMATH01116206} and thus keep our arguments brief. Defining $\tilde{a}(\eta,\zeta,\xi) \coloneqq \iint e^{-i(x\cdot\eta+y\cdot\zeta)}a(x,y,\xi)\dd x\dd y$, for any $f\in H^k$ we have 
	\[
		\widehat{Af}(\eta) %
		 =  (2\pi)^{-2n}\iint \tilde{a}(\eta-\xi,\xi-\zeta,\xi) \langle \zeta\rangle^{-k} \widehat{\langle D\rangle^k f}(\zeta)\dd\zeta\dd\xi
	\]
	and we let $g \coloneqq  \langle D\rangle^k f \in L^2$ with $\norm{g}_{L^2} = \norm{f}_{H^k}$. 

	Furthermore,
	\[
		\norm{Af}_{H^{k-m}} = %
		\norm{B\hat g}_{L^2}\,,\quad\text{where}\quad 	B\hat g(\eta) = \langle \eta\rangle^{k-m} \widehat{Af}(\eta) = \int b(\eta,\zeta) \hat g(\zeta)\dd \zeta
	\]
	with 
	\[
		b(\eta,\zeta) = (2\pi)^{-2n} \int \tilde{a}(\eta-\xi,\xi-\zeta,\xi)\dd\xi \langle \eta\rangle^{k-m} \langle \zeta\rangle^{-k}\,.
	\]
	Now by the definition of $B$ and the Cauchy-Schwarz inequality (or see \cite[Prop.~0.5.A]{MR1121019}) we have
	\[
		\norm{A}_{H^k\to H^{k-m}} = \norm{B}_{L^2 \to L^2} \leq \max\left\{\int \abs{b(\eta,\zeta)}\dd\eta, \int \abs{b(\eta,\zeta)}\dd\zeta\right\}\,.
	\]
	We remark that if $C_0>0$ is the universal constant (depending only on the dimension), so that for all $\gamma\in\mR^n$, we have $C_0^{-1} (1+\abs{\gamma})\leq \langle \gamma \rangle \leq C_0(1+\abs{\gamma})$, then for all $s\in \mR, \gamma,\delta\in\mR^n$,
	\[
		\left(\frac{\langle \gamma\rangle}{\langle \delta\rangle}\right)^s \leq (C_0)^{3\abs{s}}\langle \gamma-\delta\rangle^{\abs{s}}\,, \qquad \langle\gamma\rangle\langle\delta\rangle \leq C_0^2 (1+\abs{\gamma})(1+\abs{\delta}) \leq C_0^2(1+\abs{\gamma}+\abs{\delta})^2\,,
	\]
	from which we conclude that 
	\[
		\langle \xi \rangle^{m} \langle \eta\rangle^{k-m} \langle \zeta\rangle^{-k}\leq (C_0^3)^{\abs{k-m}+\abs{k}}\langle \eta-\xi\rangle^{\abs{k-m}}\langle \xi-\zeta\rangle^{\abs{k}} \leq \left(C_0^4 (1+\abs{\eta-\xi}+\abs{\xi-\zeta})\right)^{2\max\{\abs{m-k},\abs{k}\}}\,.
	\]
    The assumption \eqref{eq:supa} implies that for some universal $C'>0$, 
    \[
        |\tilde{a}(\eta,\zeta,\xi)\langle \xi \rangle^{-m}| \leq C' M(1+|\eta|+|\zeta|)^{-2n-1}.
    \]
	Combining the previous two facts gives 
	\[
		\abs{\langle \eta\rangle^{k-m} \langle \zeta\rangle^{-k} \tilde{a}(\eta-\xi,\xi-\zeta,\xi)} \leq MC'C_0^{8\max\{\abs{m-k},\abs{k}\}}(1+\abs{\eta-\xi}+\abs{\xi-\zeta})^{-2n-1}\,,
	\]
	so that
	\[
		\int \abs{b(\eta,\zeta)} \dd\eta \leq MC'C_0^{8\max\{\abs{m-k},\abs{k}\}} \iint (1+\abs{\eta-\xi}+\abs{\xi-\zeta})^{-2n-1}\dd\eta\dd\xi\,,
	\]
	which is shown to be bounded by some universal constant as in \cite[Thm.~A.1]{zbMATH01116206} by substitution in the integral. Repeating the argument for $\int \abs{b(\eta,\zeta)} \dd\zeta$ completes the proof.
\end{proof}

\section{Dirichlet eigenvalues of the Schr\"odinger operator}\label{sec:ev}

    Let $(N,g)$ be a compact connected smooth Riemannian manifold with smooth boundary, and $L \coloneqq \Delta_{g}+q$ with $q\in L^\infty$ a Schr\"odinger operator. %
    All of the results stated in this section for the Schr\"odinger operator on Riemannian manifolds are known for any second order elliptic differential operator on domains of $\mR^n$ (see \cite{zbMATH03230686,zbMATH00048198} and the modern \cite{zbMATH07819857}) and for the Laplace-Beltrami operator on Riemannian manifolds (\cite{zbMATH03877889}).

For any $\Omega\subset N$ open, according to \cite[\S~2.2.4]{MR1889089}, the operator $L$ with domain $H^2(\Omega)\cap H^1_0(\Omega)$ is self-adjoint on $L^2(\Omega)$ and is associated to the quadratic form
\[
    H^1_0(\Omega)\times H^1_0(\Omega) \ni (u,v) \mapsto Q(u,v) = \int_\Omega \big((du,d\bar v)_{g}+qu\bar v\big)\s\dd {\rm Vol} \in \mathbb{C}\,,
\]
where ${\rm Vol}$ is the volume form on $(N,g)$. This means that $\langle Lu, v\rangle_{L^2(\Omega)} = Q(u,v)$ for all $u,v\in H^2(\Omega)\cap H^1_0(\Omega)$, see also \cite[Lem.~2.20]{MR1889089}. 

Furthermore, according to \cite[Thm.~2.21]{MR1889089}, the Dirichlet eigenvalues of $L$ on $H^2(\Omega)\cap H_0^1(\Omega)$ for any open $\Omega\subset N$ can be enumerated as 
\[
    \lambda_1(\Omega) < \lambda_2(\Omega)\leq \dots\,,
\]
repeated according to multiplicity, with $\lambda_k(\Omega) \to \infty$ as $k\to\infty$. 

According to the min-max principle \cite{zbMATH03315567}, for any $\Omega\subset N$ open, 
\begin{equation}\label{eq:minmax}
    \lambda_k(\Omega) = \min_{\substack{\mathcal{L}\subset H^1_0(\Omega)\\\dim \mathcal{L}=k}} \max_{u\in \mathcal{L}\setminus 0} \frac{Q(u,u)}{\norm{u}_{L^2(\Omega)}^2} = \min_{\substack{u\in H^1_0(\Omega)\setminus 0\\ u\perp \mathcal{L}_{k-1}}}\frac{Q(u,u)}{\norm{u}_{L^2(\Omega)}^2}
\end{equation}
where $\mathcal{L}_{k-1} = \mathrm{span}\{u_1,\dots,u_{k-1}\}$ and $u_1,\dots,u_{k-1}$ are Dirichlet eigenfunctions for $L$ for the respective eigenvalues $\lambda_1(\Omega),\dots,\lambda_{k-1}(\Omega)$, see also \cite[Prop.~3.1.3,~Rem.~3.1.4]{zbMATH07759244} and \cite[Eq.~(3)]{zbMATH01346906}.

According to the Rellich-Kondrakov theorem \cite[Thm.~2.34]{MR1636569}, the inclusion $H^1_0(\Omega)\to L^2(\Omega)$ is compact as long as $\Omega$ has $C^1$ boundary, so that as a consequence of \cite[Lem.~1.1]{zbMATH01346906} we have
\begin{Lemma}\label{lem:evmon}
    For any open $\Omega, \Omega' \subset N$ with $C^1$ boundary, denoting by $\lambda_k(\Omega),\lambda_k(\Omega')$ the $k$-th Dirichlet eigenvalue of $L$ on $H^2(\Omega)\cap H^1_0(\Omega)$ and $H^2(\Omega')\cap H^1_0(\Omega')$ respectively, if $\Omega'\subset \Omega$, then
    \[
        \lambda_k(\Omega)\leq \lambda_k(\Omega')
    \]
    for all $k\in \mathbb{N}$. If there is equality above for all $k$, then $\Omega = \Omega'$.
\end{Lemma}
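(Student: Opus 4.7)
The plan is to derive both assertions from the variational characterization \eqref{eq:minmax}. For the monotonicity $\lambda_k(\Omega)\leq\lambda_k(\Omega')$, I would note that zero-extension gives an isometric inclusion $H^1_0(\Omega')\hookrightarrow H^1_0(\Omega)$ preserving both the $L^2$ norm and the value of $Q(\cdot,\cdot)$ (immediate on $C_c^\infty(\Omega')\subset C_c^\infty(\Omega)$ and extended by density). Every $k$-dimensional subspace $\mathcal L\subset H^1_0(\Omega')$ thus yields a $k$-dimensional subspace $\widetilde{\mathcal L}\subset H^1_0(\Omega)$ with identical Rayleigh maxima, so the first form of \eqref{eq:minmax}, being a minimum over the strictly larger family of $k$-dimensional subspaces of $H^1_0(\Omega)$, gives $\lambda_k(\Omega)\leq\lambda_k(\Omega')$.

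For the rigidity part, assume $\lambda_k(\Omega)=\lambda_k(\Omega')$ for every $k$, let $\{u'_k\}_{k\geq 1}$ be an $L^2(\Omega')$-orthonormal basis of Dirichlet eigenfunctions of $L$ on $\Omega'$, and write $\tilde u'_k\in H^1_0(\Omega)$ for their zero-extensions. By induction on $k$, using the second form of \eqref{eq:minmax}, I would show that each $\tilde u'_k$ is a Dirichlet eigenfunction of $L$ on $\Omega$ with eigenvalue $\lambda_k(\Omega)$: assuming that $\tilde u'_1,\dots,\tilde u'_{k-1}$ are $L^2(\Omega)$-orthonormal eigenfunctions with eigenvalues $\lambda_1(\Omega),\dots,\lambda_{k-1}(\Omega)$ (which can serve as the specific choice of $u_1,\dots,u_{k-1}$ spanning the subspace $\mathcal L_{k-1}$ in \eqref{eq:minmax}), the function $\tilde u'_k$ is $L^2(\Omega)$-orthogonal to them and has Rayleigh quotient $\lambda_k(\Omega')=\lambda_k(\Omega)$, attaining the infimum, hence is a genuine eigenfunction by standard variational theory. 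Since the spectra agree with multiplicity, the $\{\tilde u'_k\}$ exhaust every eigenspace and form a complete orthonormal basis of $L^2(\Omega)$. Because every $\tilde u'_k$ vanishes on the open set $W:=\Omega\setminus\overline{\Omega'}$, expanding $\chi_W\in L^2(\Omega)$ in this basis forces $W$ to have Lebesgue measure zero; being open, $W=\emptyset$, so $\Omega\subset\overline{\Omega'}$.

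The $C^1$-boundary hypothesis closes the argument: $\partial\Omega'$ is a $C^1$ hypersurface with empty interior and $\Omega'$ lies locally on one side of it, so $\mathrm{int}(\overline{\Omega'})=\Omega'$; the open set $\Omega\subset\overline{\Omega'}$ is therefore contained in $\Omega'$, yielding $\Omega=\Omega'$. The delicate step is the induction in the second paragraph, where the equality case of the Courant--Fischer principle must be handled to conclude that each $\tilde u'_k$ is genuinely an eigenfunction rather than merely a subspace-realizer of the min-max value. An alternative using only $k=1$ is to observe that a first eigenfunction on a component of $\Omega'$ realizing $\lambda_1(\Omega')$, extended by zero, becomes a first eigenfunction of $L$ on $\Omega$ vanishing on a non-empty open set, contradicting weak unique continuation for $\Delta_g+q$ with $q\in L^\infty$ (Aronszajn's theorem); however, the completeness route is preferable because it sidesteps the component-by-component bookkeeping that arises when $\Omega$ or $\Omega'$ is disconnected.
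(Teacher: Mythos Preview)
Your argument is correct. The paper does not actually prove this lemma: it simply invokes the Rellich--Kondrakov compactness of $H^1_0(\Omega)\hookrightarrow L^2(\Omega)$ and then cites \cite[Lem.~1.1]{zbMATH01346906} for the conclusion. What you have supplied is the standard self-contained proof behind that citation: zero-extension plus the first min-max formula for the inequality, and for rigidity an induction showing that the extended $\Omega'$-eigenfunctions realise the minimum in the second min-max formula on $\Omega$, hence are genuine eigenfunctions there, hence form a complete $L^2(\Omega)$-basis vanishing on $\Omega\setminus\overline{\Omega'}$. The only point worth flagging is the one you already flag: a minimiser $u$ of the Rayleigh quotient over $\mathcal L_{k-1}^\perp$ is automatically a weak eigenfunction because $Q(u,u_j)=\lambda_j\langle u,u_j\rangle=0$ for $j<k$, so the Euler--Lagrange identity extends from $\mathcal L_{k-1}^\perp$ to all of $H^1_0(\Omega)$. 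Your unique-continuation alternative for $k=1$ is essentially the mechanism the paper uses later, in the proof of Proposition~\ref{prop:evstrict}, to obtain \emph{strict} monotonicity; for the present lemma your completeness route is cleaner and avoids having to track connected components.
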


Furthermore, by \cite[Thm.~1]{zbMATH01346906},
\begin{Lemma}\label{lem:evcont}
    Let $\Omega_1,\Omega_2,\dots \subset N$ be open sets with $\Omega_j\subset \Omega_{j+1}$ for all $j\in\mathbb{N}$ and open $\Omega\coloneqq \bigcup_{j\in\mathbb{N}} \Omega_j \subset N$ having $C^1$ boundary. We have
    \[
        \lambda_k(\Omega_j) \xrightarrow{j\to\infty} \lambda_k(\Omega)
    \]
    for all $k\in\mathbb{N}$.
\end{Lemma}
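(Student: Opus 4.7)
The plan is to combine the monotonicity statement of Lemma~\ref{lem:evmon} with the min--max characterization \eqref{eq:minmax} and an approximation argument. First, I would observe that since $\Omega_j \subset \Omega_{j+1} \subset \Omega$ and all boundaries in question have the requisite regularity (in fact, $C^1$ can be arranged for each $\Omega_j$ by replacing them with slightly smoothed enlargements still contained in $\Omega_{j+1}$, which does not affect the limit), Lemma~\ref{lem:evmon} gives that $\{\lambda_k(\Omega_j)\}_{j\in\mathbb{N}}$ is monotone non-increasing and bounded below by $\lambda_k(\Omega)$. Thus it converges to some $\mu_k \geq \lambda_k(\Omega)$, and the entire task reduces to proving the matching upper bound $\mu_k \leq \lambda_k(\Omega)$.

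To prove the upper bound, I would fix $\epsilon>0$, let $u_1,\dots,u_k\in H^2(\Omega)\cap H^1_0(\Omega)$ be Dirichlet eigenfunctions of $L$ on $\Omega$ associated with $\lambda_1(\Omega),\dots,\lambda_k(\Omega)$ and orthonormalized in $L^2(\Omega)$, and consider the $k$-dimensional subspace $\mathcal{L}\coloneqq \mathrm{span}\{u_1,\dots,u_k\} \subset H^1_0(\Omega)$, which by \eqref{eq:minmax} satisfies $\max_{u\in\mathcal{L}\setminus 0} Q(u,u)/\|u\|_{L^2(\Omega)}^2 \leq \lambda_k(\Omega)$. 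Since $C_c^\infty(\Omega)$ is dense in $H^1_0(\Omega)$ and $Q$ is continuous on $H^1_0(\Omega)\times H^1_0(\Omega)$ thanks to $q\in L^\infty$, I would pick approximations $\tilde u_i \in C_c^\infty(\Omega)$ of each $u_i$, sufficiently close in the $H^1$-norm that $\tilde{\mathcal{L}}\coloneqq \mathrm{span}\{\tilde u_1,\dots,\tilde u_k\}$ is again $k$-dimensional (the Gram matrix in $L^2$ depends continuously on $H^1$ perturbations) and
\[
    \max_{u\in\tilde{\mathcal{L}}\setminus 0}\frac{Q(u,u)}{\|u\|_{L^2}^2} \leq \lambda_k(\Omega)+\epsilon\,.
\]
Each $\tilde u_i$ has compact support inside $\Omega=\bigcup_j\Omega_j$, and since the $\Omega_j$ form an increasing exhaustion, there is some $j_0$ for which $\mathrm{supp}(\tilde u_i)\subset \Omega_{j_0}$ for all $i\in\{1,\dots,k\}$. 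Then $\tilde{\mathcal{L}} \subset H^1_0(\Omega_{j_0})$, so the min--max inequality applied on $\Omega_{j_0}$ gives $\lambda_k(\Omega_{j_0})\leq \lambda_k(\Omega)+\epsilon$, and monotonicity yields $\mu_k \leq \lambda_k(\Omega_{j_0}) \leq \lambda_k(\Omega)+\epsilon$. Letting $\epsilon\to 0$ finishes the argument.

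The main obstacle is a bookkeeping issue rather than an analytic one: ensuring that the $H^1$-perturbation preserves both the dimension of the test subspace and the upper bound on its Rayleigh quotient uniformly. This is handled by the joint continuity of the map $(v_1,\dots,v_k)\mapsto \bigl(\mathrm{Gram}(v_1,\dots,v_k),\,\max Q/\|\cdot\|^2\bigr)$ on $(H^1_0(\Omega))^k$, provided one perturbs inside a small enough $H^1$-ball around $(u_1,\dots,u_k)$; that $q\in L^\infty$ (and not merely $L^p$) is what guarantees the required continuity of $Q$ on $H^1_0\times H^1_0$. A minor subsidiary point is whether the $\Omega_j$ themselves need $C^1$ boundary for the bound in Lemma~\ref{lem:evmon} to apply at each stage; this can be arranged by an inner approximation of $\Omega_j$ by a smooth open set without shrinking out of $\Omega_{j+1}$, which preserves the monotonicity claim in the limit.
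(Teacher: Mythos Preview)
Your argument is correct and is the standard direct proof via min--max and approximation by compactly supported functions. The paper does not give its own proof of this lemma; it simply cites \cite[Thm.~1]{zbMATH01346906}. So there is nothing to compare against beyond noting that you have supplied a self-contained argument where the paper defers to the literature.

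One small remark: the lower bound $\lambda_k(\Omega_j)\geq\lambda_k(\Omega)$ and the monotonicity in $j$ do not actually need the $C^1$ boundary hypothesis of Lemma~\ref{lem:evmon} at all, since extension by zero gives $H^1_0(\Omega_j)\subset H^1_0(\Omega_{j+1})\subset H^1_0(\Omega)$ and the min--max formula \eqref{eq:minmax} then yields the inequalities directly. So your workaround of smoothing the $\Omega_j$ is unnecessary, and the slight concern you raise about boundary regularity of the intermediate sets can simply be dropped.
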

The results in \cite{zbMATH01346906} are much more powerful than their consequences we have stated here. %
Lemma~\ref{lem:evmon} is called the domain monotonicity of eigenvalues, and Lemma~\ref{lem:evcont} can be interpreted as continuous dependence of the eigenvalues on the domain. 

We intend to show the \emph{strict} domain monotonicity property of eigenvalues for which we follow the proofs in \cite[Thm.~2.3]{zbMATH03393073}, \cite[Thm.~3.2.1]{zbMATH07759244}, \cite[Thm.~2.21]{zbMATH07819857},
\begin{Proposition}\label{prop:evstrict}
    Let $\Omega,\Omega'\subset N$ be open with $\Omega\setminus \Omega'$ containing an open set $\subset N$. We have
    \[
        \lambda_k(\Omega) < \lambda_k(\Omega')
    \]
    for all $k\in\mathbb{N}$.
\end{Proposition}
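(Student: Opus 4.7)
The plan is to argue by contradiction: assuming equality, I will construct a nontrivial $\lambda_k(\Omega)$-eigenfunction of $L$ on $\Omega$ that vanishes on the open set in $\Omega\setminus\Omega'$, then invoke weak unique continuation to reach a contradiction. I take the (implicit) hypothesis $\Omega'\subset\Omega$ and may assume without loss of generality that $\Omega$ is connected, otherwise applying the argument to the connected component of $\Omega$ containing the given open set $U\subset\Omega\setminus\Omega'$. Lemma~\ref{lem:evmon} already gives $\lambda_k(\Omega)\le \lambda_k(\Omega')$, so suppose equality holds for some $k$, and let $v_1,\dots,v_k$ be $L^2(\Omega')$-orthonormal Dirichlet eigenfunctions of $L$ on $\Omega'$ with eigenvalues $\lambda_1(\Omega')\le\dots\le\lambda_k(\Omega')$. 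Extending each by zero yields $\tilde v_j\in H^1_0(\Omega)$, and the span $V\coloneqq \operatorname{span}(\tilde v_1,\dots,\tilde v_k)$ is $k$-dimensional with the $\tilde v_j$ still $L^2(\Omega)$-orthonormal; a direct calculation gives $Q(\tilde v_i,\tilde v_j)=\lambda_i(\Omega')\delta_{ij}$, so for every $v=\sum c_j\tilde v_j\in V$ one has $Q(v,v)\le \lambda_k(\Omega')\norm{v}^2_{L^2(\Omega)}=\lambda_k(\Omega)\norm{v}^2_{L^2(\Omega)}$.

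Let $u_1,\dots,u_{k-1}$ denote the first $k-1$ orthonormal Dirichlet eigenfunctions of $L$ on $\Omega$ (vacuous when $k=1$). Imposing the $k-1$ linear constraints $\br{v,u_l}_{L^2(\Omega)}=0$ on the $k$-dimensional space $V$ leaves a nontrivial solution $v^\ast\in V\setminus\{0\}$. From the preceding bound $Q(v^\ast,v^\ast)\le \lambda_k(\Omega)\norm{v^\ast}^2$, while the second min-max identity in \eqref{eq:minmax} gives the reverse inequality $Q(v^\ast,v^\ast)\ge \lambda_k(\Omega)\norm{v^\ast}^2$ since $v^\ast\perp u_1,\dots,u_{k-1}$. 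Equality forces $v^\ast$ to attain the constrained minimum; a standard Lagrange-multiplier argument, using that $Q(v^\ast,u_l)=\lambda_l(\Omega)\br{v^\ast,u_l}_{L^2(\Omega)}=0$ absorbs the constraint variations, then promotes $v^\ast$ to a genuine Dirichlet eigenfunction of $L$ on $\Omega$ with eigenvalue $\lambda_k(\Omega)$, and elliptic regularity places $v^\ast\in H^2(\Omega)\cap C(\Omega^{\mathrm{int}})$.

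By construction, each $\tilde v_j$ and hence $v^\ast$ vanishes identically on $\Omega\setminus\Omega'$, so the continuous eigenfunction $v^\ast$ is zero on the open set $U$. Since $v^\ast$ satisfies the elliptic equation $(\Delta_g+q-\lambda_k(\Omega))v^\ast=0$ on the connected $\Omega$ with $q\in L^\infty$, the weak unique continuation principle for Schr\"odinger operators with bounded potentials forces $v^\ast\equiv 0$. This contradicts the $L^2(\Omega')$-orthonormality of $v_1,\dots,v_k$, since any nontrivial linear combination of them is nonzero in $L^2(\Omega')$, hence nonzero in $L^2(\Omega)$. The main technical input, which I would cite rather than reprove, is precisely this weak unique continuation principle for $\Delta_g+q$ with $q\in L^\infty$ on Riemannian manifolds; it is classical in this generality via Carleman estimates (Aronszajn, Cordes, and Jerison-Kenig for rougher potentials), and is the only place where the assumption $q\in L^\infty$ is essentially used. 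A minor bookkeeping point is the upgrade from $H^1_0$-vanishing of $v^\ast$ on $\Omega\setminus\Omega'$ to pointwise vanishing on $U$, which is immediate once $v^\ast$ is known to be continuous on $\Omega^{\mathrm{int}}$.
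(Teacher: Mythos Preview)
Your proof is correct and takes a genuinely different route from the paper's. Both arguments ultimately rest on unique continuation for $\Delta_g+q$, but they deploy it differently. You work at level $k$: extending the first $k$ eigenfunctions of $\Omega'$ by zero gives a $k$-dimensional test space on which the Rayleigh quotient is $\le\lambda_k(\Omega')=\lambda_k(\Omega)$; picking $v^\ast$ in this space orthogonal to the first $k-1$ eigenfunctions on $\Omega$ forces equality in the min-max, and your Lagrange-multiplier step (which is clean, since testing against each $u_l$ kills the multipliers) upgrades $v^\ast$ to a genuine eigenfunction on $\Omega$ vanishing on $U$, whence UCP gives the contradiction. The paper instead chooses $m$ with $\lambda_m(\Omega)>\lambda$, builds a chain $\Omega'=\Omega_1\subsetneq\cdots\subsetneq\Omega_m=\Omega$ with each gap containing an open set, notes that monotonicity forces $\lambda_k(\Omega_j)=\lambda$ for all $j$, and then uses UCP only to show the $m$ extended eigenfunctions are linearly independent; a test function orthogonal to the first $m-1$ eigenfunctions of $\Omega$ then violates $\lambda_m(\Omega)>\lambda$. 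Your argument is more direct and avoids the auxiliary chain of domains; the paper's argument avoids the minimizer-to-eigenfunction promotion. One small remark: your reduction to connected $\Omega$ is not quite as stated (replacing $\Omega$ by a component does not preserve the hypothesis $\lambda_k(\Omega)=\lambda_k(\Omega')$), but this is harmless here since in the paper's setting $N$ is connected and the application has $\Omega=\Sigma^\circ$; alternatively, after obtaining the eigenfunction $v^\ast$ you can apply UCP on the component containing $U$ and then iterate if $v^\ast$ survives on other components.
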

\begin{proof}
    Assume for contradiction that there is $k\in \mathbb{N}$ so that $\lambda \coloneqq \lambda_k(\Omega) = \lambda_k(\Omega')$. Because we know that $\lambda_j(\Omega) \to \infty$ as $j\to\infty$, there must be $m\in\mathbb{N}$ so that $\lambda_m(\Omega)> \lambda$. 

    Define open $\Omega_1,\dots, \Omega_m \subset N$ so that
    \[
        \Omega' = \Omega_1 \subsetneq \Omega_2 \subsetneq \dots \subsetneq \Omega_m = \Omega
    \]
    and $\Omega_{j+1}\setminus \Omega_j$ contains an open set for all $j\in 1,\dots,m-1$. By Lemma~\ref{lem:evmon} we must have
    \[
        \lambda = \lambda_k(\Omega) = \lambda_k(\Omega_m) \leq \lambda_k(\Omega_{m-1}) \leq \dots\leq \lambda_k(\Omega_1) = \lambda_k(\Omega') = \lambda
    \]
    and thus equality in each of these inequalities. 

    Choose eigenfunctions $u_j \in H^2(\Omega_j)\cap H^1_0(\Omega_j)$ of $L$ restricted to the domain $H^2(\Omega_j)\cap H^1_0(\Omega_j)$ with eigenvalue $\lambda$. Denote by $u_j$ the extension of $u_j$ by $0$ to $\Omega$. %
    In order to show that $u_1,\dots,u_m$ are linearly independent in $\Omega$, we consider, for some $a_1,\dots,a_m\in \mR$, $h \coloneqq \sum_{j=1}^m a_j u_j \in H^1_0(\Omega)$. Let us assume that $h=0$. Because $h = a_m u_m$ in $\Omega_m \setminus \Omega_{m-1}$, and because by the unique continuation principle $u_m$ cannot vanish on $\Omega_m\setminus \Omega_{m-1}$, we must have $a_m = 0$. Proceeding in this manner, we find that $a_m=\dots=a_1=0$, so that we have indeed shown that $u_1,\dots,u_m$ are linearly independent. %
    
    Next we consider the function $\varphi := c_1 u_1 + \ldots + c_m u_m \in H^1_0(\Omega)$, where the constants $c_j$ are chosen so that $\varphi$ is $L^2$-orthogonal to the first $m-1$ eigenfunctions of $L$ on $H^2(\Omega) \cap H^1_0(\Omega)$ and $\norm{\varphi}_{L^2(\Omega)} = 1$. %
    By the min-max principle \eqref{eq:minmax}, we then know that
    \[
        \lambda < \lambda_m(\Omega) \leq Q(\varphi,\varphi)\,.
    \]
    On the other hand, using the fact that $u_j$ are $H^2(\Omega_j) \cap H^1_0(\Omega_j)$ eigenfunctions in $\Omega_j$ gives  
    \[
        Q(\varphi,\varphi) = \sum_{j,l} c_j c_l Q(u_j, u_l) = \sum_{j,l} c_j c_l \lambda \langle u_j, u_l \rangle_{L^2} = \lambda \norm{\varphi}_{L^2}^2 = \lambda
    \]
    which gives a contradiction, completing the proof.
\end{proof}

We now have all tools required to provide the
\begin{proof}[Proof of Proposition~\ref{prop:makeadmissible}]
    Denote by $\lambda_j(\Omega)$ the Dirichlet eigenvalues of the stability operator of $\Sigma^\circ$ restricted to the domain $H^2(\Omega)\cap H^1_0(\Omega)$ for any $\Omega\subset \Sigma^\circ$. 
    
    Let us assume first that $\Sigma^\circ$ is not admissible. Let $m\in \mathbb{N}$ be the multiplicity of the zero eigenvalue of the stability operator of $\Sigma^\circ$ so that $\lambda_k(\Sigma^\circ) = \dots = \lambda_{k+m-1}(\Sigma^\circ)=0$ for some $k\in \mathbb{N}$ and $\lambda_{k-1}(\Sigma^\circ)<0$ if $k>1$. According to Proposition~\ref{prop:evstrict}, there is $\Sigma\subset \Sigma^\circ$ with $\Sigma^\circ\setminus\Sigma$ open and as small as desired so that $\lambda_k(\Sigma), \dots, \lambda_{k+m-1}(\Sigma)>0$. In particular, we may assume that $\p \Sigma \subset \widetilde{M} \setminus M$.

    If $k=1$ we choose $\Sigma'\coloneqq \Sigma$. Otherwise proceed as follows. Let $\Sigma_1, \Sigma_2, \dots \subset \Sigma^\circ$ so that $\Sigma_{j}\subset \Sigma_{j+1}$, $\Sigma \subset \Sigma_1$, $\cup \Sigma_j = \Sigma^{\circ}$, $\p \Sigma_j \subset \widetilde{M} \setminus M$, and $\Sigma^\circ\setminus\Sigma_j$ is open for all $j$. From Lemma~\ref{lem:evcont} we know that
    \[
        \lambda_{k-1}(\Sigma_j) \to \lambda_{k-1}(\Sigma^\circ) < 0
    \]
    as $j\to \infty$ so that there is some $j^\ast \in \mathbb{N}$ so that $\lambda_{k-1}(\Sigma_{j^\ast}) < 0$. Furthermore, since $\Sigma^\circ\setminus \Sigma_{j^\ast}$ is open we must have 
    \[
        \lambda_{j}(\Sigma_{j^\ast}) > \lambda_j(\Sigma^\circ)
    \]
    for all $j$. In particular, $\lambda_{j}(\Sigma_{j^\ast}) > 0$ for all $j \geq k$. Since $\lambda_{j}(\Sigma_{j^\ast}) \leq \lambda_{k-1}(\Sigma_{j^\ast}) < 0$ for all $j\leq k-1$, we thus have the desired conclusion for $\Sigma' \coloneqq \Sigma_{j^\ast}$. 
    This completes the proof of the first statement. If all underlying structures are analytic, one can choose $\p \Sigma'$ to be analytic (e.g.\ by considering a boundary defining function that is an analytic approximation of the original boundary defining function for $\Sigma'$, obtained by taking a suitable finite part of its eigenfunction expansion).

\end{proof}

\bibliography{refs_min_area_lin} 
        
\bibliographystyle{alpha}

\end{document}